\documentclass[12pt]{amsart}

\usepackage{amsmath}
\usepackage{amssymb}
\usepackage{amsfonts}
\usepackage{amsthm}
\usepackage{bbm}
\usepackage{cite}
\usepackage{latexsym}
\usepackage[pdftex]{graphicx}
\usepackage{color}
\usepackage{comment}
\usepackage{url}
\usepackage{hyperref}
\usepackage{todonotes}
\input xy
\xyoption{all}

\setlength{\marginparwidth}{1.2in}
\setlength{\topmargin}{-50pt}
\setlength{\textheight}{24cm}
\setlength{\oddsidemargin}{5pt} \setlength{\evensidemargin}{5pt}
\setlength{\textwidth}{440pt}
\parindent=0pt
\parskip=4pt

\newcommand{\R}{\mathbb{R}}
\newcommand{\C}{\mathbb{C}}
\newcommand{\N}{\mathbb{N}}
\newcommand{\Z}{\mathbb{Z}}

\newcommand{\T}{\mathbb{T}}
\newcommand{\scr}{\mathcal}

\newcommand{\Open}{\mathrm{Op}}
\newcommand{\wt}{\widetilde}
\newcommand{\sub}{\mathrm{sub}}

\theoremstyle{plain}
\newtheorem{thm}{Theorem}[section]
\newtheorem{lem}[thm]{Lemma}
\newtheorem{prop}[thm]{Proposition}
\newtheorem{quest}[thm]{Question}
\theoremstyle{definition}
\newtheorem{defn}[thm]{Definition}
\newtheorem{exam}[thm]{Example}
\newtheorem{cor}[thm]{Corollary}

\newtheorem*{claim*}{Claim}
\theoremstyle{remark}
\newtheorem{rmk}[thm]{Remark}

\title{Getting a handle on contact manifolds}
\author{Kevin Sackel}

\begin{document}

\begin{abstract}
We develop the details of a surgery theory for contact manifolds of arbitrary dimension via convex structures, extending the 3-dimensional theory developed by Giroux. The theory is analogous to that of Weinstein manifolds in symplectic geometry, with the key difference that the vector field does not necessarily have positive divergence everywhere. The surgery theory for contact manifolds contains the surgery theory for Weinstein manifolds via a sutured model for attaching critical points of low index. Using this sutured model, we show that the existence of convex structures on closed contact manifolds is guaranteed, a result equivalent to the existence of supporting Weinstein open book decompositions.
\end{abstract}

\maketitle


\section{Introduction}
Three-dimensional contact geometry is a gem of a mathematical garden, with many thoroughly developed tools which have been fleshed out over the past few decades. One particularly fruitful tool comes in the guise of convex surfaces, as pioneered by Giroux \cite{Giroux}. There are many aspects to the theory in three dimensions which are quite powerful, built loosely out of the following two facts. First, convexity is a $C^{\infty}$-generic condition, so such surfaces exist in abundance in contact 3-manifolds. Second, a neighborhood of a convex surface is completely encoded by the data of its dividing set, with flexibility in the characteristic foliation (which can be made to realize certain Legendrians \cite{Kanda,Honda}). Taken together, for example, one can classify the tight contact structures on many $3$-manifolds (possibly with boundary), including $T^3$ \cite{Kanda}, $T^2 \times I$, and $L(p,q)$ \cite{Honda, Hondafix}. (See also \cite{EH,GLS1,GLS2,Honda2,Matkovic,Wu} for more examples.) These facts and their resulting toolkit are now somewhat classical and have been used repeatedly throughout $3$-dimensional contact geometry in the years since classification of tight contact structures on such manifolds became possible.

In higher dimensions, convex hypersurfaces are not $C^{\infty}$-generic, and it is an open question whether they are $C^0$-generic. While we do not yet understand this genericity, the angle suggested in this paper for studying convex hypersurfaces is to consider them as level sets of so-called convex contact manifolds, for which we develop a surgery theory. Convex contact manifolds were first studied in the same aforementioned seminal paper of Giroux \cite{Giroux}, based on the notion of convexity defined by Eliashberg and Gromov \cite{EG}, in which a full surgery theory was developed in three dimensions. That a higher-dimensional surgery theory for convex contact manifolds should exist has been expected by experts for a long time, though the devil is in the details. We finally pin down this long-delayed theory and some of its most basic consequences.

The surgery theory we develop is meant to be analogous to that of Weinstein manifolds, which one can find in great detail in the book \cite{CE} of Cieliebak and Eliashberg. In the symplectic setting, these Weinstein handle decompositions have been useful in understanding all sorts of symplectic information, notably including pseudo-holomorphic curve invariants and flexibility and subflexibility phenomena (see, for example, \cite{BEE, CE,CM,MS}). It is hoped that the theory developed in this paper can eventually be used in similar ways in contact geometry, to understand both rigid and flexible phenomena.

Before outlining the theory, we note that the systematic study of convex hypersurfaces in higher dimensions was initiated by Honda and Huang \cite{HH} concurrently with the author's foray into the field. Their viewpoint involves the study of bypass attachments along convex hypersurfaces, which can be thought of as a certain smoothly cancelling pair of $n$- and $(n+1)$-handles. The more general surgery-theoretic viewpoint presented here should be seen as complementary to their work.

\subsection{The surgery theory}

A convex contact cobordism is a quadruple $(M^{2n+1},\xi,X,\phi)$ such that $(M,\xi)$ is a contact manifold, possibly with boundary, $X$ is a contact vector field transverse to the boundary, and $\phi$ is a Morse function for which $X$ is a pseudo-gradient. The boundary $\partial M$ is then automatically a convex hypersurface, realized as such by the transverse contact vector field $X$. In fact, any neighborhood of a convex hypersurface provides an example of a convex contact cobordism: any $(\Sigma^{2n} \times I, udt + \beta, \partial_t, t)$, where $u \in C^{\infty}(\Sigma)$ and $\beta \in \Omega^1(\Sigma)$ satisfying $(ud\beta - ndu \wedge \beta) \wedge d\beta^{n-1} \neq 0$, is an example. Away from the critical points of $\phi$ (points where $X=0$), the level surfaces of $\phi$ sweep out these trivial cylindrical convex contact cobordisms, and so it suffices to understand the convex contact structure around a critical point.

Suppose $p$ is a critical point of $\phi$ of index $0 \leq k \leq n$. Such a critical point will be called a subcritical point. The quadruple
\begin{itemize}
	\item $M = \R^{2n+1}$
	\item $\alpha = dz+\frac{1}{2}\sum_{i=1}^{n}(x_idy_i-y_idx_i)$
	\item $X = z\partial_z + \sum_{i=1}^{k}(-x_i\partial_{x_i} + 2y_i\partial_{y_i}) + \frac{1}{2}\sum_{i=k+1}^{n}(x_i\partial_{x_i} + y_i\partial_{y_i})$
	\item $\phi = z^2+\sum_{i=1}^{k}(-x_i^2+y_i^2) + \sum_{i=k+1}^{n}(x_i^2+y_i^2)$
\end{itemize}
defines a convex structure around the origin of index $k$. Similarly, if $\phi$ has index $n+1 \leq k \leq 2n+1$, which we call supercritical, then we can take the subcritical neighborhoods just described but with the pair $(-X,-\phi)$ instead. These models turn out to be sufficient to studying critical points -- any convex contact structure can be made to look like these standard models up to nice homotopies. As in standard Morse theory, these models allow us to define standard handles, which are specified neighborhoods $H_k$ of these critical points, and the convex contact structure along the attaching region encodes how the handle can be attached along the boundary of a given convex contact cobordism. We call a convex contact cobordism which arises by handle attachments along a trivial cylinder a convex contact handlebody. The main theorem of this paper, whose proof has just been outlined, is the following.

\begin{thm}[= Theorem \ref{thm:main_thm_body}] \label{thm:main_thm}
Every convex contact cobordism $(M^{2n+1},\xi,X,\phi)$ is strictly convex homotopic to a convex contact handlebody.
\end{thm}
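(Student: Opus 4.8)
The plan is to follow the standard Morse-theoretic strategy, adapted to the convex contact setting, and reduce the global statement to the local models described above. First I would establish that outside a neighborhood of the critical points, the flow of $X$ provides a strictly convex homotopy identifying the relevant piece of $(M,\xi,X,\phi)$ with a trivial cylindrical convex contact cobordism $(\Sigma \times I, u\,dt+\beta, \partial_t, t)$; this is the "product region" lemma, and it follows by flowing along $X$ and checking that the contact condition $(u\,d\beta - n\,du\wedge\beta)\wedge d\beta^{n-1}\neq 0$ is preserved, so that level sets between consecutive critical values sweep out trivial cobordisms. Ordering the critical values (which one can arrange, since $\phi$ is Morse and $X$ is a pseudo-gradient, by a preliminary homotopy that spreads out critical values and, if desired, rearranges them by index), it then suffices to analyze passage through a single critical point.

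The core step is a \emph{local normal form}: near a critical point $p$ of index $k$, I would show that $(M,\xi,X,\phi)$ is strictly convex homotopic, relative to the boundary of a small ball, to one of the standard models listed in the excerpt (the subcritical model for $0\le k\le n$, and its negative $(-X,-\phi)$ for the supercritical range $n+1\le k\le 2n+1$). The strategy here is the usual one: first normalize $\alpha$ near $p$ using a contact Darboux/Moser argument so that $\alpha=dz+\tfrac12\sum(x_i\,dy_i-y_i\,dx_i)$; then, with $\xi$ fixed, homotope the pair $(X,\phi)$. Since $X$ is a contact vector field, it is determined by its contact Hamiltonian $H=\alpha(X)$, and $\phi$ being a pseudo-gradient for $X$ is an open condition; so I would linearize $X$ at $p$, match the index of the Hessian of $\phi$ with the stable/unstable splitting of the linearization, and interpolate between the given data and the model data through contact vector fields whose Hamiltonians vary in a controlled family, checking at each stage that the pseudo-gradient (Lyapunov) inequality $d\phi(X)>0$ away from $p$ persists and that the homotopy is \emph{strict} in the required sense. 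The subcritical-versus-supercritical dichotomy is exactly what lets all indices $0\le k\le 2n+1$ be handled, since the sign of the divergence of $X$ is allowed to change — this is the key departure from the Weinstein case.

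Assembling the pieces: by the product lemma the cobordism is a concatenation of trivial cylinders and arbitrarily small neighborhoods of critical points; by the local normal form each such neighborhood is strictly convex homotopic to a standard handle $H_k$; and by a gluing/compatibility argument — matching the convex structure on the overlap, i.e. on the attaching region, which is governed by the induced convex germ on a piece of $\partial H_k$ sitting inside a trivial cylinder — these local homotopies extend to a global strict convex homotopy. Performing the handle attachments in order of their critical values (starting from a collar of $\partial_- M$, or from the empty set / a ball if there is no negative boundary), one obtains precisely a convex contact handlebody, which is Theorem~\ref{thm:main_thm_body}.

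The main obstacle I anticipate is the local normal form near a critical point, and specifically making the homotopy of $(X,\phi)$ \emph{strict} while controlling the pseudo-gradient inequality throughout. Unlike the Weinstein setting, where the Liouville condition $d\lambda$ non-degenerate is linear-algebraic and the Lyapunov function can be built by elementary interpolation, here the contact vector field is constrained to preserve $\xi$ (so one is homotoping within the affine space of contact Hamiltonians, not among all vector fields), and the function $\phi$ must simultaneously remain Morse with the correct index and satisfy $d\phi(X)>0$ off $p$. Keeping these two conditions compatible during the interpolation — particularly near the boundary of the normalizing ball, where one must glue to the ambient trivial cylinder without destroying the contact condition $(u\,d\beta-n\,du\wedge\beta)\wedge d\beta^{n-1}\neq 0$ — is where the real technical work lies, and is presumably the "devil in the details" the introduction alludes to.
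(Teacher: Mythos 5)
Your high-level strategy is the same as the paper's: reduce to elementary cobordisms, standardize near critical points via Hamiltonian interpolation, and assemble. You also correctly identify the $(X,\phi)$-interpolation as the crux. However, there is a genuine gap precisely there, and a second one in the assembly step. For the interpolation: if you take a single cutoff function $\rho$ and set $H_t=(1-t\rho)H_0+t\rho H_1$, the resulting contact vector field picks up an extra term proportional to $(H_1-H_0)Z$, where $Z$ encodes $d\rho$. Near $p$ you have $|Z|\lesssim C/\epsilon$ while $|H_1-H_0|$ is controlled by the distance to $p$; the Lyapunov inequality survives only when $|X_1-X_0|$ and $|\mu_1-\mu_0|$ are already small relative to $|d\phi|$. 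For a large jump between $X_0$ and the standard model, a single cutoff therefore destroys the pseudo-gradient condition. The paper's Lemma~\ref{lem:main_homotopy} resolves this by performing the interpolation in a finite sequence of small steps on shrinking concentric balls; this is not merely ``checking'' the inequality at each stage, but a genuine refinement without which the argument does not close. Your sketch asserts the conclusion of this step without the mechanism.

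For the assembly: after standardizing a critical point, the resulting standard handle sits inside a \emph{level set} near the critical value, not at $\partial_-M$. To see the elementary cobordism as a handle attached to a cylinder one must propagate the attaching data down through the intervening trivial cylinder, and then reconcile different choices of local normalization. This is handled in the paper by characterizing attaching data as framed isotropic spheres in the dividing set (subcritical) or framed balanced coisotropic spheres (supercritical), and showing that isotopies of such data lift to convexomorphisms of level sets, which in turn arise as holonomy maps of cylindrical cobordisms (Lemmas~\ref{lem:framed_isotropic_isotopy_to_holonomy} and~\ref{lem:balanced_isotopy_to_holonomy}, plus Theorems~\ref{thm:subcrit_attach} and~\ref{thm:supercrit_attach}). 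Your ``gluing/compatibility argument'' gestures at the need for this, but without the holonomy/convexomorphism machinery it is not clear how the local homotopies can be made compatible with the global cylinder structure, especially since the boundary contact germ is allowed to change during a strict homotopy.
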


By itself, this theorem is a bit weak, in that handle attachments, a priori, may depend upon continuous choices of data. We will prove that this is not the case, by understanding precisely what data encodes a given handle attachment.

Consider a convex contact cobordism with boundary $\partial M = \Sigma$. The set of points along $\partial M$ in which $X \in \xi$ is a smooth submanifold, in fact a contact submanifold, referred to as the dividing set. If we wish to specify a handle attachment, it suffices to specify a framed isotropic submanifold in the dividing set in the subcritical case, and a certain special framed coisotropic submanifold, called a balanced coisotropic sphere, in the supercritical case, up to some natural equivalence. Hence, we obtain a surgery-theoretic picture analogous to the case of Weinstein manifolds, in which any convex contact cobordism is encoded by a sequence of (framed) isotropic and coisotropic submanifolds in sequential level sets.

\subsection{Relation to open books}

Suppose we are given a contact manifold $M$ with convex boundary, and suppose $X$ is a contact vector field transverse to the boundary. Let $\Gamma$ be the dividing set on $\partial M$. The surgery theoretic picture above implied that subcritical points were attached along spheres in the dividing set, so in large part, the geometry of convex contact structures is encoded in the dividing sets of its level sets. It is therefore natural to `suture' $M$ along $\Gamma$ (\`a la \cite{CGHH}) to produce a contact manifold with corners, singling out the geometry near $\Gamma$. Near the sutured region, the contact manifold looks of the form $([-1,1] \times (-1,0] \times \Gamma, Cds + e^{\tau}\beta_0)$, where $(s,\tau) \in [-1,1] \times (-1,0]$ and $\beta_0$ is a contact form on $\Gamma$. We think of this as $[-1,1] \times L$, where $L$ is the Liouville domain $(-1,0] \times \Gamma$ with Liouville form $e^{\tau}\beta_0$. In other words, near the suture, a sutured contact cobordism looks like a thickened Liouville collar. One can then further include convex information $(X,\phi)$ to define the notion of a sutured convex contact cobordism.

It is not hard to go back and forth between the sutured and the smooth model for convex contact structures. From a sutured model, one can round out the corners. From a smooth model, one simply sutures along the dividing set in a standard way. These two operations are inverse to each other up to a natural notion of homotopy.

This sutured model is particularly convenient if we wish to study subcritical handle attachment. Smoothly, subcritical handles are attached in a neighborhood of the dividing set of the convex boundary. If we use the sutured model instead, one sees, at least if we forget about the convex data $(X,\phi)$, that this corresponds to a thickened Weinstein handle attachment. In fact, the framing data for subcritical handle attachment matches the framing data for Weinstein handle attachment. That is, to the neighborhood $[-1,1] \times L$ of the suture, we attach $[-1,1] \times H$ where $H$ is a Weinstein handle which attaches to $L$.

Hence, any sequence of subcritical handle attachments can be seen as appending a thickened Weinstein cobordism. If we then have a convex contact cobordism with only subcritical points, then the underlying contact manifold is simply $[-1,1] \times W$, where $W$ is a Weinstein domain. If we unsuture this manifold, this this is just half of an open book with page $W$ which is supported by the contact structure (see Section \ref{sec:OBD} for details and definitions). Hence, if we have a convex contact manifold (without boundary) which is \emph{split}, meaning that its subcritical points have smaller critical values than its supercritical points, then one can split the contact manifold between the two classes of critical points and view it as two halves of a supporting open book glued together in the middle. This proves the following result, stated more precisely later.

\begin{thm}[= Theorem \ref{thm:split_is_OBD}] \label{thm:proto_split_is_OBD} On a contact manifold, there is a correspondence between split convex structures and supporting Weinstein open books.
\end{thm}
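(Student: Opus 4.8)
The strategy is to produce the correspondence in both directions and check that the composites are the identity up to the natural notions of homotopy (strict convex homotopy on one side, and the appropriate equivalence of Weinstein open books on the other). First I would set up the sutured model carefully, following the discussion in the excerpt: given a split convex structure $(M^{2n+1},\xi,X,\phi)$, choose a regular value $c$ separating the subcritical values from the supercritical ones, and let $M_- = \phi^{-1}((-\infty,c])$, $M_+ = \phi^{-1}([c,+\infty))$. The hypersurface $\Sigma_c = \phi^{-1}(c)$ is convex, cut out by $X$, with dividing set $\Gamma_c$. Suture $M_-$ along $\Gamma_c$. By the sutured handle-attachment picture already established, $M_-$ carries only subcritical handles attached to a trivial cylinder, so the sutured $M_-$ is strict-convex-homotopic to $[-1,1]\times W_-$ for a Weinstein domain $W_-$ built from the handles of index $\le n$; symmetrically, applying the same to $(-X,-\phi)$ on $M_+$, the sutured $M_+$ is $[-1,1]\times W_+$ for a Weinstein domain $W_+$ built from the dual (co-index $\le n$) handles. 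Then I would invoke the standard fact that $M$ minus a neighborhood of $\Gamma_c$, presented as $[-1,1]\times W_-$ glued to $[-1,1]\times W_+$ along $\{\pm 1\}\times(\text{collar})$, together with the sutured neighborhood of $\Gamma_c$ which looks like $[-1,1]\times L$ for $L = (-1,0]\times\Gamma_c$, is precisely the abstract open book $\mathrm{OB}(W,\psi)$ where $W$ is the completed page obtained from the two halves and $\psi$ is the monodromy recorded by the identification of the two collars; unsuturing and rounding corners recovers $(M,\xi)$ as being supported by this open book. The Weinstein structure on the page is exactly the one assembled from the subcritical handles on one side and their Weinstein-dual handles on the other, so it is a genuine Weinstein open book.

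For the reverse direction, start with a supporting Weinstein open book $(W,\psi)$ for $(M,\xi)$. Write $M = \mathrm{OB}(W,\psi)$ in the usual way, and construct a convex structure on it directly: on the "binding region" $\partial W \times D^2$ one has an explicit model, and on the mapping-torus region one uses a Weinstein Morse function $\varphi$ on $W$ together with its Liouville field $Z$ to build $X$ and $\phi$. Concretely, the page direction contributes the subcritical critical points (those of $\varphi$, of index $\le n$), and the second, "mirror" copy of the page on the other side of the binding contributes the supercritical ones (index $\ge n+1$), with all subcritical values strictly below all supercritical values by construction — so the resulting convex structure is split. This is essentially the reverse of the sutured-model analysis: cut $M$ along a page, suture, and observe that each half is a thickened Weinstein cobordism built by (co)isotropic surgeries matching the handles of $\varphi$.

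Finally I would verify the two composites. Going split convex structure $\to$ open book $\to$ split convex structure returns the handle decomposition we started from, because the open book's monodromy was *defined* as the gluing data of the two sutured halves, and reconstructing the convex structure from it reattaches the same handles with the same framings (here I use the earlier result that subcritical/supercritical handle attachments are encoded by framed isotropic/balanced-coisotropic data, so "same gluing" really does mean "same convex structure up to strict convex homotopy"). Going open book $\to$ split convex structure $\to$ open book returns the same page and monodromy up to the standard stabilization/positive-isotopy equivalence, since reconstructing the open book from the convex structure just reads off the page as the Weinstein domain assembled from the handles and the monodromy from the corner identification. The main obstacle I anticipate is bookkeeping the monodromy precisely: one must pin down how the abstract gluing map between the two sutured halves is extracted, show it is well-defined up to Weinstein isotopy and independent of the auxiliary choices (the separating regular value $c$, the Morse function, the rounding of corners), and confirm it is compatible on the nose with the support condition for the open book. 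Everything else — the local handle models, the rounding/suturing equivalences — is supplied by the earlier sections; the delicate point is that this bookkeeping genuinely yields a *bijective* correspondence rather than merely two one-sided constructions.
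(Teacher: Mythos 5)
Your forward direction follows the same route as the paper (split at a regular level, suture each half, use Theorem \ref{thm:subcrit_is_Weinstein} to present each half as a thickened Weinstein domain, and read off the page and monodromy from the gluing), but it stops short at the one step that carries most of the weight: you assert that ``unsuturing and rounding corners recovers $(M,\xi)$ as being supported by this open book'' without any argument. Producing the open book \emph{decomposition} of the underlying manifold is the easy part; the content of ``supporting'' is the existence of a Giroux form, i.e.\ a contact form $\alpha$ for the \emph{given} $\xi$ which is contact on the binding and whose differential is symplectic on every page with the right orientations. In the paper this is where the work happens: after standardizing each sutured half as $(ds+\lambda,\, s\partial_s+V_\lambda)$, one interpolates the contact Hamiltonian from $H=s$ to $H=\pm 1$ away from a neighborhood of $\Gamma\times I$ (keeping the vector field transverse to the boundary and free of new zeros), so that the contact vector field becomes $\pm R_\alpha$ on the two sides; gluing with $\pm X$ on the middle cylindrical piece yields a nowhere-vanishing contact vector field $Y$ transverse to all pages outside $\Open(B)$, which is therefore the Reeb field of some contact form $\alpha^Y$, automatically symplectic on pages; finally $\alpha^Y$ is extended over $\Open(B)$ by the standard binding model as in Theorem \ref{thm:WOBD_from_abstract}. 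Without this interpolation (or a substitute, e.g.\ identifying the glued contact structure with the Giroux--Mohsen construction on $\mathrm{OB}(W,\psi)$ by a uniqueness argument), the support condition is unproven. A smaller but genuine omission in the same direction: even naming the page requires knowing $W_+$ and $W_-$ agree, which the paper gets from the fact that the holonomy of the middle cylindrical cobordism is an \emph{exact} symplectomorphism of the completions (Lemma \ref{lem:holonomy}); your ``monodromy recorded by the identification of the two collars'' presupposes this identification.

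Two further points of comparison. For the converse you propose a direct Thurston--Winkelnkemper-style construction on $\mathrm{OB}(W,\psi)$ (closer to Courte--Massot), whereas the paper avoids redoing that analysis by taking $u(P\times I)\cup C\cup u(P\times I)$ with $C$ a cylindrical convex cobordism whose holonomy is the identity on $R_+$ and $\phi^{-1}$ on $R_-$ (realizable by Lemma \ref{lem:holonomy} since this is a convexomorphism), and then citing the forward direction to see that this convex structure is supported by the same open book; your route can be made to work but you would have to handle the monodromy in the mapping-torus region explicitly, which is exactly what the cylindrical-cobordism trick sidesteps. Finally, you plan to verify both composites to get a ``bijective'' correspondence; the precise statement (Theorem \ref{thm:split_is_OBD}) only asserts the two constructions together with the claim that open book $\to$ convex structure $\to$ open book returns the same open book up to exact symplectomorphism of the page and homotopy of the monodromy in its symplectic mapping class. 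The other composite (convex $\to$ open book $\to$ convex, up to strict convex homotopy) is not claimed or proved in the paper, and establishing it would require additional argument beyond ``same gluing data,'' so you should either drop it or recognize it as a separate, stronger statement.
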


By a result of Giroux and Mohsen \cite{Giroux2}, every closed contact manifold has a supporting Weinstein open book.

\begin{cor}[= Theorem \ref{cor:CCS_exist}] \label{cor:existence}
Every closed contact manifold has a (split) convex contact structure, and is therefore a convex contact handlebody.
\end{cor}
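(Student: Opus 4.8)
The plan is to deduce the corollary by chaining the paper's two main theorems with the Giroux--Mohsen existence result, so the argument is short once those ingredients are in hand.

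First I would invoke the theorem of Giroux and Mohsen \cite{Giroux2}: the closed contact manifold $(M^{2n+1},\xi)$ admits a supporting Weinstein open book decomposition, with some Weinstein page $W$ and monodromy $\psi$. Next, I would feed this open book into the correspondence of Theorem \ref{thm:proto_split_is_OBD} (equivalently Theorem \ref{thm:split_is_OBD}): a supporting Weinstein open book yields a \emph{split} convex contact structure $(X,\phi)$ on $(M,\xi)$, meaning one whose subcritical critical points all have smaller $\phi$-value than its supercritical ones. Geometrically, one realizes $M$ as two halves of the open book glued along a common regular level set of $\phi$ inside the mapping-torus region; on the lower half $\phi$ increases and sweeps out the Weinstein handles of $W$ as subcritical points, on the upper half $\phi$ decreases and contributes the dual supercritical points, and near the common level set the structure is the trivial cylindrical convex contact cobordism.

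Finally, I would regard the closed manifold $(M,\xi,X,\phi)$ as a convex contact cobordism with empty boundary (or, if preferred, cut it along the regular level set above into two honest convex contact cobordisms). Theorem \ref{thm:main_thm} then says this cobordism is strictly convex homotopic to a convex contact handlebody, which is exactly the claimed statement; in particular $(M,\xi)$ is the underlying contact manifold of a convex contact handlebody.

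The only substantive step is the second one: one must verify that a supporting Weinstein open book genuinely assembles into a \emph{global} convex structure, i.e. that the contact vector field built from the two halves is a pseudo-gradient for a single Morse function $\phi$ on all of $M$, compatibly across the mapping-torus gluing where the monodromy $\psi$ acts and across the binding neighborhood $B \times D^2$, where the trivial cylindrical convex contact cobordism must be glued in. This is exactly the content of Theorem \ref{thm:proto_split_is_OBD}, which we are entitled to assume; granting it, the first and third steps are immediate.
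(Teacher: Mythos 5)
Your proposal is correct and follows essentially the same route as the paper: the paper's proof is exactly the two-line chain of Giroux--Mohsen (Theorem \ref{thm:WOBD}) followed by Theorem \ref{thm:split_is_OBD}, with the handlebody conclusion supplied by Theorem \ref{thm:main_thm}. Your extra sketch of how the two halves and the cylindrical piece assemble is just an outline of the converse direction of Theorem \ref{thm:split_is_OBD}, which you correctly flag as the one substantive ingredient being invoked.
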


We remark that the procedure of obtaining a convex structure from an open book was already discovered by Courte and Massot \cite[Proposition 3.5, Remark 3.6]{CoMa} using essentially the same methods, and has been a folk theorem for an even longer time before that. Our perspective here is slightly different, in that we demonstrate that convex structures give rise to explicit handle decompositions, and we instead emphasize the opposite procedure of obtaining an open book from a convex structure. 

\subsection{Birth-death and convex contact homotopy}

Finally, as in the Morse-theoretic or Weinstein manifold setting, we wish to allow for birth-death type singularities in our notion of homotopy. It is possible, for example, that a pair of convex handles may cancel in a way such that the underlying contact geometry is essentially trivial.

Given the relation between subcritical handle attachments and thickened Weinstein handle attachments, one finds that subcritical handles cancel if and only if the corresponding Weinstein handle attachments cancel. This also allows us to understand cancelling supercritical handles by the duality offered by flipping the pair $(X,\phi)$ to $(-X,-\phi)$. We therefore obtain the following result.

\begin{cor}[= Proposition \ref{prop:subcrit_cancel}] \label{cor:subcrit_cancel_intro} For $0 \leq k \leq 2n$, $k \neq n$, an index $k$- and $(k+1)-$ handle form a cancelling pair so long as there is one and only one (non-degenerate) trajectory between them.
\end{cor}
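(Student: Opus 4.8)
The plan is to reduce the statement, via the sutured model, to the corresponding cancellation statement for Weinstein handles, which is standard (see \cite[Chapter 12]{CE}). First I would treat the genuinely subcritical range $0 \leq k \leq n-1$. Here an index-$k$ handle attachment followed by an index-$(k+1)$ handle attachment, both attached along the dividing set of the relevant level set, can be passed through the suturing operation: by the discussion preceding Theorem \ref{thm:proto_split_is_OBD}, suturing along the dividing set turns the pair of subcritical convex handle attachments into the thickened Weinstein handle attachments $[-1,1]\times H_k$ and $[-1,1]\times H_{k+1}$ on the thickened Liouville collar $[-1,1]\times L$, and the framings match. The hypothesis that there is exactly one non-degenerate gradient trajectory of $\phi$ from the lower critical point to the upper one translates directly into the hypothesis of the Weinstein cancellation lemma: the attaching isotropic sphere of $H_{k+1}$ meets the belt coisotropic sphere of $H_k$ transversally in a single point. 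Applying the Weinstein cancellation result gives that $[-1,1]\times H_k \cup [-1,1]\times H_{k+1}$ is, after a Weinstein homotopy (supported in the $L$-factor), the trivial cobordism $[-1,1]\times L$; crossing with $[-1,1]$ preserves this, and then unsuturing (rounding the corners, inverse to suturing up to the natural homotopy) yields that the two convex handles form a trivial convex contact cobordism, i.e.\ a cancelling pair in the sense of the birth--death homotopy.

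Next I would handle the supercritical range $n+1 \leq k \leq 2n$ by the duality $(X,\phi) \mapsto (-X,-\phi)$, which exchanges a convex structure with index-$j$ and index-$(j+1)$ critical points for one with index-$(2n+1-j-1)$ and index-$(2n+1-j)$ critical points, reverses the direction of gradient trajectories, and carries (non-degenerate) trajectories to (non-degenerate) trajectories bijectively. Thus a cancelling supercritical pair is exactly the dual of a cancelling subcritical pair, and the count-one hypothesis is self-dual, so this case follows formally from the subcritical one. The excluded value $k=n$ is genuinely different: there an index-$n$ (subcritical) handle is followed by an index-$(n+1)$ (supercritical) handle, crossing the split divide, so it is not covered by either the subcritical or the dual supercritical argument — this is precisely the bypass-type situation studied by Honda--Huang \cite{HH}, and it is why the hypothesis $k \neq n$ appears.

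The main obstacle I anticipate is bookkeeping rather than conceptual: one must check that the sutured cancellation can be carried out \emph{within the class of convex contact homotopies} (allowing birth--death), i.e.\ that the Weinstein homotopy realizing the cancellation in the $L$-factor, when thickened by $[-1,1]$ and unsutured, genuinely produces a \emph{strictly convex} homotopy of $(M,\xi,X,\phi)$ and not merely a contactomorphism of underlying manifolds. This requires tracking the Morse function $\phi$ and the contact vector field $X$ through the rounding/unrounding procedure and verifying that the pseudo-gradient condition is preserved under the homotopy; the key input is that the sutured-to-smooth correspondence is an equivalence up to the natural notion of homotopy, as asserted earlier in the excerpt, so that the Weinstein-level statement can be pushed back to the convex-contact level without loss. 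A secondary technical point is to confirm that ``exactly one non-degenerate trajectory'' in the convex sense corresponds on the nose to ``transverse single intersection of attaching and belt spheres'' in the Weinstein handle picture — this is the standard Morse-theoretic dictionary, but one should record that the pseudo-gradient $X$, restricted near the suture, is (homotopic to) an honest Liouville pseudo-gradient for the page, so the trajectory counts agree.
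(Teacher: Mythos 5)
Your proposal matches the paper's proof in both structure and key ingredients: reduce the subcritical range $0 \leq k \leq n-1$ to Weinstein handle cancellation via the sutured/thickened-Weinstein correspondence of Theorem \ref{thm:subcrit_is_Weinstein}, then handle $n+1 \leq k \leq 2n$ by the duality $(X,\phi)\mapsto(-X,-\phi)$. The bookkeeping concern you flag is exactly what Theorem \ref{thm:subcrit_is_Weinstein} is designed to absorb, so once that theorem is in hand the argument closes as you describe.
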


This leaves the case of cancelling $n$ and $n+1$-handles. We provide a model for a cancelling pair in this paper. It will turn out that this model recovers the notion of a trivial bypass attachment due to Honda and Huang \cite{HH}. Unfortunately, it is not obvious that this is the only possibility. There is a clear proof strategy: one must understand the contact geometry of embryonic critical points as one does in the Weinstein setting \cite{CE}. This will be undertaken in future work, and will complete the basic surgery theory for convex contact manifolds.

\subsection{Outline}
For the reader's convenience, we include an outline of the rest of the paper. The experienced reader may wish to skip Sections \ref{sec:background} and \ref{sec:Weinstein}, referring back to them as reference. The same experienced reader may also be interested in Section \ref{subsec:nbhd}, which describes neighborhood theorems in a very general way.

\begin{itemize}
	\item Section \ref{sec:background} - We review the basic submanifolds in contact geometry - isotropic, coisotropic, and convex. Coisotropic submanifolds remain relatively under-studied in contact geometry, and are likely the least familiar to the reader. We also discuss (in a more general way than one typically encounters in the literature) how topological data along these manifolds encodes their contact neighborhoods.
	\item Section \ref{sec:BCS} - We describe the particular neighborhood theorems which we will need to understand handle attachments. In particular, we define the notion of a balanced coisotropic sphere prove it has a standard neighborhood.
	\item Section \ref{sec:Weinstein} - Here we recall what surgery theory looks like for Weinstein manifolds. Our surgery theory for contact manifolds in fact includes this case; see \emph{Act 3}.
	\item Section \ref{sec:CCMs} - We describe the basics of convex contact cobordisms. The main result is that one can homotope every critical point into standard form in a controlled way.
	\item Section \ref{sec:attach} - We explicitly analyze how to attach the handles to a given convex contact cobordism. We prove the main result of the paper, Theorem \ref{thm:main_thm}, and describe the underlying surgery theory, including an explicit description of the attaching data.
	\item Section \ref{sec:suture} - We suture our convex contact cobordisms, allowing us to think about these objects as having thickened Weinstein collars. We show that subcritical handle attachment corresponds to a thickened Weinstein handle attachment.
	\item Section \ref{sec:OBD} - The correspondence of the previous section gives rise to a way to go between closed convex contact manifolds and supporting Weinstein open book decompositions, Theorem \ref{thm:proto_split_is_OBD}, hence also proving that every closed contact manifold has a handle decomposition, Corollary \ref{cor:existence}.
	\item Section \ref{sec:homotopy} - We discuss how pairs of critical points cancel, yielding a notion of convex contact homotopy. The correspondence of Section \ref{sec:suture} easily reduces all cases to the Weinstein setting with the exception of the middle-dimensional handles. In the middle-dimensional case, we provide a model which recovers the trivial bypasses of Honda and Huang \cite{HH}.
\end{itemize}

\subsection{Acknowledgements}

I am extremely grateful to my Ph.D. advisor, Emmy Murphy, for suggesting this project, for edits and improvements on the thesis version of this paper, and for constant encouragement and helpful discussions throughout my life as a graduate student. I am also thankful to Ko Honda for expressing interest in this work, and for explaining aspects of his related research on bypasses. This work is a condensed version of my thesis, the majority of which was performed with the support of an NSF Graduate Research Fellowship.

\section{Background in contact geometry} \label{sec:background}

In this section, we lay out some of the basic concepts in contact geometry, with a view towards the surgery theory we will develop. Notably, we are interested in three particular classes of submanifolds in a contact manifold: isotropics, coisotropics, and convex hypersurfaces. We assume familiarity with isotropic and coisotropic submanifolds in symplectic geometry, as well as their corresponding neighborhood theorems; on the other hand, we assume no familiarity with contact geometry, although the reader may wish to consult Geiges' excellent book \cite{Geiges} for a thorough and gentle introduction. Proofs of all results in this section which aren't otherwise presented here can be found in the author's thesis \cite{Sackel_thesis}.

Although the results in this section are of a classical flavor, the presentation differs from standard sources in two ways. First, we provide a general neighborhood theorem for contact geometry, clarifying that the typical case-by-case neighborhood theorems one encounters have essentially the same proof. This includes also a statement of a parametric relative version which, to the author's knowledge, is otherwise missing from the literature. Second, the study of coisotropic submanifolds has only relatively recently been initiated in work of Huang \cite{Huang1, Huang2, Huang3}.

\subsection{Contact manifolds}

\begin{defn} A \textbf{contact manifold} is a pair $(M,\xi)$ consisting of an odd-dimensional manifold $M^{2n+1}$ and a (codimension $1$) hyperplane distribution $\xi \subset TM$ which is \textbf{maximally non-integrable}, which means that if $\xi = \ker(\alpha)$ for a $1$-form $\alpha$ on some chart $U \subset M$, then $\alpha \wedge d\alpha^n \neq 0$. We say the contact manifold is \textbf{coorientable} if the line bundle $TM/\xi$ is trivial, and \textbf{cooriented} if we choose a coorientation, meaning an orientation for the line bundle $TM/\xi$. Equivalently, $\xi$ is coorientable if there is a global (nonzero) $1$-form $\alpha$, called a \textbf{contact form} with $\xi = \ker(\alpha)$, which is well defined up to scaling by a global nonzero function. A coorientation is then a choice of $\alpha$ up to scaling by a positive function.
\end{defn}

For the rest of this paper, we assume that all of our contact manifolds are cooriented.

\begin{rmk}
The relevant data is always $\xi$ (with its coorientation) and not $\alpha$. In practice, however, we will sometimes do computations in which we have chosen a particular $\alpha$.
\end{rmk}

\begin{defn}
The \textbf{Reeb vector field} $R_{\alpha}$ for a contact form $\alpha$ is the unique vector field such that
$$\alpha(R_{\alpha})=1,~~~~d\alpha(R_{\alpha},-) = 0.$$
\end{defn}

We see that $TM = \langle R_{\alpha} \rangle \oplus \xi$. Since $\alpha$ vanishes on $\xi$ but $\alpha \wedge d\alpha^{n} \neq 0$, we have that $d\alpha^n|_{\xi} \neq 0$. Hence, $d\alpha$ determines a symplectic structure on the vector bundle $\xi \rightarrow M$. If we used instead the contact form $f\alpha$, with $f$ a positive function, then we instead have
$$d(f\alpha)^n|_{\xi} = (df\wedge \alpha + fd\alpha)^n|_{\xi} = f^n(d\alpha)^n|_{\xi},$$
where the last equality uses that $\alpha|_{\xi} = 0$. Hence, $d\alpha|_{\xi}$ is only determined up to positive scaling since $\alpha$ is only determined up to positive scaling.

\begin{defn}
The \textbf{conformal symplectic structure} on a contact manifold $(M,\xi)$ is the symplectic structure on $\xi$ modulo scaling by postitive functions.
\end{defn}

\subsection{Neighborhood theorems} \label{subsec:nbhd}

A neighborhood theorem is a way of interpolating between topological data on a submanifold and geometric data on an open neighborhood. The fundamental example in smooth differential topology is the tubular neighborhood theorem, which asserts that the normal bundle to a submanifold determines (a germ of) an open neighborhood. In fact, one has the following slightly stronger statement. Suppose we have, for $i=1,2$, submanifolds $M_i \subseteq N_i$, together with a bundle isomorphism $\Phi \colon TN_1|_{M_1} \rightarrow TN_2|_{M_2}$ such that $\Phi|_{TM_1} = d\phi$ for some diffeomorphism $\phi \colon M_1 \rightarrow M_2$. Then one can find neighborhoods $U_1$ and $U_2$ of $M_1$ and $M_2$, respectively, and a diffeomorphism $\psi \colon U_1 \rightarrow U_2$, such that $\psi|_{M_1} = \phi$, and $d\psi|_{TU_1|_{M_1}} = \Phi|_{TN_1|_{M_1}}$.

We present a neighborhood theorem for contact geometry which is more general than one typically encounters. There is a similar symplectic version; see the author's thesis for a statement and proof \cite{Sackel_thesis}. Typically in the literature, one only encounters these statements in specific instances, e.g. \cite{Loose}. Instead, we present the general version here, choosing to see these instances as corollaries. This choice is motivated by the fact that we will inevitably be faced with coisotropic submanifolds whose characteristic foliations have singularities, namely the spin-symmetric spheres of Section \ref{sec:BCS}.

In what follows, given a closed subset $B$ of a manifold $A$, we shall use the notation $\Open_A(B)$ for an open neighborhood of $B$ in $A$, possibly changing from line to line (typically becoming smaller). If $A$ is implicit, and we shall write simply $\Open(B)$ instead.

\begin{thm}[General contact neighborhood theorem] \label{thm:gen_nbhd_contact}
Suppose, for $i=1,2$, that $M_i \subseteq (X_i,\xi_i)$ is a submanifold of a contact manifold. Suppose that there is a diffeomorphism $\phi \colon M_1 \rightarrow M_2$ and lying over it a bundle isomorphism $\Phi \colon TX_1|_{M_1} \rightarrow TX_2|_{M_2}$ such that $\Phi|_{TM_1} = d\phi \colon TM_1 \rightarrow TM_2$ and $\Phi$ sends $\xi_1$ to $\xi_2$ and preserves the conformal symplectic structure. Then there is a contactomorphism $\psi \colon \Open(M_1) \rightarrow \Open(M_2)$ such that $d\psi|_{TX_1|_{M_1}} = \Phi$.
\end{thm}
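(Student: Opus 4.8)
The plan is to follow the classical Moser/Weinstein deformation strategy, building the contactomorphism in two stages: first pass from the bundle data to a genuine diffeomorphism matching first-order data along $M_1$, then correct it to a contactomorphism by a Moser-type argument. Concretely, I would first invoke the smooth tubular neighborhood statement recalled just before the theorem: since $\Phi \colon TX_1|_{M_1} \to TX_2|_{M_2}$ lies over $\phi$ and restricts to $d\phi$ on $TM_1$, there is a diffeomorphism $\psi_0 \colon \Open(M_1) \to \Open(M_2)$ with $\psi_0|_{M_1} = \phi$ and $d\psi_0|_{TX_1|_{M_1}} = \Phi$. Replacing $\xi_2, \alpha_2$ by their pullbacks under $\psi_0$, we reduce to the case $X_1 = X_2$ (germ of a neighborhood of $M = M_1 = M_2$), $\phi = \mathrm{id}$, $\Phi = \mathrm{id}$, and we are handed two contact forms $\alpha_0, \alpha_1$ (local defining forms for $\xi_0, \xi_1$, chosen compatibly with the coorientations) such that along $M$ we have $\ker \alpha_0 = \ker \alpha_1 = \xi|_M$ and the conformal symplectic structures agree; after rescaling $\alpha_1$ by a positive function we may assume $\alpha_0 = \alpha_1$ and $d\alpha_0 = d\alpha_1$ along $M$ (the hypothesis that $\Phi$ preserves the conformal symplectic structure is exactly what lets us arrange $d\alpha_0 = d\alpha_1$ on $\xi|_M$; the Reeb directions can be matched using $\alpha_0(R)=1$).

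Next I would set $\alpha_t = (1-t)\alpha_0 + t\alpha_1$ for $t \in [0,1]$. Since $\alpha_0 = \alpha_1$ and $d\alpha_0 = d\alpha_1$ along $M$, each $\alpha_t$ agrees with $\alpha_0$ to first order along $M$; in particular $\alpha_t \wedge d\alpha_t^{\,n} \neq 0$ on $M$, hence on a (possibly smaller) neighborhood of $M$, uniformly in $t$, so each $\alpha_t$ is contact near $M$. The standard Gray/Moser trick then asks for a time-dependent vector field $Y_t$ with $Y_t|_M = 0$ whose flow $\psi_t$ satisfies $\psi_t^* \alpha_t = \lambda_t \alpha_0$ for positive functions $\lambda_t$; differentiating, this reduces to solving, pointwise, the linear-algebraic equation
$$
\dot\alpha_t + \iota_{Y_t} d\alpha_t + d(\iota_{Y_t}\alpha_t) = \mu_t\, \alpha_t
$$
for $Y_t$ and a function $\mu_t$ (then $\lambda_t$ is recovered by integrating $\mu_t$ along the flow). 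Writing $\beta_t = \dot\alpha_t = \alpha_1 - \alpha_0$, one decomposes $Y_t = g_t R_{\alpha_t} + Z_t$ with $Z_t \in \xi_t$: the function $g_t$ is forced by evaluating on $R_{\alpha_t}$, and then $Z_t$ is determined uniquely by contracting with $d\alpha_t$ and using nondegeneracy of $d\alpha_t|_{\xi_t}$ (here $\mu_t$ is pinned down too, e.g. by normalizing). Because $\beta_t$ and $d\beta_t$ vanish along $M$, the resulting $Y_t$ vanishes along $M$, so $M$ is fixed by the flow and the flow exists for all $t \in [0,1]$ on a sufficiently small neighborhood. Setting $\psi = \psi_1 \circ \psi_0$ gives the desired contactomorphism, and the first-order condition $d\psi|_{TX_1|_{M_1}} = \Phi$ survives because $\psi_1$ is tangent to the identity to first order along $M$ (as $Y_t$ and its covariant derivative along $M$... wait, we only need $Y_t|_M = 0$ to get $\psi_1|_M = \mathrm{id}$; to also get $d\psi_1|_{TX|_M} = \mathrm{id}$ one arranges in addition that the linearization of $Y_t$ normal to $M$ vanishes, which can be achieved since $\beta_t$ vanishes to first order along $M$, i.e. $\beta_t$ and $d\beta_t$ both vanish on $M$).

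The main obstacle is the bookkeeping at the level of first-order jets along $M$: one must be careful that the homotopy $\alpha_t$ and the Moser vector field $Y_t$ not only fix $M$ but are tangent to the identity along $M$ to the appropriate order, so that the conclusion $d\psi|_{TX_1|_{M_1}} = \Phi$ — and not merely $\psi|_{M_1} = \phi$ — holds. This is where the full strength of the hypothesis (a bundle isomorphism of ambient tangent bundles, not just of normal bundles, restricting to $d\phi$ and preserving $\xi$ and the conformal symplectic structure) gets used: it guarantees $\alpha_0$ and $\alpha_1$ can be made to agree to first order along $M$, which forces $\beta_t = \alpha_1 - \alpha_0$ to vanish to first order, which in turn forces $Y_t$ to vanish to first order. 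Everything else is the routine linear algebra of the Reeb/$\xi$ splitting and a compactness argument to get a uniform-in-$t$ neighborhood on which the flow is defined; I would relegate those computations to a remark or cite the thesis, since the parametric and relative refinements mentioned in the text are proved by the same argument with parameters carried along.
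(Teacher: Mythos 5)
Your proposal follows the paper's proof in its essential structure: the same reduction to germs via the smooth tubular-neighborhood realization of $\Phi$, the same linear interpolation $\alpha_t = (1-t)\alpha_0 + t\alpha_1$, the same Moser/Gray trick, and the same concern that the Moser vector field vanish to \emph{first} order along $M$ so that the conclusion $d\psi|_{TX_1|_{M_1}} = \Phi$ holds, and not merely $\psi|_{M_1} = \phi$.

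Two points are worth flagging. First, in your decomposition $Y_t = g_tR_{\alpha_t} + Z_t$, evaluating the Moser equation on $R_{\alpha_t}$ does not force $g_t$; it yields a single scalar relation coupling $g_t$ and $\mu_t$, so a choice has to be made. The paper takes the standard choice $V_t \in \xi_t$ (that is, $g_t = 0$), which determines $V_t$ uniquely and pins down $\mu_t = (\alpha_1-\alpha_0)(R_t)$ with no further normalization. Second, and more substantively, the hesitation at the end of your proposal is exactly the crux of the argument: $Y_t|_M = 0$ only gives that the time-one Moser flow fixes $M$, whereas for its derivative to be the identity on $TX_1|_{M_1}$ one must show the linearization of $Y_t$ also vanishes along $M$. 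The paper does this by showing $\alpha_1 - \alpha_0 = O(\epsilon^2)$ in the distance $\epsilon$ to $M$, the key step being that $d\alpha_0 = d\alpha_1$ on $TX_1|_{M_1}$ is deduced from $\alpha_0 = \alpha_1$ there together with the identity $d\alpha|_{\xi}(X,Y) = -\alpha([X,Y])$; then both $\mu_t$ and $|V_t|$ are $O(\epsilon^2)$ and the time-one flow has identity derivative along $M_1$. You gesture at this same conclusion via the conformal-symplectic hypothesis plus a ``Reeb directions can be matched'' remark. That is the right goal, but it is the one genuinely delicate step of the whole proof and should be written out carefully rather than relegated to a parenthetical.
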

\begin{proof}
Since $\Phi$ maps $\xi_1$ along $M_1$ to $\xi_2$ along $M_2$, we may choose contact forms $\alpha_i$ for each $\xi_i$ such that $\Phi^*\alpha_2 = \alpha_1$ along $M_1$. We choose a diffeomorphism $\psi_0 \colon \Open(M_1) \rightarrow \Open(M_2)$ realizing $\Phi$ as in the smooth neighborhood theorem.

Set $\alpha_0 = \psi_0^*\alpha_2$. Set $\alpha_t = (1-t)\alpha_0 + t\alpha_1$ for $t \in [0,1]$. Since $\alpha_0 = \alpha_1$, and $d\alpha_0|_{\xi}$ is some $C^{\infty}$ multiple of $d\alpha_1|_{\xi}$, we have that $\alpha_t$ remains a contact form in some $\Open(M_1)$. What follows next is a modification of the Moser trick as typically applied in Gray stability on closed contact manifolds, and follows the proof from Geiges' book \cite[Theorem 2.2.2]{Geiges}.

We wish to find a family of diffeomorphisms $\phi_t \colon \Open(M_1) \rightarrow \Open(M_1)$ such that $\phi_t^*\alpha_t = \lambda_t\alpha_0$ for some positive function $\lambda_t$ and such that $\phi_t|_{M_1}$ is the identity and $d\phi_t|_{TX_1|_{M_1}}$ is also the identity, since then $\psi := \psi_0 \circ \phi_1^{-1} \colon \Open(M_1) \rightarrow \Open(M_2)$ yields a contactomorphism as desired.

We suppose that $\phi_t$ is the flow of a vector field $V_t$. Then
$$\left(\frac{d}{dt}\log(\lambda_t)\right)\phi_t^*\alpha_t = \left(\frac{d}{dt}\lambda_t\right)\alpha_0 = \frac{d}{dt}(\phi_t^*\alpha_t) = \phi_t^*(\alpha_1-\alpha_0 + \scr{L}_{V_t}\alpha_t).$$
Writing $\mu_t = ((\phi_t)^*)^{-1}(\frac{d}{dt}\log \lambda_t)$, we find
$$\mu_t\alpha_t = \alpha_1-\alpha_0 + \scr{L}_{V_t}\alpha_t.$$
Suppose now that $V_t \in \xi_t = \ker(\alpha_t)$, so that $\scr{L}_{V_t}\alpha_t = i_{V_t}d\alpha_t$. Then we simply need
$$i_{V_t}d\alpha_t = -\alpha_1 + \alpha_0 + \mu_t\alpha_t.$$
Suppose $R_t$ is the Reeb vector field for $\alpha_t$. The above equation has a unique solution $V_t \in \xi$ so long as it is valid when we input $R_t$, i.e.
$$(-\alpha_1+\alpha_0 + \mu_t\alpha_t)(R_t) = (-\alpha_1+\alpha_0)(R_t) + \mu_t =  0.$$
Setting $\mu_t = (\alpha_1-\alpha_0)(R_t)$ ensures this is possible. Furthermore, along $M_0$, we have $\alpha_0 = \alpha_1$, and so $\mu_t = 0$ along $M_0$ as well, and hence $V_t = 0$ along $M_0$. Thus, $\phi_t$ indeed fixes $M_0$.

In fact, slightly more is true. Note that since $\alpha_0 = \alpha_1$ along all of $TX_1|_{M_1}$, and $d\alpha|_{\xi}(X,Y) = -\alpha([X,Y])$, we actually have that $d\alpha_0 = d\alpha_1$ along all of $TX_1|_{M_1}$. Hence $\alpha_1-\alpha_0 = O(\epsilon^2)$, where $\epsilon$ is the distance to $M_1$ for a fixed metric. Hence also $\mu_t = O(\epsilon^2)$, and so also $|V_t| = O(\epsilon^2)$. Therefore, the flow $\phi_1$ of $V_t$ has $d\phi_1$ equal to the identity along $TX_1|_{M_1}$.
\end{proof}

\begin{cor} [Darboux]
Any point $p$ in a contact manifold $(M^{2n+1},\xi)$ has a neighborhood contactomorphic to some $\Open(0)$ in $\left(\R^{2n+1},\ker (dz - \sum y_idx_i)\right)$, where $p$ is identified with $0$.
\end{cor}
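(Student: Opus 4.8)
The plan is to deduce this directly from Theorem~\ref{thm:gen_nbhd_contact}, applied to the zero-dimensional submanifolds $M_1 = \{p\} \subseteq (M,\xi)$ and $M_2 = \{0\} \subseteq \bigl(\R^{2n+1},\xi_{\mathrm{std}}\bigr)$, where $\xi_{\mathrm{std}} = \ker(dz - \sum y_i\,dx_i)$. The diffeomorphism $\phi \colon M_1 \to M_2$ is the unique map sending $p$ to $0$, and since $TM_1 = 0$ the compatibility requirement $\Phi|_{TM_1} = d\phi$ in that theorem is vacuous. So everything reduces to producing a single linear isomorphism $\Phi \colon T_pM \to T_0\R^{2n+1}$ carrying $\xi_p$ to $(\xi_{\mathrm{std}})_0$ and preserving the conformal symplectic structure.

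To build $\Phi$, I would invoke the linear normal form for symplectic vector spaces: any two symplectic vector spaces of the same finite dimension admit a linear symplectomorphism. Choosing a contact form $\alpha$ for $\xi$ near $p$ and $\alpha_{\mathrm{std}} = dz - \sum y_i\,dx_i$ for $\xi_{\mathrm{std}}$, this provides a linear isomorphism $\xi_p \to (\xi_{\mathrm{std}})_0$ intertwining $d\alpha|_{\xi_p}$ with $d\alpha_{\mathrm{std}}|_{(\xi_{\mathrm{std}})_0}$ exactly, hence in particular preserving the conformal symplectic structures. I would then extend this over the one remaining dimension by sending a vector transverse to $\xi_p$ in the direction of the coorientation to $\partial_z|_0$, which is likewise positively transverse to $(\xi_{\mathrm{std}})_0$ since $dz(\partial_z) = 1 > 0$; this makes $\Phi$ an isomorphism $T_pM \to T_0\R^{2n+1}$ with $\Phi(\xi_p) = (\xi_{\mathrm{std}})_0$ and compatible coorientations.

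With $\phi$ and $\Phi$ so defined, Theorem~\ref{thm:gen_nbhd_contact} immediately yields a contactomorphism $\psi \colon \Open(p) \to \Open(0)$ with $d\psi|_{T_pM} = \Phi$; in particular $\psi(p) = 0$, as claimed. I do not expect any genuine obstacle here: the entire content of the statement is the linear symplectic normal form, and the analytic heart of the matter — the Moser-type deformation argument — has already been carried out in the proof of the general neighborhood theorem.
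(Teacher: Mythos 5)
Your proof is correct and follows exactly the same route as the paper: apply Theorem~\ref{thm:gen_nbhd_contact} to the zero-dimensional submanifolds $\{p\}$ and $\{0\}$, with the required bundle map $\Phi$ supplied by the fact that all symplectic vector spaces of the same dimension are linearly symplectomorphic. You have simply unwound the details (the vacuity of the $\Phi|_{TM_1}=d\phi$ condition and the extension by a cooriented transverse vector) that the paper compresses into one sentence.
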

\begin{proof}
All symplectic vector spaces of the same dimension are isomorphic, so the result follows immediately from Theorem \ref{thm:gen_nbhd_contact}.
\end{proof}

\begin{rmk}
Herein lies a crucial point of contact geometry: there is no interesting local geometry, since all contact manifolds are locally isomorphic. Any interesting feature of contact geometry is fundamentally global. The same is true in symplectic geometry.
\end{rmk}

Furthermore, the general neighborhood theorem holds if we include relative data and consider higher parametric families. Let $\scr{B}$ (for `bundle data') be the space of pairs $(\Phi,\phi)$ of bundle maps $\Phi \colon TX_1|_{M_1} \rightarrow TX_2|_{M_2}$ lying over a diffeomorphism $\phi \colon M_1 \rightarrow M_2$ so that $\Phi$ sends $\xi_1$ to $\xi_2$ and preserves the conformal structure. Let $\scr{G}$ (for `germs') denote the space of germs of contactomorphisms $\psi \colon \Open(M_1) \rightarrow \Open(M_2)$ such that $\psi|_{M_1}$ is a diffeomorphism onto $M_2$. There is a natural continuous map $\pi \colon \scr{G} \rightarrow \scr{B}$ given by taking the derivative of $\psi$ along $TX_1|_{M_1}$.

Meanwhile, suppose $A \subset M_1$ is a fixed closed subset. Denote by $\scr{R}$ the space of pairs $(\eta,\psi)$ consisting of germs of embeddings $\eta \colon \Open_{M_1}(A) \hookrightarrow M_2$ and germs of symplectomorphisms $\psi \colon \Open_{X_1}(A) \rightarrow \Open_{X_2}(\phi(A))$ such that $\psi|_{\Open_{M_1}(A)} = \eta$. Denote by $\scr{B}_{\scr{R}}$ the set of $((\Phi,\phi),(\eta,\psi)) \in \scr{B} \times \scr{R}$, such that $\pi(\psi)$ realizes $\Phi$ over $\Open(A)$. There is again a forgetful map $\pi_{\scr{R}} \colon \scr{G} \rightarrow \scr{B}_{\scr{R}}$.

\begin{exam} If $A = \emptyset$, then $\scr{B} = \scr{B}_{\scr{R}}$, and $\pi_{\scr{R}} = \pi$. If $A = M_1$, then $\scr{B}_{\scr{R}} = \scr{G}$, and $\pi_{\scr{R}}$ is just the identity.
\end{exam}

\begin{rmk}
We always take the $C^{\infty}$-topology, since it ensures each $\pi_{\scr{R}}$, which simply takes a derivative, is continuous. For example, let us make the topology on $\scr{G}$ explicit. We fix once and for all some neighborhood $U = \Open(M_1)$. Then the space of maps from $U$ to $X_2$ can be given the $C^{\infty}$-topology. The subset along which $M_1$ maps diffeomorphically onto $M_2$ can be given the subset topology. Finally, $\scr{G}$ is endowed with the quotient topology by the equivalence relation defining germs.
\end{rmk}

\begin{thm}[Parametric relative neighborhood theorem] \label{thm:par_rel}
For all commutative diagrams
$$\xymatrix{S^{k-1} \ar[d]_{\beta} \ar[r] & D^k \ar[d]^{\alpha} \\ \scr{G} \ar[r]_{\pi_{\scr{R}}} & \scr{B}_{\scr{R}}}$$
there exists an extension $\wt{\beta}$ of $\beta$ to $D^k$ commensurate with $\alpha$, i.e. fitting into the following diagram:
$$\xymatrix{S^{k-1} \ar[d]_{\beta} \ar[r] & D^k \ar[dl]_{\wt{\beta}} \ar[d]^{\alpha} \\ \scr{G} \ar[r]_{\pi_{\scr{R}}} & \scr{B}_{\scr{R}}}$$
As a result, $\pi$ induces a weak homotopy equivalence $\pi \colon \scr{G} \rightarrow \scr{B}_{\scr{R}}$. (The diagram for $k$ yields injectivity on $\pi_{k-1}$ and surjectivity on $\pi_k$.)
\end{thm}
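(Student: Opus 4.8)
The plan is to run the Moser-trick argument of Theorem~\ref{thm:gen_nbhd_contact} parametrically in the disk $D^k$, while keeping everything fixed near the subset $A$. So the first step is bookkeeping: given the map $\alpha\colon D^k \to \scr{B}_{\scr{R}}$, we have for each $s \in D^k$ a bundle datum $(\Phi_s,\phi_s) \in \scr{B}$ together with a germ of a relative symplectomorphism $(\eta_s,\psi_s^A) \in \scr{R}$ realizing $\Phi_s$ over $\Open(A)$; and for each $s \in S^{k-1}$ we are handed a germ of a contactomorphism $\psi_s \in \scr{G}$ lifting all of this. I would first use the relative smooth neighborhood theorem (the strengthened tubular neighborhood statement recalled in Section~\ref{subsec:nbhd}), carried out in families over $D^k$, to produce a continuous family of diffeomorphisms $\psi_{0,s}\colon \Open(M_1) \to \Open(M_2)$ realizing $\Phi_s$ along $TX_1|_{M_1}$ and agreeing with the given $\psi_s$ (or at least with a representative of its germ) for $s \in S^{k-1}$, and agreeing with $\psi_s^A$ near $A$. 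One shrinks the neighborhood once, uniformly in the compact parameter $s$, so that all constructions below make sense on a fixed $\Open(M_1)$.

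Next, as in the absolute proof, set $\alpha_{0,s} = \psi_{0,s}^*\alpha_2$ and $\alpha_{t,s} = (1-t)\alpha_{0,s} + t\alpha_1$; compactness of $D^k \times [0,1]$ lets us shrink $\Open(M_1)$ once more so that $\alpha_{t,s}$ is a contact form throughout. The Moser vector field $V_{t,s} \in \xi_{t,s}$ is defined by exactly the same formula $i_{V_{t,s}}d\alpha_{t,s} = -\alpha_1 + \alpha_{0,s} + \mu_{t,s}\alpha_{t,s}$ with $\mu_{t,s} = (\alpha_1 - \alpha_{0,s})(R_{t,s})$; this depends continuously (indeed smoothly) on $(t,s)$. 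Its time-one flow $\phi_{1,s}$ then gives $\psi_s' := \psi_{0,s}\circ\phi_{1,s}^{-1}$, a family of contactomorphisms. The same $O(\epsilon^2)$ estimate as in Theorem~\ref{thm:gen_nbhd_contact} shows $\psi_s'$ realizes $\Phi_s$ along $TX_1|_{M_1}$, so $\pi_{\scr{R}}(\psi_s') = \alpha(s)$; this gives the desired $\wt\beta(s) = [\psi_s']$ as a candidate extension. The relative part is automatic: near $A$ we already have $\alpha_{0,s} = \psi_s^{A*}\alpha_2 = \alpha_1$ on the nose (not just to second order), hence $\alpha_{t,s} \equiv \alpha_1$ and $V_{t,s} \equiv 0$ there, so $\psi_s'$ coincides with the prescribed germ near $A$; likewise the relative condition is what forces $\psi_{0,s}^A$ to match.

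The main obstacle is the boundary condition: for $s \in S^{k-1}$ we need $\wt\beta(s)$ to equal the \emph{given} germ $\beta(s) = [\psi_s]$, not merely some contactomorphism lying over the same bundle data. The fix is the standard trick of absorbing the given data into the model. For $s$ in a collar neighborhood of $S^{k-1}$ in $D^k$ we already have $\psi_s$ itself available; we choose $\psi_{0,s}$ in that collar to \emph{be} $\psi_s$ (it is already a contactomorphism realizing $\Phi_s$), and interpolate in the collar parameter so that on $S^{k-1}$ itself $\psi_{0,s} = \psi_s$ exactly. Then on $S^{k-1}$ one has $\alpha_{0,s} = \psi_s^*\alpha_2$, which is already a positive multiple of $\alpha_1$; rescaling the normalization of $\alpha_1$ (or, more cleanly, redefining $\alpha_{t,s}$ to interpolate multiplicatively when $\alpha_{0,s}$ and $\alpha_1$ define the same contact structure) makes $\alpha_{t,s}$ a path within a single contact structure, so the Moser flow $\phi_{t,s}$ is the identity and $\psi_s' = \psi_s$ on $S^{k-1}$. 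Care is needed that this rescaling is compatible with the relative condition near $A$, but since $\psi_s^{A*}\alpha_2 = \alpha_1$ exactly there, no rescaling happens near $A$ and the two prescriptions agree. With the lifting property for all $k$ in hand, the final sentence is formal: a map with the homotopy lifting property against all $(S^{k-1},D^k)$ induces isomorphisms on all $\pi_k$ (with basepoints, using that the statement for $k$ gives surjectivity on $\pi_k$ and injectivity on $\pi_{k-1}$), i.e. $\pi_{\scr{R}}$ — and hence the composite with the weak equivalence $\scr{B}_{\scr{R}} \hookleftarrow$ down to $\scr{B}$ in the relevant case — is a weak homotopy equivalence.
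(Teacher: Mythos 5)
The paper does not actually present a proof of Theorem~\ref{thm:par_rel}; the text says proofs of results in Section~\ref{sec:background} that are not given there may be found in the author's thesis. So there is no proof in the paper to compare against directly, and I can only evaluate your argument on its own merits. Your plan --- run the Moser trick of Theorem~\ref{thm:gen_nbhd_contact} parametrically over $D^k$, relatively near $A$ --- is the expected one and is essentially sound. However, there are a couple of places worth tightening.

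First, the collar patch is not quite right as written. You describe choosing $\psi_{0,s}$ ``in that collar to be $\psi_s$'' and interpolating, but $\psi_s$ is only defined for $s \in S^{k-1}$, and replacing it with $\psi_{r(s)}$ (where $r$ is the collar retraction) produces a diffeomorphism realizing the wrong bundle datum $\Phi_{r(s)}$ rather than $\Phi_s$. So a naive convex interpolation in the collar will not realize $\Phi_s$ and will break the $O(\epsilon^2)$ estimate you then lean on. What you actually need is the parametric \emph{relative} smooth tubular neighborhood theorem, in the form that gives a family $\psi_{0,s}$ over $D^k$ realizing $\Phi_s$ for all $s$, equal to $\psi_s$ when $s \in S^{k-1}$, and equal to $\psi^A_s$ near $A$. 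That is a genuine input; it is true (via contractibility of the space of Riemannian metrics, or of complementary subbundles, underlying the exponential-map construction of tubular neighborhoods), but it should be invoked as a lemma in its own right rather than patched by an ad hoc collar interpolation. Once it is in place the separate collar paragraph becomes unnecessary, because the boundary condition then falls out automatically, as you note.

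Second, the rescaling discussion is harmless but unnecessary, and the specific claim that ``near $A$ we already have $\alpha_{0,s} = \psi_s^{A*}\alpha_2 = \alpha_1$ on the nose'' is not justified: $\psi^A_s$ is only a contactomorphism germ, so $\psi_s^{A*}\alpha_2$ is a positive multiple of $\alpha_1$, with the multiple depending on $s$ and not equal to $1$ in general. The same applies on $S^{k-1}$. The fix is simply to compute: when $\alpha_{0,s} = g_s \alpha_1$ for a positive function $g_s$, the straight-line interpolation $\alpha_{t,s} = (1-t)\alpha_{0,s} + t\alpha_1 = ((1-t)g_s + t)\alpha_1$ already stays within the single contact structure $\xi_1$, and plugging into the Moser formula gives $-\alpha_1 + \alpha_{0,s} + \mu_{t,s}\alpha_{t,s} = 0$ identically, hence $V_{t,s} \equiv 0$. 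So the flow is the identity wherever $\alpha_{0,s}$ and $\alpha_1$ are proportional --- both on $S^{k-1}$ and near $A$ --- without any rescaling or multiplicative re-interpolation. With these two points tightened, the argument is complete, and the final sentence (that the stated lifting property for all $k$ gives surjectivity on $\pi_k$ and injectivity on $\pi_{k-1}$, hence $\pi_{\scr{R}}$ is a weak homotopy equivalence) is the standard formal consequence.
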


\begin{rmk}
Note that the parametric relative neighborhood theorem implies a fiberwise version, yielding a homotopy equivalence for each fixed diffeomorphism $\phi \colon M_1 \rightarrow M_2$.
\end{rmk}

\subsection{Isotropic submanifolds}

\begin{defn}
An \textbf{isotropic submanifold} of a contact manifold $(M,\xi)$ is a submanifold $V \subseteq M$ for which $TV \subseteq \xi$. Equivalently, $V$ is isotropic if and only if $\R \times V \subseteq (\R \times M, \omega = d(e^t\alpha))$ is isotropic (in the symplectic sense).
\end{defn}

For any isotropic submanifold $V \subseteq (M,\xi)$, $TV \subseteq \xi$ is isotropic with respect to the conformal symplectic structure on $\xi$. Hence, in a contact manifold of dimension $2n+1$, an isotropic submanifold can have dimension at most $n$. These isotropics of maximal dimension are of particular interest.
\begin{defn}
An isotropic submanifold of dimension $n$ embedded in a contact manifold of dimension $2n+1$ is called a \textbf{Legendrian submanifold}.
\end{defn}

\begin{rmk}
By an h-principle, the study of isotropic submanifolds which are not of maximal dimension is purely algebro-toplogical (see, e.g. \cite[Theorem 12.4.1]{EM}). Meanwhile, Legendrians remain central to research in contact geometry. On the one hand, Legendrians are not classified (up to isotopy through Legendrians) purely by topological information \cite{Chekanov}. On the other, a subclass of Legendrians, termed loose \cite{Murphy}, do satisfy an h-principle, and so are purely topological. This dichotomy is especially relevant in understanding surgery-theoretic information in both contact and symplectic geometry.
\end{rmk}

We turn now to understanding what data characterizes a neighborhood of an isotropic submanifold $L \subset (M,\xi)$. We may decompose the tangent space along $L$ as
$$TM|_L = TL \oplus (TL)^{d\alpha}/TL \oplus \xi|_L/(TL)^{d\alpha} \oplus TM|_L/\xi_L.$$
Notice that $d\alpha|_{TL^{d\alpha}}$ vanishes on $TL$, and so $d\alpha$ descends to a symplectic structure on the vector bundle $(TL)^{d\alpha}/TL$. This symplectic structure is determined up to scaling, and hence determines a conformal symplectic structure.

\begin{defn}
On an isotropic submanifold $L \subset (M,\xi)$, the vector bundle $(TL)^{d\alpha}/TL$ over $L$, together with its conformal symplectic structure, is called the \textbf{conformal symplectic normal bundle}.
\end{defn}

\begin{thm} \label{thm:INT}
Suppose $L_i \subset (M_i,\xi_i)$ are isotropic submanifolds of contact manifolds. Suppose there is a diffeomorphism $\phi \colon L_1 \rightarrow L_2$ and over $\phi$ an isomorphism of conformal symplectic normal bundles $\Phi \colon (TL_1)^{d\alpha_1}/TL_1 \rightarrow (TL_2)^{d\alpha_2}/TL_2$ (intertwining their conformal symplectic structures). Then $L_1$ and $L_2$ have contactomorphic open neighborhoods via a contactomorphism realizing $\phi$ on its restriction to $L_1$.
\end{thm}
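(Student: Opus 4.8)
The plan is to deduce this from the general contact neighborhood theorem, Theorem~\ref{thm:gen_nbhd_contact}, by upgrading the given data $(\phi,\Phi)$ — a diffeomorphism $\phi\colon L_1\to L_2$ together with an isomorphism $\Phi$ of conformal symplectic normal bundles over it — to a full bundle isomorphism $\wt\Phi\colon TM_1|_{L_1}\to TM_2|_{L_2}$ covering $\phi$ which carries $\xi_1$ to $\xi_2$, restricts to $d\phi$ on $TL_1$, and preserves the conformal symplectic structure on $\xi$. Once such a $\wt\Phi$ is constructed, Theorem~\ref{thm:gen_nbhd_contact} immediately produces a contactomorphism $\psi\colon\Open(L_1)\to\Open(L_2)$ with $d\psi|_{TM_1|_{L_1}}=\wt\Phi$, in particular $\psi|_{L_1}=\phi$, which is exactly the claim (and, as a bonus, this $\psi$ realizes $\Phi$ on the conformal symplectic normal bundles).

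To build $\wt\Phi$, first choose contact forms $\alpha_i$ for $\xi_i$. Along $L_i$ one has the flag $TL_i\subseteq (TL_i)^{d\alpha_i}\subseteq \xi_i|_{L_i}\subseteq TM_i|_{L_i}$, with successive quotients $TL_i$, the conformal symplectic normal bundle $N_i:=(TL_i)^{d\alpha_i}/TL_i$, the bundle $\xi_i|_{L_i}/(TL_i)^{d\alpha_i}$ (which $d\alpha_i$ identifies with $(TL_i)^{*}$ by the standard symplectic linear algebra of an isotropic subspace), and the coorientation line $TM_i|_{L_i}/\xi_i|_{L_i}$. Using an auxiliary metric, choose a rank-$(\dim L_i)$ subbundle $W_i'\subseteq \xi_i|_{L_i}$ complementary to $(TL_i)^{d\alpha_i}$; then $TL_i\oplus W_i'$ is a $d\alpha_i$-symplectic subbundle on which $d\alpha_i$ is the hyperbolic pairing between $TL_i$ and $W_i'\cong (TL_i)^{*}$, its $d\alpha_i$-orthogonal complement $\wt N_i\subseteq \xi_i|_{L_i}$ projects isomorphically onto $N_i$, and we obtain a $d\alpha_i$-symplectic splitting $\xi_i|_{L_i}=TL_i\oplus W_i'\oplus \wt N_i$, hence $TM_i|_{L_i}=TL_i\oplus W_i'\oplus \wt N_i\oplus\langle R_{\alpha_i}\rangle$. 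All of these choices can be made smoothly in families, which is what is needed for the parametric refinements.

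Now, since $\Phi$ intertwines only the conformal symplectic structures on $N_1$ and $N_2$, first rescale $\alpha_2$ by a positive function — harmless, as a contact form is only defined up to such scaling — so that along $L_2$ the map $\Phi$ intertwines $d\alpha_1|_{N_1}$ with $d\alpha_2|_{N_2}$ exactly; this is possible because $d(g\alpha_2)|_{\xi_2}=g\,d\alpha_2|_{\xi_2}$. Then define $\wt\Phi$ block-diagonally with respect to the two splittings: $d\phi$ on $TL_1$; the inverse-dual $(d\phi)^{-*}$ on $W_1'\cong (TL_1)^{*}$, so that the hyperbolic pairing with $TL_1$ is preserved; the map $\wt N_1\cong N_1\xrightarrow{\Phi} N_2\cong \wt N_2$ on $\wt N_1$; and $R_{\alpha_1}\mapsto R_{\alpha_2}$ on the coorientation line. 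By construction $\wt\Phi$ lies over $\phi$, equals $d\phi$ on $TL_1$, carries $\xi_1|_{L_1}=TL_1\oplus W_1'\oplus \wt N_1$ isomorphically onto $\xi_2|_{L_2}$, is symplectic there (hence preserves the conformal symplectic structure), and respects coorientations. Theorem~\ref{thm:gen_nbhd_contact} then gives the desired $\psi$.

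The only step requiring genuine care is the linear-algebra bookkeeping in the middle paragraph: producing the $d\alpha_i$-symplectic splitting of $\xi_i|_{L_i}$ adapted to the isotropic flag, and verifying that the block-diagonal $\wt\Phi$ is honestly symplectic on $\xi_1|_{L_1}$ — together with the observation that rescaling $\alpha_2$ changes nothing, since only $\xi_2$ (with its conformal symplectic structure) is remembered. Everything else is a direct application of Theorem~\ref{thm:gen_nbhd_contact}; feeding the same construction into the parametric relative version, Theorem~\ref{thm:par_rel}, yields the corresponding parametric and relative statements for neighborhoods of isotropics with no additional work.
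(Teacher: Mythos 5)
Your strategy is exactly the one the paper intends: Theorem~\ref{thm:INT} is meant to be a corollary of the general neighborhood theorem, Theorem~\ref{thm:gen_nbhd_contact}, obtained by promoting the conformal-symplectic-normal-bundle isomorphism to a full bundle map $\wt\Phi \colon TM_1|_{L_1} \to TM_2|_{L_2}$ that restricts to $d\phi$ on $TL_1$, carries $\xi_1$ to $\xi_2$ and is conformally symplectic there, and the rescaling of $\alpha_2$ to turn ``conformal'' into ``honest'' is the standard move. The only slip is in the middle linear-algebra step: you choose $W_i'$ merely as \emph{a} complement to $(TL_i)^{d\alpha_i}$ in $\xi_i|_{L_i}$, but then assert that $d\alpha_i$ is the hyperbolic pairing on $TL_i \oplus W_i'$ and use the inverse-dual $(d\phi)^{-\ast}$ there. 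For an arbitrary complement $W_i'$, the restricted form on $TL_i \oplus W_i'$ is the hyperbolic pairing \emph{plus} the skew form $d\alpha_i|_{W_i'}$, and the block-diagonal $d\phi \oplus (d\phi)^{-\ast}$ need not be symplectic. You must additionally arrange $W_i'$ to be isotropic (which is always possible: starting from any complement $W_0$, replace each $w$ by $w + \Psi(w)$ for the unique $\Psi(w) \in TL_i$ satisfying $d\alpha_i(w', \Psi(w)) = \tfrac12 d\alpha_i(w, w')$ for all $w' \in W_0$); with an isotropic $W_i'$ your hyperbolic-pairing claim and the inverse-dual block are correct, and the rest of the argument goes through as written.
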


\subsection{Coisotropic submanifolds}


\begin{defn}
A \textbf{coisotropic submanifold} $V \subseteq (M,\xi = \ker \alpha)$ is a submanifold such that $TV \cap \xi$ is a coisotropic subspace of $\xi$ at each point of $V$. Equivalently, $V$ is coisotropic if and only if $\R \times V \subseteq (\R \times M, d(e^t\alpha))$ is a coisotropic submanifold in the symplectic setting.
\end{defn}

\begin{exam} \label{exam:3d_coisotropic}
In $3$-dimensional contact geometry, a submanifold $V^k \subseteq (M^3,\xi)$ of dimension $k$ is coisotropic if and only if either:
\begin{itemize}
	\item $k \geq 2$
	\item $k=1$ and $V$ is Legendrian.
\end{itemize}
\end{exam}

We shall also need the following proposition, which essentially exchanges the role of contact and symplectic geometry in what we just proved.

\begin{prop} \label{prop:coisotropic} $V$ is a coisotropic submanifold of an exact symplectic manifold $(M,\omega = d\alpha)$ if and only if $\R \times V \subseteq (\R \times M, dz - \alpha)$ is coisotropic.
\end{prop}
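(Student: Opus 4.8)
The plan is to unwind the definition of coisotropic on both sides and reduce to a linear-algebraic statement about the respective symplectic forms. Recall that by definition $V \subseteq (M,\omega = d\alpha)$ is coisotropic in the symplectic sense iff $(TV)^{\omega} \subseteq TV$, and that $\R \times V$ is coisotropic in $(\R \times M, \xi_0 = \ker(dz - \alpha))$ in the contact sense iff $T(\R\times V) \cap \xi_0$ is a coisotropic subspace of $\xi_0$ with respect to the conformal symplectic structure $d(dz-\alpha)|_{\xi_0} = -d\alpha|_{\xi_0}$ (which up to sign and scaling is just $\omega$ restricted to $\xi_0$). So both conditions are really statements about the behavior of $\omega$ on certain subspaces, and the whole proof is the bookkeeping that matches them up.

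First I would fix a point $q \in V$ and write $p = (z,q) \in \R \times V$, and set $H = \xi_0|_p \subseteq T_p(\R \times M) = \R\partial_z \oplus T_qM$. Since $\alpha_q \neq 0$ on $T_qM$, the hyperplane $H$ is the graph of $\alpha_q$, i.e. $H = \{(\,\alpha_q(v),\,v\,) : v \in T_qM\}$, and the projection $T_p(\R\times M) \to T_qM$ restricts to a linear isomorphism $H \xrightarrow{\sim} T_qM$; moreover under this identification the conformal symplectic form on $H$ corresponds to $\omega_q$ on $T_qM$. Next I would compute $T_p(\R \times V) \cap H$. We have $T_p(\R \times V) = \R\partial_z \oplus T_qV$, and intersecting with the graph $H$ kills the purely-$\partial_z$ direction unless $\partial_z \in H$ (it is not, since $\alpha_q \neq 0$ on all of $T_qM$ but we need $\alpha_q(v)=0,\,v=0$... actually $\partial_z=(1,0)$ lies in $H$ iff $1 = \alpha_q(0) = 0$, false), so the intersection is exactly the graph of $\alpha_q|_{T_qV}$, which under the isomorphism $H \cong T_qM$ corresponds to the subspace $T_qV \subseteq T_qM$ itself.

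Putting these two observations together: under the symplectic isomorphism $(H, \text{conf. symp.}) \cong (T_qM, \omega_q)$, the subspace $T_p(\R\times V)\cap H$ goes to $T_qV$. Hence $T_p(\R\times V)\cap H$ is a coisotropic subspace of $(H,\omega_q|_H)$ if and only if $T_qV$ is a coisotropic subspace of $(T_qM,\omega_q)$. Since $q$ (and $z$) were arbitrary, and since $\R\times V$ being contact-coisotropic means precisely that this holds at every point, this is exactly the claimed equivalence. I would also remark that one direction is already essentially known: the symplectization of $(\R\times M, dz-\alpha)$ can be compared with $(\R\times M,\omega)$, but the direct linear computation above is cleaner and self-contained.

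The main obstacle, such as it is, is purely notational: keeping straight the three ambient spaces ($M$, $\R\times M$ with a symplectic form, $\R\times M$ with a contact form — and the fact that here the roles are swapped relative to the earlier definitions, where it was $\R\times(\text{contact})$ that was symplectic) and verifying that the identification $H \cong T_qM$ genuinely intertwines $-d\alpha|_{\xi_0}$ with $\omega$ up to the allowed positive scaling. There is no analytic difficulty and no need for a Moser-type argument; unlike the neighborhood theorems above, this is a pointwise statement, so once the linear algebra at a single point is pinned down the proof is complete.
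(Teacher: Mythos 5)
Your proof is correct, and the pointwise linear-algebra argument you give is the natural one: identify $\xi_0|_p$ with $T_qM$ via the projection (which intertwines the conformal symplectic form $-d\alpha|_{\xi_0}$ with $-\omega_q$, and sign-reversal is irrelevant for symplectic orthogonality), check that $T_p(\R\times V)\cap\xi_0$ maps onto $T_qV$ under this identification, and conclude. The paper defers its own proof to the thesis, but your argument is exactly the intended ``exchange roles'' computation.

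One small caution: the claim that ``$\alpha_q \neq 0$ on $T_qM$'' is both unnecessary and, in general, false --- a primitive of an exact symplectic form can certainly vanish at points (e.g.\ $\alpha = x\,dy$ on $\R^2$). Fortunately nothing in the argument uses it: $H = \ker(dz - \alpha)$ is always the graph of $\alpha_q$ because $dz$ has no $T_qM$-component, the projection to $T_qM$ is an isomorphism on that graph regardless of whether $\alpha_q$ vanishes, and your subsequent check that $\partial_z \notin H$ is done directly and correctly. Just drop the ``since $\alpha_q \neq 0$'' clause.
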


\begin{rmk} This statement is clearly false for isotropic submanifolds.
\end{rmk}

\begin{rmk} The same proposition holds where $(M,\omega)$ is not exact and we instead use the Boothby-Wang construction.
\end{rmk}

\begin{defn} For a coisotropic submanifold $V \subseteq (M,\xi)$, one forms the (possibly singular) \textbf{characteristic foliation} $\scr{F}_{V,\xi} := (TV \cap \xi)^{d\alpha}$. The points at which $TV \subseteq \xi$ are called the \textbf{singular set} $S(V)$. A coisotropic submanifold for which $S(V) = \emptyset$ is called \textbf{nonsingular}.
\end{defn}

\begin{prop} \label{cor:char_fol_is_fol} The characteristic foliation of a coisotropic submanifold $V$ is integrable away from the singular set $S(V)$, hence justifying the use of the word foliation (as opposed to distribution).
\end{prop}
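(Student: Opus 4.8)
**Proof proposal for Proposition \ref{cor:char_fol_is_fol} (integrability of the characteristic foliation away from the singular set).**

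The plan is to reduce the statement to the classical fact that the characteristic foliation of a coisotropic submanifold in a symplectic manifold is integrable, using the symplectization. Recall from the definitions that $V \subseteq (M, \xi = \ker\alpha)$ is coisotropic precisely when $\R \times V \subseteq (\R\times M, \omega = d(e^t\alpha))$ is a coisotropic submanifold in the symplectic sense. For a coisotropic submanifold $W$ of a symplectic manifold, the symplectic orthogonal complement $TW^\omega \subseteq TW$ is an integrable distribution of constant rank (equal to the corank of $W$): integrability is the standard computation that for sections $Y, Z$ of $TW^\omega$ and any $U$ tangent to $W$, one has $\omega([Y,Z], U) = 0$ by the Cartan formula together with $d\omega = 0$ and $\omega(Y,\cdot)|_{TW} = \omega(Z,\cdot)|_{TW} = 0$. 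So the first step is to apply this to $W = \R\times V$ and then to descend the resulting foliation of $\R\times V$ to a foliation of $V$.

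The key computation is to identify $T(\R\times V)^\omega$ explicitly in terms of the contact data. Writing $\omega = d(e^t\alpha) = e^t(dt\wedge\alpha + d\alpha)$, a vector $a\,\partial_t + v$ (with $v \in TM$) lies in $T(\R\times V)^\omega$ iff $\omega(a\partial_t + v, b\partial_t + w) = 0$ for all $b\partial_t + w$ with $w \in TV$, i.e. iff $a\,\alpha(w) - b\,\alpha(v) + d\alpha(v,w) = 0$ for all such $b, w$. Taking $b$ arbitrary forces $\alpha(v) = 0$, i.e. $v \in \xi$; and then the remaining condition $a\,\alpha(w) + d\alpha(v,w) = 0$ for all $w \in TV$ pins down the $\R$-component $a$ in terms of $v$ and shows $v$ must be $d\alpha$-orthogonal within $\xi$ to $TV \cap \xi$. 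Away from the singular set $S(V)$ — that is, exactly where $TV \not\subseteq \xi$, so that $TV + \xi = TM$ and $TV\cap\xi$ has constant corank in $\xi$ — the projection of $T(\R\times V)^\omega$ to $TM$ along $\partial_t$ is injective (if $v = 0$ then $a\,\alpha(w) = 0$ for all $w \in TV$, and picking $w \in TV \setminus \xi$ gives $a = 0$), with image exactly $(TV\cap\xi)^{d\alpha} = \scr{F}_{V,\xi}$, which moreover has constant rank there. Thus on $\R \times (V \setminus S(V))$ the projection to $V$ carries the leaves of the symplectic characteristic foliation onto the integral submanifolds of $\scr{F}_{V,\xi}$.

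The last step is to promote this pointwise-and-infinitesimal matching to an honest foliation statement: since $\scr{F}_{V,\xi}$ has constant rank on $V\setminus S(V)$ and is the diffeomorphic image of an integrable distribution under the (submersive, fiber-$\R$) projection $\R\times(V\setminus S(V)) \to V\setminus S(V)$, it is itself involutive there, and Frobenius gives the foliation. Concretely, one can just transport involutivity: if $Y, Z$ are local sections of $\scr{F}_{V,\xi}$, lift them to sections $\wt Y, \wt Z$ of $T(\R\times V)^\omega$ of the form $\wt Y = a\partial_t + Y$, $\wt Z = c\partial_t + Z$ using the formula above, note $[\wt Y,\wt Z]$ is again a section of $T(\R\times V)^\omega$, and project down to conclude $[Y,Z]$ is a section of $\scr{F}_{V,\xi}$. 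I expect the only real subtlety — the "main obstacle," such as it is — to be the careful bookkeeping around the singular set: one must check that $S(V)$ is exactly the locus where the rank of $\scr{F}_{V,\xi}$ could jump, and hence that restricting to $V\setminus S(V)$ genuinely yields a constant-rank distribution to which Frobenius applies; everything else is the routine symplectic computation recalled above.
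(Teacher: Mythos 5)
Your proof is correct, and the reduction to the symplectization is exactly the tool the paper itself sets up: the definition records that $V$ is coisotropic iff $\R\times V$ is coisotropic in $(\R\times M, d(e^t\alpha))$, and Proposition~\ref{prop:project_foliation} is precisely the statement that $\pi_*$ carries the (everywhere nonsingular) characteristic foliation $\scr{G}$ of $\R\times V$ isomorphically onto $\scr{F}$ wherever $\scr{F}$ is nonsingular, which is the core of your argument. The paper does not print a proof of this proposition (it defers to the thesis), so a line-by-line comparison is not possible, but your route is both correct and well adapted to the framework the paper builds. Two small remarks worth keeping in mind. First, in the step where you transport involutivity, you should lift $Y,Z$ to $t$-invariant vector fields so that $[\wt Y,\wt Z] = (Y(c)-Z(a))\partial_t + [Y,Z]$ visibly projects to $[Y,Z]$; this is implicit in your write-up but worth saying, since otherwise the $\partial_t$-component of the bracket could involve derivatives of the lifts in the $t$-direction. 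Second, your bookkeeping on the singular locus is right: away from $S(V)$ one has $TV+\xi = TM$, so $\dim(TV\cap\xi)$ and hence $\dim\scr{F}$ are locally constant, and the projection of $T(\R\times V)^\omega$ is injective with image $\scr{F}$; on $S(V)$ the rank of $\scr{F}$ drops by one and the $\partial_t$-component is no longer pinned down, so the identification genuinely breaks there. A purely three-line alternative is to argue directly on $V$: for sections $Y,Z$ of $\scr{F}$ and $W$ a section of $TV\cap\xi$, compute $d\alpha([Y,Z],W)$ via the coboundary formula for $d(d\alpha)=0$ and use $\alpha(Y)=\alpha(Z)=0$, $d\alpha(Y,\cdot)|_{TV\cap\xi}=d\alpha(Z,\cdot)|_{TV\cap\xi}=0$, together with the fact that $[Y,Z]$ is tangent to $V$; one also needs $\alpha([Y,Z])=-d\alpha(Y,Z)=0$, which holds since $Z\in\scr{F}\subseteq TV\cap\xi$. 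This avoids the symplectization entirely, but given that the paper leans on the symplectization repeatedly, your version is the more natural one here.
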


Along $S(V)$, $TV \cap \xi$ is one dimension higher than expected, and so $\scr{F}_{V,\xi}$ is one dimension lower. In general, the contact geometry near the singular set is unclear, although see papers of Huang \cite{Huang1, Huang3}, which study allowable singular sets in the case of Legendrian foliations for coisotropics $V^{n+1} \subset (M^{2n+1},\xi)$, and when this foliation determines a neighborhood of $V$ up to contactomorphism. We will come back to studying singular coisotropic submanifolds in the form of spin-symmetric coisotropic spheres in Section \ref{sec:BCS}.

For now, consider a nonsingular coisotropic $C \subset (M,\xi)$. Then,
$$TM|_C = (TC \cap \xi)^{d\alpha} \oplus (TC \cap \xi)/(TC \cap \xi)^{d\alpha} \oplus \xi|_C/(TC\cap \xi) \oplus TC/(TC \cap \xi).$$

\begin{defn} On a coisotropic, the conformal symplectic vector bundle $(TC \cap \xi)/(TC \cap \xi)^{d\alpha}$ is called the \textbf{conformal cosymplectic normal bundle}.
\end{defn}

\begin{rmk}
The use of the term `normal' is meant to indicate that the bundle is normal to the foliation, $\scr{F}$, though it is internal to $C$.
\end{rmk}

\begin{cor} [Nonsingular coisotropic neighborhood theorem] \label{cor:contact_coiso_nbhd}
Suppose $C_i \subseteq (M_i,\xi_i)$ are nonsingular coisotropic submanifolds of contact manifolds. Then $C_1$ and $C_2$ have contactomorphic neighborhoods precisely when there exists a diffeomorphism $\phi \colon C_1 \rightarrow C_2$ preserving the characteristic foliations and the conformal symplectic structure on the cosymplectic normal bundle, and such a contactomorphism of neighborhoods can be made to realize $\phi$ on its restriction to $C_1$.
\end{cor}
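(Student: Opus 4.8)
The plan is to deduce this from the general contact neighborhood theorem, Theorem~\ref{thm:gen_nbhd_contact}, which reduces the \emph{if} direction to producing a bundle isomorphism $\Phi\colon TM_1|_{C_1}\to TM_2|_{C_2}$ covering $\phi$, restricting to $d\phi$ on $TC_1$, carrying $\xi_1$ to $\xi_2$, and preserving the conformal symplectic structure on $\xi$. The \emph{only if} direction is immediate: if $\psi$ is a contactomorphism of neighborhoods with $\psi|_{C_1}=\phi$, then $d\psi$ along $C_1$ carries $TC_1$ to $TC_2$ and $\xi_1$ to $\xi_2$, hence $TC_1\cap\xi_1$ to $TC_2\cap\xi_2$; since $\psi^*\alpha_2=f\alpha_1$ with $f>0$ forces $\psi^*(d\alpha_2)|_{\xi_1}=f\,d\alpha_1|_{\xi_1}$, the map $d\psi$ preserves the conformal symplectic structure on $\xi$, so it sends $\mathcal{F}_1=(TC_1\cap\xi_1)^{d\alpha_1}$ to $\mathcal{F}_2$ and induces a conformal symplectic isomorphism of cosymplectic normal bundles.

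For the construction of $\Phi$, I would use the splitting
\[
TM_i|_{C_i}=\mathcal{F}_i\oplus\big((TC_i\cap\xi_i)/\mathcal{F}_i\big)\oplus\big(\xi_i|_{C_i}/(TC_i\cap\xi_i)\big)\oplus\big(TC_i/(TC_i\cap\xi_i)\big),
\]
noting that $\xi_i|_{C_i}$ is recovered from the first three summands and $TC_i$ from the first, second, and fourth. Here one should read the hypothesis on $\phi$ as including that $d\phi$ carries $TC_1\cap\xi_1$ to $TC_2\cap\xi_2$ --- this is implicit in even comparing the cosymplectic normal bundles $N_i=(TC_i\cap\xi_i)/\mathcal{F}_i$. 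After rescaling $\alpha_2$ by a positive function, which changes neither $\xi_2$ nor its coorientation, I may assume $\overline{d\phi}\colon N_1\to N_2$ preserves the induced symplectic forms exactly. The crux is that $d\alpha_i|_{TC_i\cap\xi_i}$ has kernel precisely $\mathcal{F}_i$, hence is pulled back from the symplectic form on $N_i$; so the hypothesis says exactly that $d\phi\colon TC_1\cap\xi_1\to TC_2\cap\xi_2$ intertwines the degenerate restrictions of $d\alpha_1$ and $d\alpha_2$. I then invoke the standard linear-algebra fact that an isomorphism of coisotropic subspaces of symplectic vector spaces matching their symplectic orthogonals and induced forms extends to a symplectomorphism of the ambient spaces; performing this fiberwise and smoothly over $C_1$ (the relevant spaces of choices are contractible, so a section exists) extends $d\phi|_{TC_1\cap\xi_1}$ to a symplectic bundle isomorphism $G\colon\xi_1|_{C_1}\to\xi_2|_{C_2}$. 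Finally choose a rank-one subbundle $\ell_1\subset TC_1$ complementary to $TC_1\cap\xi_1$, so that $TM_1|_{C_1}=\xi_1|_{C_1}\oplus\ell_1$ and $d\phi(\ell_1)\subset TC_2$ is complementary to $\xi_2|_{C_2}$; setting $\Phi=G$ on $\xi_1|_{C_1}$ and $\Phi=d\phi$ on $\ell_1$ gives a bundle isomorphism covering $\phi$, equal to $d\phi$ on $TC_1=(TC_1\cap\xi_1)\oplus\ell_1$, carrying $\xi_1$ to $\xi_2$, and preserving the conformal symplectic structure. Theorem~\ref{thm:gen_nbhd_contact} then produces the contactomorphism of neighborhoods realizing $\phi$.

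The main obstacle is not a computation but the careful packaging of the linear algebra: one must recognize that preserving the characteristic foliation together with the conformal structure on the cosymplectic normal bundle is equivalent to $d\phi$ preserving $TC\cap\xi$ and $\mathcal{F}$ and intertwining the conformal class of $d\alpha|_{TC\cap\xi}$, and then apply the coisotropic extension lemma with enough care to keep the extension smooth over $C_1$ and compatible with the rescaling of $\alpha_2$. Granting that, the reduction to Theorem~\ref{thm:gen_nbhd_contact}, the necessity of the conditions, and the bookkeeping with the transverse line field $TC/(TC\cap\xi)$ are all routine.
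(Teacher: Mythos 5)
Your proof is correct, and it is essentially the approach the paper sets up. The paper itself defers the proof of this corollary to the author's thesis (the section's preamble says proofs not given in the paper appear there), but it has just introduced the splitting $TM|_C=\mathcal{F}\oplus(TC\cap\xi)/\mathcal{F}\oplus\xi/(TC\cap\xi)\oplus TC/(TC\cap\xi)$ precisely so that the corollary drops out of Theorem~\ref{thm:gen_nbhd_contact}, which is exactly what you do. Your two observations that do the real work are both right and worth stating the way you do: (i) the hypothesis only parses if one reads it as $d\phi(TC_1\cap\xi_1)=TC_2\cap\xi_2$, and once that is in place preserving $\mathcal{F}$ plus the conformal class on $N$ is the same as intertwining the degenerate forms $d\alpha_i|_{TC_i\cap\xi_i}$ up to positive scale; (ii) after rescaling $\alpha_2$ to kill the conformal factor, the Witt extension for coisotropic subspaces gives the fiberwise symplectomorphism $G\colon\xi_1|_{C_1}\to\xi_2|_{C_2}$, and since the ingredients (a complement to the radical in $W$, and Lagrangian complements to the radical inside $S^{\omega}$) each live in contractible spaces of choices, a smooth section over $C_1$ exists. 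Splicing $G$ with $d\phi$ on a line field $\ell_1$ transverse to $\xi_1$ inside $TC_1$ gives a valid input $\Phi$ for Theorem~\ref{thm:gen_nbhd_contact}, and the converse is the routine computation you give.
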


There is more structure coming from parallel transport along the foliation. That is, suppose $v \in \Gamma(\scr{F})$ is a section of the foliation and $j \colon C \hookrightarrow M$ the inclusion. Then
$$\scr{L}_v j^* \alpha = i_v j^* d\alpha$$
is a $1$-form vanishing on $TC \cap \xi$, so $v$ preserves $TC \cap \xi$. Hence, this form is $\nu j^* \alpha$ for some function $\nu$ on $C$. Then also,
$$\scr{L}_v j^*d\alpha = d(\nu j^*\alpha) = d\nu \wedge j^*\alpha + \nu j^*d\alpha,$$
and since $j^*\alpha|_{\xi}=0$, $v$ also preserves the cosymplectic normal bundle. Hence, $\alpha$ descends to the leaf space of the foliation, and locally, it descends as a contact form (since $\pi^*$ is injective and sends the volume form $\alpha \wedge (d\alpha)^k$ to a nonzero form). So we can think of the bundle data necessary for determining the neighborhood of a nonsingular coisotropic as a contact form on $C/\scr{F}$ (at least if the leaf space is a smooth manifold).

\begin{exam} \label{exam:p_in_coisotropic}
Let $C^{2n+1-k} \subset (M^{2n+1},\xi)$ be a nonsingular coisotropic submanifold (with $0 \leq k \leq n$), and let $p \in C$ be any point. Then on some $\Open_C(p)$, the foliation is trivial, and applying the Darboux theorem to the local leaf space, we have that there exist local coordinates $x_1,\ldots,x_n, y_1,\ldots,y_n, z$ on some $\Open_M(p)$ such that
$$\xi = \ker \left( dz + \frac{1}{2} \sum (-y_i dx_i + x_i dy_i) \right)$$
and $C$ is just given by the set $x_1 = \ldots = x_k = 0$. In these coordinates, the leaves of the foliation are parallel to the plane in the $y_1$ through $y_k$ directions.
\end{exam}

We may strengthen this example by including also the data of a complementary isotropic submanifold.

\begin{thm} \label{thm:complementary_isotropic}
Let $C^{2n+1-k} \subset (M^{2n+1},\xi)$ be a coisotropic submanifold and suppose $L^k$ is an isotropic submanifold transverse to $C$ with $L \cap C = \{p\}$. Then on some $\Open_M(p)$, there exist local coordinates $x_1,\ldots,x_n,y_1,\ldots,y_n,z$ such that
$$\xi = \ker \left( dz + \frac{1}{2} \sum (-y_i dx_i + x_i dy_i) \right)$$
and $C$ is the plane given by $x_1 = \cdots = x_k = 0$ whereas $L$ is the plane given by $x_{k+1} = \cdots = x_n = y_1 = \cdots = y_n = z = 0$.
\end{thm}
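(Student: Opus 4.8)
The plan is to bootstrap from Example~\ref{exam:p_in_coisotropic}, which already gives Darboux coordinates in which $C$ is the plane $x_1 = \cdots = x_k = 0$, and then to use the neighborhood-theorem machinery (Theorem~\ref{thm:gen_nbhd_contact}, or rather a combination of the coisotropic and isotropic neighborhood theorems) to arrange $L$ simultaneously. More precisely, the model we want to match is $\R^{2n+1}$ with the standard contact form $\alpha_0 = dz + \tfrac12\sum(-y_i dx_i + x_i dy_i)$, with $C_0 = \{x_1 = \cdots = x_k = 0\}$ and $L_0 = \{x_{k+1} = \cdots = x_n = y_1 = \cdots = y_n = z = 0\}$. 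One checks directly that $C_0$ is coisotropic, $L_0$ is isotropic, they are transverse, and $L_0 \cap C_0 = \{0\}$. So it suffices to produce, over the point $p$, a bundle isomorphism $\Phi \colon TM|_p \to T\R^{2n+1}|_0$ carrying $\xi_p$ to $(\xi_0)_0$, preserving the conformal symplectic structure, and carrying $T_pC \mapsto T_0 C_0$ and $T_pL \mapsto T_0 L_0$; then Theorem~\ref{thm:gen_nbhd_contact} upgrades this to a local contactomorphism, which automatically takes (germs of) $C$ and $L$ into the model planes up to their tangent spaces at $p$ — but not, a priori, onto the planes themselves.

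That last gap — straightening $C$ and $L$ to be honestly linear, not just tangent to the linear models — is where the real work is, and I would handle it in two stages. First, apply Example~\ref{exam:p_in_coisotropic} to get coordinates in which $C$ is exactly the plane $x_1 = \cdots = x_k = 0$; this is already done for us. Now in these coordinates $L$ is some isotropic submanifold through the origin, transverse to $C$, meeting it only at $p$, with $T_pL$ some $k$-plane complementary to $T_pC = \{x_1 = \cdots = x_k = 0\}$. I would next find a contactomorphism of a neighborhood of $p$, fixing $C$ setwise, that straightens $L$. The natural tool is the isotropic neighborhood theorem (Theorem~\ref{thm:INT}) applied to $L$, but with a relative constraint forcing $C$ to be preserved — i.e. the parametric relative neighborhood theorem, Theorem~\ref{thm:par_rel}, with the closed set $A$ chosen so that the contact geometry transverse to $L$ along $C\cap(\text{nbhd})$ is already normalized. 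Concretely: the leaves of the characteristic foliation of $C$ through $p$ are the $y_1,\dots,y_k$-directions, and $L$ being transverse to $C$ and isotropic means $L$ is a "Legendrian-like" section of the normal data; I want a contactomorphism supported near $p$, equal to the identity to first order, carrying $L$ to the coordinate plane $L_0$ while dragging $C$ along with it — but since $C$ is the full coisotropic whose neighborhood is rigid, I must check the straightening of $L$ can be done through contactomorphisms preserving $C$.

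The cleanest route is probably to work directly: in the coordinates from Example~\ref{exam:p_in_coisotropic}, write $L$ as a graph. Since $T_pL$ is complementary to $T_pC$, after a linear symplectic change of the $(x_i,y_i)$ for $i > k$ and the $(x_i,y_i)$ for $i \le k$ — chosen to preserve $\alpha_0$ and to fix $C = \{x_1 = \cdots = x_k = 0\}$, which still leaves freedom in how $T_pL$ sits — I can assume $T_pL = T_0L_0 = \{x_{k+1} = \cdots = x_n = y_1 = \cdots = y_n = z = 0\}$, i.e. $L$ is tangent to the $x_1,\dots,x_k$-plane. Then $L$, being isotropic and a graph over $(x_1,\dots,x_k)$ with vanishing derivative at the origin, is cut out by equations $y_i = f_i(x_1,\dots,x_k)$, $z = g(x_1,\dots,x_k)$, $x_j = h_j(x_1,\dots,x_k)$ for $j > k$, with all functions vanishing to second order at $0$. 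The isotropy condition $\alpha_0|_{TL} = 0$ and $d\alpha_0|_{TL}=0$ constrains these (in particular forces $g$ to be determined, up to second order, by the $f_i$ and the $x_i$, mimicking the generating-function description of Legendrians). I then construct the straightening contactomorphism via a Moser-type argument: interpolate $L_t$ between $L$ and $L_0$ (e.g. rescaling the defining functions), build a time-dependent contact vector field $V_t$ whose flow carries $L_t$ along and which is tangent to $C$ (this tangency is automatic if $V_t$ is built from a contact Hamiltonian vanishing appropriately along $C$ — one checks the relevant Hamiltonian can be chosen to vanish on $\{x_1 = \cdots = x_k = 0\}$ since $L_t$ and $C$ only meet at the origin and $V_t$ is supported near $p$), and whose $1$-jet at $p$ is trivial so that the identifications at $p$ survive.

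The main obstacle I anticipate is exactly this simultaneous normalization: straightening $L$ while keeping $C$ genuinely linear. Each neighborhood theorem individually handles one of the two submanifolds, but the composite contactomorphisms from the two theorems need not be compatible, so one cannot simply apply them in sequence. My proposal is to fix $C$ first (using Example~\ref{exam:p_in_coisotropic}, which costs nothing more), and then do the $L$-straightening \emph{within the group of contactomorphisms preserving $C$}; the key technical check is that the Moser vector field $V_t$ produced to move $L_t \to L_0$ can be taken tangent to the coisotropic plane $C$. I expect this to follow because the deformation $L_t$ can be taken to fix $L_t \cap C = \{p\}$ for all $t$ and to be supported in a small neighborhood, so the generating contact Hamiltonian vanishes on $C$ to the needed order; alternatively, one invokes Theorem~\ref{thm:par_rel} with $A$ a small disk in $L$ transverse to $C$... but I suspect the direct Moser argument, carefully tracking tangency to $C$, is the most transparent. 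Once $L$ is straightened to $L_0$ and $C$ remains $\{x_1 = \cdots = x_k = 0\}$, we are done.
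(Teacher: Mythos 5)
Your strategy is the same as the paper's: standardize $C$ via Example~\ref{exam:p_in_coisotropic}, linearly normalize $T_pL$, then straighten $L$ globally using the isotropic isotopy-extension theorem driven by a contact Hamiltonian that vanishes along $C$ (so $C$ is preserved) --- the paper cites \cite[Theorem~2.6.2]{Geiges} for precisely this last point, exactly as you anticipate. The one place you should be careful is the linear step: the block-diagonal symplectic change you describe, acting separately on the $(x_i,y_i)$ with $i\le k$ and with $i>k$, is not sufficient, since a priori $T_pL$ can have components across both blocks (and in the $\partial_z$ direction). The paper handles this in two substeps. First, exploiting the freedom in Corollary~\ref{cor:contact_coiso_nbhd}, one re-invokes Example~\ref{exam:p_in_coisotropic} so that the slice $S_p = T_pL^{\omega}\cap(T_pC\cap\xi_p)$ of the cosymplectic normal bundle is the standard $\partial_{x_{k+1}},\ldots,\partial_{y_n}$-plane, which forces $T_pL$ into the span of $\partial_{x_1},\ldots,\partial_{x_k},\partial_{y_1},\ldots,\partial_{y_k}$. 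Second, writing $T_pL=\mathrm{span}\langle\partial_{x_i}+\sum_j c_{ij}\partial_{y_j}\rangle$, the isotropy of $L$ forces $c_{ij}=c_{ji}$, so the shear $y_i\mapsto y_i-\sum_j c_{ij}x_j$ preserves $\alpha$ and $C$ and aligns $T_pL$. These are small but genuine substeps that your proposal elides; everything else matches.
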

\begin{proof}
Since $C$ has a transverse complementary isotropic submanifold at $p$, $C$ is automatically nonsingular at $p$. By Example \ref{exam:p_in_coisotropic}, we may fix a neighborhood where $\xi$ is of the desired form and $C$ is the desired plane. Our goal is then to align $L$ with the $x_1\cdots x_k$-plane.\\

\noindent
\textbf{\emph{Step 1}} We can suppose $T_pL$ is a subspace of the span of the vectors $\partial_{x_1}, \ldots, \partial_{x_k}, \partial_{y_1}, \ldots, \partial_{y_k}$. \\

By symplectic linear algebra at $p$, we have, $S_p = T_pL^{\omega} \cap (T_pC \cap \xi_p)$ is a slice representing the conformal symplectic normal bundle at $p$. By Corollary \ref{cor:contact_coiso_nbhd}, we may have assumed furthermore, in our invocation of Example \ref{exam:p_in_coisotropic}, that $S_p$ is precisely spanned by the vectors $\partial_{x_{k+1}}, \ldots ,\partial_{x_n}, \partial_{y_{k+1}}, \ldots, \partial_{y_n}$. Taking the orthogonal complement, we have $T_pL + \scr{F}_p$ is the span of the vectors $\partial_{x_1}, \ldots, \partial_{x_k}, \partial_{y_1}, \ldots, \partial_{y_k}$, hence proving the claim. \\

\noindent
\textbf{\emph{Step 2}} We can suppose $T_pL$ is precisely the span of $\partial_{x_1},\ldots,\partial_{x_k}$. \\

Since $T_pL$ is complementary to $T_pC$, and from the above step, we have that we can write
$$T_pL = \mathrm{span}\langle \partial_{x_i} + \sum_{j=1}^{k}c_{ij}\partial_{y_j} \rangle_{i=1}^{k}$$
for some constants $c_{ij}$. Since $L$ is isotropic, we must have $\omega(w_i,w_j) = 0$, which implies $c_{ij} = c_{ji}$ for all $i,j$. This in turn implies that the isomorphism
$$\phi \colon (x_1,\ldots,x_n;y_1,\ldots,y_n;z) \mapsto (x_1,\ldots,x_n;y_1-\sum c_{1j}x_j,\ldots, y_n - \sum c_{nj}x_j; z)$$
is a contact automorphism, preserving the contact form $\alpha$ (and not just $\xi$) as well as the positioning of $C$. The only thing it changes is that it sends $T_pL$ to the span desired.\\

\noindent
\textbf{\emph{Step 3}} We can ensure $L$ is parallel to the $x_1\cdots x_k$-plane in local coordinates as desired. \\

In order to do this, consider an isotopy of isotropic embeddings $j_t \colon L \rightarrow M$ such that $j_0$ is the standard inclusion and $j_1$ embeds $L$ as the $(x_1,\ldots,x_k)$-plane (all in a neighborhood of $p$), and such that $(dj_t)_p$ is constant. We refer to \cite[Theorem 2.6.2]{Geiges}, which proves that there is some contact vector field $Y_t$ whose flow realizes $j_t$, where $Y_t$ is chosen to correspond to some contact Hamiltonian $H_t$ (with respect to a fixed contact form). The construction of $H_t$ in this proof allows for $H_t$ to be equal to $0$ along $C$, so that $C$ remains fixed. This concludes the proof.
\end{proof}

Finally, we remark how characteristic foliations interact between symplectic and contact geometry. The following proposition follows essentially by computation.

\begin{prop} \label{prop:project_foliation} Suppose either
\begin{itemize}
	\item $V \subset (M,\xi)$ is a coisotropic in a contact manifold, or
	\item $V \subset (M,\omega = d\alpha)$ is a coisotropic in an exact symplectic manifold,
\end{itemize}
so that $\R \times V$ is a coisotropic in the symplectization or contactization, respectively. Both $V$ and $\R \times V$ have characteristic foliations, $\scr{F}$ and $\scr{G}$ respectively. Then,
\begin{itemize}
	\item $\scr{G}$ is $\R$-invariant.
	\item $\pi \colon \R \times V \rightarrow V$ satisfies $\pi_* \scr{G} = \scr{F}$.
	\item If $\scr{F}$ is nonsingular at a point $p \in V$, then $\pi_*$ induces an isomorphism between $\scr{G}_{(p,t)}$ and $\scr{F}_p$. In particular, in the setting for which $V$ is coisotropic in an exact symplectic manifold, $\scr{G}$ is always nonsingular.
\end{itemize}
\end{prop}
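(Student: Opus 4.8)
The plan is to treat the two cases separately, since $\scr{G}$ is defined through a symplectic orthogonal complement in the symplectization case and through a contact characteristic foliation in the contactization case; in both, everything reduces to a pointwise linear-algebra computation along $\R\times V$, after which $\R$-invariance is automatic because the fiber of $\scr{G}$ over $(t,p)$ will turn out to be independent of $t$, using the canonical splitting $T_{(t,p)}(\R\times M) = \R\partial_t\oplus T_pM$ under which the translation flow acts as the identity.

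For the symplectization case ($V\subset(M,\xi=\ker\alpha)$ coisotropic, $\omega = d(e^t\alpha)$), I would first record $i_{\partial_t}\omega = e^t\alpha$. Then $a\partial_t + w$ (with $w\in T_pM$) lies in $\scr{G}_{(t,p)} = (T(\R\times V))^{\omega}$ precisely when (i) $\alpha(w)=0$, from pairing against $\partial_t$, and (ii) $a\,\alpha(v) + d\alpha(w,v) = 0$ for all $v\in T_pV$. Condition (i) puts $w\in\xi$; testing (ii) against $v\in T_pV\cap\xi$ then forces $w\in(T_pV\cap\xi)^{d\alpha} = \scr{F}_p$. If $p\notin S(V)$, choosing $v_*\in T_pV$ with $\alpha(v_*)=1$, the remaining content of (ii) is $a = -d\alpha(w,v_*)$ (well defined independently of the choice of $v_*$), so $\pi_*$ carries $\scr{G}_{(t,p)}$ isomorphically onto $\scr{F}_p$. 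If $p\in S(V)$, then $\alpha$ vanishes on $T_pV$, (ii) collapses to $w\in\scr{F}_p$ with $a$ arbitrary, and $\scr{G}_{(t,p)} = \R\partial_t\oplus\scr{F}_p$, so still $\pi_*\scr{G}_{(t,p)} = \scr{F}_p$. In both cases the fiber is $t$-independent, giving $\R$-invariance, $\pi_*\scr{G} = \scr{F}$, and the isomorphism claim off $S(V)$.

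For the contactization case ($V\subset(M,\omega = d\alpha)$ coisotropic and, by Proposition \ref{prop:coisotropic}, $\R\times V$ coisotropic in $(\R\times M, \lambda = dz - \alpha)$), I would introduce the fiberwise section $\sigma\colon T_pM \to (\ker\lambda)_{(z,p)}$, $\sigma(w) = \alpha(w)\partial_z + w$. This is a linear isomorphism with $\pi_*\circ\sigma = \mathrm{id}$ and $d\lambda(\sigma(w),\sigma(w')) = -\omega(w,w')$, hence an anti-symplectomorphism onto $(\ker\lambda, d\lambda|_{\ker\lambda})$. Since $T(\R\times V)\cap\ker\lambda = \sigma(T_pV)$, taking orthogonal complements inside $\ker\lambda$ yields $\scr{G}_{(z,p)} = \sigma\big((T_pV)^{\omega}\big) = \sigma(\scr{F}_p)$. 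This is $z$-independent ($\R$-invariance), satisfies $\pi_*\scr{G}_{(z,p)} = \scr{F}_p$, and has $\pi_*$ injective on every fiber, so $\scr{G}$ is everywhere nonsingular --- equivalently $\partial_z\notin\scr{G}$, which is clear anyway since $\partial_z\notin\ker\lambda$.

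I expect the only genuinely delicate point to be the bookkeeping along $S(V)$ in the symplectization case: one must check that $\scr{G}$ remains a constant-rank (hence honest) foliation there while acquiring the $\partial_t$-direction, so that $\pi_*$ drops rank on $\scr{G}$ exactly where, and by exactly the amount that, $\scr{F}$ does. Otherwise the computations are routine, the main hygiene issue being not to confuse the three pairings in which orthogonal complements are taken: $d\alpha$ on $TM$, $d\alpha|_\xi$, and $d\lambda|_{\ker\lambda}$.
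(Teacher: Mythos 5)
Your proof is correct, and it executes precisely the pointwise linear-algebra computation that the paper itself points to with its remark that the proposition ``follows essentially by computation.'' Your bookkeeping along $S(V)$ in the symplectization case --- where $\scr{G}_{(t,p)}$ acquires the $\partial_t$-direction and $\pi_*$ drops rank by exactly one, matching the rank drop of $\scr{F}$ --- is correct, and your observation that in the contactization $\partial_z \notin \ker\lambda$ forces $S(\R\times V) = \emptyset$ is the right way to see nonsingularity of $\scr{G}$ directly.
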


\subsection{Convex hypersurfaces} \label{subsec:convex_surfaces}

Finally, the study of convex hypersurfaces, which are a class of coisotropic submanifolds, originated with work of Eliashberg and Gromov \cite{EG} and was largely solidified by Giroux \cite{Giroux}. Unfortunately, these methods have only been satisfactory in understanding the theory for convex surfaces in 3-dimensional contact manifolds, and part of the motivation for this work is to extend this understanding to higher dimensions. We begin with some reminders of contact vector fields, which will be important later when we study convex contact cobordisms.

\begin{defn} \label{defn:cvf_and_expansion}
A \textbf{contact vector field} $X$ on a contact manifold $(M,\xi)$ is a vector field whose flow preserves the contact distribution $\xi$. Equivalently, if we fix a contact form $\alpha$ for $\xi$, then one requires $\scr{L}_X\alpha = \mu \alpha$ for some function $\mu$. We refer to $\mu$ as the \textbf{expansion coefficient} of $X$ with respect to $\alpha$.
\end{defn}

The value of $\mu$ depends on the choice of contact form $\alpha$, and is not an invariant of the contact structure $\xi$ itself. If instead we use the form $\alpha' = e^f\alpha$, then the expansion coefficient is $\mu_{\alpha'} = \mu_{\alpha} + df(X)$. The value of $\mu$ at a point where $X = 0$, however, is invariant.

\begin{prop} There is a 1-1 correspondence between contact vector fields $X$ on $(M,\xi = \ker \alpha)$ and smooth functions $H$ on $M$. This correspondence is given by the equations
$$i_X \alpha = H$$
$$i_X d\alpha = dH(R_{\alpha})\alpha - dH.$$
\end{prop}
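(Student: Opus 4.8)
The plan is to verify that the two displayed equations determine a unique vector field $X$ for each smooth $H$, and that every contact vector field arises this way with $H = i_X\alpha$. First I would show existence and uniqueness of $X$ given $H$. Decompose $X$ along the splitting $TM = \langle R_\alpha\rangle \oplus \xi$, writing $X = H R_\alpha + X_\xi$ where the coefficient of $R_\alpha$ is forced to be $H$ by the first equation $i_X\alpha = H$ (since $\alpha(R_\alpha)=1$ and $\alpha|_\xi = 0$). It then remains to pin down $X_\xi \in \xi$. Feeding $X = HR_\alpha + X_\xi$ into the second equation and using $i_{R_\alpha}d\alpha = 0$, we get $i_{X_\xi}d\alpha = dH(R_\alpha)\alpha - dH$. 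Since $d\alpha$ restricts to a symplectic (hence nondegenerate) form on $\xi$, this equation has a unique solution $X_\xi \in \xi$ provided the right-hand side, viewed as a $1$-form on $M$, restricts to something in the image of $i_{(\cdot)}d\alpha \colon \xi \to \xi^*$ — equivalently, provided it annihilates $R_\alpha$. Checking this is the one genuine computation: $(dH(R_\alpha)\alpha - dH)(R_\alpha) = dH(R_\alpha)\cdot 1 - dH(R_\alpha) = 0$, so the obstruction vanishes identically and $X_\xi$ is uniquely determined. This gives a well-defined map $H \mapsto X$.

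Next I would check that the $X$ so produced is actually a contact vector field, i.e.\ $\scr{L}_X\alpha = \mu\alpha$ for some function $\mu$. By Cartan's formula, $\scr{L}_X\alpha = i_X d\alpha + d(i_X\alpha) = \big(dH(R_\alpha)\alpha - dH\big) + dH = dH(R_\alpha)\,\alpha$, so indeed $\scr{L}_X\alpha = \mu\alpha$ with $\mu = dH(R_\alpha)$; in particular $X$ preserves $\xi = \ker\alpha$. Conversely, given any contact vector field $X$ with $\scr{L}_X\alpha = \mu\alpha$, set $H := i_X\alpha$; then Cartan's formula reads $i_X d\alpha = \scr{L}_X\alpha - d(i_X\alpha) = \mu\alpha - dH$, and evaluating both sides on $R_\alpha$ gives $0 = \mu - dH(R_\alpha)$, so $\mu = dH(R_\alpha)$ and hence $i_X d\alpha = dH(R_\alpha)\alpha - dH$, which is exactly the second equation. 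Thus $X$ satisfies both displayed equations for this $H$, so the two assignments $H \mapsto X$ and $X \mapsto i_X\alpha$ are mutually inverse, establishing the bijection.

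The main (and essentially only) obstacle is the linear-algebra point that $i_{(\cdot)}d\alpha$ is an isomorphism $\xi \to \xi^*$ but not from $TM$ to $T^*M$ — $R_\alpha$ lies in its kernel — so solvability of $i_{X_\xi}d\alpha = dH(R_\alpha)\alpha - dH$ hinges precisely on the right-hand side lying in the annihilator of $R_\alpha$; the computation above shows this holds for free, and the coefficient $dH(R_\alpha)$ in the statement is exactly what makes it work. Everything else is Cartan's formula bookkeeping. I would also remark in passing that smoothness of $X$ in $H$ (and vice versa) is automatic since all the operations involved — contraction, $d$, and inverting the bundle map $d\alpha|_\xi$ — are smooth, so the correspondence is as regular as one could want.
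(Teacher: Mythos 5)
Your proof is correct and is the standard argument: the paper does not spell out a proof for this proposition (it is deferred to the thesis), but the decomposition $X = HR_\alpha + X_\xi$ along $TM = \langle R_\alpha\rangle \oplus \xi$, the observation that solvability of $i_{X_\xi}d\alpha = dH(R_\alpha)\alpha - dH$ reduces to the right-hand side annihilating $R_\alpha$, and the Cartan's-formula verification of $\scr{L}_X\alpha = dH(R_\alpha)\alpha$ are exactly what one expects (and the latter even appears verbatim in the paper's Remark following the proposition).
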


\begin{defn} The function $H$ is referred to as a \textbf{contact Hamiltonian} for the contact vector field $X$.
\end{defn}

\begin{rmk}
This correspondence also depends on the choice of contact form $\alpha$. One can state the correspondence in a more invariant way by instead associating $H$ with a section of $TM/\xi$ via the trivialization given by $\alpha$. The correspondence between contact vector fields and sections of $TM/\xi$ is canonical. Indeed, the pairs $(\alpha, H)$ and $(e^f\alpha,e^fH)$ yield the same contact vector field.
\end{rmk}

\begin{rmk} \label{rmk:mu_and_H}
We have
$$\mu \alpha = \scr{L}_X\alpha = di_X\alpha + i_Xd\alpha = dH + (dH(R_{\alpha})\alpha - dH) = dH(R_{\alpha})\alpha.$$
This yields the expression $\mu = dH(R_{\alpha})$.
\end{rmk}

\begin{defn}
A \textbf{convex hypersurface} is a codimension $1$ hypersurface $\Sigma$ embedded in a contact manifold $(M,\xi)$ such that there exists a transverse contact vector field to $\Sigma$.
\end{defn}

\begin{rmk}
This definition is local to the hypersurface, since if such a contact vector field exists locally, then it has a locally defined contact Hamiltonian $H$, which can then be multiplied by a bump function supported in a small neighborhood of $\Sigma$ to produce a global contact Hamiltonian for a contact vector field which matches the initial one near $\Sigma$.
\end{rmk}


Suppose $\Sigma$ is a convex hypersurface, and $X$ is a given convex vector field. Then one has that the flow of $X$ preserves $\xi$, and so we can find a neighborhood of $\Sigma$, namely $\Sigma \times (-\epsilon,\epsilon)$, on which $\xi$ is $t$-invariant (where $t$ is the coordinate of $(-\epsilon,\epsilon)$). In fact, the contact form itself can be made $t$-invariant via the following argument which can be found in more detail in \cite[Section 4.6.2]{Geiges}. First, pick any contact form $\alpha$ in this neighborhood. Then $\scr{L}_{\partial_t}\alpha = \mu \alpha$ for some function $\mu$. Taking $\lambda(p,t) = \exp\left(-\int_0^t \mu(p,\tau)d\tau\right)$ for $(p,t) \in \Sigma \times (-\epsilon,\epsilon)$ produces a $t$-invariant form $\lambda \alpha$.

So now, we may write $\xi = \ker(udt + \beta)$ where in these coordinates $X = \partial_t$, $\Sigma = \{t=0\}$, $u \in C^{\infty}(\Sigma)$, and $\beta \in \Omega^1(\Sigma)$. We will use this expression numerous times throughout the rest of this paper. The contact condition is then that $dt \wedge (ud\beta + n\beta \wedge du) \wedge (d\beta)^{n-1} > 0$.

\begin{defn} Given a convex hypersurface $\Sigma$ with a transverse contact vector field $X$, the \textbf{dividing set} $\Gamma$ is the subset of points $p \in \Sigma$ in which $X_p \in \xi_p$.
\end{defn}

\begin{prop} The dividing set $\Gamma$ (with respect to a given contact vector field $X$) is a smooth codimension $1$ hypersurface of $\Sigma$ which is a contact submanifold of $(M,\xi)$ (i.e. $\alpha|_{\Gamma}$ is a contact form).
\end{prop}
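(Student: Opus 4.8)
The plan is to work in the $t$-invariant local model $\xi = \ker(\alpha)$ with $\alpha = u\,dt + \beta$, $X = \partial_t$, $\Sigma = \{t=0\}$, where $u \in C^\infty(\Sigma)$ and $\beta \in \Omega^1(\Sigma)$, and the contact condition reads $dt \wedge (u\,d\beta + n\,\beta\wedge du)\wedge (d\beta)^{n-1} > 0$. In this model $X_p = (\partial_t)_p$ lies in $\xi_p$ exactly when $\alpha_p(\partial_t) = 0$, i.e. $u(p) = 0$. So $\Gamma = \{u = 0\} \subseteq \Sigma$, and the whole statement becomes: (i) $0$ is a regular value of $u$, so $\Gamma = u^{-1}(0)$ is a smooth codimension-$1$ hypersurface of $\Sigma$; and (ii) $\beta|_\Gamma$ is a contact form on $\Gamma^{2n-1}$, i.e. $\beta \wedge (d\beta)^{n-1}|_\Gamma \neq 0$.

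First I would establish (i). Suppose $u(p) = 0$ but $du_p = 0$. Evaluate the contact condition at $p$: the term $n\,\beta\wedge du$ vanishes at $p$ (since $du_p = 0$), and the term $u\,d\beta$ vanishes at $p$ (since $u(p) = 0$), so $dt\wedge(u\,d\beta + n\,\beta\wedge du)\wedge(d\beta)^{n-1}$ vanishes at $p$, contradicting strict positivity. Hence $du_p \neq 0$ whenever $u(p) = 0$, so $0$ is a regular value and $\Gamma$ is a smooth hypersurface of $\Sigma$.

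Next I would prove (ii). Along $\Gamma$ we have $u = 0$, so the contact condition restricted to points of $\Gamma$ reduces to $n\, dt\wedge \beta \wedge du \wedge (d\beta)^{n-1} > 0$ as a top form on $M$ along $\Gamma$. Since $T\Sigma = \ker(dt)$ and $T\Gamma = \ker(dt)\cap\ker(du)$, a dimension count shows that the nonvanishing of $dt\wedge du\wedge\beta\wedge(d\beta)^{n-1}$ along $\Gamma$ is equivalent to the nonvanishing of $\beta\wedge(d\beta)^{n-1}$ restricted to $T\Gamma$: indeed, choosing a local coframe completing $dt, du$, the wedge $dt\wedge du\wedge(\beta\wedge(d\beta)^{n-1})$ is a nonzero multiple of $dt\wedge du\wedge \iota^*(\beta\wedge(d\beta)^{n-1})$ where $\iota\colon\Gamma\hookrightarrow M$, so one is nonzero iff the other is. Therefore $\beta|_\Gamma\wedge(d(\beta|_\Gamma))^{n-1}\neq 0$, i.e. $\alpha|_\Gamma = \beta|_\Gamma$ is a contact form, so $\Gamma$ is a contact submanifold. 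Finally I would note that although $u$ and $\beta$ depend on the auxiliary choice of $t$-invariant contact form, the set $\Gamma = \{X\in\xi\}$ is manifestly intrinsic to $(\Sigma, \xi, X)$, so the conclusion is independent of these choices.

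The main obstacle — really the only subtlety — is step (ii): correctly bookkeeping the linear algebra that converts the ambient nonvanishing of $dt\wedge du\wedge\beta\wedge(d\beta)^{n-1}$ into the intrinsic contact condition on $\Gamma$, making sure the $d\beta$ factors are handled on the correct subspace (one must check that $d\beta$ and $d(\beta|_\Gamma)$ agree when pulled back to $T\Gamma$, which is immediate since pullback commutes with $d$). Everything else is a direct evaluation of the contact inequality at points of $\Gamma$.
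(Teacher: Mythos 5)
Your proof is correct and follows the route the paper clearly has in mind: the surrounding text sets up the $t$-invariant model $\alpha = u\,dt+\beta$, $X=\partial_t$, and the contact condition $dt\wedge(u\,d\beta + n\,\beta\wedge du)\wedge(d\beta)^{n-1}\neq 0$ precisely so that $\Gamma=\{u=0\}$ and both claims drop out of evaluating that form along $\{u=0\}$. Your identification of the only real subtlety (the pointwise linear algebra converting nonvanishing of $dt\wedge du\wedge\beta\wedge(d\beta)^{n-1}$ on $T_pM$ into nonvanishing of $\iota_\Gamma^*(\beta\wedge(d\beta)^{n-1})$ on $T_p\Gamma$, together with $\iota_\Gamma^*d\beta = d(\iota_\Gamma^*\beta)$) is accurate and correctly resolved.
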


Any convex hypersurface is automatically coisotropic since it is codimension 1, and therefore carries its $1$-dimensional singular foliation $\scr{F}$. The following lemma proves that the singular set always avoids the dividing set.

\begin{lem} The foliation $\scr{F}$ is non-singular along $\Gamma$, i.e. $S(\Sigma) \cap \Gamma = \emptyset$. Furthermore, $\scr{F}$ is transverse to $\Gamma$.
\end{lem}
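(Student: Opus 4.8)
The plan is to work in the local model adapted to the contact vector field $X$, namely $\xi = \ker(u\,dt + \beta)$ with $X = \partial_t$, $\Sigma = \{t=0\}$, $u \in C^\infty(\Sigma)$, and $\beta \in \Omega^1(\Sigma)$, all of this valid in a neighborhood $\Sigma \times (-\epsilon,\epsilon)$ obtained from the flow of $X$ as described just above the statement. In these coordinates, a point $p \in \Sigma$ lies in the dividing set $\Gamma$ precisely when $X_p = (\partial_t)_p \in \xi_p$, i.e. when $(u\,dt + \beta)(\partial_t)|_p = u(p) = 0$. So $\Gamma = \{u = 0\}$, and the already-quoted proposition tells us $0$ is a regular value of $u$, hence $du|_\Gamma \neq 0$.

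First I would identify the characteristic foliation $\scr{F}$ of $\Sigma$ explicitly in these coordinates. Writing $\alpha = u\,dt + \beta$, along $\Sigma = \{t = 0\}$ we have $j^*\alpha = \beta$ and the singular set $S(\Sigma)$ is where $T\Sigma \subseteq \xi$, i.e. where $\alpha$ restricted to $T\Sigma$ vanishes; but $\alpha|_{T\Sigma} = \beta$, so $S(\Sigma) = \{\beta = 0\} \cap \Sigma$. The characteristic foliation is $\scr{F} = (T\Sigma \cap \xi)^{d\alpha}$; away from $S(\Sigma)$, $T\Sigma \cap \xi = \ker\beta$, and $\scr{F}$ is the kernel of $d\beta$ restricted to $\ker\beta$ — equivalently $\scr{F}$ is spanned by the Reeb-type direction of $\beta$ on $\Sigma$ (when $\beta$ is itself a contact form, which it need not be everywhere). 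The cleanest route is to compute directly: the singular foliation $\scr{F}$ is spanned by a vector field $Y$ on $\Sigma$ characterized by $\beta(Y) = 0$ and $i_Y d\beta = c\,\beta$ for some function $c$ (this is the condition that $Y$ directs the null direction of $\alpha|_{T\Sigma}$ together with its $d\alpha$-orthogonal). I would extract $Y$ from the contact condition $dt \wedge (u\,d\beta + n\,\beta \wedge du) \wedge (d\beta)^{n-1} > 0$.

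Second, the key computation: contract the contact volume form $\alpha \wedge (d\alpha)^n > 0$ in the direction of the characteristic foliation and evaluate along $\Gamma$. Since $\alpha \wedge (d\alpha)^n$ is a nowhere-zero top form on $M$, and $\scr{F}$ at a point $p$ where $u(p)=0$ is a line, I want to show two things at such $p$: (i) $\scr{F}_p$ is $1$-dimensional, i.e. $p \notin S(\Sigma)$, and (ii) $\scr{F}_p \not\subseteq T\Gamma_p$. For (i), note that if $p \in S(\Sigma) \cap \Gamma$ then $\beta(p) = 0$ and $u(p) = 0$; but then at $p$ the $1$-form $\alpha = u\,dt + \beta$ vanishes entirely (both summands vanish), contradicting that $\alpha$ is a contact form (it is nowhere zero). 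Hence $S(\Sigma) \cap \Gamma = \emptyset$, which is the first assertion. For (ii), at $p \in \Gamma$ we have $u(p)=0$, so the contact condition restricted to $\Sigma$ reads $(n\,\beta \wedge du)\wedge(d\beta)^{n-1}|_p \neq 0$; this says precisely that, along $\Gamma = \{u=0\}$, the form $\beta \wedge (d\beta)^{n-1}$ is nonzero when wedged with $du$, i.e. $\beta|_{\Gamma}$ together with the induced structure makes $\Gamma$ a contact submanifold (this is the already-quoted proposition) \emph{and} the characteristic direction $Y$ of $\beta$ on $\Sigma$ is not tangent to $\{u=0\}$, since $\beta \wedge (d\beta)^{n-1}$ is (up to scale) the volume form paired with $Y$ and wedging with $du$ being nonzero forces $du(Y) \neq 0$. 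Thus $\scr{F}_p = \langle Y_p\rangle$ satisfies $du(Y_p) \neq 0$, so $\scr{F}$ is transverse to $\Gamma = \{u = 0\}$.

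The main obstacle is bookkeeping rather than conceptual: correctly relating the "null direction of $d\beta$ on $\ker\beta$" on $\Sigma$ to the combination $u\,d\beta + n\,\beta\wedge du$ appearing in the contact condition, and getting the coefficient $n$ and the vanishing-at-$u=0$ right so that the single nonvanishing statement $(\beta \wedge du)\wedge(d\beta)^{n-1}|_\Gamma \neq 0$ delivers \emph{both} non-singularity and transversality simultaneously. Once one observes that at $u = 0$ the contact condition degenerates exactly to this wedge, both conclusions fall out of the same nonvanishing, which is why the lemma bundles them together. I would write the argument by first fixing the vector field $Y$ spanning $\scr{F}$ near $\Gamma$ via the equation $i_Y(d\beta) \equiv 0 \pmod{\beta}$, $\beta(Y) = 0$, then contracting $\beta \wedge (d\beta)^{n-1}$ with $Y$ — noting this is nonzero away from $S(\Sigma)$ — and finally pairing with $du$ using the contact condition at $u=0$.
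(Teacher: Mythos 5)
Your argument is correct and is the natural one given the paper's setup (the local model $\alpha = u\,dt+\beta$ and the contact condition $dt\wedge(u\,d\beta + n\,\beta\wedge du)\wedge(d\beta)^{n-1}>0$ are introduced immediately before the lemma precisely to enable this computation). The two claims do indeed fall out of the same nonvanishing: for non-singularity, $p\in S(\Sigma)\cap\Gamma$ would force $\beta_p=0$ and $u(p)=0$ simultaneously, so $\alpha_p=0$, contradicting $\alpha\wedge(d\alpha)^n\neq 0$; for transversality, once $S(\Sigma)\cap\Gamma=\emptyset$ the direction $Y$ spanning $\scr{F}_p$ satisfies $\beta(Y)=0$ and $i_Yd\beta\equiv 0\pmod\beta$, and then
\[
i_Y\bigl(\beta\wedge du\wedge(d\beta)^{n-1}\bigr) = -\,du(Y)\,\beta\wedge(d\beta)^{n-1}
\]
must be nonzero since $\beta\wedge du\wedge(d\beta)^{n-1}$ is a volume form on $T_p\Sigma$ at $u=0$, giving $du(Y)\neq 0$, i.e.\ $Y\pitchfork\Gamma$. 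One small clarification worth making when you write this up: the parenthetical aside about the Reeb-type direction ``when $\beta$ is itself a contact form'' is misleading since $\Sigma$ is even-dimensional; the correct characterization of $Y$ is exactly the pair of equations $\beta(Y)=0$, $i_Yd\beta\equiv 0\pmod\beta$, which you state anyway, so the aside should simply be dropped.
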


\begin{prop} \label{prop:div_set_nbhd}
One can find a neighborhood $U = \Open_M(\Gamma) \cong \Gamma \times \Open_{\R^2}(0)$ such that, with $\pi \colon U \rightarrow \Gamma$ the projection, and with $u,t$ the coordinates on $\R^2$, we have that $\alpha = udt + \pi^*(\alpha|_{\Gamma})$, and such that in these coordinates, $X = \partial_t$ and $\Sigma = \{t=0\}$.
\end{prop}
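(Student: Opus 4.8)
The plan is to reduce to the stated model in two moves: first use the flow of $X$ to put the pair $(\Sigma,X)$ in standard position, then run a Gray--stability argument to standardize the contact form near $\Gamma$ without disturbing $X$ or $\Sigma$. Theorem~\ref{thm:gen_nbhd_contact} can be used to match $\xi$ near $\Gamma$, but it records nothing about $X$ or $\Sigma$, so the argument must be hands-on, with every normalization effected by a diffeomorphism that commutes with $\partial_t$ and preserves $\{t=0\}$. I begin with the routine model check: for a contact form $\beta_0$ on $\Gamma$ the form $u\,dt+\pi^*\beta_0$ on $\Gamma\times\R^2$ is contact, $\partial_t$ is a contact vector field with vanishing expansion coefficient transverse to $\{t=0\}$, and since $(u\,dt+\pi^*\beta_0)(\partial_t)=u$ the dividing set of $\{t=0\}$ with respect to $\partial_t$ is exactly $\Gamma\times\{0\}$.

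For the straightening, recall the normalization stated just before the proposition: near $\Sigma$ there are coordinates $\Sigma\times\R_t$ in which the contact form is $t$-invariant and equals $u\,dt+\beta$ with $u\in C^\infty(\Sigma)$, $\beta\in\Omega^1(\Sigma)$, $X=\partial_t$, $\Sigma=\{t=0\}$; then $\Gamma=u^{-1}(0)$, and restricting the contact condition $dt\wedge(u\,d\beta+n\beta\wedge du)\wedge(d\beta)^{n-1}>0$ to $\Gamma$ shows $du|_{T\Sigma}$ and $\beta|_{T\Sigma}$ are pointwise independent along $\Gamma$, hence on some $\Open_\Sigma(\Gamma)$. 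I would then pick a vector field $Z$ on $\Open_\Sigma(\Gamma)$ with $du(Z)=1$ and $\beta(Z)=0$ and follow its flow by the flow of $X$; this produces an identification $\Open_M(\Gamma)\cong\Gamma\times\Open_{\R^2}(0)$ in which $X=\partial_t$, $\Sigma=\{t=0\}$, and the contact form is $u\,dt+\hat\beta$, where $\hat\beta$ is a $u$-family of $1$-forms on $\Gamma$ with no $du$-component, whose restriction $\hat\beta_u$ to $\Gamma\times\{u\}$ satisfies $\hat\beta_0=\alpha|_\Gamma=:\beta_0$, a contact form on $\Gamma$.

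The crux --- and the step I expect to be most delicate --- is removing the $u$-dependence of $\hat\beta$ through diffeomorphisms that leave the coordinate $t$ (hence $X$ and $\Sigma$) untouched. Since $\hat\beta_u$ is contact on the compact manifold $\Gamma$ for $u$ small, a $u$-parametrized Gray--stability argument gives an isotopy $\theta_u$ of $\Gamma$ with $\theta_0=\mathrm{id}$ and $\theta_u^*\hat\beta_u=g_u\beta_0$, $g_u>0$, $g_0=1$, whose generating vector field lies in $\ker\hat\beta_u$. Setting $\widehat\Theta(q,u,t)=(\theta_u(q),u,t)$, one checks --- and it is exactly here that membership of the Gray vector field in $\ker\hat\beta_u$ is used, to annihilate the term that would otherwise produce a $du$-component --- that $\widehat\Theta^*(u\,dt+\hat\beta)=u\,dt+g(q,u)\,\pi^*\beta_0$; as $\widehat\Theta$ is the identity in $t$ it preserves $\partial_t$, preserves $\{t=0\}$, and fixes $\Gamma$. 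The residual conformal factor is absorbed into the normal direction rather than into $t$: $v:=u/g(q,u)$ is a genuine coordinate near $\{u=0\}$ (since $g(\cdot,0)=1$) and $u\,dt+g(q,u)\,\pi^*\beta_0=g(q,u)\bigl(v\,dt+\pi^*\beta_0\bigr)$, so in the coordinates $(q,v,t)$ the contact structure is $\ker(v\,dt+\pi^*\beta_0)$; this substitution preserves $q$ and $t$, hence $\partial_t$, $\{t=0\}$ and $\Gamma$. Relabelling $v$ as $u$ and choosing $\alpha:=u\,dt+\pi^*\beta_0$ as the contact form for $\xi$, for which indeed $\alpha|_\Gamma=\beta_0$, gives the proposition.

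Thus the real content lies in the ordering and the kernel observation: one must straighten by the $X$-flow, then kill the $du$-component of $\hat\beta$ via the choice of $Z$, then normalize by the manifestly $t$-independent $\widehat\Theta$ (using that the Gray vector field lies in $\ker\hat\beta_u$, without which a $du$-term would reappear that could be removed only by moving $\{t=0\}$), and finally eliminate the conformal factor by a change of coordinates in $u$ alone. Compactness of $\Gamma$ is used through Gray stability; in the noncompact case one localizes and the product neighborhood becomes $\Gamma$-dependent in the normal directions.
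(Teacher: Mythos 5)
Your argument is correct. The key technical points you flag --- that the Gray vector field's membership in $\ker\hat\beta_u$ prevents a $du$-term from reappearing in $\widehat\Theta^*\hat\beta$, and that $v=u/g$ is a legitimate local coordinate because $g(\cdot,0)\equiv 1$ --- are exactly what makes the final normalization close without disturbing $X$ or $\Sigma$. The route, however, differs from the paper's. The paper's sketch uses the characteristic foliation $\scr{F}$ of the coisotropic hypersurface $\Sigma$ as the $u$-axis and invokes Corollary \ref{cor:contact_coiso_nbhd}, whereas you take a transversal $Z$ subject only to $\beta(Z)=0$ and $du(Z)=1$. Near $\Gamma$, $\scr{F}$ is spanned by a vector $v$ tangent to $\Sigma$ satisfying $\beta(v)=0$, $du(v)\neq 0$, \emph{and} $i_v d\beta=0$ on $\ker\beta|_{T\Sigma}$; the discussion following Corollary \ref{cor:contact_coiso_nbhd} shows $\scr{L}_v(j^*\alpha)=\nu\, j^*\alpha$, so flowing along $\scr{F}$ already preserves $\beta=j^*\alpha$ up to conformal scaling. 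Had you chosen $Z$ to span $\scr{F}$ (normalized by $du(Z)=1$), you would have obtained $\hat\beta_u = g_u\beta_0$ directly and your Gray-stability step would be unnecessary. What your more generic choice of $Z$ buys is independence from the paper's earlier discussion of the characteristic foliation and the cosymplectic normal bundle --- the argument is self-contained given Gray stability --- at the cost of one extra normalization. Both routes terminate with the identical conformal change of variable in $u$ alone.
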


\begin{proof}[Proof Sketch]
The foliation on $\Sigma$ is nonsingular near the dividing set, and the dividing set carries the conformal cosymplectic normal bundle of $\Sigma$, so this almost follows as a corollary of Corollary \ref{cor:contact_coiso_nbhd}, except that we need to take a little more care to ensure that $X = \partial_t$. The idea is to use $\scr{F}$ as a $u$-axis, and the result follows almost immediately.
\end{proof}

The dividing set $\Gamma$ divides $\Sigma$ into two regions, $R_+$ and $R_-$, on which, with respect to the form $\alpha = udt+\beta$, one has $u > 0$ and $u < 0$ respectively. Furthermore, away from the set where $u = 0$, we can write $\alpha = dt + (\beta/u)$, and so the contact condition is precisely that $d(\beta/u)$ is symplectic on $R_{\pm}$. So each of these is an exact symplectic manifold with respect to the primitive $\lambda = \beta/u$, which is independent of the particular choice of contact form.

On an exact symplectic manifold $(W,d\lambda)$, one can define a Liouville vector field $Z_{\lambda}$ by the condition $i_{Z_{\lambda}}d\lambda = \lambda$. In the case described, since we are using $\lambda = \beta/u$, which blows up near the dividing set, $Z_{\lambda}$ will point towards the dividing set (one may deduce this from Proposition \ref{prop:div_set_nbhd}). This type of convexity condition ensures that $R_{\pm}$ are actually (finite-type) Liouville manifolds. We will exclude the prefix finite-type for the remainder of this paper, encoding this technicality in the following definition.

\begin{defn}
A \textbf{Liouville manifold} is an exact symplectic manifold $(W,d\lambda)$ such that $W = W' \cup \partial W' \times [0,\infty)$, where $W'$ is a compact domain, and such that $\lambda = e^t\beta$ on $\partial W' \times [0,\infty)$, where $\beta$ is a $1$-form on $\partial W'$. It is then automatic that $Z_{\lambda} = \partial_t$, and that $\beta = \lambda|_{\partial W'}$ is a contact form on $\partial W'$. The Liouville manifold $(W,\lambda)$ is said to be a \textbf{(completed) Liouville filling} of the contact manifold $(\partial W',\xi = \ker \beta)$ (which is well-defined up to contactomorphism by the data of $(W,\lambda)$ without reference to the choice of compact part $W'$ but with reference to a particular choice of primitive $\lambda$).

For later, we also consider the compact part $(W',\lambda)$, through which the Liouville field is outwardly transverse to the boundary. This is called a \textbf{Liouville domain}, and we shall also refer to $(W',\lambda)$ as a \textbf{Liouville filling} of its boundary $(\partial W', \lambda|_{\partial W'})$, which is a contact manifold.

Finally, we will also consider \textbf{Liouville cobordisms}, which are compact with Liouville vector field transverse to the boundary, possibly inwardly transverse along some boundary components.
\end{defn}

\begin{rmk}
For any connected Liouville cobordism, the Liouville vector field $Z_{\lambda}$ is always outwardly transverse to at least one boundary component. This is because $\scr{L}_X\omega = \omega$, so $X$ is volume expanding.
\end{rmk}

To summarize, the contact germ of $\Sigma$, with explicit transverse contact vector field $X$, is precisely equivalent to the data of $(R_{\pm},\lambda_{\pm})$, Liouville fillings of contact manifolds $(\Gamma,\pm \ker \beta)$ with reversed coorientation, which match along the dividing set as in Proposition \ref{prop:div_set_nbhd}. (Note that $\beta$ is not actually an invariant, but $\ker \beta$ is.)

\begin{rmk}
On $R_{\pm}$, we have the characteristic foliation $\scr{F}$ (as a convex hypersurface of a contact manifold) is spanned by $Z_{\lambda}$.
\end{rmk}

\begin{rmk} \label{rmk:ideal}
The structure on $\overline{R_{\pm}} = R_{\pm} \cup \Gamma$ is what Giroux has defined to be an \textbf{ideal Liouville domain} \cite{Giroux_Ideal} (see also \cite{MNW} for its first use in the literature). This notion is better than the notion of a completed Liouville manifold. The basic reason is that it allows one to forget about the precise choice of primitive $\lambda$ for $\omega$, because the data of $\omega$ together with the ideal boundary as a smooth boundary component automatically determines the contact structure on the boundary, and hence the behavior at infinity. Meanwhile, for a completed Liouville manifold without the ideal boundary, one cannot even determine the contact boundary from the data of $\omega$ itself. Furthermore, if we wish to consider Lagrangian fillings (in $R_{\pm}$) of a Legendrian (in $\Gamma$), one usually needs to be careful about the asymptotics at infinity. For example, one can be strict and consider cylindrical ends parallel to the Liouville vector field, but this again depends upon $\lambda$, not $\omega$. In the world of ideal Liouville manifolds, we can instead consider exact Lagrangian submanifolds with boundary on $\Gamma$ which are transverse to $\Gamma$, a very natural asymptotic condition. And perhaps most importantly, it is easiest to state and prove Moser-type theorems for ideal Liouville domains since they naturally encode the behavior at infinity required in the completed Liouville manifold setting.
\end{rmk}

\section{Balanced coisotropic spheres} \label{sec:BCS}

The goal of this section is to describe the neighborhoods of the submanifolds of convex hypersurfaces which will form our attaching regions for contact handles. The main focus is on balanced coisotropic spheres (see Definition \ref{defn:bcs}) which are the attaching spheres for high-index handles. The low-index handle attaching spheres are simply neighborhoods of isotropic spheres in dividing sets, and are therefore only discussed minimally in this section (Proposition \ref{prop:attaching_data}).

\subsection{Spin-symmetric coisotropic spheres in contact manifolds}

Although we know how coisotropic spheres look near nonsingular points, the singular points may in general be quite complicated. We describe a certain type of coisotropic sphere with singularities which can be dealt with explicitly.

\begin{defn}
Consider a sphere $S^k \subset \R^{k+1}$. Fix an axis $A = \R^r$, and consider all hyperplanes $H_s$ of dimension $r+1$ containing this axis $A$, parametrized by $s \in \mathbb{P}A^{\perp} \cong \R P^{k-r}$. One obtains a singular foliation consisting of leaves $(H_s \setminus A) \cap S^k$ with singular set $A \cap S^k \cong S^{r-1}$. A singular foliation $\scr{F}$ on a sphere $S^r$ is said to be \textbf{spin-symmetric} if it matches this model up to diffeomorphism. The singular sphere is called the \textbf{binding}. In particular, a \textbf{spin-symmetric coisotropic sphere} is a coisotropic sphere with spin-symmetric characteristic foliation. See Figure \ref{fig:Spin-symmetric}.
\end{defn}

\begin{figure}[h!]
\centering
  \includegraphics[width=\textwidth]{./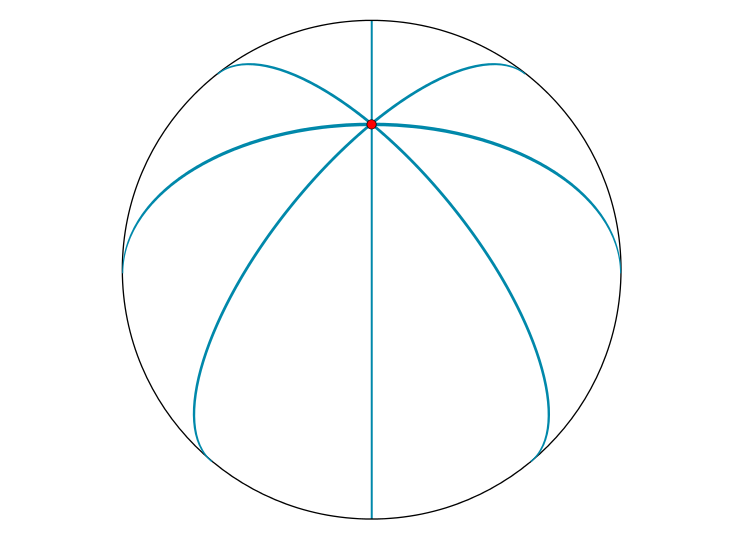}
  \caption{A spin-symmetric foliation for $k=2$ and $r=1$. The binding is an $S^0$, where the other point in the binding is out of view on the backside of the sphere. This case matches the foliation of the standard convex $S^2$ given as the boundary of a standard Darboux ball $D^3 \subset \R^3$.}\label{fig:Spin-symmetric}
\end{figure}

\begin{rmk}
The nonsingular leaves of the foliation $\scr{F}$ have dimension $r$.
\end{rmk}

\begin{rmk}
We always have $k+r$ is odd for a spin-symmetric coisotropic sphere. In particular, $k+r$ is always the dimension of the ambient contact space.
\end{rmk}

\begin{rmk}
In the case when $k=r+1$, this foliation yields an open book, explaining the use of the term binding.
\end{rmk}

For a spin-symmetric coistropic sphere $C \cong S^k$ in a contact $2n+1$-manifold, with $k > n$, at any point $p$ in the singular set $S = S^{2n-k}$, we can consider the conformal cosymplectic normal bundle $(TC \cap \xi)/(TC \cap \xi)^{d\alpha}$. The discussion after Corollary \ref{cor:contact_coiso_nbhd} shows that the flow of any vector field tangent to the foliation preserves the conformal cosymplectic normal bundle, even when there is a singular set. Hence, any vector field tangent to $S$ and all of the nonsingular leaves preserves the conformal cosymplectic normal bundle over $S$. This allows us to prove a neighborhood theorem for spin-symmetric coisotropic spheres which only requires as input the conformal cosymplectic normal bundle data at a single point in $S$.

\begin{thm} \label{thm:spin-sym_nbhd}
Any two spin-symmetric coisotropic spheres of the same dimension in contact manifolds of the same dimension have contactomorphic neighborhoods via a contactomorphism lying over any diffeomorphism preserving the foliations and preserving the conformal cosymplectic normal bundle at a single point $p \in S$.
\end{thm}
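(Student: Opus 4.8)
The plan is to deduce the statement from the general contact neighborhood theorem, Theorem~\ref{thm:gen_nbhd_contact}. Write $C_i \cong S^k \subset (M_i^{2n+1},\xi_i)$ for $i = 1,2$, with binding $S_i \cong S^{2n-k}$, and let $\phi \colon C_1 \to C_2$ be the given diffeomorphism preserving the characteristic foliations (so that $d\phi$ identifies the flags $\scr{F}_i \subseteq TC_i \cap \xi_i \subseteq \xi_i|_{C_i}$) and carrying the prescribed conformal cosymplectic normal datum at $p \in S_1$ to that at $\phi(p) \in S_2$. It suffices to extend $\phi$ to a bundle isomorphism $\Phi \colon TM_1|_{C_1} \to TM_2|_{C_2}$ lying over $\phi$, with $\Phi|_{TC_1} = d\phi$, sending $\xi_1$ to $\xi_2$, preserving the conformal symplectic structure, and inducing the prescribed identification at $p$; Theorem~\ref{thm:gen_nbhd_contact} then produces the contactomorphism. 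The coorientation line $TM_i/\xi_i$ is trivial and the behaviour of $\Phi$ there is essentially forced, so the whole content lies in constructing $\Phi$ on $\xi_1|_{C_1}$ extending $d\phi|_{TC_1\cap\xi_1}$ compatibly with $d\alpha$.

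That construction proceeds by parallel transport out of the single point $p$. Since $d\alpha_i$ identifies $\xi_i/(TC_i\cap\xi_i)$ with $\scr{F}_i^*$, the map $d\phi$ already pins down $\Phi$ on $\xi_i|_{C_i}$ except for a conformally symplectic identification of the cosymplectic normal bundles $(TC_i\cap\xi_i)/\scr{F}_i$ over $\phi$. By the discussion following Corollary~\ref{cor:contact_coiso_nbhd}, the characteristic foliation equips this cosymplectic normal data with a canonical partial flat connection, and, as observed just before the statement, vector fields simultaneously tangent to $S_i$ and to every nonsingular leaf still preserve the cosymplectic normal bundle over $S_i$. Using the spin-symmetric model, near a binding point the characteristic foliation is modelled on the product of $\R^{2n-k}$ with the radial foliation of $\R^{2(k-n)}$. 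One therefore transports the datum at $p$ first over the binding $S_i \cong S^{2n-k}$ (possible since it is connected whenever $k < 2n$), then over a neighbourhood of $S_i$ in $C_i$ using this near-binding normal form, and finally over all of $C_i \setminus S_i$ by the flat leafwise connection --- each nonsingular leaf being a disk, hence carrying no monodromy. Carried out identically on $C_1$ and $C_2$, this reconstructs $\xi_i|_{C_i}$, together with its flag and conformal symplectic structure, canonically from the one-point datum, so that $\phi$ and the prescribed datum at $p$ extend canonically to the desired $\Phi$.

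The main obstacle is the behaviour across the binding. The conformal cosymplectic normal bundle jumps in rank by two along $S_i$ --- from $2(k-n-1)$ on $C_i \setminus S_i$ to $2(k-n)$ on $S_i$ --- so it is not a vector bundle over $C_i$, and Corollary~\ref{cor:contact_coiso_nbhd} cannot be applied in a neighbourhood of $S_i$. The real work is thus to establish a genuine normal form for the spin-symmetric characteristic foliation near its binding and to check that the flat transport along the nonsingular leaves extends continuously onto $S_i$, reconciling the two ranks in the standard way; the degenerate case $k = 2n$, in which the binding $S^0$ is disconnected, must be handled separately by connecting its two points through a single nonsingular leaf in the explicit model. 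Granting this, the reduction to Theorem~\ref{thm:gen_nbhd_contact} and the final assembly of $\Phi$ from the graded pieces are routine.
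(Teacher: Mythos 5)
Your proposal correctly identifies the architecture of the paper's proof: reduce to Theorem~\ref{thm:gen_nbhd_contact} by constructing a bundle map $\Phi$, and use parallel transport along $S$ and along the foliation to propagate the one-point conformal cosymplectic datum over all of $C$. You also correctly identify that the key obstruction is the rank jump of the cosymplectic normal bundle along $S$, and your handling of the degenerate case $k = 2n$ by transporting across a single nonsingular leaf is a legitimate observation. Your decision to build $\Phi$ on $C\setminus S$ by leafwise transport rather than an appeal to the parametric relative theorem matches the alternative argument the paper points to in the remark following the theorem (citing Huang's partition-of-unity construction).

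That said, the proof as written has a genuine gap exactly where you defer. The normal form for $\alpha$ on $C$ near $S$ is not just ``real work to be done''; it is the entire content of the first half of the argument. Concretely, the paper shows that in coordinates $\Open_C(S) \cong S \times D^{2\ell}$ with $x_1,y_1,\ldots,x_\ell,y_\ell$ on $D^{2\ell}$, radial invariance of the conformal structure forces $\xi \cap TC = \ker\sum(x_k dy_k - y_k dx_k)$, and then nonvanishing of $d\alpha|_C$ along $S$ forces $\alpha|_C = \sum(x_k dy_k - y_k dx_k)$ after scaling. Without this explicit computation one cannot verify that the leafwise cosymplectic normal data extends continuously onto $S$ via the coisotropic reduction $(\nu_L)_q = (T_qS^{d\alpha}/T_qS)/(T_qL/T_qS)$, which is how the rank-$2\ell$ datum on $S$ actually reconstructs the rank-$(2\ell-2)$ datum on $\overline{L}$. ``Granting this'' is granting the theorem.

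A second, milder issue: the claim that the ``final assembly of $\Phi$ from the graded pieces is routine'' is optimistic. The required $\Phi$ must be a bundle map of $TM_1|_{C_1} \to TM_2|_{C_2}$ restricting to $d\phi$ on $TC_1$, sending $\xi_1 \to \xi_2$, and conformally symplectic on $\xi$, and these conditions couple the graded pieces. The paper handles this by a careful, explicit almost complex structure $J$ over $\Open_C(S)$ (splitting $TM|_U = TS \oplus A \oplus TD \oplus \R\langle R_\alpha\rangle$ with $A = J\cdot TS$ and defining $\Phi$ term by term so that it is genuinely $d\alpha$-compatible, not merely $(J_1,J_2)$-holomorphic). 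An abstract Witt-extension/contractibility argument could probably substitute, but it is not a triviality and, since you also need the extension over $C\setminus S$ to patch smoothly with the extension near $S$, there is a partition-of-unity argument lurking. You should either carry out the $J$-construction or cite Theorem~\ref{thm:par_rel} for the nonsingular remainder as the paper does.
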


\begin{proof}
We wish to construct $\Phi$ as in the input of Theorem \ref{thm:gen_nbhd_contact}. There are two main steps. The first is to prove that any such diffeomorphism preserving the foliations and the conformal cosymplectic normal bundle at a single point $p \in S$ automatically preserves the conformal symplectic structure throughout the entire coisotropic. The second step is to then construct $\Phi$ by including data normal to the coisotropic.

Let us dive into the first step. As discussed just prior to the statement of this theorem, any vector field parallel to the foliation preserves the conformal cosymplectic normal bundle, even along the singular set. For any $q \in S$, we can find a vector field which is tangent to $S$ and which flows from $p$ to $q$. Therefore, the conformal cosymplectic normal bundle at $p$ determines the bundle along all of $S$. Similarly, consider any leaf $L$ of the foliation, and consider its conformal cosymplectic normal bundle $\nu_L = TL^{d\alpha}/TL$. This bundle extends continuously to $\overline{L} = L \cup S$. At $q \in S$, we have $(\nu_L)_q$ is just the coisotropic reduction of $T_qS^{d\alpha}/T_qS$ with respect to the isotropic $T_qL/T_qS$. Hence, $(\nu_L)|_S$ is determined along $S$ by the conformal cosymplectic normal bundle to $S$. We pick a neighborhood $\Open_C(S) \cong S \times D^{2\ell}$ where the leaves are just $S \times r$ with $r$ a radial ray in $D^{2\ell}$. The vector field which is radial in the $D^{2\ell}$ coordinates is everywhere tangent to the leaves, and hence the conformal cosymplectic normal bundle of each $\nu_L$ is radially invariant in this neighborhood. Since we have determined the conformal symplectic structure on $(\nu_L)|_S$, the conformal symplectic structure on $L \cap \Open_C(S)$ is also determined. One then extends this data to all of $L$ using vector fields tangent to the foliation. To summarize, the conformal cosymplectic normal bundle data on all of a given spin-symmetric coisotropic submanifold is completely determined by the data at a single point $p \in S$ together with the topological data of the foliation. Hence, any diffeomorphism of coisotropic submanifolds preserving these two pieces of information automatically intertwines the conformal cosymplectic normal bundle data. That is, it preserves $TC \cap \xi$ and the conformal class of $d\alpha|_{TC \cap \xi}$.

For the second step, we now suppose that we have such a diffeomorphism $\phi \colon C_1 \rightarrow C_2$, and ask how we can construct $\Phi \colon TM_1|_{C_1} \rightarrow TM_2|_{C_2}$ so as to complete the proof. Essentially, we can construct $\Phi$ over $\Open_C(S)$, which is sufficient to obtain the result.

To begin the construction on $\Open_C(S)$, let us be a bit more precise about the behavior of the contact structure near $S$. Consider the neighborhood $\Open_C(S) \cong S \times D^{2\ell}$, where the leaves of the foliation of the form $S \times r$ for $r$ a radial ray of $D^{2\ell}$. Using coordinates $x_1,y_1,\ldots,x_\ell,y_\ell$ for $D^{2\ell}$, the above argument proved that $\xi$ was invariant under radial scaling and determined by the conformal symplectic structure along $S = S \times\{0\}$. It follows that $\xi \cap TC = \ker \sum (x_kdy_k - y_kdx_k)$, where the conformal symplectic structure along $S$ is $\sum dx_k \wedge dy_k$ (up to scaling). Hence, we must have $\alpha|_C = f\sum(x_kdy_k - y_kdx_k)$ where $f$ is a function which is everywhere positive except possibly at the origin. But we also know $d\alpha|_C$ is nonzero along $S$, and so $f$ must also be nonvanishing at $S$. Therefore, by scaling, we may assume $\alpha|_C=\sum (x_kdy_k - y_kdx_k)$.

To be a bit more explicit, we have two neighborhoods $\Open_{C_0}(S_0) \cong S^k \times D^{2\ell} \cong \Open_{C_1}(S_1)$, where the neighborhoods are identified by $d\phi$, and where each $\alpha_i$ on $C_i$ is scaled so that $\alpha_i|_{C_i} = \sum (x_kdy_k - y_kdx_k)$.

On $U = \Open_C(S) = S \times D$, note that the Reeb field $R_{\alpha}$ is transverse to $C$, since $TC|_S \subset \xi$. Then we have
$$TM|_U = TS \oplus A \oplus TD \oplus \R\langle R_{\alpha}\rangle,$$
where $A$ is chosen so that $TS \oplus A \subset \xi$ is the symplectic orthogonal complement to $(TC \oplus \R\langle R_{\alpha}) \rangle \cap \xi$ with respect to $d\alpha$. Fix a metric on $S$. We define an automorphism $J \colon TM|_U \rightarrow TM|_U$ by
\begin{itemize}
	\item $J \colon TS \oplus A \rightarrow TS \oplus A$ is an almost complex structure compatible with $d\alpha$ and intertwining the direct summands, and such that $d\alpha(Jv,v) = |v|^2$ for $v \in TS$.
	\item $J \colon TD \oplus \R\langle R_{\alpha} \rangle \rightarrow TD \oplus \R\langle R_{\alpha} \rangle$ is given by $v+tR_{\alpha} \mapsto iv+(t+\alpha(v)-\alpha(iv))R_{\alpha}$ where $i$ is the standard `multiplication by $i$' map on the tangent bundle of $TD^{2\ell} \cong T\C^{\ell}$.
\end{itemize}
Notice that on $\xi$, $J$ is an almost complex structure compatible with $d\alpha$, and with $JR_{\alpha} = JR_{\alpha}$. We define $\Phi \colon TM_1|_{U_1} \rightarrow TM_2|_{U_2}$ by matching up the summands $TS$ and $TD$ via $d\phi$, $\R\langle R_{\alpha} \rangle$ by the identity, and by defining $\Phi \colon A_1 \rightarrow A_2$ by $\Phi(J_1v_1) = J_2d\phi(v_1)$ for all $v_1 \in TS_1$. Then $\Phi$ preserves the conformal symplectic structure $d\alpha$. (Note that this is stronger than $\Phi$ being $(J_1,J_2)$-holomorphic, and requires us to have picked the same metric on $S_1$ as on $S_2$ under the identification by $d\phi$.)

We have constructed the neighborhood over $\Open_C(S)$, and the remaining region is a nonsingular coisotropic, so it suffices to prove a neighborhood theorem for nonsingular coisotropics where we have already specified the neighborhood at the boundary. We appeal to the parametric relative neighborhood theorem, Theorem \ref{thm:par_rel}. (We actually only need the result here at the level of $\pi_0$-equivalence, though everything about this proof holds in higher parametric families.)
\end{proof}

\begin{rmk}
If the reader is not content with appealing to Theorem \ref{thm:par_rel}, an alternative argument can be made, in which one also constructs $\Phi$ over $C \setminus S$ explicitly. The idea is similar to the construction near $S$, but we instead use the decomposition
$$TM|_U = TL \oplus JTL \oplus (TC \cap \xi)/TL \oplus TC/(TC \cap \xi),$$
where $L$ is a leaf. We choose an explicit slice for the conformal symplectic bundle $(TC \cap \xi)/TL$ and a fixed $J$ compatible with the conformal symplectic normal bundle on this slice, as well as a section of the line bundle $TC/(TC \cap \xi)$. This decomposition again lets us define $\Phi$ term by term. The final step is to then piece this together with $\Phi$ over $\Open_C(S)$. This is a fairly straightforward partition-of-unity type argument. Details can be found in a paper of Huang \cite[Lemma 3.6]{Huang1}, which proves the above result for the case of an $(n+1)$-dimensional spin-symmetric coisotropic sphere.
\end{rmk}

\begin{rmk}
The normal bundle to a spin-symmetric coisotropic sphere $C$, which near the singular set we saw was just $JTS \oplus \R$, is trivial because $TS$ is trivial after a single stabilization for any sphere $S$.
\end{rmk}

\subsection{Neighborhoods for submanifolds of convex hypersurfaces}

The following proposition is all we will need to understand low-index (subcritical) handle attachments.

\begin{prop} \label{prop:attaching_data}
Any isotropic submanifold $L$ of the diving set $\Gamma$ has a standard neighborhood of the form $udt + \beta$ where $\beta$ is a contact form on $\Open_{\Gamma}(L)$ depending parametrically only upon framing data for the conformal symplectic normal bundle of $L$ in $\Gamma$. In particular, if the conformal symplectic normal is trivial of rank $2\ell$, then germs of symplectic neighborhoods of $L$ are parametrized by homotopy classes of maps from $L$ to $Sp(2\ell,\R)$.
\end{prop}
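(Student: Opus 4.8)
The plan is to decouple the transverse $(u,t)$-directions from the dividing set $\Gamma$, so that the problem reduces to the isotropic neighborhood theory inside $\Gamma$, and then to upgrade that to a parametric statement using Theorem~\ref{thm:par_rel}. First I would invoke Proposition~\ref{prop:div_set_nbhd} to fix a neighborhood $\Open_M(\Gamma) \cong \Gamma \times \Open_{\R^2}(0)$ on which $\alpha = u\,dt + \pi^*(\alpha|_{\Gamma})$, with $X = \partial_t$ and $\Sigma = \{t=0\}$, and then restrict $\pi$ to $\Open_{\Gamma}(L)$. The germ near $L$ of the ambient contact structure and transverse contact field is then literally of the form $u\,dt + \beta$, where $\beta := \pi^*(\alpha|_{\Gamma})$ is a contact form on $\Open_{\Gamma}(L)$ and the summand $u\,dt$ together with $X = \partial_t$ is a fixed standard piece carrying no information. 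So the first assertion of the proposition is already a restatement of Proposition~\ref{prop:div_set_nbhd}, and everything reduces to understanding the germ of $\ker\beta$ near $L$ inside $\Gamma^{2n-1}$.

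Since $TL \subseteq \xi \cap T\Gamma$, the submanifold $L$ is isotropic in $(\Gamma,\ker\beta)$, so Theorem~\ref{thm:INT} identifies the germ of $\beta$ near $L$, up to a contactomorphism equal to the identity on $L$, with the conformal symplectic normal bundle $E := (TL)^{d\beta}/TL$ of $L$ in $\Gamma$ and its conformal symplectic structure. To display the parametric dependence explicitly, I would write down the standard model: choosing a $d\beta$-compatible complex structure, a Hermitian metric and a Hermitian connection on $E$ (of rank $2\ell$), one takes a neighborhood of the zero section in $\R_z \times T^*L \times E$ with contact form $dz + \lambda_{\mathrm{can}} + \lambda_E$, where $\lambda_{\mathrm{can}}$ is the canonical $1$-form on $T^*L$ and $\lambda_E$ is a connection $1$-form restricting on each fiber to $\tfrac{1}{2}\sum(x_j\,dy_j - y_j\,dx_j)$. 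For fixed $E$ these auxiliary choices form a contractible space, so the model, and hence $\beta$ up to the natural equivalence, depends only on $L$ and on the bundle $E$; when $E$ is trivial of rank $2\ell$, a trivialization makes the model completely explicit, which is the content of ``depending parametrically only upon framing data''.

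For the final sentence I would argue at the level of spaces. Fixing a reference framing, every framing differs from it by a map $L \to Sp(2\ell,\R)$, and homotopic framings give homotopic bundle data $(\Phi,\phi) \in \scr{B}$, while conversely a contactomorphism of germs fixing $L$ restricts to such bundle data. Applying the weak homotopy equivalence $\pi \colon \scr{G} \to \scr{B}_{\scr{R}}$ of Theorem~\ref{thm:par_rel} with $A = \emptyset$ (so $\scr{B}_{\scr{R}} = \scr{B}$, taking both isotropics to be $L$ sitting in a model contact ball) then turns homotopies of framings into isotopies of germs of contactomorphisms and conversely. Identifying the relevant component of $\scr{B}$ with $\mathrm{Map}(L, Sp(2\ell,\R))$, up to positive conformal rescalings which are homotopically negligible, and passing to $\pi_0$ exhibits germs of symplectic (equivalently, by contactization, contact) neighborhoods of $L$ as parametrized by $[L, Sp(2\ell,\R)]$.

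I expect the main obstacle to lie in this last parametric bookkeeping rather than in any geometric difficulty: one must pin down precisely which space of germs of symplectic neighborhoods is intended, set up the source and target of Theorem~\ref{thm:par_rel} so that $\scr{B}$ genuinely computes the framing data of $L$, and check that quotienting out the positive conformal rescalings does not alter the homotopy type. By contrast, the reduction via Proposition~\ref{prop:div_set_nbhd} and the appeal to Theorem~\ref{thm:INT} (or to a direct construction of the model above) should be routine given the machinery already in place.
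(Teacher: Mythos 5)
Your proposal is correct and follows essentially the same route the paper takes, which is simply to combine Proposition~\ref{prop:div_set_nbhd} with the parametric/fiberwise version of Theorem~\ref{thm:INT} (the latter being exactly the output of Theorem~\ref{thm:par_rel} applied with $A=\emptyset$ and $\phi$ fixed). You spell out an explicit contact model $dz + \lambda_{\mathrm{can}} + \lambda_E$ and track how the contractibility of the auxiliary choices reduces everything to the framing of the conformal symplectic normal bundle; the paper compresses all of that into the two citations, but the underlying argument is the same.
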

\begin{proof}
This follows from Proposition \ref{prop:div_set_nbhd} and the parametric (fiberwise) version of Theorem \ref{thm:INT}.
\end{proof}

We shall typically be concerned with the case when $L$ is an isotropic sphere $S^k$ with trivial conformal symplectic normal bundle in the dividing set $\Gamma^{2n-1}$ of a convex hypersurface in a contact manifold $M^{2n+1}$, in which case the symplectic normal bundle is of rank $2\ell = 2(n-k-1)$. At the most basic level, we shall only be concerned with equivalence up to $1$-parametric homotopy (i.e. $\pi_0$), corresponding to $[L=S^k,Sp(2\ell,\R)]=\pi_k(Sp(2\ell,\R))$ (the symplectic group is connected and $\pi_1 = \Z$ is abelian, so this association is canonical). For example, if $k=0$, there is only one choice of framing up to homotopy, while if $k=1$ and $2n-1 \geq 5$, then there is a $\Z$-worth. If $k=n-2$, then $Sp(2,\R) = SL(2,\R)$ deformation retracts onto $SO(2,\R) = S^1$, and so we see that the set of framings up to homotopy is trivial unless $k=1$, in which case $2n-1 = 5$. In particular, framing data only becomes relevant when $2n+1 \geq 7$. As we shall see later, there is no coincidence that framing data is similarly relevant for Weinstein handle decompositions of dimension $2n$ when $2n \geq 6$.

We now turn to the neighborhood theorem we will need for high-index (supercritical) handle attachments. We again suppose that we have a convex hypersurface $\Sigma$ with an explict transverse contact vector field $X$, so that there is a well-defined dividing set.

\begin{defn}
The \textbf{standard nonsingular foliation} of rank $r\leq k$ on the compact ball $D^k$ in standard Euclidean space $\R^k$ consists of a singular leaf $S^{r-1}$ given by $\R^r \cap \partial D^k$, where $\R^r$ is some coordinate plane, and with nonsingular leaves given by ellipsoids (of dimension $r$) passing through $S^{r-1}$. See Figure \ref{fig:Standard_fol}.
\end{defn}

\begin{rmk}
Note that, by definition, the characteristic foliation along $\partial D^k$ is spin-symmetric, with binding given by $S^{r-1}$.
\end{rmk}

\begin{figure}[h!]
\centering
  \includegraphics[width=\textwidth]{./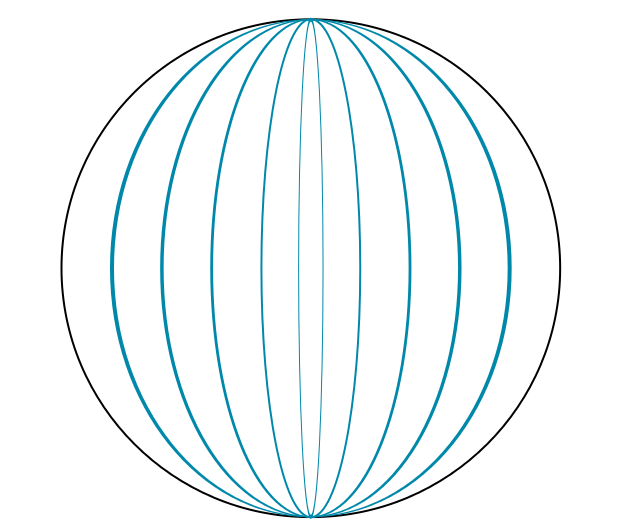}
  \caption{The standard foliation with $k=2$ and $r=1$. We see that along the boundary, we have a spin-symmetric sphere with $k=1$ and $r=1$. Notice that near the binding of the boundary, the foliation is not simply radial. Warning: this example does not realize a foliation which will appear in our setting, since for a balanced coisotropic disk, the foliation on each $D_{\pm}$ has $k+r$ is even. It is included here since it is the simplest nontrivial example of a standard foliation one can hope to draw.}\label{fig:Standard_fol}
\end{figure}

\begin{defn} \label{defn:bcs}
A \textbf{balanced coisotropic sphere} in a convex hypersurface $\Sigma$ (with transverse germ of contact vector field) consists of a smoothly embedded sphere $S^k = D_+^k \cup_{S^{k-1}} D_-^k \hookrightarrow \Sigma$, such that the embedding is transverse to the dividing set along the \textbf{equator} $S^{k-1}$, and such that the hemispheres, $D_{\pm}$, are embedded in $\overline{R_{\pm}}$ so that their characteristic foliations (with respect to the Liouville form on $R_{\pm}$ in the interior, as well as with respect to the contact structure on $\Gamma$ on the boundary) is the standard nonsingular foliation.
\end{defn}

\begin{rmk}
If the ambient contact manifold has dimension $2n+1$, then we must have $k \geq n$, and the rank of the characteristic foliation is then given by $r = 2n-k$.
\end{rmk}

\begin{rmk}
In the case of a balanced coisotropic sphere with $k=n$, we have simply a pair of Lagrangian disk fillings of a Legendrian. Note that since $H^1(D^n) = 0$, Lagrangian disk fillings are automatically exact Lagrangians.
\end{rmk}

\begin{thm} \label{thm:bcs_nbhd}
For fixed $n,k$, any two balanced coisotropic spheres $C_1,C_2$ in a convex hypersurface $\Sigma$ of a contact manifold  have neighborhoods which are symplectomorphic in the sense that there is a diffeomorphism of neighborhoods $\phi \colon U := \Open_{\Sigma}(C_1) \rightarrow V:= \Open_{\Sigma}(C_2)$ such that
\begin{itemize}
	\item $\phi$ restricts to a contactomorphism from $U \cap \Gamma$ to $V \cap \Gamma$, and
	\item $\phi$ is a symplectomorphism from $U \cap R_{\pm}$ to $V \cap R_{\pm}$
\end{itemize}
This result holds parametrically, in that the germ of $\phi$ is determined parametrically up to framing of the conformal symplectic normal bundle at a single point in the binding of the equator.
\end{thm}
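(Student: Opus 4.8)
The plan is to mimic the proof of Theorem~\ref{thm:spin-sym_nbhd}, but with the balanced coisotropic sphere $C=D_+^k\cup_{S^{k-1}}D_-^k$ split into its three natural pieces: the two Liouville hemispheres $D_\pm\subset\overline{R_\pm}$ and the equatorial sphere $S^{k-1}\subset\Gamma$, which by the remark following the definition of the standard nonsingular foliation carries a spin-symmetric foliation of rank $r=2n-k$ with binding a coordinate subsphere $S^{r-1}$. The first reduction is to observe, using the ideal Liouville domain formalism of Remark~\ref{rmk:ideal}, that the two bulleted conditions on $\phi$ are not independent: a symplectomorphism between neighborhoods of $D_\pm$ in $R_\pm$ which extends to a diffeomorphism at the ideal boundary automatically induces a contactomorphism of the corresponding neighborhoods in $\Gamma$. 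Hence it suffices to produce a diffeomorphism $\phi\colon\Open_\Sigma(C_1)\to\Open_\Sigma(C_2)$ restricting to a symplectomorphism on each $R_\pm$-side and to a fixed model near $\Gamma$; in the spirit of Theorem~\ref{thm:gen_nbhd_contact} and its symplectic analogue, this in turn reduces to producing an appropriate bundle isomorphism along $C$ together with an explicit model near the singular locus.

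\emph{Step 1 (model near the binding).} The singularities of the characteristic foliations on $D_\pm$ all concentrate along the binding $S^{r-1}\subset\Gamma$, which plays the role of the singular set $S$ in Theorem~\ref{thm:spin-sym_nbhd}. I would first build an explicit standard model on a neighborhood $\Open_\Sigma(S^{r-1})\cong S^{r-1}\times B$, $B$ a small ball, in which $\Gamma$, $R_\pm$, $C$, the hemispheres $D_\pm$, and the Liouville forms $\lambda_\pm$ (equivalently, via Proposition~\ref{prop:div_set_nbhd}, the local contact form $u\,dt+\pi^*(\alpha|_\Gamma)$) all take a fixed normal form, chosen so that the standard nonsingular rank-$r$ foliations on the two $D_\pm$ and the spin-symmetric rank-$r$ foliation on the equator are simultaneously standardized, with the binding a coordinate subsphere. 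The only free parameter in this model is the conformal cosymplectic normal bundle at a single point of $S^{r-1}$. This simultaneous standardization — reconciling the contact structure on $\Gamma$, the two Liouville structures on $R_\pm$, and the three families of leaves all at once near the singular binding — is the delicate part of the argument; everything downstream is formal.

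\emph{Step 2 (rigidity of the bundle data).} As in Step~1 of the proof of Theorem~\ref{thm:spin-sym_nbhd}, I would show that any diffeomorphism $\phi\colon C_1\to C_2$ preserving the hemisphere decomposition, the characteristic foliations on $D_\pm$ and on the equator, and the conformal cosymplectic normal bundle at one point of the binding, automatically intertwines all the conformal symplectic data along $C$. The mechanism is unchanged: flows of vector fields tangent to the (possibly singular) foliation preserve the conformal cosymplectic normal bundle, so the data at one point of $S^{r-1}$ propagates over all of $S^{r-1}$ and then outward along the leaves — ellipsoids through $S^{r-1}$ exhausting the $D_\pm$, and the spin-symmetric leaves through $S^{r-1}$ exhausting the equator. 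On the $\Gamma$-piece one may simply quote Theorem~\ref{thm:spin-sym_nbhd} for the equator $S^{k-1}\subset\Gamma$ (when $k=n$ the equator degenerates to a Legendrian $S^{n-1}$ with Lagrangian fillings $D_\pm$, and one instead invokes Theorem~\ref{thm:INT} together with the standard Lagrangian/Liouville neighborhood theorem); on each $D_\pm\setminus S^{r-1}$ one uses that it is a nonsingular coisotropic; and compatibility of these along the equator is guaranteed by the standardization of Step~1.

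\emph{Step 3 (construction, gluing, parameters).} With the conformal (co)symplectic data matched, build the germ of $\phi$ piecewise: near the binding from the model of Step~1; over the rest of the equator in $\Gamma$ from Theorem~\ref{thm:spin-sym_nbhd} (or Theorem~\ref{thm:INT} if $k=n$), thickened into $\Sigma$ using the product form of Proposition~\ref{prop:div_set_nbhd}; and over neighborhoods of $D_\pm\setminus S^{r-1}$ in $R_\pm$ from the symplectic counterpart of the parametric relative neighborhood theorem (the symplectic sibling of Theorem~\ref{thm:par_rel}, proved in \cite{Sackel_thesis}), applied to these nonsingular coisotropics relative to the collars already treated near the binding and the equator. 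The resulting germs agree on overlapping collars, and a relative Moser / partition-of-unity argument patches them into a single diffeomorphism $\phi$ on $\Open_\Sigma(C)$ with the required properties; because $\overline{R_\pm}$ are ideal Liouville domains, no asymptotic choices enter the gluing along $\Gamma$. Finally, each step is a parametric relative statement, so the construction runs in families, and the only residual indeterminacy is the framing of the conformal cosymplectic normal bundle at the chosen point of the binding, exactly as asserted. The main obstacle, as flagged above, is Step~1.
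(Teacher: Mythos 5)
Your overall architecture — leverage the spin-symmetric neighborhood theorem on the equator, propagate conformal cosymplectic data along the foliation, then build the diffeomorphism by a parametric relative Moser argument — is recognizably the same skeleton as the paper's proof, and you are correct (and more careful than the paper's own wording) to flag the degenerate case $k=n$ where the equator is a Legendrian $S^{n-1}$ and one must fall back on Theorem~\ref{thm:INT} rather than Theorem~\ref{thm:spin-sym_nbhd}. Your Step~2 is essentially the paper's argument.

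The genuine gap is your Step~1, which you yourself flag as ``the delicate part'' and then do not construct. You propose to build a \emph{new} explicit normal form for the convex surface $\Sigma$ in a full ambient neighborhood of the binding $S^{r-1}$, simultaneously standardizing the contact structure on $\Gamma$, the Liouville structures on $R_\pm$, and the three foliations, and then to glue this model to neighborhood germs on the three open pieces. This is both harder than necessary and not obviously achievable by the tools you cite. The paper avoids needing any such model. After fixing $\psi$ on $\Open_\Gamma(C_1\cap\Gamma)$ via Theorem~\ref{thm:spin-sym_nbhd}, it extends $\psi$ to a diffeomorphism $\wt\psi\colon C_1\to C_2$ preserving the conformal cosymplectic normal bundle, using the observation that in the coordinates $udt+\beta$ of Proposition~\ref{prop:div_set_nbhd}, constant-$u$ slices of $C_i\cap R_\pm$ carry the symplectic form $\tfrac{1}{u}d\beta$ and the leaf space $(C_i\cap R_\pm)/\scr{F}$ is a \emph{standard} symplectic disk $\R^{2\ell}$; this bypasses your worries about reconciling the hemispheres with the equator. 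The real trick is then the choice of normal bundle along $C\cap R_\pm$: the paper picks a \emph{cylindrical} almost complex structure $J$ on each $R_\pm$ (standard on the symplectization end near $\Gamma$) and uses $J\scr{F}$ as the complementary bundle. The cylindricity forces $J\scr{F}$ to extend continuously across $\Gamma$ to exactly the normal bundle $JTS\oplus\R$ that the spin-symmetric theorem's proof constructs over the binding, so the Moser deformation can be run with this asymptotic boundary condition and the three pieces need no separate gluing. This single asymptotic observation is precisely what replaces your Step~1. Without it, your piecewise construction still has to produce compatible normal bundles on overlaps near $\Gamma$ and near $S^{r-1}$, and you have not said how to do that; so as written the proposal does not close.

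One small additional remark: your initial reduction via ideal Liouville domains (that a boundary-respecting symplectomorphism of $R_\pm$ automatically induces a contactomorphism of $\Gamma$) is correct in spirit but unnecessary, and it somewhat obscures the logical order: the paper produces the contactomorphism on $\Gamma$ \emph{first}, and then extends into $R_\pm$, rather than deducing one from the other.
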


\begin{proof}
Since the equators are spin-symmetric coisotropic spheres of the same dimension, they automatically have contactomorphic neighborhoods in $\Gamma$. Fix such a contactomorphism of neighborhoods $\psi \colon \Open_{\Gamma}(C_1 \cap \Gamma) \rightarrow \Open_{\Gamma}(C_2 \cap \Gamma)$.

Next we wish to understand the conformal cosymplectic normal bundle on $C_i \cap R_{\pm}$. Since flowing along the characteristic foliation preserves this bundle, all of the data is encoded in a neighborhood of the binding of the equator. Also, notice that on constant-$u$ slices $S$ near $\Gamma$, we have that $d\lambda|_S = d(\beta/u)|_S = \frac{1}{u}d\beta|_S$. So by taking constant-$u$ slices of $C_i$ parallel to $\Gamma$, we find that the leaf space $(C_i \cap R_{\pm})/\scr{F}$ is just an infinite standard symplectic disk $\R^{2\ell}$ of the correct dimension. Fix a diffeomorphism $\wt{\psi} \colon C_1 \rightarrow C_2$ which is $\psi$ along $C_1 \cap \Gamma$ and which preserves this conformal cosymplectic bundle.

Finally, we have on $R_{\pm}$ an almost complex structure $J$ which is standard in the typical way along the symplectization. (Here, recall that $|u| = e^{-\tau}$ where $\tau$ is the symplectization coordinate, and the condition is that $J$ is $\tau$-invariant, with $J\xi = \xi$ and $J\partial_{\tau} = R_{\beta}$.) Then $J\scr{F}$ is everywhere transverse to $C \cap R_{\pm}$. Furthermore, the asymptotics imply that $J\scr{F}$ extends continuously to $\Gamma$ precisely as the normal bundle we had for the equator in the neighborhood theorem for spin-symmetric coisotropic spheres, lying over the diffeomorphism $\wt{\psi}$ we previously cooked up. Hence, we may apply Moser's trick with this extra asymptotic condition to yield the result.

For the parametric version, one simply needs to check that every step can be performed parametrically.
\end{proof}

\section{Recollections of Weinstein surgery theory} \label{sec:Weinstein}

Here we recall the definition of a Weinstein manifold, along with its surgery theory. We assume that the reader is familiar with standard Morse theory. We will not discuss any proofs of these results, though the reader interested in more details can find them in the book of Cieliebak and Eliashberg \cite[Part 4]{CE}.

\begin{defn}
A \textbf{Weinstein cobordism} consists of a quadruple $(W,\omega,X,\phi)$ where:
\begin{itemize}
	\item $(W,\omega)$ is a compact symplectic manifold with boundary.
	\item $X$ is a Liouville vector field for $(W,\omega)$, meaning that $\scr{L}_X\omega = \omega$. In particular, $(W,\omega)$ is an exact symplectic manifold with $\omega = d\lambda$ where $\lambda = i_X\omega$.
	\item $\phi \colon W \rightarrow \R$ is a Morse function.
	\item $(X,\phi)$ form a \textbf{gradient-like (or Lyapunov) pair}, meaning that for a fixed metric on $W$, there is a constant $\delta > 0$ such that $d\phi(X) \geq \delta(|X|^2+|d\phi|^2)$.
	\item The boundary $\partial W = \partial_+ W \sqcup \partial_- W$ consists of two regular level sets for $\phi$, the maximum and minimum.
\end{itemize}

If $\partial_-W = \emptyset$, we call $W$ a \textbf{Weinstein domain}.
\end{defn}

\begin{rmk}
The last condition can be replaced by the weaker condition that $X$ is transverse to $\partial W$, but in accordance with the choices made in this paper, we choose this slightly stronger definition.
\end{rmk}

\begin{rmk}
If one drops the Morse function in a Weinstein cobordism one obtains a Liouville cobordism. However, this extra taming of the Liouville field is important for the discussion that follows. In general, we know very little about what a Liouville cobordism can look like.
\end{rmk}

The geometry of a Weinstein cobordism is strongly controlled. The fact that $\scr{L}_X\omega = \omega$ means that the vector field $X$ must always be expanding. Hence, for example, we see that if $W$ has dimension $2n$, there can never be an index $2n$-critical point, since then $X$ would contract. In fact, more is true.

\begin{prop} The descending manifold of a critical point is an isotropic submanifold, and the ascending manifold is coisotropic. In particular, if $W$ is of dimension $2n$, then every critical point has index at most $n$.
\end{prop}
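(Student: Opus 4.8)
The plan is to extract everything from the single structural identity $\scr{L}_X\omega=\omega$. Writing $\psi^t$ for the (local) flow of $X$, this is equivalent to $(\psi^t)^*\omega=e^t\omega$ wherever defined; since $W$ is compact and $(X,\phi)$ is a Lyapunov pair, the critical point $p$ is a hyperbolic zero of $X$, its \emph{descending} manifold $D$ (the stable manifold of $X$) has $\dim D=\mathrm{ind}(p)=:k$ and flows into $p$ as $t\to+\infty$, while its \emph{ascending} manifold $A$ (the unstable manifold of $X$) has $\dim A=2n-k$ and flows into $p$ as $t\to-\infty$; both are $\psi^t$-invariant.

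First I would show $\omega|_D\equiv 0$ directly. Fix $q\in D$ and $u,v\in T_qD$; by invariance $d\psi^t u,\,d\psi^t v\in T_{\psi^t(q)}D$, so the conformality identity gives
$$\omega_q(u,v)=e^{-t}\,\omega_{\psi^t(q)}\!\left(d\psi^t u,\,d\psi^t v\right)$$
for all $t$. Letting $t\to+\infty$, we have $\psi^t(q)\to p$ while $d\psi^t u,\,d\psi^t v\to 0$ (the flow contracts $D$ toward $p$), so the right-hand side is at most $e^{-t}\,\|\omega\|_{C^0}\,|d\psi^t u|\,|d\psi^t v|\to 0$; as the left-hand side is $t$-independent it vanishes. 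Thus $D$ is isotropic, and since an isotropic submanifold of a $2n$-dimensional symplectic manifold has dimension at most $n$, we get $k=\dim D\le n$, which is the asserted bound.

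For the ascending manifold I would first prove that $T_pA$ is coisotropic in $(T_pW,\omega_p)$ and then propagate. Linearizing $\scr{L}_X\omega=\omega$ at $p$ shows the linearization $\Lambda:=DX_p$ satisfies $(e^{t\Lambda})^*\omega_p=e^t\omega_p$, i.e.\ $\Lambda-\tfrac12\mathrm{Id}\in\mathfrak{sp}(\omega_p)$; hence the eigenvalues of $\Lambda$ occur in pairs $\mu\leftrightarrow 1-\mu$, and $\omega_p$ pairs generalized eigenspaces $E_\mu,E_{\mu'}$ trivially unless $\mu+\mu'=1$. Since $T_pD=E^s$ is the span of the eigenspaces with $\mathrm{Re}\,\mu<0$ and $T_pA=E^u$ that of those with $\mathrm{Re}\,\mu>0$, the $\omega_p$-partner of $E^s$ is the subspace $E^u_{>1}\subseteq E^u$ with $\mathrm{Re}\,\mu>1$ (a Lagrangian complement to $E^s$ inside the symplectic subspace $E^s\oplus E^u_{>1}$), while the remaining part of $E^u$, with $0<\mathrm{Re}\,\mu<1$, is $\omega_p$-symplectic and $\omega_p$-orthogonal to $E^s\oplus E^u_{>1}$; a dimension count then gives $(E^u)^{\omega_p}=E^u_{>1}\subseteq E^u$, so $T_pA$ is coisotropic. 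To upgrade this to all of $A$, I would invoke the standard Weinstein handle normal form near $p$ (in which $X$ is linear, so $A$ is literally the linear subspace $E^u$ near $p$ and hence coisotropic on a whole neighborhood of $p$), and then spread the property along the flow: each $\psi^t$ is conformally symplectic, coisotropy is a pointwise, scale-invariant condition, and $A=\bigcup_{t\ge 0}\psi^t(\Open_A(p))$.

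The step I expect to be the main obstacle is exactly this last propagation for $A$. Coisotropy of a linear subspace is a closed but not an open condition, so knowing only that $T_pA$ is coisotropic does not control $T_qA$ for $q$ near $p$; one really needs the explicit local model (or some equivalent rigidity built into the Lyapunov condition) to conclude that $A$ is coisotropic on a neighborhood of $p$ before flow-invariance can take over. By contrast, the isotropy of $D$ and the resulting index bound require nothing beyond compactness and the conformality identity.
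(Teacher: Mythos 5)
Your argument for the isotropy of $D$ is correct and is essentially the one in the paper (given explicitly there for the contact analogue in Section 5, and in Cieliebak--Eliashberg for the Weinstein case): rearrange $(\psi^t)^*\omega = e^t\omega$ and let $t\to+\infty$, using the contraction of $D$ onto $p$. The linear algebra at $p$ — the eigenvalue pairing $\mu\leftrightarrow 1-\mu$, the splitting of $E^u$ into $E^u_{>1}\oplus E^u_{<1}$, and the identification $(E^u)^{\omega_p}=E^u_{>1}$ — is also correct and is in fact more explicit than what the sources write down. But the gap you flagged is real, and your proposed fix does not close it. Invoking the Weinstein handle normal form to conclude that $A$ is literally linear near $p$ is circular: that normal form is proved \emph{using} the fact that descending manifolds are isotropic and ascending manifolds are coisotropic, which is exactly the statement at hand. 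And even ignoring circularity, the normal form is only achieved after a homotopy of the symplectic structure, so it would not establish coisotropy of $A$ for the \emph{given} $\omega$. Since, as you observe, coisotropy is closed but not open, the linear statement at $p$ genuinely gives no control over $T_qA$ for nearby $q$.

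The way the paper (following Cieliebak--Eliashberg) propagates to all of $A$ is via the $\lambda$-lemma rather than a local normal form. Take $q\in A$ and $v\in (T_qA)^\omega$, and suppose toward a contradiction that $v\notin T_qA$. Renormalize $d\psi^{-n}(v)$ to unit vectors and pass to a convergent subsequence to get a unit vector $v_\infty\in T_pW$. Flow-invariance of $A$, $\omega$-conformality of the flow gives $v_\infty\in (T_pA)^\omega$; meanwhile the $\lambda$-lemma, applied to a transverse disk through $q$ containing $v$, forces $v_\infty\in T_pD$. Since $T_pD$ is isotropic by the first half of the proof, $v_\infty$ is $\omega_p$-orthogonal to $T_pD+T_pA=T_pW$, so $v_\infty=0$, contradicting $|v_\infty|=1$. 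Note the dependency: this route feeds the isotropy of $D$ into the coisotropy of $A$, whereas your outline treats the two halves independently, and it replaces the eigenvalue decomposition by the dynamical control supplied by the $\lambda$-lemma.
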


From this theorem, one already can find a coordinate neighborhood around a given critical point so that the symplectic form is of some specified standard model. The only other data local to a critical point must therefore arise from the pair $(X,\phi)$. Although there may be interesting configurations, in reality, one is only concerned about Weinstein manifolds up to some notion of equivalence. We allow for homotopies of the form $(\omega_t,X_t,\phi_t)$ through Weinstein structures. The following is a sort of Morse Lemma for the Weinstein setting.

\begin{prop}{\cite[Proposition 12.12]{CE}}
One can homotope the Weinstein structure on a cobordism in a neighborhood of the critical points so that the critical points, their critical values, and the descending isotropic submanifolds remain fixed, and such that in the end, a neighborhood of each critical point matches a specified model depending only upon the index. In particular, every Weinstein manifold is Weinstein homotopic to one such that the critical points are standard in this way.
\end{prop}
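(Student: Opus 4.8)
The plan is to reconstruct the argument of Cieliebak--Eliashberg \cite[Part 4]{CE}. Since the statement is entirely local near the critical points, I would fix a critical point $p$ of $\phi$ of index $k$, work inside a small ball $U=\Open(p)$, and treat distinct critical points independently (their balls being disjoint). Every modification of $(\omega,X,\phi)$ will be supported inside $U$, leaving the structure untouched near $\partial U$, so that the homotopy is supported near the critical points as required; and I would keep $p$, the critical value $\phi(p)$, and the descending isotropic $D_p$ literally fixed throughout.

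\emph{Step 1 (standardize $\omega$ and $D_p$).} Since $D_p\subset (W,\omega)$ is isotropic, the symplectic version of the isotropic neighbourhood theorem (the analogue of Theorem \ref{thm:INT}, cf.\ \cite{Sackel_thesis}) gives a Moser-type homotopy, rel $D_p$ and rel $p$, of $\omega$ to the standard form $\omega_{\mathrm{std}}$ on a smaller ball, under which $D_p$ becomes the standard isotropic $k$-plane of the model. Moser's trick produces along the way a family of primitives, hence a family of Liouville fields $X_t$; I would keep $\phi_t\equiv\phi$, using that the Lyapunov condition $d\phi(X)\ge\delta(|X|^2+|d\phi|^2)$ is open, so by taking the neighbourhood and the homotopy small enough the pair stays gradient-like, the only zero in $U$ stays $p$, and $D_p$ stays the descending manifold. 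After Step 1 we may assume $\omega=\omega_{\mathrm{std}}$, $D_p$ is the standard plane, and the problem is to homotope $X$ (and a compatible Morse function) to the model $(X_k,\phi_k)$ of \cite{CE} rel all of the above.

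\emph{Step 2 (standardize $X$).} Write $\lambda:=i_X\omega_{\mathrm{std}}$ and $\lambda_k:=i_{X_k}\omega_{\mathrm{std}}$; since $d\lambda=d\lambda_k=\omega_{\mathrm{std}}$ and $U$ is a ball, $\lambda-\lambda_k=dh$ for a function $h$ on $U$. Both $\lambda$ and $\lambda_k$ vanish on $D_p$ (there $X$ and $X_k$ are tangent to $D_p$), so $h$ is constant on $D_p$ and may be normalised to vanish there; and since $X(p)=X_k(p)=0$ we get $dh(p)=0$, so $h$ has a (possibly degenerate) critical point at $p$, with $X=X_k$ near $p$ exactly when $h\equiv 0$ near $p$. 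I would kill $h$ in two sub-steps. (2a) First move the Hessian of $h$ at $p$ to $0$: the linearisation $D_pX$ lies in the space $\mathcal L_k$ of linear Liouville fields for $\omega_{\mathrm{std}}$ which are hyperbolic of index $k$ with descending subspace $T_pD_p$ (because the original $(X,\phi)$ was gradient-like), and $D_pX_k$ lies there too; a standard computation with $\mathfrak{sp}(2n)$ shows $\mathcal L_k$ is connected (indeed contractible), so I would choose a path from $D_pX$ to $D_pX_k$ in $\mathcal L_k$, realise its $\mathfrak{sp}$-part by quadratic Hamiltonians $q_t$, and modify $h$ by cutoff corrections $-\chi\cdot(q_0-q_t)$ supported in a small ball, subdividing the path into many $C^1$-small steps so that no correction creates a new zero of the Liouville field or changes the index at $p$. (2b) Now $h$ vanishes to second order at $p$, so the homotopy $h_s:=(1-s\chi)h$ (with $\chi\equiv 1$ near $p$, supported in $U$) leaves the $1$-jet of $X$ at $p$ unchanged, hence the linearisation stays the model, hyperbolic of index $k$, for all $s$; at $s=1$ we have $h_1=(1-\chi)h\equiv 0$ near $p$, i.e.\ $X=X_k$ near $p$, while $X$ is unchanged near $\partial U$.

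\emph{Step 3 (recover the Morse functions and assemble).} Along the homotopies of Step 2 the Liouville field has no zero in $U\setminus\{p\}$ and a hyperbolic zero of index $k$ at $p$, so near $p$ one has a quadratic Lyapunov function for the linearised field (the model $\phi_k$ once the field is the model), near $\partial U$ the unchanged $\phi$ still works, and one interpolates between them over the collar, where transversality of $X$ to the level sets is preserved by the same smallness estimates; this produces $\phi_t$ with $(X_t,\phi_t)$ gradient-like, $p$ an index-$k$ critical point of fixed critical value, and $D_p$ the descending manifold throughout. The main obstacle is exactly this bookkeeping in Steps 2--3: maintaining the gradient-like property — no spurious zeros, constant Morse index at $p$ — while $X$ is deformed; routing the deformation through the linear model $D_pX_k$, using the contractibility of $\mathcal L_k$, and subdividing into $C^1$-small steps is what makes it go through. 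The final clause of the statement is then immediate by running the above simultaneously over all critical points.
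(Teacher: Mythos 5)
The paper does not prove this proposition: Section~\ref{sec:Weinstein} explicitly states it will not discuss proofs of the recalled Weinstein results and refers the reader to \cite{CE}. So there is no in-paper argument to compare against directly; what the paper does contain is a detailed proof of the contact analogue, Theorem~\ref{thm:standard_cp}, which is structurally a translation of CE's Proposition 12.12. Against that benchmark, your three-stage plan (Moser-standardize $\omega$ along the isotropic descending disk, interpolate the primitive of $X$ via a cutoff, then recover $\phi$ from openness/convexity of the Lyapunov condition) has the correct skeleton.

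The place your sketch falls short is exactly where you flag the ``main obstacle.'' Replacing $h$ by $(1-s\chi)h$ puts the error term $s\,h\cdot\omega^{-1}(d\chi)$ into the Liouville field on the annulus where $d\chi\neq0$, and keeping this subordinate to $d\phi(X)$ throughout is a genuine quantitative estimate --- not something furnished by openness of the Lyapunov condition or by making each step ``$C^1$-small.'' Moreover, a single cutoff only permits interpolation for a bounded time before that estimate fails, and after one step the field on the annulus no longer agrees with any linear model, so the next step must take place on a strictly smaller ball where $\chi\equiv1$ from the previous step. This iterated, shrinking-ball construction is the content of CE's Lemma 12.9; the paper singles it out as a ``trick'' immediately before its contact analogue, Lemma~\ref{lem:main_homotopy}, noting that a single bump function ``can only interpolate for a short amount of time.'' Your proposal names the difficulty but does not supply this mechanism, so Step~2 as written does not close. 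Separately, the contractibility of $\mathcal{L}_k$ needed in Step~2a is asserted as ``a standard computation'' without argument; it is used essentially and should be justified, since one must control how the unstable coisotropic complement is allowed to vary while the stable subspace $T_pD_p$ is pinned. For a worked model of the delicate estimates, consult the paper's own Lemmas~\ref{lem:main_homotopy} and~\ref{lem:handles}.
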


The level sets of a Weinstein manifold are naturally contact, with contact form given by the restriction of $\lambda = i_X\omega$. Hence, the descending manifolds of a critical point of a Weinstein manifold will intersect a given level set along an isotropic sphere. Furthermore, the geometry of the situation implies that the the conformal symplectic normal bundle of the attaching sphere matches up with the symplectic normal bundle to the isotropic descending disk.

We make this a little more precise. Consider the following data on $\R^{2n}$, for a fixed index $0 \leq k \leq n$:
\begin{itemize}
	\item $\omega = \sum_{i=1}^{n} dx_i \wedge dy_i$
	\item $X = \sum_{i=1}^{k} (2x_i\partial_{x_i}-y_i\partial_{y_i}) + \frac{1}{2}\sum_{i=k+1}^{n} (x_i\partial_{x_i} + y_i\partial_{y_i})$
	\item $\phi = \sum_{i=1}^{k} (x_i^2-y_i^2) + \sum_{i=k+1}^{n} (x_i^2+y_i^2)$
\end{itemize}
Then the origin provides the aforementioned standard model for the critical points. By taking a neighborhood of the descending isotropic submanifold along the $y_1,\ldots,y_k$-plane, we may view this as a handle attachment of index $k$ along the level set $\phi^{-1}(-\epsilon)$. This is what we mean by a \textbf{Weinstein handle attachment}. (We keep this slightly imprecise for now; we will see the nitty gritty details in the contact setting in Section \ref{sec:attach}.) One checks that the attaching sphere is indeed a framed isotropic submanifold of the level set $\phi^{-1}(-\epsilon)$ (where, by a framing, we mean a framing of the conformal symplectic normal bundle).

More generally, suppose we have a Weinstein cobordism $W^{2n}$ along with a framed isotropic sphere $S^{k-1} \subset \partial_+W$. By matching this framed isotropic sphere with the framing attaching sphere determined by the standard model for a critical point of index $k$, one can find a new Weinstein domain $W'$, with $W \hookrightarrow W'$, such that there is one critical point in $W' \setminus W$, with $W' \setminus W$ equivalent to the model provided above. In other words, $W'$ is the result of a handle attachment along $W$ with attaching data given by our framed isotropic sphere. Furthermore, the resulting Weinstein cobordism $W'$ is essentially unique, since the gluing occurs along a neighborhood of the attaching sphere, which is determined up to contactomorphism by the framing data.

The discussion above proves the following theorem.

\begin{thm} Up to Weinstein homotopy, even without birth-death type singularities, a Weinstein cobordism $W$ can be built out of a sequence of handle attachments along $\partial_-W$.
\end{thm}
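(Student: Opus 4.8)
The plan is to assemble the ingredients recalled in this section into a Morse-theoretic induction on critical values, with the Weinstein Morse Lemma doing all the real work. First I would perturb the Morse function $\phi$ so that all critical values become distinct; this is harmless because the Lyapunov inequality $d\phi(X) \geq \delta(|X|^2 + |d\phi|^2)$ is an open condition, and one can realize the perturbation by a compactly supported function that is locally constant near each critical point, so the critical points themselves (and the fact that $(X,\phi)$ is gradient-like) are unchanged. Then choose regular values $c_0 < c_1 < \cdots < c_N$ of $\phi$ so that each open interval $(c_{i-1},c_i)$ contains exactly one critical value, $\phi^{-1}((-\infty,c_0]) \cap W$ is a collar of $\partial_- W$, and $\phi^{-1}([c_N,\infty)) \cap W$ is a collar of $\partial_+ W$. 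Write $W_i := \phi^{-1}((-\infty,c_i]) \cap W$, so $W_0$ is a trivial cobordism on $\partial_- W$ and $W = W_N$ up to discarding the top collar.

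Next I would invoke the Weinstein Morse Lemma (Proposition 12.12 of \cite{CE}, recalled above) to homotope the Weinstein structure by a homotopy supported in pairwise disjoint neighborhoods of the critical points — so that the $c_i$ remain regular values and the critical points, critical values, and descending isotropic submanifolds are fixed — after which $(\omega, X, \phi)$ agrees near each index-$k$ critical point $p$ with the standard local model on $\R^{2n}$ displayed above. Now analyze the slabs $\phi^{-1}([c_{i-1},c_i])$. On a slab the vector field $X$ is transverse to every level set (no critical points, and $(X,\phi)$ Lyapunov), so its flow trivializes the slab as a product whose symplectic form is the symplectization of the contact form $\lambda|_{\phi^{-1}(c_{i-1})}$; crossing such a slab changes nothing up to Weinstein homotopy. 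For the slab containing a critical point $p$ of index $k$, the standard local model identifies a neighborhood of the descending disk with the model Weinstein handle $H_k$; the descending manifold meets $\phi^{-1}(c_{i-1})$ along an isotropic sphere $S^{k-1} \subset \partial_+ W_{i-1}$, and in the model the conformal symplectic normal bundle of $S^{k-1}$ is identified with the symplectic normal bundle of the descending disk, which supplies the framing. By the parametric (fiberwise) form of Theorem \ref{thm:INT}, a neighborhood of this framed isotropic sphere is determined up to contactomorphism by the framing, so $W_i$ is precisely the result of attaching $H_k$ to $W_{i-1}$ along $S^{k-1}$.

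Finally, induct: $W_0$ is Weinstein homotopic (rescale the collar) to the trivial cobordism on $\partial_- W$, and each step $W_{i-1} \rightsquigarrow W_i$ is a single Weinstein handle attachment along the positive boundary, hence $W = W_N$ is a handlebody built from $\partial_- W$. I expect the only genuine subtlety to be bookkeeping: arranging the local homotopies near the distinct critical points to have disjoint support so they compose into one global Weinstein homotopy, and checking carefully that the framing read off from the descending disk is exactly the data classifying the attaching neighborhood — everything deeper is packaged into the cited Weinstein Morse Lemma and the isotropic neighborhood theorem. No birth--death moves are needed precisely because the Morse Lemma keeps the critical points fixed rather than creating or cancelling them.
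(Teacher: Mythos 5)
Your proof is correct and follows essentially the same Morse-theoretic route the paper gestures at: perturb to distinct critical values, slice into slabs, standardize near each critical point via the Weinstein Morse Lemma \cite[Proposition 12.12]{CE}, and identify each nontrivial slab with a model handle attachment using the isotropic neighborhood theorem on the contact level set. The paper leaves this entirely to the reader (it presents only the discussion preceding the theorem statement and defers all proofs in Section \ref{sec:Weinstein} to \cite{CE}), so your write-up is a reasonable expansion of exactly that argument.
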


Furthermore, suppose $W'$ is built from $W$ by attaching a single handle along a framed isotropic sphere $S' \subset \partial_+ W$, and similarly $W''$ is built by attaching a single handle along a possibly different framed isotropic sphere $S'' \subset \partial_+ W$. If $S'$ and $S''$ are isotopic through framed isotropic submanifolds, then that isotopy can be extended to an ambient contactomorphism of $\partial_+ W$. But one can simply attach a cylinder $\partial_+ W \times I$ with a Weinstein structure whose holonomy map $\phi \colon \partial_+ W \times \{0\} \rightarrow \partial_+W \times \{1\}$ (given by following trajectories of the Liouville vector field) realizes this contactomorphism. By including this cylinder before attaching the handle which forms $W''$, the new descending disk for the critical point attaches along $\partial_+ W \times \{0\}$ along $S'$, and so $W'$ and $W''$ are handle attachments along the same framed sphere, hence homotopic.

The notion of homotopy through Weinstein structures discussed so far is a little bit too weak, since for example, it fixes the number of critical points. In standard Morse theory, if there are two critical points of neighboring index with a single trajectory between them, the critical points can be smoothly cancelled; by reversing the homotopy, we allow for critical points to be created. However, such a homotopy, since it creates or destroys critical points, cannot be such that each $(X_t,\phi_t)$ is gradient-like and each $\phi_t$ is Morse. Instead, there must be some time along the homotopy at which the two critical points coincide. For genericity reasons, it suffices to consider a specific kind of creation and cancellation in $1$-parameter families.

\begin{defn} \label{def:birth-death} An \textbf{embryonic critical point} for a 1-parameter family of functions $\phi_t$ is one locally modelled in coordinates $(x_1,\ldots,x_n)$ by
$$\phi_t(x_1,\ldots,x_n) = x_1^3 + \sum_{i=2}^{n} \pm x_i^2.$$
A \textbf{birth-death type singularity} is a $1$-parameter family of of vector fields and functions $(X_t,\phi_t)$ such that the gradient-like condition $d\phi(X) \geq \delta(|X|^2+|d\phi|^2)$ is satisfied, and such that $\phi_t$ is modelled in coordinates $(x_1,\ldots,x_n)$ by
$$\phi_t(x_1,\ldots,x_n) = x_1^3 \pm tx_1 + \sum_{i=2}^{n} \pm x_i^2,$$
hence passing through an embryonic critical point at $t=0$. In the case of a plus sign in front of the variable $t$ in these equations, we have a death-type singularity (there are two critical points for $t < 0$ and none for $t > 0$), and similarly the minus sign case is a birth-type singularity.
\end{defn}

\begin{rmk} A generic $1$-parameter family passing through an embryonic critical point will be of birth-death type. For an example of a non-generic $1$-parameter family, one could consider $\phi_t(x_1,\ldots,x_n) = x_1^3 + t^2x_1 + \sum_{i=2}^{n} \pm x_i^2$, which again passes through an embryonic critical point, but is not of birth-death type.
\end{rmk}

\begin{defn} A \textbf{Weinstein homotopy} on a compact manifold with boundary $W$ is a family of Weinstein structures $(\omega_t,X_t,\phi_t)$, $0 \leq t \leq 1$, which is allowed to have birth-death type singularities in $(X_t,\phi_t)$.
\end{defn}

This yields a good notion of equivalence, since Weinstein homotopies induce symplectomorphisms on their completions and allow for handle cancellation and creation as in the smooth theory. In the Weinstein setting, we understand handle cancellation quite well.

\begin{prop}
Suppose $W$ is a Weinstein cobordism with two critical points $p$ and $q$ of index $k$ and $k+1$ respectively, for some $k \geq 0$. Suppose furthermore that there is precisely one trajectory along $X$ from $p$ to $q$ along which the ascending manifold of $p$ and the descending manifold of $q$ intersect transversely. Then there is a Weinstein homotopy which cancels these critical points.
\end{prop}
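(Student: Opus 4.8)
## Proof proposal

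The plan is to reduce the statement to its smooth Morse-theoretic counterpart and then upgrade the cancellation homotopy to one that respects the Weinstein structure, exploiting the strong rigidity of Weinstein geometry near critical points. First I would invoke the standard Morse Lemma for the Weinstein setting (the proposition attributed to \cite[Proposition 12.12]{CE} above) to put both critical points $p$ and $q$ simultaneously into their index $k$ and $k+1$ standard models, with the descending isotropic disks fixed. The key point is that, having done this, the geometry between the two critical points is entirely controlled: a neighborhood of the single connecting trajectory, together with the pieces of the ascending sphere of $p$ and the descending sphere of $q$ near this trajectory, can be coordinatized so that $\omega$, $X$, and $\phi$ all take an explicit standard form. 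The transverse intersection hypothesis is precisely what guarantees that this configuration is the \emph{standard} cancelling pair and not some degenerate variant.

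Next I would construct the cancelling homotopy in this local model. Smoothly, one has the classical first-cancellation picture: a homotopy $\phi_t$ passing through a birth-death (death-type) singularity that eliminates $p$ and $q$, supported in a neighborhood $U$ of the union of the descending disk of $q$, the connecting trajectory, and the ascending disk of $p$. The task is to carry along a compatible family $(\omega_t, X_t)$ so that $(X_t,\phi_t)$ remains gradient-like (Lyapunov) throughout and $\omega_t$ stays symplectic with $X_t$ Liouville. In the explicit standard model this is an explicit computation: one writes down the model death-type family directly, exactly as in the proof of the analogous Weinstein statement in \cite[Part 4]{CE}. Because the descending manifolds are isotropic, the relevant local picture lives in $\R^{2k}\times\R^{2(n-k)}$ with the $\R^{2(n-k)}$ factor carrying the standard symplectic form and a fixed Liouville summand; the cancellation happens entirely in a $3$-dimensional (or low-dimensional) slice and extends trivially in the symplectically standard normal directions, so the Liouville and symplectic conditions are preserved essentially by inspection.

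Finally I would globalize. The homotopy just built is supported in a neighborhood $U$ of the union of the two stable/unstable disks and the connecting trajectory; outside $U$ nothing changes, so one patches the local homotopy with the constant homotopy on $W\setminus U$, using a cutoff in the Lyapunov estimate (here one needs $\delta>0$ uniform, which is fine on the compact cobordism). After the homotopy, $W$ has two fewer critical points, and the resulting structure is a genuine Weinstein cobordism; by definition this qualifies as a Weinstein homotopy since the only non-Lyapunov, non-Morse moment is the single birth-death singularity at $t=0$.

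The main obstacle I expect is the \textbf{gradient-like (Lyapunov) condition} $d\phi_t(X_t) \geq \delta(|X_t|^2 + |d\phi_t|^2)$ being maintained uniformly through the birth-death moment $t=0$, where $\phi_0$ has a degenerate (embryonic) critical point. At that instant $d\phi_0$ and $X_0$ both vanish at the embryonic point, so the inequality becomes delicate: one must verify the cubic-order behavior of $\phi_t$ dominates correctly, which forces a careful choice of the model family and of $X_t$ near the embryonic point, and a matching careful choice of the interpolating metric. A secondary subtlety is ensuring the cutoff used to patch the local homotopy into the global cobordism does not destroy the Liouville property of $\omega_t$ — this is why it is essential that the homotopy be \emph{supported} away from where the global Liouville structure matters, i.e. that $X_t = X$ and $\omega_t = \omega$ identically outside $U$, rather than merely being $C^0$-close there. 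Both of these are handled in the standard references, and the work here is to confirm that the contact-geometric context has not introduced any new difficulty — it has not, since everything takes place in the interior of a genuine Weinstein cobordism.
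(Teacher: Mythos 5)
The paper does not give a proof of this proposition --- Section \ref{sec:Weinstein} opens by saying explicitly that no proofs will be given there, and it defers to \cite[Part 4]{CE}, specifically \cite[Proposition 12.22]{CE}, which is also the reference your sketch invokes. Your outline (standardize the two critical points by the Weinstein Morse lemma, put a neighborhood of the unique connecting trajectory into normal form, realize an explicit death-type family in that model, patch by a cutoff) is the strategy of that reference, and you have correctly located the technical crux in the uniform Lyapunov estimate through the embryonic moment.

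Two concrete cautions if you were to actually carry this out. First, the decomposition $\R^{2k}\times\R^{2(n-k)}$ with a ``$3$-dimensional slice'' does not match the symplectic geometry: the embryonic descending disk is isotropic of dimension $k+1$, and the cubic direction cannot be isolated from its symplectic conjugate, so the nontrivial part of the cancellation model lives in a $2(k+1)$-dimensional symplectic block (containing the descending directions of $p$, the cubic direction of the merging, and all their conjugates), with the remaining $2(n-k-1)$ directions extended standardly. If you try to do the Lyapunov estimate in the wrong splitting it will not close up. Second, in \cite{CE} the form $\omega$ is kept fixed throughout the homotopy and only $(X_t,\phi_t)$ vary; you should do the same, as this automatically removes your secondary worry that the cutoff could destroy the Liouville property of a varying $\omega_t$, and avoids an unnecessary Moser-type bookkeeping step.
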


In this way, there is a calculus for surgery data of Weinstein cobordisms. In four dimensions, for example, there can only be $0$, $1$, and $2$-handles. Suppose $\partial_-W = \emptyset$. If $W$ is connected, we can always cancel $0$ and $1$ handles until there is precisely one $0$-handle. The framing data for the $1$-handles is trivial. Hence, all of the information is encoded simply by the number of $1$-handles, followed by the Legendrian knots along which the $2$-handles are attached. See Gompf's paper \cite{Gompf} for more details on the precise surgery diagrams one can draw. In this framework, $1$- and $2$-handle cancellations can also be realized diagramatically. See, for example, \cite[Proposition 2.18]{CM}, also for the higher dimensional $(n-1)$- and $n$-handle cancellation.

\section{Convex contact cobordisms and their critical points} \label{sec:CCMs}

As we have seen, the surgery description of Weinstein cobordisms boils down to the key fact that we have a Weinstein Morse lemma for the critical points. With a little more effort, this proved that we can completely understand a Weinstein cobordism by surgery data consisting of framed isotropic attaching spheres along contact level sets.

On the contact side, we will develop a similar surgery theory for so-called convex contact cobordisms in analogous stages.
\begin{itemize}
	\item After defining convex contact manifolds, we will prove the proper version of the Morse Lemma in this setting, Theorem \ref{thm:standard_cp}, stating that we are able to standardize convex contact structures around critical points in such a way that their descending manifolds remain fixed near the critical point.
	\item The goal of the following section is then to use this result to prove that every convex contact cobordism can be realized by handle attachments, Theorem \ref{thm:main_thm_body} (= Theorem \ref{thm:main_thm}). We can understand the attaching data for these handles quite explicitly, allowing for a complete surgery theory up to strict convex contact homotopy. The issue of allowing for creation and cancellation of critical points is relegated to Section \ref{sec:homotopy}.
\end{itemize}

\subsection{Basics of convex contact manifolds}
\begin{defn}
A \textbf{(closed) convex contact manifold} is a quadruple $(M,\xi,X,\phi)$ such that
\begin{itemize}
	\item $(M,\xi)$ is a contact manifold
	\item $X$ is a contact vector field on $(M,\xi)$
	\item $\phi$ is a Morse function on $M$
	\item $(X,\phi)$ is a gradient-like pair
\end{itemize}

A \textbf{convex contact cobordism} is also a quadruple of the above form, but such that $M$ possibly has boundary $\partial M = \partial_+ M \sqcup \partial_- M$ such that
\begin{itemize}
	\item $\partial_{+} M$ and $\partial_{-}M$ are each regular level sets for $\phi$
	\item $\phi$ attains its minimum along $\partial_- M$ and its maximum along $\partial_+ M$.
\end{itemize}

In either the closed or relative case, the pair $(X,\phi)$ is referred to as a \textbf{convex contact structure} on $(M,\xi)$.
\end{defn}

\begin{rmk} \label{rmk:equiv_notions_convex_contact_cobordism}
That $\partial_{\pm}M$ are regular level sets automatically implies that $X$ is transverse to $\partial M$, pointing inwards along $\partial_- M$ and outwards along $\partial_+ M$. One may wish to work with a weaker notion of convex contact cobordism, in which one requires only this transversality. Nothing is lost; in Remark \ref{rmk:def_retract}, we show that the inclusion of the strict version to the weak one induces a weak homotopy equivalence.
\end{rmk}

We will care about convex contact manifolds, and more generally cobordisms, up to some natural notion of homotopy. At a first pass, one may wish to study paths in the space of convex contact manifolds. We call this a strict convex homotopy, formalizing the definition below. This is too strong since it does not allow for birth-death type singularities which appear even in smooth Morse theory. Nonetheless, until Section \ref{sec:homotopy}, when we discuss more general homotopies, we will only care about convex contact manifolds up to strict homotopies.

\begin{defn} \label{defn:strict_homotopy}
A \textbf{strict convex homotopy} on a cobordism $M$ consists of a $C^{\infty}$-family of triples $(\xi_t,X_t,\phi_t)$ for $t \in [0,1]$ such that for each fixed $t$ the triple is a convex contact cobordism. In particular, $\phi_t$ is a Morse function for every $\phi$.
\end{defn}

In our definition of strict convex contact homotopy, we allow that $\xi$ could also change through the homotopy. \textbf{This includes allowing the underlying contact structure to change near the boundary.} If $\xi$ remains fixed near the boundary, then one can use the Moser trick to find a family of diffeomorphisms $\psi_t \colon M \rightarrow M$, fixed near the boundary, so that $d\psi_t \xi_0 = \xi_t$. Pulling back by $\psi_t$ allows us to think of every strict homotopy with $\xi$ fixed near the boundary as a strict homotopy with fixed $\xi$ everywhere, up to diffeomorphism. However, by allowing $\xi_t$ to vary along the boundary, we are allowing for the contact structure to flow into and out of the boundary. We will address this point further in Section \ref{sec:attach}.

\begin{rmk} \label{rmk:general_conv_struct}
For the purpose of considering higher parametric families, one would prefer to define a more general class of convex structures, in which a generic choice matches our definition, and a generic path passes only through birth-death type singularities. Given such a definition, then our notion of homotopy is enough to study the space of convex contact structures at the level of $\pi_0$. This is precisely what is done in the setting of Weinstein structures throughout \cite{CE}, although possible less strict definitions are discussed by Eliashberg elsewhere \cite{Eliashberg}.
\end{rmk}

The main order of business for this section is to understand the critical points of the pair $(X,\phi)$. We will see that it is convenient to separate such critical points into two distinct cases.

\begin{defn} A critical point of index $k \leq n$ is called \textbf{subcritical}. A critical point of index $k \geq n+1$ is called \textbf{supercritical}.
\end{defn}

We conclude this subsection with two examples of convex contact manifolds to indicate that these structures do occur naturally in some basic examples. We will prove later, as Corollary \ref{cor:CCS_exist}, that all closed contact manifolds do have a convex contact structure.


\begin{exam}
Consider the standard contact sphere $S^{2n-1} \subset \C^n$, where the contact structure $\xi$ is the bundle of complex tangencies, $\xi_p = T_pS \cap iT_pS$. Associating $\C^n \equiv \R^{2n}$ with coordinates $x_1,y_1,\ldots,x_n,y_n$, we can write $iT_pS$ as the kernel of $\alpha = \sum(-y_idx_i + x_idy_i)$, so that $\alpha$ is a contact form when restricted to $S$. The contact vector field associated to the Hamiltonian $H = y_1$ is given by
$$X_H = \frac{1}{2} \left[-\partial_{x_1} + x_1 \sum\left(x_i \partial_{x_i} + y_i \partial_{y_i}\right) -y_1\sum\left(y_i\partial_{x_i} - x_i\partial_{y_i}\right) \right].$$
Consider $\phi = -x_1$, a Morse function on $S$. Then, with respect to the round metric,
$$\left|d\phi|_S\right|^2 = 1-x_1^2$$
$$|X_H|^2 = \frac{1}{4}\left(1-x_1^2+3y_1^2\right).$$
$$d\phi(X_H) = 1 - x_1^2 + y_1^2$$
from which the gradient-like condition follows along $S$.
\end{exam}

\begin{exam}
Let $M$ be any closed manifold, and consider the 1-jet space $J^1 M = \R \times T^*M$ with contact form $\alpha = dz - \lambda$, where $\lambda = \sum p_i dq_i$ is the Liouville form on $T^*M$ (where $q_i$ are coordinates on $M$ and $p_i$ are the dual coordinates). Let $\phi \colon M \rightarrow \R$ be a Morse function. Fix any Riemannian metric $g$ on $M$. Then consider the Hamiltonian function
$$H = z - \lambda(\nabla \phi).$$
In this case, a computation shows that in normal coordinates (so that the $\partial/\partial q_i$ are orthonormal):
$$X_H = \nabla \phi + \sum_{i=1}^{n} \left(p_i + \sum_{j=1}^{n} \frac{\partial^2\phi}{\partial q_i \partial q_j}p_j \right) \frac{\partial}{\partial p_i} + z \partial_z.$$
Let the matrix $\scr{H}$ denote the Hessian $\frac{\partial^2\phi}{\partial q_i \partial q_j}$. Then
$$|X_H|^2 = |\nabla \phi|^2 + P^T(I+\scr{H})^2P + z^2,$$
where $P$ is the column vector with entries $p_1$ through $p_n$ and $I$ is the identity matrix. Meanwhile, set
$$\wt{\phi}(q,p,z) = \phi(q) + \frac{1}{2}|p|^2 + \frac{1}{2}z^2,$$
so that
$$|d\wt{\phi}|^2 = |\nabla \phi|^2 + |P|^2 + z^2.$$
Finally, notice that
$$d\wt{\phi}(X) = |\nabla \phi|^2 + P^T(I+\scr{H})P + z^2.$$
It follows that $X_H$ is gradient-like for $\wt{\phi}$ so long as $\scr{H}$ is small enough, which we can arrange for by scaling $\phi$. This gives a convex contact structure on $J^1 M$.
\end{exam}

\begin{rmk}
Any contact vector field $X$ is given by a Hamiltonian $H$. It is, however, difficult in general to determine by looking at the Hamiltonian whether the corresponding vector field comes from a convex structure (so that there is some $\phi$ for which $X$ is a pseudo-gradient). For example, the author is aware of no good method for producing a convex contact structure on the standard $3$-torus $\T^3$ with contact form $\cos\theta dx - \sin\theta dy$. In Section \ref{sec:OBD}, we will prove that such a convex contact structure must exist, and it will be implicit in the discussion that a convex structure with a minimal number of critical points is intimately connected with supporting open books with pages of maximal Euler characteristic.
\end{rmk}

\subsection{Standard neighborhoods for critical points}

We would like a natural local model for critical points of any index. We need only specify such a model for subcritical points, since sending $(X,\phi)$ to $(-X,-\phi)$ exhibits a duality between subcritical and supercritical points.

\begin{defn} \label{def:standard_cp}
Let $0 \leq k \leq n$ be an integer. Let $\R^{2n+1}_{\mathrm{std}}$ be the standard contact structure on $\R^{2n+1}$ given by the kernel of $\alpha = dz + \frac{1}{2}\sum_{i=1}^{n}(-y_idx_i + x_idy_i)$. The vector field
$$X_k = z \partial_z + \sum_{i=1}^{k}(-x_i\partial_{x_i} + 2y_i\partial_{y_i}) + \sum_{i=k+1}^{n}\left(\frac{1}{2}x_i \partial_{x_i} + \frac{1}{2}y_i\partial_{y_i} \right).$$
is a pseudo-gradient for the Morse function
$$\phi_k = z^2 + \sum_{i=1}^{k}(-x_i^2 + y_i^2) + \sum_{i=k+1}^{n}(x_i^2+y_i^2).$$
This yields a convex contact structure of index $k$,
$$\scr{C}_k := (\R^{2n+1}_{\mathrm{std}},X_k,\phi_k).$$
If instead $n+1 \leq k \leq 2n+1$, take
$$\scr{C}_k := (\R^{2n+1}_{\mathrm{std}},-X_{2n+1-k},-\phi_{2n+1-k}).$$
In either case, the model is called the \textbf{standard convex contact structure of index $k$}. A critical point $p$ of a convex contact cobordism $(M,\xi,X,\phi)$ is called \textbf{standard} if one can find a neighborhood $\Open_M(p)$ isomorphic to a neighborhood of $0$ in $\scr{C}_k$ up to shifting $\phi$ by a constant. If all critical points are standard, then the cobordism is also called standard.
\end{defn}

\begin{rmk}
Recall that the expansion coefficient $\mu$ is defined by the equation $\scr{L}_X\alpha = \mu \alpha$. In the standard model, one computes $\mu = 1$ in the subcritical case and $\mu = -1$ in the supercritical case.
\end{rmk}

\begin{rmk}
It is not the case that every critical point is standard in some coordinates: even in one dimension, taking $X = 2x\partial_x$ is not standard, since a simple computation shows that the value of $\mu$ at a critical point does not depend on the choice of contact form $\alpha$ for a given contact structure $\xi = \ker(\alpha)$, whereas $\mu = 2$ for this example.
\end{rmk}

We come now to our first main theorem, which asserts that every convex contact cobordisms is standard up to strict convex contact homotopy. In fact, we have the following stronger statement, in which there are strict conditions on the homotopy itself.

\begin{thm} \label{thm:standard_cp}
Suppose $(M,\xi,X,\phi)$ is a convex contact cobordism. Then one can find a strict convex contact homotopy $(X_t,\phi_t)$ on $(M,\xi)$, $0 \leq t \leq 1$, such that the following conditions are satisfied:
\begin{itemize}
	\item $(X_0,\phi_0) = (X,\phi)$
	\item the critical points and their critical values remain fixed for all $t$, and the homotopy is supported in an arbitrarily small neighborhood $U$ of these critical points
	\item on some smaller neighborhood $V \subset U$ of the critical points, the ascending and descending manifolds stay fixed
	\item $(M,\xi,X_1,\phi_1)$ is standard
\end{itemize}
\end{thm}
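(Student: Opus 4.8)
The plan is to follow the Weinstein-manifold blueprint from \cite{CE} (specifically their Proposition 12.12, recalled in Section~\ref{sec:Weinstein}) but adapted so that we control the \emph{contact} rather than symplectic structure. We work near a single critical point $p$; by duality $(X,\phi)\mapsto(-X,-\phi)$ we may assume $p$ is subcritical of index $0\le k\le n$. The first step is a sequence of reductions. Using the smooth Morse Lemma we fix coordinates $(x_i,y_i,z)$ near $p$ so that $\phi$ has the standard quadratic form $\phi_k$ up to a constant; the descending manifold of $p$ is then (arranged to be) the $y_1\cdots y_k$-plane near $p$. Next, since by Darboux (Corollary after Theorem~\ref{thm:gen_nbhd_contact}) all contact structures are locally standard, and since the descending disc is isotropic, we can further normalize the contact form to the standard $\alpha=dz+\tfrac12\sum(x_idy_i-y_idx_i)$ in a neighborhood of $p$ while keeping $\phi=\phi_k$; here one invokes Theorem~\ref{thm:complementary_isotropic} or the isotropic-neighborhood theorem (Theorem~\ref{thm:INT}) to simultaneously fix the contact form and position the descending isotropic. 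After these moves the only non-standard object is the contact vector field $X$, which is still a pseudo-gradient for $\phi_k$ with respect to $\alpha_{\mathrm{std}}$ and vanishes at $p$.

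The second step is to homotope $X$ to the model $X_k$ rel the critical point and rel the descending manifold. Because a contact vector field on $(\R^{2n+1},\alpha_{\mathrm{std}})$ is equivalent to its contact Hamiltonian $H=i_X\alpha$, this is really a statement about homotoping Hamiltonians: $X$ corresponds to some $H$ with $H(p)=0$ and $dH(p)=0$ (as $X$ vanishes to first order at $p$ — here we use that the expansion coefficient at a zero of $X$ is an invariant, equal to $\mu=1$ for index $\le n$, which pins down the relevant eigenvalue data), while $X_k$ corresponds to $H_k=z+\tfrac12\sum_{i=1}^k(-x_i^2+\cdots)$ — more precisely, one computes $H_k=i_{X_k}\alpha_{\mathrm{std}}$ explicitly. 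The key linear-algebra input is that the linearization $DX_p$ and $D(X_k)_p$ are conjugate: both are infinitesimally contact, both pseudo-gradient for the same quadratic form, and both have the prescribed eigenvalue at the Reeb direction, so the space of such linearizations is connected (indeed convex after a preliminary conjugation). We then take a straight-line-type homotopy $H_t=(1-t)H+tH_k$, cut off to be supported in $U$; the pseudo-gradient (Lyapunov) inequality $d\phi_k(X_t)\ge\delta(|X_t|^2+|d\phi_k|^2)$ is an open condition and holds on a possibly smaller neighborhood $V$ after rescaling, and it is preserved along the homotopy since each $X_t$ agrees to first order with a pseudo-gradient at $p$ and the inequality is stable under small $C^1$-perturbations away from $p$; on $V$ the descending manifold of $X_t$ stays the $y_1\cdots y_k$-plane because $X_t$ and $X_k$ agree there to the needed order.

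The third step is globalization: the homotopy just constructed near each critical point is supported in a small neighborhood $U$, and since distinct critical points can be taken to have disjoint such neighborhoods, the local homotopies assemble into a global strict convex contact homotopy $(X_t,\phi_t)$ on all of $M$ (constant outside $\bigcup U$), with $\phi_t$ Morse for all $t$ and $(X_t,\phi_t)$ a gradient-like pair throughout.

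I expect the main obstacle to be the \emph{simultaneous} normalization in step one together with the pseudo-gradient control in step two: it is easy to standardize $\alpha$ and $\phi$ separately, but one must do so while keeping $X$ a pseudo-gradient for $\phi$ at every stage and keeping the descending isotropic fixed near $p$, which forces some care in the order of the reductions (normalize $\phi$ first, then the descending disc, then $\alpha$ rel that disc, \emph{then} $X$). The second delicate point is verifying that the Lyapunov inequality survives the Hamiltonian homotopy $H_t$ on a uniform neighborhood $V$; this is handled by noting the inequality depends only on the $1$-jet of $X_t$ at $p$ for points very close to $p$, and is a stable open condition elsewhere, so shrinking $U$ (hence $V$) makes it automatic — but making this quantitative is where the real work lies, and it mirrors exactly the estimate in the proof of \cite[Proposition 12.12]{CE}.
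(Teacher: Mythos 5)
Your outline is in the right ballpark---reduce to a single subcritical point, normalize the coordinates, then homotope the Hamiltonian---but there are two concrete errors, one of which you flag yourself as ``where the real work lies,'' and that work is precisely what cannot be done the way you suggest.

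\emph{First}, your claim that at a critical point $dH(p)=0$ and that the expansion coefficient is ``an invariant, equal to $\mu=1$ for index $\le n$'' is false on both counts. At a zero of a contact vector field one has $i_Xd\alpha = dH(R_\alpha)\alpha - dH$, so $dH(p) = \mu(p)\alpha_p \neq 0$; and $\mu(p)$, while indeed an invariant of the critical point (Proposition~\ref{prop:nonzero_mu} and the remark following Definition~\ref{def:standard_cp}), need not equal $1$ --- the paper even notes that $X = 2x\partial_x$ in one dimension has $\mu=2$ and hence is not standard in any coordinates. This is not a cosmetic issue: any contact Hamiltonian interpolation which is linear near $p$ cannot change $\mu(p)$, so one can only hope to homotope to $\mu(p)\cdot X_k$ rather than to $X_k$ itself (this is the content of Lemma~\ref{lem:handles}), and the passage from $\mu X_{\mathrm{std}}$ to $X_{\mathrm{std}}$ requires a separate cut-off homotopy, which is the actual proof of Theorem~\ref{thm:standard_cp} once Lemma~\ref{lem:handles} is in hand (the bump function there must satisfy $r|\rho'(r)|$ sufficiently small, which works because $\int dr/r$ diverges near $0$). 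Your $H_k$ formula is also wrong: $i_{X_k}\alpha_{\mathrm{std}} = z + \tfrac32\sum_{i\le k}x_iy_i$, with no quadratic diagonal terms.

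\emph{Second}, the naive cut-off homotopy $H_t = (1-t\rho)H_0 + t\rho H_1$ does not in general preserve the Lyapunov inequality, and shrinking the neighborhood does not fix it. The obstruction is the derivative term $t(H_1-H_0)Z$ coming from $d\rho$, where $Z$ is the contact vector field dual to $-d\rho$ on $\xi$; $|Z|$ scales like $1/\epsilon$ while $|H_1-H_0|$ only scales like $\epsilon|d\phi|$, so the correction term is bounded by $|d\phi|$ with a constant that depends on $\|X_1-X_0\|$ and $\|\mu_1-\mu_0\|$ near $p$, not on $\epsilon$. Thus the single-bump interpolation works only when $X_1-X_0$ and $\mu_1-\mu_0$ are already small relative to $\delta|d\phi|$. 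The paper's Lemma~\ref{lem:main_homotopy} handles the general case by linearly interpolating $X_t$ in many stages $t_0<\cdots<t_M$, each stage satisfying the required smallness bounds, and applying the cut-off argument successively on a nested sequence of shrinking balls. This iterated construction is the essential idea your proposal is missing.

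Beyond those gaps, your ordering of the reductions (normalize $\phi$, then the descending disc, then $\alpha$, then $X$) differs from the paper, which uses Theorem~\ref{thm:complementary_isotropic} to get $\xi$ and both the ascending and descending planes into standard position in one stroke (Lemma~\ref{lem:using_isotopy_extension}), then normalizes the vector field first (Lemma~\ref{lem:main_homotopy}) and only afterward drags the Morse function into standard form (the matrix argument in Lemma~\ref{lem:handles}, comparing $B_0A_0$ with $B_1A_0$). Your order is not unreasonable in principle, but as written it relies on the failed interpolation in the second step.
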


\begin{rmk}
In this theorem, the underlying contact manifold $(M,\xi)$ remains constant through the homotopy.
\end{rmk}

\subsection{Descending and ascending manifolds}

The first step towards proving Theorem \ref{thm:standard_cp} is to understand as much as we can about the neighborhood of the critical points of a (possibly non-standard) convex contact structure. This will allow us to gain enough control, via the various neighborhood theorems described in Section \ref{sec:background}, to construct our homotopy.

We fix notation so that $(M^{2n+1},\xi,X,\phi)$ is a convex contact cobordism. That the convex structure places strong restrictions on how the Morse theory of $X$ interacts with the contact geometry of $(M,\xi)$ is no surprise, given the corresponding statements for Weinstein manifolds. We prove that the ascending and descending manifolds in a convex contact manifold are isotropic and coisotropic, depending on the index of the critical point.

\begin{prop} \label{prop:nonzero_mu}
Let $p \in M$ be a critical point. Then $\mu(p) \neq 0$.
\end{prop}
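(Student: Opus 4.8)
The plan is to exploit the gradient-like condition directly at the critical point, where most geometric quantities simplify. Recall that $X$ is a contact vector field with contact Hamiltonian $H = i_X\alpha$, and that $\mu = dH(R_\alpha)$ is the expansion coefficient, whose value at a point where $X = 0$ is independent of the choice of contact form (as noted after Definition \ref{defn:cvf_and_expansion}). At a critical point $p$ of $\phi$, we have $X(p) = 0$, hence $H(p) = i_X\alpha|_p = 0$, and moreover the full $1$-form $i_Xd\alpha|_p = dH(R_\alpha)\alpha - dH$ vanishes. Feeding the Reeb vector field into this relation gives $dH(R_\alpha)|_p = dH(R_\alpha)|_p$, which is vacuous, but feeding in an arbitrary vector shows $dH|_p = \mu(p)\,\alpha|_p$. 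In other words, at a critical point, $dH$ is proportional to $\alpha$, with proportionality constant exactly $\mu(p)$.

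The key observation is then the following: the critical point $p$ of $\phi$ is \emph{not} generally a critical point of $H$, and in fact $\mu(p) = 0$ would force $dH|_p = 0$, i.e. $p$ would be a critical point of $H$ as well. I would argue this cannot happen for a gradient-like pair. The gradient-like inequality $d\phi(X) \geq \delta(|X|^2 + |d\phi|^2)$ controls $X$ near $p$, but the cleaner route is to linearize: near $p$, write $X$ in terms of its linearization $A = DX_p \colon T_pM \to T_pM$, and recall that for a contact vector field the linearization at a zero is tied to $\mu(p)$ through the action on $TM/\xi$. Concretely, $X$ descends to a section of $TM/\xi$ via $\alpha$, namely $H$, and $X$ acting on this line bundle at its zero $p$ has eigenvalue precisely $\mu(p)$ (this is the intrinsic meaning of $\mu(p)$: it is the derivative of the induced flow on $TM/\xi$). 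If $\mu(p) = 0$, then the linearization $A$ preserves $\xi_p$ and has no component transverse to $\xi_p$ — but $d\phi|_p = 0$ and the Morse condition forces the Hessian of $\phi$ at $p$ to be nondegenerate, while the gradient-like condition forces $A$ to be a nondegenerate linear map whose symmetric part (with respect to a suitable metric) is positive-definite on a complement — and one checks this is incompatible with $A$ preserving a hyperplane with zero transverse eigenvalue, since then $A$ would have $0$ as an eigenvalue, contradicting nondegeneracy of $X$ near the critical point (which follows from the gradient-like inequality, as $|X|^2 \leq \delta^{-1}d\phi(X)$ and the Morse/Lyapunov structure makes $p$ an isolated nondegenerate zero of $X$).

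So the main steps, in order, are: (1) show $dH|_p = \mu(p)\alpha|_p$ using the contact-Hamiltonian correspondence at a zero of $X$; (2) identify $\mu(p)$ as the eigenvalue of the linearization $DX_p$ on the transverse line $T_pM/\xi_p$; (3) show that a gradient-like pair forces $DX_p$ to be an invertible linear map with no zero eigenvalue (this is essentially the statement that critical points of a Lyapunov pair are hyperbolic, or at least nondegenerate zeros of the field); (4) conclude that if $\mu(p) = 0$ then $DX_p$ would have a zero eigenvalue in the transverse direction, a contradiction. The main obstacle is step (3)–(4): one must be careful that "no zero transverse eigenvalue" really does follow, which requires knowing that $\xi_p$ is an $DX_p$-invariant subspace when $\mu(p)=0$ — this invariance comes from the fact that $X$ is a contact vector field, so its flow preserves $\xi$, hence $DX_p$ preserves $\xi_p$; then the transverse eigenvalue being $\mu(p) = 0$ directly exhibits $0$ in the spectrum of $DX_p$. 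I expect the author's proof to do exactly this, possibly packaging it more slickly by working with the symplectization $(\R\times M, d(e^t\alpha))$ where $X$ lifts to a Liouville-type field and the argument reduces to the Weinstein statement that critical points have nonzero eigenvalues, but the linearization argument above is the most direct route.
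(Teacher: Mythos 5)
Your proposal is correct and is essentially the paper's argument, just phrased in terms of the primal linearization $DX_p$ rather than its dual: you observe that $\mu(p)$ is the eigenvalue of $DX_p$ on the invariant line $T_pM/\xi_p$ and that the gradient-like condition forces $DX_p$ to be invertible, whereas the paper packages the same facts by noting that $di_X$ acts pointwise on $\Omega^1_p$ with matrix $\partial_j X^i$ and that $\alpha_p$ is an eigenvector with eigenvalue $\mu(p)$. One minor stylistic note: your invertibility step (3) can be stated more directly than via "symmetric parts on a complement" — with $X(Z) = AZ + O(|Z|^2)$ and the Hessian $B$ of $\phi$, the gradient-like condition gives $BA$ positive-definite with $B$ nondegenerate symmetric, so $Av = 0$ would force $v^T BAv = 0$, hence $v=0$.
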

\begin{proof}
We have that $\mu(p)$ is defined by the equation
$$(i_X d\alpha + di_X\alpha)_p = \mu(p)\alpha_p.$$
Note that $(i_Xd\alpha)_p = i_{X(p)}(d\alpha)_p = 0$ since $X(p) = 0$ at a critical point. So we reduce to the equation
$$(di_X\alpha)_p = \mu(p)\alpha_p.$$
On the other hand, we see that for any function $f$ in a neighborhood of $p$,
$$di_X(f\alpha)_p = \alpha_p(X(p))df_p + f(di_X\alpha)_p = f(di_X\alpha)_p.$$
In other words, $di_X$ acts pointwise at $p$, and is hence just a linear map $\Omega^1_p \rightarrow \Omega^1_p$. The equation defining $\mu(p)$ simply asserts that $\mu(p)$ is an eigenvalue for the operator $di_X$ on $\Omega^1_p$ with eigenvector $\alpha_p$ (note that $\alpha_p \neq 0$ since $\alpha$ is a contact form). If we use coordinates $x^1,\ldots,x^{m=2n+1}$, so that $\Omega^1_p$ has the canonical basis $dx^1,\ldots, dx^m$, then a short computation shows $di_X$ acts by the matrix $B_j^i = \partial_j X^i$. This matrix is invertible since $X$ is gradient-like for a Morse function. In particular, $\mu(p) \neq 0$.
\end{proof}

\begin{lem}
Suppose $p \in M$ is a critical point for which $\mu(p) > 0$. Then the descending manifold is isotropic and the ascending manifold is coisotropic.
\end{lem}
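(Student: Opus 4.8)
The plan is to work at the level of the symplectization, reducing the statement to the known Weinstein result. Recall that if $\mu(p) > 0$, then locally near $p$ we may choose a contact form $\alpha$ so that $\mathcal{L}_X\alpha = \mu\alpha$ with $\mu$ positive near $p$ (shrinking the neighborhood if necessary, since $\mu(p) \neq 0$ by Proposition \ref{prop:nonzero_mu} and $\mu$ is continuous). On the symplectization $(\R \times M, \omega = d(e^s\alpha))$ one has a natural lift of the contact vector field $X$ to a vector field $\wt{X}$: writing the contact Hamiltonian of $X$ as $H = i_X\alpha$, the vector field $\wt{X} = X + g\,\partial_s$ for a suitable function $g$ can be chosen to be Liouville for $\omega$, i.e. $\mathcal{L}_{\wt{X}}\omega = \omega$. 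The key computation is that this works precisely because the expansion coefficient $\mu$ enters as the ``$\partial_s$-component'' of the Liouville condition; one checks $\mathcal{L}_X(e^s\alpha) = e^s\mu\alpha$, so adjusting by a $\partial_s$-term to absorb the discrepancy from $1$ is possible when we have enough room, and near a critical point $p$ of $X$ with $\mu(p) > 0$ this adjustment is well-behaved and $\wt{X}$ has an isolated zero at $(s_0, p)$ for the appropriate $s_0$.

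Next I would transport the Morse data. Given the gradient-like pair $(X,\phi)$ on $M$, define $\wt{\phi}$ on $\R \times M$ by something like $\wt{\phi} = \phi + c(s - s_0)^2$ (or more carefully, $\phi$ plus a function of $s$ with a nondegenerate minimum at $s_0$), chosen so that $(\wt{X}, \wt{\phi})$ is a gradient-like pair for a Morse function on the symplectization near $(s_0,p)$, with a single critical point at $(s_0,p)$ of index $k$ (if $p$ has index $k$ for $\phi$), since we have added a single positive-definite direction. This exhibits a neighborhood of $(s_0, p)$ in $(\R \times M, \omega, \wt{X}, \wt{\phi})$ as (a piece of) a Weinstein cobordism. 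By the Weinstein result quoted in Section \ref{sec:Weinstein}, the descending manifold $\wt{D}$ of $(s_0,p)$ is isotropic for $\omega$ and the ascending manifold $\wt{A}$ is coisotropic.

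Finally I would descend back to $M$. The descending manifold $D$ of $p$ in $M$ and the ascending manifold $A$ of $p$ in $M$ are related to $\wt{D}$ and $\wt{A}$: because the added $s$-direction was positive-definite, $\wt{D} = \{s = s_0\} \cap (\R \times D)$ up to the flow, essentially a copy of $D$ sitting inside the slice $\{s=s_0\} \cong M$, while $\wt{A}$ contains $\R \times A$ (the ascending manifold gains the whole $s$-line). Then isotropy of $\wt{D}$ for $\omega = d(e^s\alpha)$ along the slice $\{s=s_0\}$ says exactly $TD \subseteq \xi$ and $d\alpha|_{TD} = 0$, i.e. $D$ is isotropic in $(M,\xi)$; and coisotropy of $\wt{A} = \R\times A$ for $\omega$ translates, via the characterization of Definition (``$\R\times V$ coisotropic in the symplectization $\iff$ $V$ coisotropic''), into $A$ being coisotropic in $(M,\xi)$. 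The main obstacle I anticipate is the first step: verifying cleanly that the Liouville lift $\wt{X}$ exists near $p$ with the correct isolated-zero and nondegeneracy behavior, and that the perturbation $\wt{\phi}$ genuinely yields a gradient-like pair in the Weinstein sense — this is where the hypothesis $\mu(p) > 0$ (rather than merely $\mu(p)\neq 0$) is essential, since a negative $\mu$ would force the added direction to be negative-definite and would instead relate $D$ to a coisotropic, which is the content of the dual lemma for supercritical points.
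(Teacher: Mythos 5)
Your plan — lift to the symplectization, make $X$ into a Liouville vector field, and invoke the Weinstein result — is an attractive idea but has three genuine gaps, the first of which I think is fatal in the form stated.

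First, the Liouville lift is forced to be $\wt{X} = X + (1-\mu)\partial_s$: one computes $\mathcal{L}_{X+g\partial_s}(e^s\alpha) = e^s(\mu+g)\alpha$, and the only way to make $\wt{X}$ Liouville for $d(e^s\alpha)$ is $g = 1-\mu$ (up to adding a Hamiltonian vector field, which you do not invoke). At a critical point $p$ of $X$ this gives $\wt{X}(s,p) = (1-\mu(p))\partial_s$, which is \emph{nonzero} for every $s$ unless $\mu(p)=1$. The value $\mu(p)$ at a zero of $X$ is invariant under rescaling of $\alpha$ (see the discussion after Definition \ref{defn:cvf_and_expansion}), so you cannot choose $\alpha$ to force $\mu(p)=1$. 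You could rescale $X$ itself by $1/\mu(p)$, but you do not say so; and even after doing so, $\wt{X}$ is $s$-independent and hence vanishes along the entire line $\R\times\{p\}$. This is a Morse--Bott degeneracy, not an isolated nondegenerate zero, and the result you quote from Section \ref{sec:Weinstein} applies to Morse, not Morse--Bott, Weinstein pairs.

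Second, this degeneracy kills your proposed Lyapunov function: with $\wt{\phi} = \phi + c(s-s_0)^2$, at points $(s,p)$ with $s\neq s_0$ one has $\wt{X}=0$ but $d\wt{\phi}\neq 0$, so the inequality $d\wt{\phi}(\wt{X}) \geq \delta(|\wt{X}|^2 + |d\wt{\phi}|^2)$ fails. To repair this you would have to perturb $\wt{X}$ by a Hamiltonian vector field to isolate the zero, but then the stable and unstable manifolds of the perturbed field no longer project so simply to those of $X$ on $M$, and the descent argument needs to be reworked.

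Third, even granting an isolated zero, your descent step conflates two different conditions. If the descending manifold $\wt{D}$ of $\wt{X}$ lives in a single slice $\{s=s_0\}$, its isotropy with respect to $\omega = e^s(ds\wedge\alpha + d\alpha)$ gives only $d\alpha|_{TD}=0$, not $\alpha|_{TD}=0$; the equivalence ``$V$ isotropic iff $\R\times V$ isotropic in the symplectization'' requires the full product $\R\times V$, not a horizontal slice, and in the Morse (non-Bott) picture $\wt{D}$ is a graph over $D$, for which the isotropy condition mixes $\alpha$ and $d\alpha$ terms in a way that does not directly yield $TD\subset\xi$.

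For comparison, the paper's proof stays on $M$ and argues directly, following \cite[Proposition 11.9]{CE}: for the descending manifold it uses that $(\Psi^t)^*\alpha = \exp\bigl(\int_0^t (\Psi^\tau)^*\mu\,d\tau\bigr)\alpha$, whose exponential factor blows up as $t\to\infty$ because $\mu(p)>0$, while $\alpha(d\Psi^t v)\to 0$ for $v$ tangent to the descending manifold, forcing $\alpha(v)=0$; for the ascending manifold it invokes the $\lambda$-lemma to produce a contradiction from a putative vector in $(TV_p^+\cap\xi)^\omega\setminus TV_p^+$. That route avoids the symplectization altogether and so sidesteps all of the issues above. Your approach could in principle be made to work (rescale $X$, perturb the Liouville lift to break the Bott degeneracy, then carefully descend a slanted descending manifold), but the argument as written does not establish those steps, and the gaps are precisely at the points you flag as ``the main obstacle.''
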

\begin{proof}
This proof is just a modification of the proof in the Weinstein setting, \cite[Proposition 11.9]{CE}. Let $V_p^-$ and $V_p^+$ be the descending and ascending manifolds for $p$. Let $\Psi^t$ be the time $t$ flow of $X$.

We first prove that $V_p^-$ is isotropic. Pick a point $q \in V_p^-$ and a vector $v \in T_qV_p^-$. It suffices to prove $\alpha(v) = 0$. Since all points in $V_p^-$ flow to $p$ under $X$, we have that $|d\Psi^t(v)|$ approaches $0$ as $t$ approaches $\infty$. Therefore, $((\Psi^t)^*\alpha)(v) = \alpha(d\Psi^t(v))$ also approaches $0$. Meanwhile, we have $\scr{L}_X\alpha = \mu \alpha$. It follows that $(\Psi^t)^*\alpha = \exp(\int_0^t ((\Psi^\tau)^*\mu) d\tau)\alpha$. Hence, if we look at the value at $q$, we see that for large enough $t$, $(\Psi^t)^*\mu \approx \mu(p)$. But $\mu(p) > 0$ by hypothesis, so this exponential coefficient goes off to $\infty$. Hence, in order for $((\Psi^t)^*\alpha)(v)$ to approach $0$, we must have $\alpha(v) = 0$.

Now we prove that $V_p^+$ is coisotropic. First we recall the $\lambda$-lemma (see e.g. \cite{PdM} for a proof) which states that there exists an open neighborhood $U$ around $p$ such that for any $q \in V_p^+ \cap U$, any submanifold $D$ intersecting $V_p^+$ at $q$ transversely of complementary dimension, and any $\epsilon > 0$, there is some $t_0$ (depending upon $\epsilon$) such that for all $t > t_0$, $\Psi^{-t}(D)$ is $\epsilon$ $C^1$-close to $V_p^- \cap U$.

Suppose we pick a point $q \in V_p^+$ and a vector $v \in (T_qV_p^+ \cap \xi_q)^{\omega}$ where $\omega$ is the conformal symplectic structure on $\xi_q$. We must prove $v \in T_qV_p^+$. Since $\Psi^t$ preserves $TV_p^{+}$, $\xi$, and the conformal class of $\omega$, and since $\Psi^t$ will flow $q$ into $U$ for $t$ negative enough, it suffices to prove the result assuming $q \in U \cap V_p^+$. The final bit of set-up we need is to suppose that there is some underlying Riemannian metric.

For each $n \in \N$, there is some constant $\lambda_n$ such that $\Psi^{-n}(v) = \lambda_nv_n$ where $v_n$ is a unit vector. Then by compactness, some subsequence of the $v_n$ converges to a unit vector $v_{\infty}$ in $T_pM$. Suppose by way of contradiction that $v \notin T_qV_p^+$. Then there is some transverse submanifold $D$ to $V_p^+$ at $q$ such that $v \in T_qD$. Applying the $\lambda$-lemma, we have that $v_{\infty} \in T_pV_p^- \leq (T_pV_p^-)^{\omega}$. But also  since $\Psi^t$ preserves $(TV_p^+ \cap \xi)^{\omega}$, we have also that $v_{\infty} \in (TV_p^+ \cap \xi)^{\omega}$. But then $v_{\infty}$ is $\omega$-orthogonal to both $T_pV_p^-$ and $T_pV_p^+ \cap \xi_p$, which span all of $\xi_p$, and so $v_{\infty} = 0$. But $v_{\infty}$ is a unit vector, so this cannot happen, and we have arrived at a contradiction in our assumption that $v \notin T_qV_p^+$. Therefore, $v \in T_qV_p^+$ as desired.
\end{proof}

\begin{lem}
If instead $\mu(p) < 0$, then the descending manifold is coisotropic and the ascending manifold is isotropic.
\end{lem}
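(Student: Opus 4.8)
The plan is to reduce this statement to the previous lemma by invoking the duality $(X,\phi)\mapsto(-X,-\phi)$, the same involution used to build the standard supercritical models out of the subcritical ones. First I would check that $(M,\xi,-X,-\phi)$ is again a convex contact cobordism: the flow of $-X$ still preserves $\xi$, so $-X$ is a contact vector field; the gradient-like inequality is insensitive to the simultaneous sign flip, since $d(-\phi)(-X)=d\phi(X)\ge \delta(|X|^2+|d\phi|^2)=\delta(|-X|^2+|d(-\phi)|^2)$; and the only effect on the boundary conditions is that the roles of $\partial_+M$ and $\partial_-M$ are interchanged. Note also that the contact structure $\xi=\ker\alpha$, and hence the conformal symplectic structure on $\xi$, is unchanged.

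Next I would record how the data at $p$ transforms. Since $\scr{L}_{-X}\alpha=-\scr{L}_X\alpha=-\mu\alpha$, the expansion coefficient of $-X$ with respect to $\alpha$ is $-\mu$, so at $p$ it equals $-\mu(p)>0$. The point $p$ is still a (nondegenerate) critical point of $-\phi$, and the flow of $-X$ is the time-reversed flow of $X$; therefore the descending (stable) manifold of $p$ for $-X$ coincides with the ascending (unstable) manifold of $p$ for $X$, and the ascending manifold of $p$ for $-X$ coincides with the descending manifold of $p$ for $X$.

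I would then simply apply the previous lemma to the convex contact cobordism $(M,\xi,-X,-\phi)$ at the critical point $p$, whose expansion coefficient is now positive: it asserts that the descending manifold of $p$ for $-X$ is isotropic and the ascending manifold of $p$ for $-X$ is coisotropic. Translating back through the dictionary of the previous paragraph yields that the ascending manifold of $p$ for $X$ is isotropic and the descending manifold of $p$ for $X$ is coisotropic, which is exactly the claim. There is essentially no obstacle here; the only place requiring care is matching the stable/unstable versus descending/ascending conventions correctly under the involution, and observing that the hypotheses of the $\mu>0$ lemma do not involve the index, so they apply verbatim to $(-X,-\phi)$.
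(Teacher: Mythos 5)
Your proposal is correct and matches the paper's proof, which also reduces to the previous lemma by replacing $(X,\phi)$ with $(-X,-\phi)$. You simply spell out the routine verifications (contact vector field, gradient-like inequality, sign flip of $\mu$, swap of ascending/descending manifolds) that the paper leaves implicit.
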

\begin{proof}
Simply apply the previous lemma to the convex contact manifold $(M,\xi,-X,-\phi)$.
\end{proof}

\begin{cor} If $p$ is a critical point for which $\mu(p) > 0$, then $p$ is subcritical. If $p$ is a critical point for which $\mu(p) < 0$, then $p$ is supercritical.
\end{cor}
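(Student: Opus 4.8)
The plan is to show that the sign of the expansion coefficient $\mu(p)$ at a critical point is determined by the index, using the two preceding lemmas together with a dimension count on the descending (or ascending) manifold. Suppose $p$ is a critical point with $\mu(p) > 0$. By Proposition \ref{prop:nonzero_mu} this is the generic situation (the only alternative being $\mu(p) < 0$), and by the lemma just proved, the descending manifold $V_p^-$ is isotropic. If $p$ has index $k$, then $\dim V_p^- = k$. But an isotropic submanifold of a $(2n+1)$-dimensional contact manifold has dimension at most $n$, as noted in the discussion following the definition of isotropic submanifolds. Hence $k \leq n$, i.e. $p$ is subcritical. Symmetrically, if $\mu(p) < 0$, then by the other lemma the ascending manifold $V_p^+$ is isotropic; since $\dim V_p^+ = 2n+1-k$, we get $2n+1-k \leq n$, so $k \geq n+1$, i.e. $p$ is supercritical.

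In more detail, the key steps in order would be: (1) invoke Proposition \ref{prop:nonzero_mu} to know $\mu(p) \neq 0$, so exactly one of the two cases holds; (2) in the case $\mu(p) > 0$, apply the lemma giving that $V_p^-$ is isotropic; (3) recall that isotropics in a $(2n+1)$-dimensional contact manifold have dimension $\leq n$ (this is the remark that $TV \subseteq \xi$ forces $TV$ isotropic for the conformal symplectic structure on the rank-$2n$ bundle $\xi$, hence of rank at most $n$); (4) combine with $\dim V_p^- = k$ to conclude $k \leq n$; (5) for $\mu(p) < 0$, either apply steps (2)--(4) to $(M,\xi,-X,-\phi)$ (for which the expansion coefficient at $p$ changes sign, so it falls under the previous case, and index $k$ becomes index $2n+1-k$), or directly use the lemma giving $V_p^+$ isotropic and the dimension count $\dim V_p^+ = 2n+1-k \leq n$.

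I do not expect any genuine obstacle here: the statement is a near-immediate corollary of the two lemmas plus the elementary fact bounding the dimension of isotropic submanifolds, and the supercritical case reduces to the subcritical one by the $(X,\phi) \mapsto (-X,-\phi)$ duality. The only point requiring the slightest care is bookkeeping the index under this duality — a critical point of index $k$ for $\phi$ has index $2n+1-k$ for $-\phi$, and the expansion coefficient at $p$ negates — so that "subcritical for $-\phi$" translates correctly to "supercritical for $\phi$." This is exactly the duality already used in Definition \ref{def:standard_cp}, so it introduces nothing new.
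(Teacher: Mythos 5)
Your proof is correct and takes essentially the same approach as the paper: both arguments invoke the two preceding lemmas to get isotropy of the descending (resp. ascending) manifold and then use the bound $\dim \leq n$ on isotropics in a $(2n+1)$-dimensional contact manifold to read off the index constraint. The extra bookkeeping you do for the $(X,\phi)\mapsto(-X,-\phi)$ duality is consistent and matches what the paper leaves implicit.
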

\begin{proof}
When $\mu(p) > 0$, the descending manifold is isotropic, and the index is just the dimension of the descending manifold, hence at most $n$. If $\mu(p) < 0$, the ascending manifold is instead isotropic, so the descending manifold, and hence the index, has dimension at least $n+1$.
\end{proof}

In what follows, we will be able to forget about the expansion coefficient, $\mu$. In summary, subcritical points have isotropic descending disks, whereas supercritical points have coisotropic descending disks.

\begin{rmk} In Weinstein surgery theory, the descending manifolds were all isotropic and of index at most half the dimension. Meanwhile, on the convex contact side, the inclusion of supercritical points allows for a richer theory. We will see in Section \ref{sec:OBD} that, in fact, if we only allow subcritical points on a contact $(2n+1)$-dimensional manifold, we are studying nothing more than the theory of $2n$-dimensional Weinstein cobordisms.
\end{rmk}

\subsection{Neighborhoods of critical points}

Finally we prove Theorem \ref{thm:standard_cp}. For simplicity, we shall always consider the subcritical case. This choice is purely cosmetic: we can consider the pair $(-X,-\phi)$ instead to obtain a supercritical critical point. We begin by understanding nice coordinates for the ascending and descending manifolds.

\begin{lem} \label{lem:using_isotopy_extension}
Let $(M^{2n+1},\xi,X,\phi)$ be a convex contact manifold with a subcritical point $p$ of index $k$. Then one can find an open neighborhood $U$ of $p$ and a smooth embedding $\Phi \colon (U,p) \hookrightarrow (\R^{2n+1},0)$ so that in the coordinates given by $\R^{2n+1}$, we have
\begin{itemize}
	\item $\xi = \ker \left( dz + \frac{1}{2}\sum_{i=1}^{n}(-y_idx_i + x_idy_i) \right)$
	\item The descending (isotropic) manifold $L$ is the $(x_1,\ldots,x_k)$-plane (i.e. the set with $x_{k+1} = \cdots = x_n = y_1 = \cdots = y_n = z = 0$).
	\item The ascending (coisotropic) manifold $C$ is the set where $x_1 = \cdots = x_k = 0$.
\end{itemize}
\end{lem}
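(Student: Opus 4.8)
The plan is to combine the local coordinate model for a coisotropic submanifold together with a transverse complementary isotropic submanifold (Theorem \ref{thm:complementary_isotropic}) with a small amount of Morse-theoretic input about the critical point $p$. First I would set up the ascending manifold $C = V_p^+$ and the descending manifold $L = V_p^-$. By the lemmas just proved, since $p$ is subcritical we have $\mu(p) > 0$, so $L$ is isotropic of dimension $k$ and $C$ is coisotropic of dimension $2n+1-k$. By the standard stable/unstable manifold theory for the gradient-like vector field $X$, near $p$ the submanifolds $L$ and $C$ intersect transversely and meet exactly at $p$: indeed $T_pL = E^-$ and $T_pC = E^+$ are the negative and non-negative eigenspaces of the linearization $d\phi$-Hessian (equivalently the linearization of $X$), and these are complementary. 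So on a small enough neighborhood of $p$, $L$ and $C$ are genuine submanifolds with $L \cap C = \{p\}$ and $\dim L + \dim C = (2n+1) + 0$... wait, $k + (2n+1-k) = 2n+1$, matching the transversality count $\dim L + \dim C - \dim M = 0$, so the intersection is a point, as needed.

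Next I would apply Theorem \ref{thm:complementary_isotropic} verbatim to the pair $(C, L)$ at the point $p$. That theorem gives local coordinates $x_1, \ldots, x_n, y_1, \ldots, y_n, z$ on a neighborhood $\Open_M(p)$ in which $\xi = \ker\left(dz + \tfrac{1}{2}\sum(-y_i\, dx_i + x_i\, dy_i)\right)$, with $C$ being the coordinate plane $\{x_1 = \cdots = x_k = 0\}$ and $L$ being the coordinate plane $\{x_{k+1} = \cdots = x_n = y_1 = \cdots = y_n = z = 0\}$. This is exactly the conclusion claimed in Lemma \ref{lem:using_isotopy_extension}; the coordinate map $\Phi$ is the one provided by that theorem. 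The only small check is that the hypotheses of Theorem \ref{thm:complementary_isotropic} are met — namely that $C$ is coisotropic (established), $L$ is isotropic (established), $L$ is transverse to $C$, and $L \cap C = \{p\}$ — all of which follow from the Morse-theoretic discussion of the previous paragraph, possibly after shrinking to a neighborhood where the stable/unstable manifolds are embedded discs.

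I expect the main (though still modest) obstacle to be the bookkeeping around transversality and the local structure of $L$ and $C$ near $p$: one must be slightly careful that $L$ and $C$ are embedded submanifolds on a genuine neighborhood of $p$ (the global ascending/descending manifolds need not be embedded, but their germs at $p$ are, by the stable manifold theorem), and that the index-$k$ bookkeeping lines up so that the descending plane is spanned by exactly $k$ coordinate directions. Once that local picture is in hand, the statement is essentially an immediate corollary of Theorem \ref{thm:complementary_isotropic}, and no Moser-type argument or further homotopy is needed at this stage — those enter only in the subsequent steps toward Theorem \ref{thm:standard_cp}, where one must additionally normalize the pair $(X, \phi)$ itself.
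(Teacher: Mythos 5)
Your proof is correct and follows the same route as the paper: identify the descending/ascending manifolds as transverse isotropic/coisotropic submanifolds meeting at $p$ (using the preceding lemmas on $\mu(p) > 0$ and standard stable-manifold theory), then invoke Theorem \ref{thm:complementary_isotropic} to produce the coordinates. The paper's proof is just a one-line restatement of this; your extra care about the local embeddedness of germs and the dimension count is correct but routine.
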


\begin{proof}
The descending isotropic and ascending coisotropic submanifolds are transverse by Morse theory ($X$ is gradient-like for $\phi$), so this is just a restatement of Theorem \ref{thm:complementary_isotropic}.
\end{proof}

We have one trick up our sleeves, following the Weinstein case \cite[Lemma 12.9]{CE}. The idea is that we wish to interpolate contact vector fields. The most natural way to go about doing this is to interpolate between their contact Hamiltonians so that they remain fixed away from the critical point. This requires the use of a bump function, but there is one tricky analytical detail, which is that if we try to use just one bump function, we can only  interpolate for a short amount of time. Instead, what we do is use a sequence of bump functions to interpolate the vector field on smaller and smaller balls around the critical point $p$.

\begin{lem} \label{lem:main_homotopy}
Suppose that in an open neighborhood $U$ of $p$, $X_0$ and $X_1$ are two contact vector fields gradient-like for some fixed Morse function $\phi$ with unique nondegenerate critical point at $p$, such that $\mu_0(p) = \mu_1(p)$. Then there exists a homotopy of contact vector fields $X_t$ such that
\begin{itemize}
	\item $X_t$ remains gradient-like for $\phi$ for all $t$ (so this yields a strict homotopy of convex contact structures with no new critical points created)
	\item $X_t$ remains fixed on $U \setminus W$, where $W$ is a precompact neighborhood of $p$ in $U$ (meaning $\overline{W} \subset U$ is compact)
	\item In some neighborhood of $p$, $X_t = (1-t)X_0 + tX_1$. In particular, if $X_0$ and $X_1$ have the same ascending and escending manifolds, then they remain fixed in this neighborhood (but not necessarily on all of $U$).
\end{itemize}
\end{lem}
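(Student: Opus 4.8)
The plan is to interpolate between the two contact Hamiltonians, but to do so over a shrinking sequence of nested balls so as to circumvent the short-time obstruction. Fix a contact form $\alpha$ for $\xi$ on $U$, and let $H_0, H_1$ be the contact Hamiltonians for $X_0, X_1$ with respect to $\alpha$. Since both vector fields vanish at $p$ and have the same expansion coefficient there, Remark \ref{rmk:mu_and_H} gives $dH_0(R_\alpha)|_p = dH_1(R_\alpha)|_p = \mu_0(p) = \mu_1(p)$; moreover the linearizations of the two vector fields at $p$ agree along the Reeb direction only up to this, so the difference $H_1 - H_0$ vanishes to first order at $p$ in the Reeb direction, and we will want more. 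First I would record what is needed for the gradient-like condition to be preserved: the set of contact vector fields which are pseudo-gradients for $\phi$ (with the given constant $\delta$ relaxed slightly to $\delta/2$, say) is an open condition away from $p$, and near $p$ it is governed entirely by the linearization, so convex combinations of $X_0$ and $X_1$ remain pseudo-gradients in \emph{some} fixed neighborhood $N$ of $p$ because the linearizations $DX_0|_p$ and $DX_1|_p$ both satisfy the Morse-theoretic positivity and this is a convex condition on the space of such linear maps. The point of the lemma is that this $N$ may be far smaller than $W$, so a single bump function supported in $W$ and equal to $1$ on $N$ would only let us run the straight-line interpolation $H_t = (1-t)H_0 + tH_1$ for a bounded amount of time before the pseudo-gradient condition could fail on $W \setminus N$.

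The key step is the telescoping trick, following \cite[Lemma 12.9]{CE}. Choose a decreasing sequence of precompact neighborhoods $W \supset W_0 \supset W_1 \supset W_2 \supset \cdots$ with $\bigcap_j W_j = \{p\}$ and each $\overline{W_{j+1}} \subset W_j$, and bump functions $\chi_j$ equal to $1$ on $W_{j+1}$ and supported in $W_j$. On the interval $t \in [1 - 2^{-j}, 1 - 2^{-j-1}]$ we interpolate only inside $W_j$: concretely, set $H_t = H^{(j)} + \chi_j \cdot \sigma_j(t)\,(H_1 - H^{(j)})$ where $H^{(j)}$ is the Hamiltonian reached at time $t = 1 - 2^{-j}$ (which already agrees with $H_1$ on $W_{j}$... more precisely on $W_{j}$ after the previous stages have been run) and $\sigma_j$ is an increasing reparametrization of the subinterval. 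Because each stage only modifies the vector field inside $W_j$, and because on the compact set $\overline{W_j}$ the straight-line homotopy from $H^{(j)}$ to $H_1$ can be run for \emph{some} definite time $\epsilon_j > 0$ without destroying the (relaxed) pseudo-gradient property — as $W_j$ shrinks towards $\{p\}$, the relevant estimates are controlled by the linearization at $p$, which is fixed — one can always fit the $j$-th interpolation into its allotted time by reparametrizing $\sigma_j$ to move slowly enough. At $t = 1$ all stages have completed and $X_1$ is reached; at every finite $t < 1$ only finitely many stages have occurred, so $X_t$ is smooth, and smoothness at $t = 1$ follows because near $p$ the homotopy is eventually just the straight-line one (all the $\chi_j$ equal $1$ near $p$), which is manifestly smooth.

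The third bullet is then immediate: on a small enough neighborhood of $p$ — contained in every $W_j$ — each $\chi_j \equiv 1$, so the cumulative effect of all stages is exactly $H_t = (1-t)H_0 + tH_1$, hence $X_t = (1-t)X_0 + tX_1$ there; if $X_0$ and $X_1$ have the same ascending and descending manifolds (e.g.\ both are coordinate planes as arranged by Lemma \ref{lem:using_isotopy_extension}), these are preserved since the convex combination of two vector fields tangent to a given submanifold is again tangent to it, and the stable/unstable manifolds of the convex combination near a hyperbolic fixed point vary continuously — in fact here they are literally fixed. The second bullet holds by construction since every modification is supported in $W \subset U$. I expect the main obstacle to be the bookkeeping in the telescoping argument: one must verify that the definite-time bound $\epsilon_j$ for running the straight-line interpolation on $\overline{W_j}$ does not degenerate to $0$ as $j \to \infty$, which requires observing that on smaller and smaller neighborhoods of $p$ the pseudo-gradient inequality is increasingly dominated by its value at $p$ (where it holds strictly for every convex combination, by the convexity of the linearized condition and $\mu_0(p) = \mu_1(p)$), so that a uniform lower bound on $\epsilon_j$ can indeed be extracted. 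This is exactly the analytic subtlety flagged in the paragraph preceding the lemma statement, and it is handled verbatim as in \cite{CE}.
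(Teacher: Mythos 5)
Your overall strategy---interpolating the contact Hamiltonians $H_0, H_1$ with bump functions on nested balls, following \cite[Lemma 12.9]{CE}---is the same as the paper's, and you correctly identify the short-time obstruction and the role of the hypothesis $\mu_0(p)=\mu_1(p)$. However, there are a few concrete problems with the way the telescoping is set up.

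First, the formula $H_t = H^{(j)} + \chi_j\,\sigma_j(t)(H_1 - H^{(j)})$ is inconsistent with the parenthetical claim that $H^{(j)}$ ``already agrees with $H_1$ on $W_j$'': if that were so, then since $\chi_j$ is supported in $W_j$, the term $\chi_j(H_1 - H^{(j)})$ would vanish identically and stage $j$ would do nothing. What the argument actually requires is that each stage accomplish only a \emph{small fraction} of the remaining interpolation, confined to $W_j$, with the size of the fraction controlled by the quantitative pseudo-gradient estimate; the paper's stage $k$ moves from $X_{t_k}$ to $X_{t_{k+1}}$ for a small, \emph{uniform} increment $t_{k+1}-t_k$, not all the way to $X_1$. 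Second, your smoothness-at-$t=1$ argument is incorrect: you say that near $p$ ``all the $\chi_j$ equal $1$,'' but $\chi_j \equiv 1$ only on $W_{j+1}$, and $\bigcap_j W_j = \{p\}$, so for any fixed neighborhood $N$ of $p$ the bump functions $\chi_j$ are \emph{not} identically $1$ on $N$ once $W_{j+1} \subsetneq N$. Third, and most importantly, you are reaching for an infinite construction when a finite one suffices. The observation that closes the gap is the one you gesture at but do not exploit: both $|X_1-X_0|$ and $|\mu_1-\mu_0|$ vanish (to first order) at $p$ along with $|d\phi|$ (nondegeneracy of $\phi$ at $p$), so the ratios $|X_1-X_0|/|d\phi|$ and $|\mu_1-\mu_0|/|d\phi|$ are bounded on a fixed neighborhood of $p$. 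Hence, along the linear interpolation $X_t=(1-t)X_0+tX_1$, one can choose a finite partition $0=t_0<\cdots<t_M=1$ with a \emph{uniform} mesh so that consecutive steps satisfy the quantitative smallness hypotheses $|X_{t_{k+1}}-X_{t_k}|\lesssim |d\phi|$ and $|\mu_{t_{k+1}}-\mu_{t_k}|\lesssim |d\phi|$; one then runs the bump-function interpolation $M$ times on $M$ nested balls, each contained in the locus $\{\rho=1\}$ of the previous one. This finite process terminates cleanly, there is no convergence or smoothness issue at $t=1$, and near $p$ the cumulative homotopy is literally $(1-t)X_0+tX_1$ as required by the third bullet.

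One further gap worth flagging: you rely on the assertion that the straight-line homotopy ``can be run for some definite time $\epsilon_j > 0$'' on $\overline{W_j}$, deferring to \cite{CE}. In the contact setting the error term introduced by the bump function is $t(H_1-H_0)Z$, where $Z$ is the contact vector field determined by $\alpha(Z)=0$ and $d\alpha(Z,\cdot)=d\rho(R_\alpha)\alpha - d\rho$; one must bound $|Z|$ in terms of $|d\rho|$ (via the operator norm of $d\alpha|_\xi$) and then show $|H_1-H_0|\,|Z|\,|d\phi|$ is dominated by $\delta|d\phi|^2$. This estimate hinges on $H_1-H_0$ vanishing to \emph{second} order at $p$ (which follows from $X_0(p)=X_1(p)=0$ together with $\mu_0(p)=\mu_1(p)$), not just to first order. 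Your proposal mentions only ``vanishes to first order in the Reeb direction,'' which under-sells the key estimate. So the approach is right, but both the combinatorics of the telescoping and the quantitative heart of the estimate need to be made precise before this constitutes a proof.
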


\begin{proof}
Let us fix a metric once and for all on $U$ so that it looks like the standard Euclidean metric in some coordinates around $p$. Since $X_0$ and $X_1$ are gradient-like for $\phi$, there is some $\delta > 0$ such that $d\phi(X_i) \geq \delta (|d\phi|^2+|X_i|^2)$.

We begin by proving the lemma under an additional assumption which will easily be removed at the end. In order to formulate this, note that the morphism $d\alpha \colon \xi \rightarrow \xi^*$ induced by interior product is an isomorphism since $d\alpha$ is symplectic on $\xi$. On the ball of radius $\epsilon$ around $p$, we can choose a constant $C_{\alpha}$ so that
$$\frac{1}{C_{\alpha}} \leq \|d\alpha\|_{\xi,\xi^*} \leq C_{\alpha},$$
where $\|d\alpha\|_{\xi,\xi^*}$ is the operator norm of $d\alpha \colon \xi \rightarrow \xi^*$ with respect to the metric and dual metric. Similarly, there is some constant $D_{\alpha}$ on this ball so that $|\alpha| \leq D_{\alpha}$. Our assumption is that the following two conditions are satisfied on the ball of radius $\epsilon$ around $p$:
$$|X_1-X_0| \leq \frac{\delta}{8C_{\alpha}^2}|d\phi|.$$
$$|\mu_1-\mu_0| \leq \frac{\delta}{8C_{\alpha}D_{\alpha}}|d\phi|.$$

For $\epsilon > 0$ small enough, we can pick a bump function $\rho$ constant and equal to $1$ near $p$, supported in an $\epsilon$-ball around $p$, and with $|d\rho| < 2/\epsilon$. We shall use the contact Hamiltonain
$$H_t = (1-t\rho)H_0 + t\rho H_1.$$
The corresponding contact vector field is then
$$X_t = (1-t\rho)X_0 + t\rho X_1 + t(H_1-H_0)Z$$
where $Z$ is a vector field satisfying
$$\alpha(Z)=0, \quad d\alpha(Z,\cdot) = d\rho(R_{\alpha})\alpha - d\rho.$$
Therefore,
$$d\phi(X_t) \geq \delta(|d\phi|^2+\min\{|X_0|^2,|X_1|^2\}) - |H_1-H_0||Z||d\phi|.$$
Notice that $d\alpha$ induces an isomorphism from $\xi$ to $\xi^*$. So we can write
$$Z = (d\alpha)^{-1}(d\rho(R_{\alpha})\alpha - d\rho),$$
where we think of $d\rho(R_{\alpha})\alpha - d\rho$ as a a section of $\xi^*$. But since $\alpha|_{\xi} = 0$, we have that this is just the same as $-d\rho$, and so
$$|Z| \leq C_{\alpha}|d\rho| \leq \frac{2C_{\alpha}}{\epsilon}.$$

Hence, it suffices to prove that $|H_1 - H_0| \leq \frac{\delta\epsilon}{4C_{\alpha}}|d\phi|$ on the support of $\rho$, since then
$$d\phi(X_t) \geq \delta(|d\phi|^2+\min\{|X_0|^2,|X_1|^2\}) - \frac{\delta}{2}|d\phi|^2 \geq \frac{\delta}{2}(|d\phi|^2+\min\{|X_0|^2,|X_1|^2\}),$$
in which case $X_t$ has produced no new critical points, and since $X_t = (1-t)X_0 + tX_1$ near $p$, it remains gradient-like for $\phi$. Here is where we need to use our assumed bounds on $|X_1-X_0|$ and $|\mu_1-\mu_0|$.

Let us work in Euclidean coordinates such that the metric is standard and such that $p$ is at $0$. Let $q$ be a point in the ball of radius $\epsilon$. Then since $H_0 = H_1 = 0$ at the critical point,
\begin{eqnarray*}
|(H_1-H_0)(q)| &=& \left|\int_0^1 \frac{d}{ds}(H_1-H_0)(sq)~ds \right| \\
	&\leq& \int_0^1 |q||d(H_1-H_0)(sq)|~ds
\end{eqnarray*}

Now $d(H_1-H_0) = (\mu_1-\mu_0)\alpha - i_{X_1-X_0}d\alpha.$ Hence
\begin{eqnarray*}
|d(H_1-H_0)(sq)| &\leq& |\mu_1-\mu_0||\alpha| + |X_1-X_0||d\alpha| \\
	&\leq& \frac{\delta}{8C_{\alpha}D_{\alpha}}D_{\alpha}|d\phi(sq)| + \frac{\delta}{8C_{\alpha}^2}C_\alpha|d\phi(sq)| \\
	&\leq& \frac{\delta}{4C_{\alpha}}|d\phi(sq)|
\end{eqnarray*}
Choosing $\epsilon$ small enough, by the nondegeneracy of $d\phi$, $|d\phi(sq)| \leq |d\phi(q)|$. Combining this all together,
$$|(H_1-H_0)(q)| \leq |q|\frac{\delta}{4C_{\alpha}}|d\phi(q)| \leq \frac{\delta\epsilon}{4C_{\alpha}}|d\phi(q)|.$$

To complete the proof of the lemma in general, consider the linear interpolation $X_t = (1-t)X_0 + tX_1$. Then one can find a sequence of times $0=t_0 < t_1 < \ldots < t_M = 1$ such that the assumptions are satisfied on each small interval, i.e.
$$|X_{t_{k+1}} - X_{t_k}| \leq \frac{\delta}{8C_{\alpha}^2}|d\phi|$$
$$|\mu_{t_{k+1}}-\mu_{t_k}| \leq \frac{\delta}{8C_{\alpha}D_{\alpha}}|d\phi|$$
(We can do this because $p$ is a non-degenerate critical point, so that $|d\phi|$ vanishes to order $\epsilon$, where $\epsilon$ is the distance to $p$.) We can then perform the modification just described sequentially on smaller and smaller balls, such that each successive ball occurs in the region where $\rho = 1$ from the previous modification, since in this region, after $k$ steps, the homotoped vector field is just $X_{t_{k+1}}$.
\end{proof}

\begin{lem} \label{lem:handles}
Given a convex structure $(X_0,\phi_0)$ on $(M,\xi)$, one can find a strict homotopy of convex contact structures $(X_t,\phi_t)$, supported in a neighborhood of the critical points, through which the critical points, their critical values, and their expansion coefficient $\mu$ all remain fixed, along with the stable and unstable manifolds in a neighborhood of each critical point, and such that in a neighborhood of any critical point $p$, one can find a coordinate neighborhood such that $(\xi,X_1,\phi_1)$ is standard up to possibly multiplying $X_1$ by a constant. That is, for the subcritical case:
$$\xi = \ker \left(dz + \frac{1}{2}\sum_{i=1}^{n} (-y_idx_i + x_idy_i) \right)$$
$$\phi_1 = \phi_0(p) + z^2 + \sum_{i=1}^{k}(-x_i^2 + y_i^2) + \sum_{i=k+1}^{n}(x_i^2+y_i^2)$$
$$X_1 = \mu\left[z\partial_z+\sum_{i=1}^{k} (-x_i \partial_{x_i} + 2y_i\partial_{y_i}) + \frac{1}{2}\sum_{i=k+1}^{n}(x_i\partial_{x_i}+y_i\partial_{y_i})\right].$$
\end{lem}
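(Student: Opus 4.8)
The plan is to apply Lemma \ref{lem:main_homotopy} with a carefully chosen target vector field, having first reduced to a situation where the contact structure and Morse function are already in standard form near each critical point. I would proceed one critical point $p$ at a time, assuming throughout (as the paper says we may) that $p$ is subcritical of index $k$, since the supercritical case follows by passing to $(-X,-\phi)$. First, invoke Lemma \ref{lem:using_isotopy_extension} to obtain coordinates $(x_1,\dots,x_n,y_1,\dots,y_n,z)$ near $p$ in which $\xi = \ker\big(dz + \tfrac12\sum_{i=1}^n(-y_i\,dx_i + x_i\,dy_i)\big)$, the descending manifold $L$ is the $(x_1,\dots,x_k)$-plane, and the ascending manifold $C$ is $\{x_1=\cdots=x_k=0\}$. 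In these coordinates the critical point is at the origin, and $\mu(p)$ is the (nonzero, by Proposition \ref{prop:nonzero_mu}) value of the expansion coefficient there.

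Next I would standardize $\phi$ near $p$ without disturbing $\xi$, $X$, or the ascending/descending manifolds near $p$. By the Morse Lemma we may choose the coordinates above so that, after possibly composing with a further diffeomorphism tangent to the identity at $p$ and preserving both the contact form and the coordinate planes $L$ and $C$ (here one uses the freedom in Theorem \ref{thm:complementary_isotropic}, which only pins down $L$ and $C$ as coordinate planes and leaves room for fiber-preserving rescalings), the function becomes $\phi = \phi_0(p) + z^2 + \sum_{i=1}^k(-x_i^2+y_i^2) + \sum_{i=k+1}^n(x_i^2+y_i^2)$ on a neighborhood of $p$. This is a $\phi$-only homotopy supported near $p$, through Morse functions with the same critical point, critical value, and (trivially, since $X$ is unchanged) the same $\mu(p)$; the stable/unstable manifolds of $X$ near $p$ are unchanged because $X$ is unchanged. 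One has to check the gradient-like condition is preserved along this homotopy, but since it is an open condition and the homotopy is supported in an arbitrarily small neighborhood of the nondegenerate critical point, a standard compactness argument handles this.

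Now define the target vector field
$$X_1 = \mu(p)\left[z\partial_z + \sum_{i=1}^{k}(-x_i\partial_{x_i} + 2y_i\partial_{y_i}) + \frac{1}{2}\sum_{i=k+1}^{n}(x_i\partial_{x_i}+y_i\partial_{y_i})\right]$$
in the standardized coordinates. One verifies directly that $X_1$ is a contact vector field for $\ker\alpha$ with expansion coefficient identically $\mu(p)$ (so in particular $\mu_1(p) = \mu(p) = \mu_0(p)$), that $X_1$ is gradient-like for the standardized $\phi$, and that the descending manifold of $X_1$ is exactly the $(x_1,\dots,x_k)$-plane $L$ while the ascending manifold is exactly $\{x_1=\cdots=x_k=0\} = C$ — the same as those of $X_0$ near $p$. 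With $X_0$ (the given $X$, in the new coordinates) and $X_1$ now two contact vector fields gradient-like for the same standardized Morse function $\phi$, with a single nondegenerate critical point at $p$ and matching expansion coefficients there, Lemma \ref{lem:main_homotopy} applies verbatim: it produces a strict homotopy $X_t$ of contact vector fields, gradient-like for $\phi$ throughout, creating no new critical points, fixed outside a precompact neighborhood of $p$, and with $X_t = (1-t)X_0 + tX_1$ near $p$ — so since $X_0$ and $X_1$ share their ascending and descending manifolds near $p$, these stay fixed. The expansion coefficient at $p$ is locally determined as an eigenvalue of $di_{X_t}$ acting on $\Omega^1_p$ (Proposition \ref{prop:nonzero_mu}), hence varies continuously; but it can only take the discrete set of eigenvalues of the Hessian-type operators along the path — more simply, along the linear interpolation near $p$ one computes $\mu_t(p) = (1-t)\mu_0(p) + t\mu_1(p) = \mu(p)$ directly, so $\mu$ is literally constant. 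Performing this construction in turn for each critical point, with the neighborhoods $U$ chosen disjoint, yields the global homotopy with all the asserted properties.

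The main obstacle, I expect, is the bookkeeping in the second step: arranging that $\phi$ can be put into the exact quadratic normal form by a diffeomorphism that \emph{simultaneously} preserves the contact form $\alpha$ (not merely $\xi$) and keeps both $L$ and $C$ as the prescribed coordinate planes. The Morse Lemma alone does not respect contact data, and Theorem \ref{thm:complementary_isotropic} standardizes $\xi$, $L$, $C$ but not $\phi$; reconciling the two requires checking that the residual gauge freedom — contactomorphisms fixing $L$ and $C$ pointwise to first order — is large enough to absorb the discrepancy between an arbitrary $\phi$ compatible with this geometry and the model quadratic. This is where one must work, and it is the contact-geometric analogue of the corresponding step in \cite[Lemma 12.9]{CE}; everything downstream is then a clean invocation of Lemma \ref{lem:main_homotopy} together with the elementary verification that the proposed $X_1$ has the right expansion coefficient and invariant manifolds.
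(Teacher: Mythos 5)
Your proposal has the right skeleton --- standardize $\xi$, $L$, $C$ via Theorem~\ref{thm:complementary_isotropic}/Lemma~\ref{lem:using_isotopy_extension}, then bring $\phi$ to quadratic form near $p$, then invoke Lemma~\ref{lem:main_homotopy} to interpolate $X_0$ to the scaled standard model $X_1 = \mu(p)X_{\mathrm{std}}$, whose stable/unstable manifolds visibly equal $C$ and $L$. The downstream step and the reduction of supercritical to subcritical via $(-X,-\phi)$ are exactly as in the paper. But the middle step --- standardizing $\phi$ --- has a genuine gap, and the route you propose is not the paper's.

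You want to reach the quadratic normal form via the Morse Lemma followed by a correcting diffeomorphism preserving $\alpha$ (not just $\xi$) and the coordinate planes $L$, $C$; you flag this ``residual gauge freedom'' question as the hard part and leave it open. It is indeed not clear this can be made to work, and the paper does not attempt it. Instead the paper leaves the coordinates entirely fixed and directly interpolates the Morse function via a bump function: $\phi_t = (1-t\rho)\phi_0 + t\rho\,\phi_1$, with $X=X_0$ held constant, and then proves directly that $X_0$ remains gradient-like for every $\phi_t$. No diffeomorphism of the coordinate chart is used at all; the homotopy of $\phi$ is realized in the fixed coordinates supplied by Lemma~\ref{lem:using_isotopy_extension}, which sidesteps the compatibility problem you worry about.

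The second problem is that you dismiss the persistence of the gradient-like condition along the $\phi$-homotopy as ``an open condition, handled by a standard compactness argument.'' This is not enough: near the nondegenerate critical point, both $d\phi(X)$ and $|X|^2+|d\phi|^2$ vanish to second order, so the condition $d\phi(X)\geq \delta(|X|^2 + |d\phi|^2)$ is a \emph{ratio} condition at $p$ and is not automatically stable under interpolation of $\phi$. The actual content of the paper's proof is precisely here: writing $X(Z)=A_0Z+O(|Z|^2)$ and $\phi_i(Z)=\phi_i(p)+\tfrac12 Z^T B_i Z + O(|Z|^3)$, the paper observes that because $X_0$ and $X_1$ have the same stable/unstable manifolds near $p$, the Hessians $B_0$ and $B_1$ are simultaneously sign-definite on the two corresponding subspaces, hence $B_1 A_0 > 0$ as well as $B_0 A_0 > 0$ --- so $X_0$ is already gradient-like for $\phi_1$ (and for every convex combination) near $p$. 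One then still has to control the extra term $t(\phi_1-\phi_0)\,d\rho(X_0)$ coming from the cutoff, which the paper does by choosing $\rho$ with small support and small $r\rho'(r)$. Your proposal would need to supply precisely this matrix-level observation and estimate; without it the claim that the interpolation stays gradient-like is unsupported, and your alternative route through the Morse Lemma remains unjustified.
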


\begin{proof}
Lemma \ref{lem:main_homotopy} provides the desired homotopy on the level of the vector field, locally preserving the stable and unstable manifolds, satisfying the desired conditions. It remains to drag $\phi$ along this homotopy.

We begin by studying the gradient-like condition near a critical point. For a gradient pair $(X,\phi)$, in local coordinates, we have $X(Z) = AZ + O(|Z|^2)$ and $\phi(Z) = \phi(p) + \frac{1}{2}Z^TBZ + O(|Z|^2)$ where $A,B$ are matrices, with $B = B^T$ (this is the notation in \cite[Section 9.3]{CE}). The condition of being gradient-like near $p$ is just that $BA$ is positive-definite. But we have $A$ is positive definite on the tangent space to the stable manifold, and negative definite on the tangent space to the unstable manifold. So since $X_0$ and $X_1$ have the same stable and unstable manifolds near $p$, then $B_1A_0 > 0$ since $B_1$ is positive on the tangent space to the stable manifold and negative definite on the tangent space to the unstable manifold by the condition $B_1A_1 > 0$. Hence, $X_0$ is actually already gradient-like with respect to $\phi_1$ in a sufficiently small neighborhood around $p$.

We wish to find a homotopy $\phi_t$ from $\phi_0$ to $\phi_1$ of the desired form around $p$ for which $X_0$ is gradient-like with respect to $\phi_t$ for all time $t$, and such that the critical points and values of the $\phi_t$ are the same. We have that $Z^TB_0A_0Z, Z^TB_1A_0Z \geq \delta|Z|^2$ for some $\delta > 0$. Suppose that $|Z^T(B_1-B_0)Z| \leq \gamma|Z|^2$ for $\gamma > 0$ in the ball of radius $\epsilon$ around $p$, and write $\|A\|$ for the norm of $A$. Take $\rho$ a radial bump function supported in the ball of radius $\epsilon$ with
$$4\|A\|\gamma r\rho'(r) < \delta.$$
Then consider the homotopy given by $\phi_t = (1-t\rho)\phi_0 + t\rho \phi_1$. Near $p$, this is just linear, and $((1-t)B_0 + tB_1)A_0 > 0$, so this is gradient-like near $p$. In the region of interpolation, we need only check that $d\phi_t(X_0) > 0$. We compute that $d\phi_t = (1-t\rho)d\phi_0 + t\rho d\phi_1 + t(\phi_1-\phi_0)d\rho$. Hence, at coordinate $Z$,
\begin{eqnarray*}
d\phi_t(X_0) &=& \left((1-t\rho(Z))Z^TB_0A_0Z + O(|Z|^3)\right) + \left(t\rho(Z)Z^TB_1A_0Z + O(|Z|^3)\right)\\
	&& ~~~~~~~~+~t(Z^T(B_1-B_0)Z+O(|Z|^3) d\rho(X_0)
\end{eqnarray*}
Now, $X$ is $A_0Z + O(|Z|^2)$, so on a small enough ball, $|X| < 2\|A_0\||Z|$. Hence, from our hypothesis on $\rho$, we have $d\rho(X_0) < \delta/2\gamma$. Hence, we see
$$d\phi_t(X_0) \geq \delta|Z|^2 - \delta/2|Z|^2 + O(|Z|^3),$$
and so is positive in the region of interpolation if $\rho$ is chosen to have small enough support.

Hence, we can form the homotopy $(X_t,\phi_t)$ by first homotoping the Morse function in the way described from $\phi_0$ to $\phi_1$ so that $X_0$ remains gradient-like, followed by homotoping the vector field from $X_0$ to $X_1$ so it is gradient-like for $\phi_1$ as in Lemma \ref{lem:main_homotopy}.
\end{proof}

\begin{proof}[Proof of Theorem \ref{thm:standard_cp}]
The final step to go from Lemma \ref{lem:handles} to Theorem \ref{thm:standard_cp} is to scale back the vector field in a neighborhood of $p$. That is, the previous lemma allows us to prove that we can homotope to the triple $(\xi_{\mathrm{std}},\phi_{\mathrm{std}},\mu X_{\mathrm{std}})$, where $\mu > 0$ is some constant. We will simply homotope the vector field from $X_0 = \mu X_{\mathrm{std}}$ to $X_1 = X_{\mathrm{std}}$ such that $d\phi_{\mathrm{std}}(X_t) > 0$. Note that we may assume $\mu < 1$, since if $\mu = 1$ then we are trivially done, while if $\mu > 1$, then we can just repeat our construction for $\mu < 1$ where we scale by $1/\mu$ and run the homotopy backwards.

We use, as usual, a homotopy of the Hamiltonian of the form
$$H_t = (1-t\rho)\mu H_{\mathrm{std}} + t\rho H_{\mathrm{std}}$$
where $\rho$ is a radial bump function supported in a ball of radius $\epsilon$. Then $X_t = (1-t\rho)\mu X_{\mathrm{std}} + t\rho X_{\mathrm{std}} + t(1-\mu)H_{\mathrm{std}}Z$ where $Z$ is the vector field satisfying
$$\alpha(Z) = 0$$
$$d\alpha(Z,-) = d\rho(R_\alpha)\alpha - d\rho.$$
As in the proof of Lemma \ref{lem:main_homotopy}, we see that $|Z| \leq C_{\alpha}|d\rho|$, and now we can use the crude approximations $|d\phi_{\mathrm{std}}| \leq \lambda r$ and $|H_{\mathrm{std}}| \leq \nu r \leq \nu \epsilon$, with $\lambda,\nu > 0$ some constants. But also,
$$d\phi_{\mathrm{std}}(X_t) = (1-t\rho)\mu + t\rho + t(1-\mu)H_{std}d\phi_{\mathrm{std}}(Z).$$
Hence, in order for this to be positive for all time $t$, it suffices to have it so for just $t=0$, which is automatic, and for $t=1$, in which case we need
$$H_{\mathrm{std}}d\phi_{\mathrm{std}}(Z) > \frac{-\mu}{1-\mu}-\rho.$$
Hence, it suffices to have
$$|H_{\mathrm{std}}||d\phi_{\mathrm{std}}||Z| < \frac{\mu}{1-\mu}.$$
We can find a radial bump function $\rho$ such that $r|\rho'(r)| < \frac{\mu}{C_{\alpha}\lambda\nu\epsilon(1-\mu)}$, since $1/r$ has divergent integral around $0$. Then
$$|H_{\mathrm{std}}||d\phi_{\mathrm{std}}||Z| \leq (\nu\epsilon)(\lambda r)(C_{\alpha}|\rho'(r)|) < \frac{\mu}{1-\mu}.$$
\end{proof}

\section{Attaching Handles} \label{sec:attach}

In the previous section, we proved that we could isotope all critical points of a convex contact cobordism so that they were standard. We now wish to use this theory to explicitly describe convex contact cobordisms via handle attachments. In this section, we will define a convex contact handlebody, which is the result of attaching together standard contact handles, which are just subsets of the already described standard models for critical points.

\subsection{Morse function modifications}

Our first step in attaching is to prove that the particular choice of Morse function is essentially irrelevant. This will allow us to easily match up standard handles without needing to ever consider the Morse function on the attaching region. The key technical lemma, which makes no mention of the underlying contact geometry, is as follows.

\begin{lem} \label{lem:Morse_modify}
Consider a trivial cobordism $\Sigma \times [-\epsilon,0]$, and let $t$ be the coordinate in $[-\epsilon,0]$. Suppose that $\phi_0$ and $\phi_1$ are functions such that
$$\frac{\partial \phi_i}{\partial t} > 0,~~~~~\mathrm{and}~~~~~\phi_0|_{\Sigma \times 0} = \phi_1|_{\Sigma \times 0}$$
Then there is a homotopy $\Phi_s$, $0 \leq s \leq 1$, such that
\begin{itemize}
	\item $\Phi_0 = \phi_0$
	\item $\Phi_1 = \phi_0$ near $\Sigma \times \{-\epsilon\}$
	\item $\Phi_1 = \phi_1$ near $\Sigma \times \{0\}$
	\item $\frac{\partial \Phi_s}{\partial t} > 0$ for all $s$
\end{itemize}
Furthermore, suppose there is some $A \subset \Sigma \times \{0\}$ such that $\phi_0 = \phi_1$ on a neighborhood of $A$ in $\Sigma \times [-\epsilon,0]$. Then $\Phi_s$ remains fixed as $s$ varies on a possibly smaller neighborhood of $A$.
\end{lem}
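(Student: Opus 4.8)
The plan is to build the homotopy $\Phi_s$ as a convex combination interpolated by a cutoff in the $t$-variable, and to control the sign of $\partial_t \Phi_s$ by a rescaling argument à la Lemma \ref{lem:main_homotopy} — the very device the paper has already used twice. First I would set $\psi = \phi_1 - \phi_0$, which vanishes on $\Sigma \times \{0\}$ by hypothesis, hence is $O(|t|)$ near $\Sigma \times \{0\}$ for a fixed metric; in particular $\partial_t \psi$ is bounded on the compact cobordism, say $|\partial_t\psi| \le K$, while $\partial_t\phi_0 \ge \delta > 0$ everywhere by compactness. Choose a smooth cutoff $\rho = \rho(t)$ with $\rho \equiv 0$ near $t = -\epsilon$, $\rho \equiv 1$ near $t = 0$, and $0 \le \rho \le 1$, and consider $\Phi_s = \phi_0 + s\,\rho(t)\,\psi$. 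This already has $\Phi_0 = \phi_0$, agrees with $\phi_0$ near $\Sigma\times\{-\epsilon\}$ and with $\phi_1 = \phi_0 + \psi$ near $\Sigma\times\{0\}$ for all $s$, and in particular at $s = 1$, so the first three bullets are immediate for any such $\rho$.

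The only real content is the fourth bullet, $\partial_t\Phi_s > 0$. We compute
$$\partial_t \Phi_s = \partial_t\phi_0 + s\rho(t)\,\partial_t\psi + s\rho'(t)\,\psi.$$
The first term is $\ge \delta$; the second is bounded in absolute value by $sK \le K$; the third is the dangerous one, because $\rho'$ can be large. The trick, exactly as in Lemma \ref{lem:main_homotopy}, is that $\psi$ is small precisely where $\rho'$ is supported if we push the interpolation region close to $\Sigma\times\{0\}$: concentrate $\mathrm{supp}(\rho')$ in a thin slab $\{-\eta \le t \le 0\}$, on which $|\psi| \le C\eta$ for some constant $C$ (since $\psi$ vanishes on $t = 0$ and the cobordism is compact). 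A standard cutoff on an interval of length $\eta$ can be chosen with $|\rho'| \le 2/\eta$, but this gives $|\rho'\psi| \le 2C$, which is not yet small. To beat this, I would instead perform the interpolation in several stages on successively thinner slabs — or, more simply, first shrink $\epsilon$ (equivalently restrict attention to a smaller collar) so that $|\partial_t\psi|$ and $C$ are as small as desired relative to $\delta$; this is legitimate because the statement is about existence of \emph{a} homotopy and the collar $\Sigma\times[-\epsilon,0]$ can be taken as thin as we like, the part near $\Sigma\times\{-\epsilon\}$ being untouched anyway. Concretely, once $\eta$ is small enough that $C\eta < \delta/8$ we may further choose $\rho$ with $\int_{-\eta}^{0}|\rho'| \,$ controlled; the cleanest route is the iterated-bump-function argument of Lemma \ref{lem:main_homotopy}: split $[0,1]$ in $s$ into finitely many subintervals and on each perform a small interpolation supported in an ever-thinner slab where $\|\psi\|_{C^1}$ is correspondingly tiny, so that $|\rho'_j \psi| < \delta/(4N)$ at each of the $N$ stages. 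Composing the $N$ stages yields the desired $\Phi_s$ with $\partial_t\Phi_s \ge \delta/2 > 0$ throughout.

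For the relative statement, suppose $\phi_0 = \phi_1$ on a neighborhood $\Open_{\Sigma\times[-\epsilon,0]}(A)$; then $\psi \equiv 0$ there, so $\Phi_s = \phi_0 + s\rho\psi = \phi_0$ on that same (possibly slightly shrunk, to stay inside the neighborhood) set for every $s$, giving the last claim for free — no extra argument is needed since our construction is an explicit formula in $\psi$. The main obstacle, as in the earlier lemmas, is purely the sign estimate $\partial_t\Phi_s > 0$ in the interpolation slab: it forces the iterated-slab / rescaling bookkeeping rather than a one-shot cutoff. Everything else is formal.
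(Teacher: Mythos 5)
Your naive interpolation $\Phi_s = \phi_0 + s\,\rho(t)\,\psi$ is not correct as it stands, and the proposed fixes do not repair it. You correctly isolate the dangerous term $s\rho'(t)\psi$, but none of the three remedies you offer — shrinking $\epsilon$, thinning the interpolation slab, or the iterated-bump-function argument of Lemma~\ref{lem:main_homotopy} — actually makes that term small. On a slab of width $\eta$ one has $|\psi|\le C\eta$ with $C = \sup|\partial_t\psi|$ and $|\rho'|\gtrsim 1/\eta$ (since $\int\rho' = 1$), so $|\rho'\psi|\lesssim C$ \emph{independently of $\eta$}. Thinning the slab therefore changes nothing, and shrinking $\epsilon$ does not make $C$ small. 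The iterated argument of Lemma~\ref{lem:main_homotopy} succeeds there only because the quantity being cut off satisfies $|H_1-H_0| \lesssim \epsilon\,|d\phi|$ near the critical point, which supplies an extra factor of $\epsilon$ to cancel $|d\rho|\sim 1/\epsilon$; the analogous cancellation is absent here because $\partial_t\psi$ need not vanish on $\Sigma\times\{0\}$. Concretely, the naive $\Phi_s$ fails: take $\phi_0 = t$ and $\phi_1 = (t+\epsilon)^n/\epsilon^{n-1} - \epsilon + \delta t$ with $n$ large and $\delta$ small; both have $\partial_t > 0$ on $[-\epsilon,0]$ and agree at $t=0$, yet one checks that $\partial_t\Phi_1$ becomes negative near $t=-3\epsilon/4$ for any nondecreasing cutoff.

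The paper avoids the estimate entirely via a sign trick rather than a smallness bound. Since only the germ of $\phi_1$ at $\Sigma\times\{0\}$ matters, it first replaces $\phi_1$ by a function $\wt\phi_1$ with $\partial_t\wt\phi_1 > 0$, equal to $\phi_1$ near $t=0$, and satisfying $\wt\phi_1 \ge \phi_0$ on $\Sigma\times[-\epsilon,-\epsilon/2]$ (possible precisely because $\phi_1(\cdot,0)=\phi_0(\cdot,0)$ and $\partial_t\phi_0 > 0$). Then with a \emph{nondecreasing} cutoff $\rho$ supported so that $\rho'$ lives only in $[-\epsilon,-\epsilon/2]$, the interpolation $\Phi_s = (1-s\rho)\phi_0 + s\rho\,\wt\phi_1$ has
\[
\frac{\partial \Phi_s}{\partial t} = (1-s\rho)\frac{\partial\phi_0}{\partial t} + s\rho\frac{\partial\wt\phi_1}{\partial t} + s(\wt\phi_1-\phi_0)\frac{\partial\rho}{\partial t},
\]
where the first two terms are a positive convex combination and the third is a product of a nonnegative function $(\wt\phi_1-\phi_0)$ and a nonnegative derivative $\rho' \ge 0$ on its support. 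No estimate on the size of $\rho'$ is needed at all. The relative statement then follows exactly as you say, by observing that the construction is an explicit formula in $\phi_1 - \phi_0$ and hence trivial wherever that difference vanishes. I would encourage you to internalize this distinction: whenever a cross term $\rho'\cdot(\text{something})$ appears in a Moser-type interpolation, one should first ask whether the sign can be controlled (as here) before reaching for a smallness estimate (as in Lemma~\ref{lem:main_homotopy}), because smallness is genuinely unavailable when the \textquotedblleft something\textquotedblright\ only vanishes to first order.
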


\begin{proof}
First observe that it actually does not matter what $\phi_1$ is outside of a neighborhood of $\Sigma \times \{0\}$, since the only requirement is that $\Phi_1$ be equal to $\phi_1$ in this neighborhood. Hence, we can replace $\phi_1$ with any other function $\wt{\phi}_1$ equal to $\phi_1$ near $\Sigma \times 0$. In particular, we choose $\wt{\phi}_1$ such that:
\begin{itemize}
	\item $\frac{\partial \wt{\phi}_1}{\partial t} > 0$
	\item $\wt{\phi}_1 = \phi_1$ near $\Sigma \times 0$
	\item $\wt{\phi}_1 \geq \phi_0$ on $\Sigma \times [-\epsilon,-\epsilon/2]$
\end{itemize}
Now suppose $\rho \colon [-\epsilon,0] \rightarrow [0,1]$ is a nondecreasing bump function equal to $0$ near $-\epsilon$ and equal to $1$ along $[-\epsilon/2,0]$. Then consider the interpolation
$$\Phi_s = (1-s\rho(t))\phi_0 + s\rho(t)\wt{\phi}_1.$$
The only nontrivial property to prove is that $\frac{\partial \Phi_s}{\partial t} > 0$ for all $s$. We compute directly
$$\frac{\partial \Phi_s}{\partial t} = (1-s\rho(t))\frac{\partial \phi_0}{\partial t} + s\rho(t)\frac{\partial \wt{\phi}_1}{\partial t} + s(\wt{\phi}_1-\phi_0)\frac{\partial \rho}{\partial t}$$
The sum of the first two terms is always positive. The last term is only nonzero when $\frac{\partial \rho}{\partial t} > 0$, but this occurs in the region $[-\epsilon,-\epsilon/2]$, in which $\wt{\phi}_1 \geq \phi_0$, so that this last term is nonnegative.

This proof holds verbatim for the relative version.
\end{proof}

In addition, in our definition of convex contact cobordisms, we required that each of $\partial_{\pm}M$ was a level set for $\phi$. However, when we attach handles, we will naturally arrive at a cobordism where this not the case. We need to account for this discrepancy.

\begin{lem} \label{lem:make_level}
Suppose $(M,\xi,X,\phi)$ satisfies all of the conditions of a convex contact cobordism, except with the slightly weaker condition that $X$ is transverse to the boundary, inward at $\partial_- M$ and outward at $\partial_+ M$, as opposed to these boundaries being level sets. Then there is a strict homotopy supported in a neighborhood of $\partial_+ M$ and $\partial_- M$ such that each becomes a regular level set.
\end{lem}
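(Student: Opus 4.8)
The plan is to treat $\partial_+M$ and $\partial_-M$ separately and symmetrically (by applying the argument to $(-X,-\phi)$ for the lower boundary), so I will describe only the case of $\partial_+M$. Since $X$ is transverse to $\partial_+M$ and outward-pointing, by the flow-box theorem there is a collar neighborhood $\partial_+M \times (-\delta,0]$ of $\partial_+M$ in $M$ on which $X = \partial_t$, where $t$ is the coordinate on $(-\delta,0]$ and $\partial_+M$ is $\{t=0\}$; moreover this collar contains no critical points (as $X$ is transverse, hence nonzero, along $\partial_+M$, shrink $\delta$ so this persists). On this collar, $\phi$ is a function with $\partial\phi/\partial t = d\phi(X) > 0$, so $\phi$ is strictly increasing along the $t$-lines; in particular $\phi|_{\partial_+M}$ is a genuine (not necessarily constant) function on $\partial_+M$, but the hypersurface $\{t=0\}$ need not be a level set. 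The goal is to homotope $\phi$, rel a slightly smaller submanifold of $M$ (and keeping $X$ untouched), so that $\partial_+M$ becomes a regular level set while the gradient-like condition $d\phi(X) > 0$ is preserved everywhere on the collar (and $\phi$ is unchanged outside it, so remains Morse with the same critical points).

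The key step is exactly Lemma \ref{lem:Morse_modify} applied on the trivial cobordism $\Sigma \times [-\epsilon,0]$ with $\Sigma = \partial_+M$, $\epsilon < \delta$, and $t$ the collar coordinate. Take $\phi_0 = \phi$ (restricted to the collar), which satisfies $\partial\phi_0/\partial t > 0$. For $\phi_1$ I would produce, near $\Sigma \times \{0\}$, a function that makes $\{t=0\}$ a level set: set $c := \max_{\Sigma}\phi|_{\Sigma\times\{0\}}$ and take, on a small neighborhood of $\Sigma \times \{0\}$, $\phi_1(x,t) = \phi(x,0) + \big(c+1 - \phi(x,0)\big)\cdot \frac{t+\eta}{\eta}$ for some small $\eta$ — more simply, any function that agrees with $\phi$ on $\Sigma\times\{0\}$, has $\partial_t$-derivative positive, and near $t=0$ depends on $t$ in a way making a nearby hypersurface a level set. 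Actually the cleanest choice: let $\phi_1$ near $\Sigma\times\{0\}$ be $\phi_1(x,t) = \phi(x,0) + Kt$ for a constant $K>0$ — this has $\partial\phi_1/\partial t = K > 0$ and agrees with $\phi_0$ on $\{t=0\}$, but $\{t=0\}$ is still not a level set of $\phi_1$ since $\phi(x,0)$ varies. So I instead want $\{t = g(x)\}$ to become the level set for a suitable function $g$; equivalently, reparametrize the collar so that the old level set $\{\phi = c+1\}$ (which, since $\phi$ is strictly increasing in $t$ and $\epsilon$ is small, is a graph $t = h(x)$ lying in the collar once we choose $\epsilon$ appropriately and $c+1$ within the range of $\phi$ on each $t$-line) becomes the new $\{t=0\}$. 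Concretely, the honest move is: $\{t=0\}$ need not be the final boundary — we may first replace the manifold-with-boundary $M$ by the sub-cobordism cut off at the regular level set $\{\phi = c'\}$ for a regular value $c' > \max_\Sigma \phi|_{\Sigma\times\{0\}}$ slightly larger, which lies inside the collar; this level set \emph{is} automatically a regular level set, $X$ is still transverse to it, and it is $C^\infty$-isotopic (within the collar, via the flow of $X$) to $\partial_+M$. Then Lemma \ref{lem:Morse_modify} is invoked to interpolate $\phi$ near the original $\partial_+M$ so that the homotopy is supported in the collar, fixes $\phi$ near the inner boundary $\Sigma\times\{-\epsilon\}$ of the collar, and near $\Sigma\times\{0\}$ equals a chosen $\phi_1$ for which $\Sigma\times\{0\}$ itself (after the trivial reparametrization $t \mapsto t - h(x)$, which is a diffeomorphism of the collar) is a level set; throughout, $\partial\Phi_s/\partial t > 0$ guarantees the gradient-like condition for $X = \partial_t$ is preserved, and no new critical points are created since $\Phi_s$ has no critical points in the collar.

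The resulting family $(\xi, X, \Phi_s)$ is then a strict convex homotopy: $\xi$ and $X$ are untouched, $\Phi_s$ is Morse for every $s$ with the same critical points and values (the homotopy is supported away from them), and $(X,\Phi_s)$ remains gradient-like because $d\Phi_s(X) = \partial\Phi_s/\partial t > 0$ on the collar and $\Phi_s = \phi$ off the collar. Applying the mirror argument on $\partial_-M$ finishes the proof. The main obstacle is purely bookkeeping: making precise the statement that a hypersurface transverse to $X$ in the collar can be straightened to a level set — this is where one must be slightly careful that the relevant level set $\{\phi = c'\}$ genuinely lies inside the collar (which follows by taking $\epsilon$ small and $c'$ close to the values of $\phi$ on $\Sigma\times\{0\}$, using strict monotonicity in $t$ and compactness of $\Sigma$) and that the reparametrization identifying it with $\Sigma\times\{0\}$ is smooth and compatible with the support conditions of Lemma \ref{lem:Morse_modify}. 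Everything else is a direct citation of Lemma \ref{lem:Morse_modify}, including its relative version for the possibility that one wants the homotopy fixed near some subset $A$ of the boundary.
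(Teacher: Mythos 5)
Your overall strategy matches the paper's: straighten the boundary into a level set by combining Lemma \ref{lem:Morse_modify} with the flow of $X$, treating $\partial_{\pm}M$ separately. However, there is a sign error in the decisive step that, as written, makes it fail. You propose to cut $M$ down to the sub-cobordism $\{\phi\leq c'\}$ for a regular value $c' > \max_{\Sigma}\phi|_{\Sigma\times\{0\}}$ and claim this level set ``lies inside the collar.'' But on the collar $\Sigma\times(-\delta,0]$ you have $\partial\phi/\partial t>0$ with $t\leq 0$, so $\phi\leq \max_{\Sigma}\phi|_{\Sigma\times\{0\}} < c'$ there; the level set $\{\phi=c'\}$ does not meet the collar at all, and ``cutting off'' at it does nothing to $\partial_+M$. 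To shrink inward you need $c' < \min_{\Sigma}\phi|_{\Sigma\times\{0\}}$ (and $c'$ above $\min_\Sigma\phi|_{\Sigma\times\{-\epsilon\}}$ so the level set stays in the collar), after which the isotopy by the flow of $X$ back to $M$ makes sense; alternatively you must first \emph{extend} $M$ beyond its boundary, which is in fact what the paper does.

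On the comparison: the paper first invokes Lemma \ref{lem:Morse_modify} to normalize $d\wt{\phi}(X)=1$ near $\partial_+ M$, so that in the collar $\wt{\phi} = f + t$ with $f$ a function on $\partial_+M$. This is what makes the subsequent geometry clean: extend cylindrically to $\partial_+M\times[-\epsilon,\infty)$ (both $\xi$ and $X$ are $t$-invariant there), take the enlarged domain $D=\{\wt{\phi}\leq \max f\}$, whose top boundary is now a genuine level set, and pull everything back to $M$ along a diffeotopy supported near the boundary. Your proposal skips the normalization of $d\phi(X)$ and works without the explicit form $\wt{\phi}=f+t$; this is workable but messier, and it is likely what led to the sign confusion, since without $d\phi(X)=1$ one has no clean picture of where a given level set sits relative to the collar. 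Once the sign is fixed (or once you switch to extending outward and normalizing first), the rest of your argument — using the flow of $X$ to identify $M$ with the cut or extended domain, appealing to the relative clause of Lemma \ref{lem:Morse_modify} to keep the homotopy supported near the boundary, and noting $d\Phi_s(X)>0$ throughout — goes through as in the paper.
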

\begin{rmk}
Recall from the discussion following Definition \ref{defn:strict_homotopy} that since we allow strict homotopies which modify the underlying contact structure near the boundary, the modification of this lemma does not imply that the resulting contact structure on $M$ is contactomorphic to the original. We will get at the heart of the matter in Corollaries \ref{cor:uncompleted_contactomorphism} and \ref{cor:completed_contactomorphism}.
\end{rmk}
\begin{proof}
It suffices to prove the lemma for $\partial_+ M$. By Lemma \ref{lem:Morse_modify}, we can modify $\phi$ to $\wt{\phi}$ near $\partial_+ M$ so that $\phi|_{\partial_+ M} = \wt{\phi}|_{\partial_- M}$ and $d\wt{\phi}(X) = 1$, and such that for the whole homotopy, $X$ remains gradient-like. Then on a neighborhood $U$ of $\partial_+ M$, the convex structure is of the form
\begin{itemize}
	\item $U = \partial_+ M \times [-\epsilon,0]$, with $t$ the coordinate in $[-\epsilon,0]$
	\item $\xi = \ker(udt+\beta)$ where $u$ and $\beta$ are a function and $1$-form on $\partial_+ M$
	\item $X = \partial_t$
	\item $\wt\phi = f+t$ where $f$ is a function on $\partial_+ M$
\end{itemize}
We may extend this structure to $\partial_+ M \times [-\epsilon,\infty)$ by naturally extending the $t$-invariant expressions for $\xi$, $X$, and $\wt\phi$. We pick the domain $D$ given by $\wt\phi \leq \max f$. Extending out to this domain $D$ by flowing along $X$ yields an explicit diffeotopy which one can use to pull back to the original $M$ to complete the proof.
\end{proof}

\begin{rmk} \label{rmk:def_retract}
Lemma \ref{lem:make_level} holds parametrically as well, and one obtains a weak homotopy equivalence of the form described in Remark \ref{rmk:equiv_notions_convex_contact_cobordism}.
\end{rmk}

For the remainder of this section, we will mostly ignore the Morse function, since the results of this subsection essentially tell us that the specific choice is completely unimportant.

\subsection{Trivial cylinders, holonomy, and convexomorphisms}

Although Theorem \ref{thm:standard_cp} provides all of the necessary information about what convex contact cobordisms look like up to strict homotopy in a neighborhood of their critical points, we do not yet know that there is no interesting information encoded away from the critical points. We spend this subsection discussing when a given convex contact cobordism has no critical points.

\begin{defn} A \textbf{cylindrical convex contact cobordism} is a convex contact cobordism with no critical points.
\end{defn}

\begin{defn}
Suppose we have a cylindrical convex contact cobordism $(M, \xi, X, \phi)$. Then the \textbf{holonomy map} is the diffeomorphism $\psi \colon \partial_-M \rightarrow \partial_+M$ determined by the existence of an $X$-integral curve from $p \times \{0\}$ to $\psi(p) \times \{1\}$.
\end{defn}

To understand what cylindrical convex contact cobordisms may look like, we first wish to understand what the possible holonomy maps are.

\begin{defn} \label{defn:conv_surf}
The \textbf{space of convex hypersurfaces of dimension $2n$}, denoted by $\scr{C}_{2n}$, consists as a set of germs of convex hypersurfaces with transverse contact vector field. Equivalently, each element is a closed $2n$-dimensional manifold $\Sigma$ together with a decomposition $\Sigma = (R_+,\lambda_+) \cup_{\Gamma,\xi} (R_-,\lambda_-)$, where each $(R_{\pm},\lambda_{\pm})$ is a Liouville manifold which and such that there is a diffeomorphism $\Open(\Gamma) \cong \Gamma \times [-1,1]$, with $u$ the coordinate on $[-1,1]$, on which $\lambda_{\pm} = \beta/u$ for $\beta$ a contact form for $(\Gamma,\xi)$.

We topologize this set via the $C^{\infty}$ topology on the pair $(\xi,X)$ consisting of the germ of the contact structure and transverse contact vector field for each fixed underlying $2n$-dimensional manifold. A path of such germs is called a \textbf{Hamiltonian isotopy of the convex structure}.
\end{defn}

\begin{defn}
For $\Sigma_0,\Sigma_1 \in \scr{C}_{2n}$, a \textbf{convexomorphism} $\phi \colon \Sigma_0 \rightarrow \Sigma_1$ is a diffeomorphism of the underlying manifold such that
\begin{itemize}
	\item The restriction of $\phi$ to $(R^0_{\pm},\lambda^0_{\pm})$ is an \textbf{exact symplectomorphism} onto $(R^1_{\pm},\lambda^1_{\pm})$. (This means $\phi^*(\lambda^1_{\pm}) - \lambda^0_{\pm}$ is an exact form.)
	\item The restriction of $\phi$ to $(\Gamma^0,\xi^0)$ is a contactomorphism onto $(\Gamma^1,\xi^1)$.
\end{itemize}
\end{defn}

\begin{lem}[Moser lemma for convex hypersurfaces] \label{lem:moser}
If $\Sigma_t$ is a Hamiltonian isotopy of convex structures on an underlying manifold $\Sigma$, then there is a family of convexomorphisms $\psi_t \colon \Sigma_0 \rightarrow \Sigma_t$.
\end{lem}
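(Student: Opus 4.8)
The plan is to run a Moser-type argument, but with two wrinkles relative to the usual Gray-stability proof: the convex structure really consists of \emph{three} pieces of data glued along $\Gamma$ (the two Liouville forms $\lambda_\pm$ on $R_\pm$ and the contact structure on $\Gamma$), and we want the resulting family of diffeomorphisms to respect this glued structure, i.e. to be a family of convexomorphisms in the sense just defined. The cleanest route is to pass to the ideal Liouville domain picture of Remark \ref{rmk:ideal}: the germ of $\Sigma_t$ near $\Gamma$ is encoded by the ideal Liouville domains $\overline{R^t_\pm} = R^t_\pm \cup \Gamma$, and away from $\Gamma$ the data is just an honest contact structure with transverse contact vector field, equivalently (by the discussion after Definition \ref{defn:cvf_and_expansion}) a $t$-invariant contact form $u\,dt + \beta_t$. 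So I would split the problem as: (i) produce a Moser isotopy on each ideal Liouville domain $\overline{R^t_\pm}$ that is an exact symplectomorphism and restricts to a contactomorphism on the ideal boundary $\Gamma$; (ii) produce a Gray-stability isotopy on the complement of a collar of $\Gamma$ inside $\Sigma$; (iii) glue the two using a cutoff in the collar $\Gamma \times [-1,1]$, checking that the cutoff does not destroy the (open) convexity/contact condition.

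For step (i): on the ideal Liouville domain $\overline{R_\pm}$ with symplectic form $\omega_t = d\lambda^t_\pm$, differentiate to get $\dot\omega_t = d\dot\lambda^t_\pm$. Since $H^1$ of a Liouville domain is spanned by the boundary, and since the family is a genuine Hamiltonian isotopy of \emph{convex} structures (so the induced contact structure $\ker\beta_t$ on $\Gamma$ varies, but tamely), one writes $\dot\lambda^t_\pm = dh_t + (\text{controlled boundary term})$; the Moser vector field $V_t$ solving $\iota_{V_t}\omega_t = -\dot\lambda^t_\pm + dh_t$ is the candidate, and the point is to choose $h_t$ so that $V_t$ is tangent to $\Gamma$ and integrates for all time out to infinity. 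This is exactly the situation Giroux's ideal Liouville framework is designed to handle, and Remark \ref{rmk:ideal} explicitly flags that Moser theorems are easiest to state and prove there — so I would lean on that, producing $\psi^\pm_t$ with $(\psi^\pm_t)^*\lambda^t_\pm - \lambda^0_\pm$ exact and $\psi^\pm_t|_\Gamma$ a contactomorphism of $(\Gamma,\xi^t)$. I'd want to arrange $\psi^+_t$ and $\psi^-_t$ to agree on $\Gamma$ (both realize the same Gray-stability isotopy of $\xi_t$ there — this is a constraint one can meet because the contactomorphism on $\Gamma$ is itself produced by an auxiliary Moser argument one can run first and feed into both sides).

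For step (ii): on $\Sigma \setminus \Open(\Gamma)$ the structure is an honest contact manifold-with-boundary $(\Sigma\setminus\Open(\Gamma), \ker(u\,dt+\beta_t))$ together with the transverse field $\partial_t$; standard Gray stability (the Moser trick exactly as in \cite[Theorem 2.2.2]{Geiges}, and as recalled in the proof of Theorem \ref{thm:gen_nbhd_contact}) gives an isotopy $\chi_t$ with $\chi_t^*(u\,dt+\beta_t) = \mu_t(u_0\,dt+\beta_0)$, which we may take to agree on the collar with the boundary values of the $\psi^\pm_t$ from step (i). For step (iii), the gluing: interpolate between $\chi_t$ and $\psi^\pm_t$ across the collar $\Gamma\times[-1,1]$ using a cutoff in the $u$-coordinate. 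The resulting diffeomorphism $\psi_t$ is automatically smooth, fixes the combinatorial pieces, restricts to the desired exact symplectomorphisms on $R^t_\pm$ (the interpolation is supported near $\Gamma$ where the Liouville form is $\beta_t/u$ and the $\psi^\pm_t$ already have the right normal form, cf.\ Proposition \ref{prop:div_set_nbhd}), and restricts to a contactomorphism on $\Gamma$.

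\textbf{Main obstacle.} The delicate point is not the Moser computation on either region separately but the compatibility at $\Gamma$: one must choose all the auxiliary Hamiltonians so that the three isotopies (on $R_+$, on $R_-$, on the contact complement) glue to a single \emph{global} diffeomorphism $\psi_t$ of $\Sigma$ that is simultaneously exact-symplectic on each hemisphere and contact on $\Gamma$, while the cutoff interpolation in the collar must preserve the open contact condition $dt\wedge(u\,d\beta + n\,\beta\wedge du)\wedge(d\beta)^{n-1} > 0$ throughout the deformation. Keeping the normal form $\lambda_\pm = \beta_t/u$ rigid on the collar (so that the collar data is literally determined by the single contact form $\beta_t$ on $\Gamma$) is what makes this manageable: then there is essentially only one piece of data to Moser, and the hemisphere and contact arguments are two ``shadows'' of the same collar isotopy. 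I expect that with this reduction the remaining estimates are routine.
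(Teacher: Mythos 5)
The key tool you identify is exactly what the paper uses: its proof is a one-line citation to Giroux's Moser lemma for ideal Liouville domains. The closure $\overline{R_\pm^t} = R_\pm^t \cup \Gamma$ is an ideal Liouville domain, so a path $\Sigma_t$ gives paths of ideal Liouville structures on each of $\overline{R_+}$ and $\overline{R_-}$; Giroux's lemma produces isotopies which are exact symplectomorphisms on the interiors and contactomorphisms on the ideal boundary, and after arranging (as you correctly flag) that the two restrict to the same contact isotopy on $\Gamma$ --- e.g.\ by first running Gray stability on $(\Gamma,\xi^t)$ and then extending inward on each side --- they glue to the desired family of convexomorphisms $\psi_t \colon \Sigma_0 \rightarrow \Sigma_t$. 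So your step (i) together with the compatibility remark is essentially the whole proof.

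However, your steps (ii) and (iii) are based on a misconception and should be dropped. You assert that ``away from $\Gamma$ the data is just an honest contact structure,'' and in step (ii) you write $\bigl(\Sigma\setminus\Open(\Gamma), \ker(u\,dt+\beta_t)\bigr)$ as a ``contact manifold-with-boundary.'' But $\Sigma$ is $2n$-dimensional, hence even-dimensional, and carries no intrinsic contact structure; the form $u\,dt+\beta$ lives on a tubular neighborhood $\Sigma \times (-\epsilon,\epsilon)$ in $M$, and Gray stability applied there would yield an ambient diffeomorphism with no reason to preserve $\Sigma = \{t=0\}$, whereas the convexomorphism $\psi_t$ must be a self-map of $\Sigma$. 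What actually lives on $\Sigma$ away from $\Gamma$ is the Liouville form $\lambda_\pm = \beta/u$ on $R_\pm$, and this is already part of the interior data of the ideal Liouville domain $\overline{R_\pm}$; it is not a separate region requiring a separate argument. The ideal Liouville Moser isotopy from step (i) is defined on all of $\overline{R_\pm}$, not merely near the ideal boundary, so there is nothing left over to cover in step (ii) and hence nothing to interpolate in step (iii). If you delete (ii) and (iii) and keep (i) plus the boundary-compatibility observation, your argument coincides with the paper's.
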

\begin{proof}
This follows from the Moser lemma for ideal Liouville domains \cite{Giroux_Ideal}.
\end{proof}

\begin{lem} \label{lem:holonomy}
Suppose $(M,\xi,X,\phi)$ is a cylindrical convex cobordism, with $\Sigma_0 = \partial_{-} M$ and $\Sigma_1 = \partial_+ M$ (as elements in $\scr{C}_{2n}$, hence with the data of the germ of $X$). Then the holonomy map $\psi \colon \Sigma_0 \rightarrow \Sigma_1$ is a convexomorphism.

Conversely, if $\Sigma_{0}, \Sigma_1 \in \scr{C}_{2n}$, and $\psi \colon \Sigma_0 \rightarrow \Sigma_1$ is a convexomorphism, then there exists a cylindrical cobordism $(M,\xi,X,\phi)$ with $\partial_{-} M  \cong \Sigma_{0}$ and $\partial_+ M \cong \Sigma_1$ and with holonomy given by $\psi$.
\end{lem}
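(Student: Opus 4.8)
The engine for both directions is the $t$-invariant normal form for a cylindrical convex contact cobordism.

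\emph{Forward direction.} Since $(M,\xi,X,\phi)$ has no critical points and $\partial_\pm M$ are regular level sets, the flow of $X$ identifies $M$ with the region $\{(p,t)\colon p\in\partial_-M,\ 0\le t\le T(p)\}\subset \partial_-M\times\R$, where $T\colon\partial_-M\to\R_{>0}$ is the flow time needed to pass from $\partial_-M$ to $\partial_+M$; in these coordinates $X=\partial_t$. As $X$ is a contact vector field, the argument recalled in Section~\ref{subsec:convex_surfaces} (following \cite[Section 4.6.2]{Geiges}) lets me rescale $\alpha$ by $\exp\!\big(-\int_0^t\mu\,d\tau\big)$ to a $t$-invariant contact form $\tilde\alpha=u\,dt+\beta$ with $u\in C^\infty(\partial_-M)$ and $\beta\in\Omega^1(\partial_-M)$. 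Setting $t=0$ reads off the convex structure $\Sigma_0$: dividing set $\{u=0\}$, half-pages $R_\pm=\{\pm u>0\}$, contact form $\beta|_\Gamma$, Liouville primitives $\lambda_\pm=\beta/u$. Setting $t'=t-T(p)$ near $\partial_+M$ gives $\tilde\alpha=u\,dt'+(\beta+u\,dT)$, so the dividing set, half-pages, and $\beta|_\Gamma$ are unchanged and only the primitives shift, to $\lambda_\pm+dT$. In these coordinates the holonomy is the identity of the underlying manifold and carries the germ of $\Sigma_0$ to that of $\Sigma_1$; hence it is a contactomorphism along $\Gamma$ and, since $dT$ is exact, an exact symplectomorphism on each $R_\pm$; that is, it is a convexomorphism (and being a convexomorphism is preserved under conjugation by the identifications $\partial_\pm M\cong\Sigma_{0,1}$).

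\emph{Converse.} Pulling back by $\psi$, I reduce to two convex structures on a single closed $2n$-manifold $\Sigma$ with the identity a convexomorphism, and must build a cylindrical cobordism with these boundaries and identity holonomy. Write the first structure as $\tilde\alpha_0=u_0\,dt+\beta_0$, so $\lambda^0_\pm=\beta_0/u_0$. Using Giroux's ideal Liouville domain formalism (Remark~\ref{rmk:ideal}) and the fact that the two structures share $\Gamma$ and $\xi|_\Gamma$, normalize the representative of the second structure so that its primitives $\lambda^1_\pm$ agree to leading order with $\beta_0/u_0$ at $\Gamma$; then $\lambda^1_\pm-\lambda^0_\pm$ extends smoothly across $\Gamma$, is exact on each $R_\pm$ by the convexomorphism hypothesis, and patches to a single closed form, which I claim (see below) may be taken to be $dT$ for a function $T\colon\Sigma\to\R$; adding a constant, $T>0$. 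Now set $M=\{(p,t)\colon 0\le t\le T(p)\}\subset\Sigma\times\R$ with contact form $\tilde\alpha_0$, contact vector field $X=\partial_t$, and function $\phi=t$. This is a cylindrical convex contact cobordism; its bottom boundary is the first structure, its top boundary (in the coordinate $t'=t-T$) is $(u_0,\beta_0+u_0\,dT)$, which has $\lambda_\pm=\lambda^0_\pm+dT=\lambda^1_\pm$ and the same $\xi|_\Gamma$ and hence is the second structure, and its holonomy is the identity of $\Sigma$, which, after undoing the identification by $\psi$, is $\psi$ itself. If one wants the boundaries to be honest level sets of a single function, Lemmas~\ref{lem:make_level} and~\ref{lem:Morse_modify} supply the cosmetic fix.

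\emph{Main obstacle.} The only nonroutine step is the repackaging of a convexomorphism as an exact primitive-difference $dT$ globally on $\Sigma$: one normalizes the two representatives so their Liouville primitives agree to leading order at $\Gamma$, then checks that the resulting closed $1$-form, exact on $R_+$ and on $R_-$ separately, is exact on all of $\Sigma$ (a Mayer--Vietoris comparison $H^1(\Sigma)\to H^1(R_+)\oplus H^1(R_-)$; in the rare case that a nontrivial class survives, pre-compose with a further short cylindrical cobordism realizing a rotation of the convex structure). This is exactly what Giroux's Moser theory for ideal Liouville domains controls -- it is the substance of Lemma~\ref{lem:moser} -- so the additional input is slight, and the construction is the direct analogue of the Weinstein-cylinder trick recalled in Section~\ref{sec:Weinstein}. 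Everything else ($X$ is automatically contact and gradient-like for $\tilde\alpha_0$ and $\phi=t$, and has no zeros) is immediate.
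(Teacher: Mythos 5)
Your argument tracks the paper's proof very closely: the forward direction is the same trivialization via the $X$-flow, passing to a $t$-invariant form $u\,dt+\beta$, and observing that the Liouville primitives on the upper boundary shift by $dT$; the converse is the same graph construction, building a domain $\{0\le t\le T(p)\}$ in $\Sigma\times\R$ from a primitive $T$ of the difference of Liouville forms. Where you and the paper diverge is in how you handle the gluing of the local primitives $H_\pm$ on $R_\pm$: the paper simply asserts that because $\psi$ is a global diffeomorphism, $H_+$ and $H_-$ glue smoothly across $\Gamma$ up to a constant shift (it produces $H$ directly as the glued primitive), whereas you phrase this as a global exactness question for a closed $1$-form on $\Sigma$ and reach for a Mayer--Vietoris argument.

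The one step in your write-up that does not hold up is the fallback in the ``Main obstacle'' paragraph. You suggest that if a nontrivial class in $\ker\bigl(H^1(\Sigma)\to H^1(R_+)\oplus H^1(R_-)\bigr)$ appears, one can ``pre-compose with a further short cylindrical cobordism realizing a rotation of the convex structure.'' But pre-composing with a holonomy $\chi$ replaces $\mu=\psi^*\lambda^1-\lambda^0$ by $\chi^*\mu+(\chi^*\lambda^0-\lambda^0)$, and the second term is already known to be exact by the forward direction; since $\chi$ is isotopic to the identity, $[\chi^*\mu]=[\mu]$, so the cohomology class you are worried about is unchanged. If the obstruction you raise were genuinely nonzero, this manoeuvre would not kill it. The substantive point your proof should make instead (and which the paper compresses into ``that $\psi$ is a diffeomorphism implies $H_\pm$ glue'') is that the local primitives on $\overline{R_+}$ and $\overline{R_-}$ differ along $\Gamma$ by a locally constant function, and that the diffeomorphism hypothesis---via the cancellation of the singular parts of $\psi^*\lambda^1_\pm$ and $\lambda^0_\pm$ near $\Gamma$, which is exactly what your leading-order normalization is doing---is what makes this comparison legitimate; one then adjusts by constants on components. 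Your Mayer--Vietoris framing is a fine way to organize the bookkeeping, but the proposed fix for a surviving class is not; better to either argue, as the paper intends, that the gluing obstruction vanishes for a convexomorphism, or restrict the claim to the situation needed downstream (e.g.\ $\Gamma$ connected) rather than promising an ad hoc rotation.
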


\begin{proof}
First, we prove that the holonomy map is indeed a convexomorphism. We use $X$ to trivialize the cobordism. That is, there is a uniquely defined domain $D \subset \Sigma_0 \times [0,\infty)$ and diffeomorphism $\chi \colon M \rightarrow D$ which is the identity along $\Sigma_0 \times \{0\}$ and such that $d\chi(X) = \partial_t$. The holonomy map in these coordinates is just  the map $\psi \colon \partial_-D \rightarrow \partial_+D$ given by flowing up in the $\partial_t$ direction, where the lower boundary $\partial_-D$ is just $\Sigma_0 \times \{0\}$, and the upper boundary $\partial_+D$ is just the graph of some function $H \colon \Sigma \rightarrow (0,\infty)$. Since $X$ preserves the contact structure, we may assume the contact form on $D$ is $t$-invariant, just $udt + \beta$ as usual. Note that using the coordinate $\wt{t} = t-H$ so that $\Sigma_1 \cong \partial_+D = \Sigma_0 \times \{0\} \subset \Sigma_0 \times \R_{\wt{t}}$ (and $X = \partial_{\wt{t}}$), we have that the contact form near $\partial_+D$ is $ud\wt{t}+udH+\beta$. Hence, $R_{\pm}$ of $\partial_+D$ have Liouville forms $\beta/u + dH$, and so the holonomy map is an exact symplectmorphism. Meanwhile, on the dividing set $\{u=0\}$, the contact form is still just $\beta$. Hence, the holonomy map does satisfy the required conditions on $D$, and $\chi^{-1}$ preserves these conditions.

The converse is realized via precisely the same graph construction. Write $\Sigma_0 = R_+^0 \cup_{\Gamma^0} R_-^0$ and $\Sigma_1 = R_+^1 \cup_{\Gamma^1} R_-^1$. Then since $\psi$ is an exact symplectomorphism on each of $R_{\pm}$, we have that there are functions $H_{\pm}$ such that $\psi^*(\lambda_{\pm}^1 + dH_{\pm}) = \lambda_{\pm}^0$. That $\psi$ is a diffeomorphism implies $H_{\pm}$ glue together smoothly to form $H$ along all of $\Sigma$, up to possibly shifting $H_-$ by a constant. By a further constant shift, we may assume $H > 0$. Then the domain $D$ in $\Sigma_0 \times \R$ with lower boundary $\Sigma_0 \times \{0\}$, upper boundary given by the graph of $H$, $X = \partial_t$, and $\xi$ the $t$-invariant contact form matching the germ of $\Sigma_0$ yields the desired cylindrical cobordism.
\end{proof}

\begin{rmk} \label{rmk:parametric_convexomorphism}
This converse further holds parametrically, in that if we are given a family of convexomorphisms, then it can be realized as the holonomy map of a family of cylindrical convex contact cobordisms. This is because the entire proof can be repeated parametrically. In particular, given a Hamiltonian isotopy $\Sigma_t$ for $\Sigma_0$ to $\Sigma_1$, we can find a family of cylindrical convex structures on $\Sigma \times I$, $(\xi_t,X_t,\phi_t)$, such that $X_0 = \partial_t$ and the holonomy map at time $t$ is precisely $\psi_t$ as constructed in Lemma \ref{lem:moser}.
\end{rmk}

As we have already seen, the procedure of using the vector field $X$ to trivialize a cylindrical cobordism is very fruitful. We will use it in the following key lemma to prove that cylindrical cobordisms are essentially trivial.

\begin{lem} \label{lem:triv_cyl}
Suppose we have two cylindrical convex contact cobordisms $(M_0, \xi_0, X_0, \phi_0)$ and $(M_1, \xi_1, X_1, \phi_1)$ such that the convex structures are identified on a neighborhood of their lower boundaries $\partial_- M_i$, by which we mean there is some $\wt{\psi} \colon \Open(\partial_- M_0) \rightarrow \Open(\partial_- M_1)$ intertwining the convex structures. Then there is a diffeomorphism $\psi \colon M_0 \rightarrow M_1$ extending $\wt{\psi}$, such that $M_0$ and $\psi^*M_1$ are strictly convex homotopic via a strict homotopy fixed on $\Open(\partial_- M_0)$. Furthermore, if the convex structures also match on $\Open(\partial_+M_i)$, meaning there is some $\widehat{\psi} \colon \Open(\partial_+ M_0) \rightarrow \Open(\partial_+ M_1)$, such that the holonomy maps are the same (under the identifications of $\wt\psi$ and $\widehat{\psi}$), then $\psi$ can be made to also extend $\widehat{\psi}$ and the strict homotopy can also be fixed on $\Open(\partial_+ M)$ such that the holonomy map remains fixed through the homotopy.
\end{lem}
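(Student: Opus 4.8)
The plan is to use the flow of $X$ to strip a cylindrical cobordism down to an essentially $t$-invariant object, reducing everything to an elementary statement about domains in a fixed trivial cylinder.

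\textbf{Step 1: trivialize by the vector field.} Flowing $\partial_-M_i$ along $X_i$ produces a diffeomorphism $\chi_i\colon M_i\xrightarrow{\sim}D_i$, where $D_i=\{(p,t):0\le t\le H_i(p)\}\subset \partial_-M_i\times[0,\infty)$ for a smooth positive function $H_i$, with $\chi_i$ the identity along $\partial_-M_i\times\{0\}$ and $(\chi_i)_*X_i=\partial_t$. By Giroux's rescaling trick (recalled just after the definition of convex hypersurface in Section~\ref{subsec:convex_surfaces}) we may take the contact form on $D_i$ to be $t$-invariant, of the shape $u\,dt+\beta$. Now use $\wt\psi$ to identify $\partial_-M_0\cong\partial_-M_1=:\Sigma$. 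Since $\wt\psi$ intertwines $(\xi_0,X_0)$ with $(\xi_1,X_1)$ near the lower boundary, it intertwines the flows of $X_0$ and $X_1$ for small time, so the two exponential coordinate systems agree on a collar $\{0\le t<\epsilon\}\subset D_0\cap D_1$, and there $\xi_0=\xi_1$; as both contact distributions are $\partial_t$-invariant and a $\partial_t$-invariant distribution is determined by its restriction to $t=0$, we get $\xi_0=\xi_1$ on all of $D_0\cap D_1$. Hence, after these identifications, $M_i$ becomes $(D_i,\xi^{\mathrm{inv}}|_{D_i},\partial_t,\phi_i)$ for one and the same $t$-invariant contact structure $\xi^{\mathrm{inv}}$ on $\Sigma\times[0,\infty)$, the only difference between the two cobordisms being the domains $D_0,D_1$ and the (irrelevant, by Lemma~\ref{lem:Morse_modify} and the remarks following it) Morse functions.

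\textbf{Step 2: the diffeomorphism and the homotopy.} Choose, smoothly in $p$, an orientation-preserving diffeomorphism $g_p\colon[0,H_0(p)]\to[0,H_1(p)]$ equal to the identity on $[0,\epsilon]$, and set $\psi(p,t)=(p,g_p(t))$; transported back through the $\chi_i$ this is a diffeomorphism $\psi\colon M_0\to M_1$ extending $\wt\psi$ and carrying $\partial_+M_0$ to $\partial_+M_1$. For the homotopy, put $H_s=(1-s)H_0+sH_1>0$, $D_s=\{0\le t\le H_s\}$, and $M_s:=(D_s,\xi^{\mathrm{inv}}|_{D_s},\partial_t,\phi_s)$, where $\phi_s$ is a homotopy of pseudo-gradient Morse functions from that of $M_0$ to that of $M_1$, fixed on $\{t<\epsilon\}$, having $\Sigma\times\{0\}$ and $\mathrm{graph}(H_s)$ as level sets (such an interpolation exists by Lemmas~\ref{lem:Morse_modify} and~\ref{lem:make_level}, since the Morse function plays no essential role). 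Pick $g_p^s\colon[0,H_0(p)]\to[0,H_s(p)]$ smooth in $(p,s)$ with $g_p^0=\mathrm{id}$, $g_p^1=g_p$, and $g_p^s=\mathrm{id}$ on $[0,\epsilon]$; then $\psi_s(p,t)=(p,g_p^s(t))$ satisfies $\psi_0=\mathrm{id}_{D_0}$, $\psi_1=\psi$, and $\psi_s^*M_s$ is a $C^\infty$-family of convex contact structures on $D_0$: no critical points appear since $\psi_s^*\partial_t$ never vanishes, $\mathrm{graph}(H_0)$ and $\Sigma\times\{0\}$ stay level sets of $\psi_s^*\phi_s$, and the gradient-like inequality follows on the compact cobordism from $d(\psi_s^*\phi_s)(\psi_s^*\partial_t)=\psi_s^*\!\big(d\phi_s(\partial_t)\big)>0$. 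This family runs from $M_0$ to $\psi^*M_1$ and is fixed on $\Open(\partial_-M_0)=\{t<\epsilon\}$, giving the first assertion.

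\textbf{Step 3: the furthermore.} Suppose additionally that $\widehat\psi$ identifies the germs near $\partial_+M_i$ and the holonomy maps coincide. In the trivialized picture the holonomy map of $M_i$ sends $p$ to the endpoint $(p,H_i(p))$ of its $\partial_t$-trajectory; equality of holonomy therefore says precisely that under $\widehat\psi$ the top boundaries are identified by $(p,H_0(p))\mapsto(p,H_1(p))$, i.e.\ $\widehat\psi$ covers $\mathrm{id}_\Sigma$ there, and the $t$-invariant structures near the two top boundaries agree up to the $t$-translation carrying $\mathrm{graph}(H_0)$ to $\mathrm{graph}(H_1)$. It then suffices to also demand $g_p^s(t)=t+\big(H_s(p)-H_0(p)\big)$ for $t$ near $H_0(p)$ — compatible with $g_p^s=\mathrm{id}$ near $0$ and with monotonicity because $H_s>0$ — and to arrange $\phi_s$ to be $t$-translation-invariant near the top (again Lemma~\ref{lem:Morse_modify}). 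Then near $\partial_+M_0$ each $\psi_s$ is a pure $t$-translation, which preserves $\xi^{\mathrm{inv}}$ and $\partial_t$, so the germ of $\psi_s^*M_s$ at $\partial_+M_0$ is independent of $s$ and agrees with the $\widehat\psi$-pullback of that of $M_1$; moreover the trajectory of $\psi_s^*\partial_t$ through $(p,0)$ is $\{p\}\times[0,H_0(p)]$ for every $s$, so the holonomy map is the $s$-independent $p\mapsto(p,H_0(p))$. Hence $\psi$ extends both $\wt\psi$ and $\widehat\psi$, and the homotopy is fixed near both boundaries with constant holonomy.

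The main obstacle is not contact-geometric — once one trivializes by $X$, the problem becomes the evident one about nested domains in a fixed $t$-invariant cylinder. The only genuine care is bookkeeping the Morse function so that it stays fixed near $\partial_-$ (and, in the furthermore, near $\partial_+$, with the prescribed level sets) throughout the homotopy, which is exactly what Lemma~\ref{lem:Morse_modify} (together with Lemma~\ref{lem:make_level}) is designed to supply, and checking that the pseudo-gradient condition survives, which is automatic on a compact critical-point-free cobordism.
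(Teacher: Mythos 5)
Your argument follows the same strategy as the paper's proof: trivialize by the flow of $X$ so that both cobordisms become domains $D_0, D_1$ in a single $t$-invariant cylinder $\Sigma \times [0,\infty)$, linearly interpolate the domains $D_s$, pull back by stretching diffeomorphisms, and treat the Morse function separately via Lemma~\ref{lem:Morse_modify}. Steps~1 and~2 are correct and match the paper.

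There is a gap in Step~3. You declare that near $\partial_+ D_0$, $\psi_s(p,t) = (p,\, t + s(H_1(p) - H_0(p)))$ is ``a pure $t$-translation, which preserves $\xi^{\mathrm{inv}}$ and $\partial_t$.'' It certainly preserves $\partial_t$, but it only preserves $\xi^{\mathrm{inv}} = \ker(u\,dt + \beta)$ if $H_1 - H_0$ is locally constant: in general $\psi_s^*(u\,dt + \beta) = u\,dt + s\,u\,d(H_1 - H_0) + \beta$, which has a different kernel wherever $u \neq 0$ and $d(H_1 - H_0) \neq 0$. The hypotheses do force local constancy, but you never establish it, and without it the ``pure $t$-translation'' claim (and hence the $s$-independence of the germ at $\partial_+ D_0$) is unjustified. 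The quickest fix in your setup is to observe that, since $\widehat\psi$ commutes with $\partial_t$ and by the holonomy condition covers $\mathrm{id}_\Sigma$, it must have the form $(p,t) \mapsto (p,\, t + (H_1 - H_0)(p))$, and demanding that this be a contactomorphism of $\xi^{\mathrm{inv}}$ forces $d(H_1 - H_0) = 0$. (The paper instead extracts this from Lemma~\ref{lem:holonomy}: the Liouville forms along $\partial_+ D_i$ are $\lambda + dH_i$ with $\lambda$ the Liouville form on the bottom, so identification by $\widehat\psi$ gives $dH_0 = dH_1$ directly.) Add that observation and the rest of your argument goes through.
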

\begin{proof}
Let $\Sigma$ be the underlying manifold of the lower boundaries of $M_0$ and $M_1$, which are identified. As in the proof of the previous lemma, we may find smooth maps $\psi_i \colon M_i \rightarrow \Sigma \times [0,\infty)$ determined by $d\psi_i(X_i) = \partial_t$. Each map $\psi_i$ is then a diffeomorphism onto its image, $D_i$. By construction, $(\psi_0)_*(X_0,\phi_0) = (\psi_1)_*(X_1,\phi_1)$ on $\Open(\Sigma \times \{0\})$. The upper boundary of $D_i$ is then the graph of some $H_i$. We may consider domains $D_\tau$ whose upper boundary is then given by the graph of $(1-\tau)H_0 + \tau H_1$. Then we may fix a family of diffeomorphisms $\chi_{\tau} \colon D_0 \rightarrow D_{\tau}$ such that each $\chi_\tau$:
\begin{itemize}
	\item is the identity on $\Open(\Sigma \times \{0\})$
	\item only stretches in the $[0,\infty)$ direction (so $\chi_{\tau}$ is of the form $(x,t) \mapsto (x,\chi_{x,\tau}(t))$)
\end{itemize}
Then $\chi_{\tau}$ pulls back $(\xi = \ker(udt+\beta),X=\partial_t)$ to a strict homotopy of convex structures on $D_0$, up to the information of the Morse function. The map $\psi_0$ then pulls this back to a strict homotopy on $M_0$ from $(\xi_0,X_0)$ to $(\psi_1^{-1} \circ \chi_1 \circ \phi_0)^*(\xi_1,X_1)$. The diffeomorphism $\psi_1^{-1} \circ \chi_1 \circ \phi_0 \colon M_0 \rightarrow M_1$ is the desired diffeomorphism ambiguity in the lemma. One can easily find a homotopy of the Morse function, since the space of $C^{\infty}$ functions $\phi$ with $d\phi(X_\tau) > 0$ is contractible (in fact convex), even if we fix $\phi$ on $\Open(\Sigma \times \{0\})$.

Furthermore, suppose the convex contact structures also agree near the upper boundary and have the same holonomy, in the sense of the statement of the lemma. Since the convex data is intertwined on $\Open(\partial_+ M_i)$, we have that the corresponding Liouville structures (meaning the primitive $\lambda$, not just the symplectic form) on the decompositions of the $\partial_+ M_i$ into $R_+ \cup R_-$ are also identified. But we also showed in Lemma \ref{lem:holonomy} that the Liouville forms along the top were of the form $\lambda + dH_0$ and $\lambda + dH_1$, where $\lambda$ is the Liouville form along the bottom boundary. Hence $H_0$ and $H_1$ only differ by a locally constant function. We may therefore take $\chi_{\tau}$ to further be such that in a neighborhood $U = \Open_{D_0}(\partial_+D_0)$, we have $\chi_{\tau}(x,t) = (x,t+\tau(H_1-H_0))$. This expression is equivariant by translation in the $t$ direction, which is what we need to conclude the stronger result, since $(\xi = \ker(udt+\beta),X = \partial_t)$ is also $t$-equivariant on $\Sigma \times \R$.
\end{proof}

\begin{cor} \label{cor:uncompleted_contactomorphism} Any two cylindrical convex contact cobordisms with the same holonomy map (in the sense of the previous lemma, in which the germs of the convex data of the upper and lower boundaries are identified) are contactomorphic.
\end{cor}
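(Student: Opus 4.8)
The plan is to deduce the corollary directly from Lemma \ref{lem:triv_cyl} together with the relative version of Gray stability. Let $(M_0,\xi_0,X_0,\phi_0)$ and $(M_1,\xi_1,X_1,\phi_1)$ be the two cylindrical convex contact cobordisms, and suppose the germs of the convex data along $\partial_{\pm}M_0$ and $\partial_{\pm}M_1$ are identified via maps $\wt{\psi}$ and $\widehat{\psi}$ with matching holonomy. Applying the stronger conclusion of Lemma \ref{lem:triv_cyl}, I obtain a diffeomorphism $\psi \colon M_0 \to M_1$ extending $\wt{\psi}$ and $\widehat{\psi}$, together with a strict convex contact homotopy $(\xi_t,X_t,\phi_t)$ on $M_0$ from $(\xi_0,X_0,\phi_0)$ to $\psi^*(\xi_1,X_1,\phi_1)$, and which is fixed on neighborhoods of both $\partial_-M_0$ and $\partial_+M_0$.

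Next I would discard the vector fields and Morse functions and retain only the path of contact structures $\xi_t$ on $M_0$. Since this path is constant near $\partial M_0$ and $M_0$ is compact, the relative Moser trick (as recalled in the discussion following Definition \ref{defn:strict_homotopy}) produces an isotopy $\rho_t \colon M_0 \to M_0$, fixed near $\partial M_0$, with $(\rho_t)_*\xi_0 = \xi_t$. In particular $\rho_1 \colon (M_0,\xi_0) \to (M_0,\psi^*\xi_1)$ is a contactomorphism, and composing, $\psi \circ \rho_1 \colon (M_0,\xi_0) \to (M_1,\xi_1)$ is a contactomorphism, which is what we want.

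The one point that requires care — and the reason the hypothesis on holonomy is needed rather than merely the identification of germs along $\partial_-M$ — is that Gray stability on a manifold with boundary demands that the path of contact structures be fixed near the \emph{entire} boundary; this is precisely what the stronger conclusion of Lemma \ref{lem:triv_cyl} guarantees along $\partial_+M_0$, and that conclusion in turn rests on the agreement of the holonomy maps. Beyond verifying that the homotopy of Lemma \ref{lem:triv_cyl} is genuinely constant in the contact structure near both boundary components (so that the relative Moser argument applies verbatim), the proof is entirely formal, so I do not anticipate any serious obstacle here.
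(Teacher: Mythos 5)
Your proposal is correct and follows essentially the same route as the paper: apply the stronger conclusion of Lemma \ref{lem:triv_cyl} to obtain, up to the diffeomorphism $\psi$, a strict homotopy of contact structures fixed near both boundary components, then invoke the relative Moser/Gray argument to conclude contactomorphism. Your extra care in emphasizing why the holonomy hypothesis (and hence the homotopy being fixed near $\partial_+M_0$, not just $\partial_-M_0$) is needed for the relative Moser trick is exactly the implicit content of the paper's shorter proof.
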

\begin{proof}
The previous lemma yields, up to diffeomorphism, a strict homotopy from one convex contact structure to the other which is fixed near the boundary. The Moser trick applies through this homotopy since the boundary contact structure remains fixed.
\end{proof}

To any convex contact cobordism $\scr{M} = (M,\xi,X,\phi)$, we may form an infinite version by attaching $\partial_-M \times (-\infty,0]$ to the negative boundary and $\partial_+M \times [0,\infty)$ to the positive boundary, such that on each piece $X = \partial_t$ and $\xi$ is invariant under translation in the $t$-direction. This extension is unique up to a choice of extension of the Morse function $\phi$. Let us call this completed convex contact cobordism $\widehat{\scr{M}}$. Note that strict homotopies, when restricted to the boundary, yield Hamiltonian isotopies of convex structures, and hence can be realized as holonomy maps. Since we have seen that the holonomy map essentially comes from taking the graph of some function $H$ in these infinite ends, the following result follows from the proofs already presented.

\begin{cor} \label{cor:completed_contactomorphism} Suppose $\scr{M}_t := (M,\xi_t,X_t,\phi_t)$ is a strict convex contact homotopy. Then $\widehat{\scr{M}_0}$ and $\widehat{\scr{M}_1}$ are contactomorphic.
\end{cor}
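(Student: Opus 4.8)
The plan is to reduce the statement to ordinary Gray stability, using the holonomy/ideal-Liouville structure to absorb all of the variation of the contact structure near infinity into a reparametrization of the cylindrical ends. Write $s\in[0,1]$ for the homotopy parameter, so $\scr{M}_s=(M,\xi_s,X_s,\phi_s)$, reserving $t$ for collar coordinates. First I would restrict the strict homotopy to the boundary components $\partial_\pm M$. A $C^\infty$-path of pairs (contact structure, transverse contact vector field) restricts to a $C^\infty$-path of germs along $\partial_\pm M$, so by Definition \ref{defn:conv_surf} this is a Hamiltonian isotopy of convex structures $\Sigma_s^\pm\in\scr{C}_{2n}$. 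Lemma \ref{lem:moser} then produces families of convexomorphisms $\psi_s^\pm\colon\Sigma_0^\pm\to\Sigma_s^\pm$ with $\psi_0^\pm=\mathrm{id}$. Using the isotopy extension theorem, I would extend the germs $\psi_s^\pm$ to an ambient isotopy $F_s\colon M\to M$ with $F_0=\mathrm{id}$ which agrees with $\psi_s^\pm$ near $\partial_\pm M$.

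Next, consider the conjugated homotopy $\scr{M}_s':=F_s^*\scr{M}_s$, which is again a strict convex contact homotopy with $\scr{M}_0'=\scr{M}_0$. By construction, near $\partial_\pm M$ the convex-structure germ of $\scr{M}_s'$ is $(\psi_s^\pm)^*$ of the germ of $\scr{M}_s$, which, since $\psi_s^\pm$ is precisely a convexomorphism realizing the isotopy $\Sigma_s^\pm$, is exactly the germ of $\scr{M}_0$; in particular $\xi_s'$ is independent of $s$ near the boundary. Consequently $\scr{M}_0$ and $\scr{M}_1'$ are joined by a strict homotopy which is fixed on a neighborhood of $\partial M$. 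I would then observe that $\widehat{\scr{M}_1}$ and $\widehat{\scr{M}_1'}$ are contactomorphic: the diffeomorphism $F_1$ extends over the cylindrical ends because $F_1|_{\partial_\pm M}=\psi_1^\pm$ is a convexomorphism, and although this extension is not translation-equivariant (a convexomorphism changes the Liouville primitives on $R_\pm$ by exact forms), this discrepancy is absorbed exactly as in the proof of Lemma \ref{lem:holonomy} by regraphing each end through a diffeomorphism of the form $(x,t)\mapsto(x,\chi_x(t))$; the resulting map is a contactomorphism of completions. So it suffices to prove $\widehat{\scr{M}_0}\cong\widehat{\scr{M}_1'}$.

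Finally, since $\scr{M}_0$ and $\scr{M}_1'$ are strictly homotopic rel a neighborhood of $\partial M$, their completions carry a homotopy of contact structures on the \emph{fixed} manifold $\widehat{M}$ (the cylindrical ends being unchanged) which is constant outside the compact core. Gray stability for contact structures that are constant outside a compact set — the standard Moser argument, in which the solution $V_s\in\xi_s$ of the Gray equation automatically vanishes where $\dot\alpha_s=0$, so its flow is compactly supported and defined for all time — then yields a contactomorphism $\widehat{\scr{M}_0}\to\widehat{\scr{M}_1'}$, hence $\widehat{\scr{M}_0}\cong\widehat{\scr{M}_1}$.

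The main obstacle I anticipate is the bookkeeping around the cylindrical ends: confirming that, after the conjugation by $F_s$, the homotopy is genuinely constant (not merely constant up to diffeomorphism) on a neighborhood of $\partial M$, so that the compactly supported version of Gray stability truly applies; and verifying that $F_1$ promotes to a contactomorphism of the completed cobordisms once the ends are regraphed. Both are routine given Lemmas \ref{lem:moser} and \ref{lem:holonomy} (and the parametric Remark \ref{rmk:parametric_convexomorphism}), but they are exactly the places where the convexity/ideal-Liouville structure at infinity carries the argument.
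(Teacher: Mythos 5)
Your overall strategy (absorb the boundary variation via the Moser convexomorphisms and holonomy, then run a Moser/Gray argument) is the same one the paper intends, but there is a genuine gap at the pivotal step, namely the claim that after conjugating by $F_s$ the structures become literally constant near $\partial M$, ``in particular $\xi_s'$ is independent of $s$ near the boundary.'' A convexomorphism $\psi^\pm_s\colon\Sigma^\pm_0\to\Sigma^\pm_s$ produced by Lemma \ref{lem:moser} is only an \emph{exact} symplectomorphism on $R_\pm$: it satisfies $(\psi^\pm_s)^*\lambda^s_\pm=\lambda^0_\pm+dH^s_\pm$, not $(\psi^\pm_s)^*\lambda^s_\pm=\lambda^0_\pm$. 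On the other hand, the contact germ along a convex hypersurface with transverse contact field remembers the primitives on the nose, since $\lambda_\pm=(\alpha|_{T\Sigma})/\alpha(X)=\beta/u$ is intrinsic to the germ $(\xi,X)$. So any diffeomorphism germ intertwining the boundary germs of $\scr{M}_0$ and $\scr{M}_s$ would have to pull back $\lambda^s_\pm$ to $\lambda^0_\pm$ \emph{strictly}; such strict symplectomorphisms are not what Lemma \ref{lem:moser} provides and in general do not exist. Concretely, the correction needed to kill the discrepancy $dH^s_\pm$ is the shear $(x,t)\mapsto(x,t+H^s_\pm(x))$ in the invariant collar, which moves the boundary hypersurface off itself into the end; hence no choice of extension $F_s\colon M\to M$ (which must preserve $\partial M$) can make the conjugated family constant near the boundary. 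Since your final step rests on compactly supported Gray stability for a family that is constant outside a compact core, the argument as written does not go through. (Your step identifying $\widehat{\scr{M}_1}$ with $\widehat{\scr{M}_1'}$ is actually the unproblematic one: $F_1$ intertwines the boundary germs by definition of pullback, so it extends translation-invariantly over the ends with no regraphing needed; the regraphing belongs precisely where your argument currently has the hole.)

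The repair is to perform the correction in the completion rather than in the compact cobordism, which is what the paper's sketch means by ``the holonomy map essentially comes from taking the graph of some function $H$ in these infinite ends.'' After conjugating by $F_s$, the boundary germs at time $s$ differ from those at time $0$ only by the exact terms $dH^s_\pm$ (with $H^s_\pm$ gluing to a function on the boundary as in the proof of Lemma \ref{lem:holonomy}); one then composes with an $s$-dependent family of shears/regraphings supported in the cylindrical ends of $\widehat{M}$ (as in Lemmas \ref{lem:holonomy} and \ref{lem:triv_cyl}, parametrically per Remark \ref{rmk:parametric_convexomorphism}) to obtain a family of contact structures on the fixed $\widehat{M}$ that is genuinely constant outside a compact subset. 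Only at that point does your Gray/Moser argument with compactly supported vector field apply and yield $\widehat{\scr{M}_0}\cong\widehat{\scr{M}_1}$. So the difficulty you deferred as ``routine bookkeeping'' is in fact the crux of the corollary, and resolving it changes the structure of the proof: the graphs must be introduced for all $s$, in the ends, before any appeal to Gray stability.
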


Finally, we prove one technical lemma, which proves that all strict homotopies can be modified near the upper boundary so that they are fixed on the upper boundary. This result allows us essentially to isolate strict convex contact homotopies away from given level sets.

\begin{lem} \label{lem:fix_top}
Suppose $(M,\xi^0_t,X^0_t)$ is a strict convex contact homotopy on a cobordism $M$ (where the Morse function is irrelevant). Then one can find a homotopy of strict convex contact homotopies $(\xi^s_t,X^s_t)$ such that:
\begin{itemize}
	\item $(\xi^s_t,X^s_t) = (\xi^0_t,X^0_t)$ except on $U=\Open(\partial_+ M)$
	\item $(\xi^1_0,X^1_0) = (\xi^0_0, X^0_0)$ everywhere
	\item $(\xi^1_t,X^1_t)$ is constant on $V=\Open(\partial_+ M)$ (where we think of $\overline{V} \subset U$)
\end{itemize}
\end{lem}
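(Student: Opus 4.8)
The plan is to use that, near $\partial_+M$, a strict convex contact homotopy carries no more information than a Hamiltonian isotopy of convex hypersurfaces together with a choice of trivializing collar, and that both of these can be absorbed into a graph-type modification supported near, but not on, $\partial_+M$. Restricting $(\xi^0_t,X^0_t)$ to $\partial_+M$ gives a smooth path $\Sigma_t$ in $\scr{C}_{2n}$, a Hamiltonian isotopy with $\Sigma_0$ the convex structure induced there by $(\xi^0_0,X^0_0)$. First I fix a collar $U=\Sigma\times(-\epsilon,0]$ of $\partial_+M=\Sigma\times\{0\}$ so small that no $\phi_t$ has a critical point in $\overline U$, together with a smaller collar $V$ with $\overline V\subset U$. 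Flowing $\partial_+M$ inward along $X^0_t$ gives, for each $t$, a (possibly $t$-dependent) sub-collar on which $X^0_t=\partial_r$ and $\xi^0_t=\ker(u_t\,dr+\beta_t)$ is $r$-invariant with $(u_t,\beta_t)$ the convex data of $\Sigma_t$; for $t=0$ this reproduces the germ of $(\xi^0_0,X^0_0)$.

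Next I apply the Moser lemma for convex hypersurfaces, Lemma~\ref{lem:moser}, to obtain a smooth family of convexomorphisms $g_t\colon\Sigma_0\to\Sigma_t$ with $g_0=\mathrm{id}$, and I encode $g_t^{-1}$ by a graphing function $H_t$ as in the proof of Lemma~\ref{lem:holonomy}: the subdomain $\{r\le H_t\}$ of the $r$-invariant cylinder with data $\Sigma_t$, after re-coordinatizing its upper boundary, is a cylindrical cobordism with lower germ $\Sigma_t$, upper germ $\Sigma_0$, and holonomy $g_t^{-1}$; moreover $H_0$ is constant, so at $t=0$ one just recovers the straight cylinder with data $\Sigma_0$. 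Introducing the homotopy-of-homotopies parameter $s$, I replace $H_t$ by $H_{s,t}$ interpolating from $H_{0,t}$ constant (recovering the straight cylinder with data $\Sigma_t$) to $H_{1,t}=H_t$, keeping $X=\partial_r$ and $r$-invariance near the lower boundary — for instance $H_{s,t}=sH_t+(1-s)c$ after a preliminary vertical shift, or a cutoff as in Lemma~\ref{lem:triv_cyl}. Transporting this two-parameter family back through the $X^0_t$-trivialization and grafting it into $U$ in place of the original $r$-invariant collar — legitimate because the lower germs are identified (Lemma~\ref{lem:holonomy}) and the graft is supported near $\partial_+M$, hence inside $U$ — yields $(\xi^s_t,X^s_t)$: equal to $(\xi^0_t,X^0_t)$ off $U$ and for $t=0$, equal to the given homotopy at $s=0$, and at $s=1$ a homotopy whose restriction to $V$ is $t$-independent and equal to $(\xi^0_0,X^0_0)$ there. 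Since only the pair $(\xi_t,X_t)$ is tracked and the space of Morse functions $\phi$ with $d\phi(X^s_t)>0$ is convex (and $\partial_\pm M$ can be made regular level sets via Lemma~\ref{lem:make_level}), a compatible Morse function is dragged along as in Lemma~\ref{lem:Morse_modify}.

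I expect the main obstacle to be bookkeeping in the grafting step rather than any single estimate. The delicate points are: the $X^0_t$-trivializing coordinates depend on $t$, so after undoing them one must check that the inserted cobordisms glue smoothly to $(\xi^0_t,X^0_t)$ along the inner boundary of $U$ for every $(s,t)$; each slice must remain a genuine convex contact cobordism, so one needs the contact condition for the interpolated data, gradient-likeness of the vector field, and transversality to $\partial M$; and every choice — the $g_t$, the $H_{s,t}$, the collars — must be simultaneously smooth in $(s,t)$ and trivial at $t=0$. All three are resolved by running the Moser lemma for ideal Liouville domains underlying Lemmas~\ref{lem:moser}--\ref{lem:holonomy} parametrically, exactly as in Remark~\ref{rmk:parametric_convexomorphism}; the essential mechanism is that the only genuine ambiguity near $\partial_+M$, namely the convexomorphism class carrying $\Sigma_t$ to $\Sigma_0$, is recorded by a single function that can be deformed to a constant.
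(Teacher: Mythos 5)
Your proposal is correct and takes essentially the same approach as the paper. Both arguments work in a fixed collar of $\partial_+ M$, use Lemma~\ref{lem:moser} to turn the Hamiltonian isotopy of the upper boundary germ into a family of convexomorphisms, realize these (via graphing functions, as in Lemma~\ref{lem:holonomy} and Remark~\ref{rmk:parametric_convexomorphism}) as a two-parameter family of cylindrical cobordisms whose lower germ matches the original data, and then deform the correction from trivial to the one needed to freeze the upper germ at $\Sigma_0$; your explicit use of the $X^0_t$-trivialization and the graphing function $H_{s,t}$ is just a more hands-on bookkeeping of the paper's citation of Remark~\ref{rmk:parametric_convexomorphism} and the deformation $\Sigma^+_{s,t}=\Sigma^+_{(1-s)t}$.
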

\begin{proof}
Consider just a fixed neighborhood $C = [-\epsilon,0] \times \partial_+ M$ which yields a cylindrical convex cobordism $C_t$ for all $t$ when we consider also the data $(\xi_t,X_t)$. The strict homotopy yields two Hamiltonian isotopies of convex structures: $\Sigma^-_t$ on $\{-\epsilon\} \times \partial_+ M$ and $\Sigma^+_t$ on $\{0\} \times \partial_+ M$. The holonomy of $C_t$ is a convexomorphism $\Sigma^-_t \rightarrow \Sigma^+_t$. We now may homotope the Hamiltonian isotopy $\Sigma^+_t$ to the constant homotopy at $\Sigma^+_{0}$ via $\Sigma^+_{s,t} := \Sigma^+_{(1-s)t}$. By Remark \ref{rmk:parametric_convexomorphism}, we have a $2$-parametric family $C_{s,t}$ of cylindrical convex structures on $C$ with $C_{0,t} = C_t$ and with holonomy a convexomorphism $\Sigma_-^t \rightarrow \Sigma^+_{s,t}$. In particular, at $s=1$, $C_{1,t}$ has fixed upper boundary, so that $(\xi^1_t,X^1_t)$ remains constant on $\Open(\partial_+M)$.
\end{proof}

\begin{rmk} \label{rmk:par_holonomy}
This lemma also holds in higher parametric families.
\end{rmk}

\subsection{Standard handles and convex contact handlebodies}

We are now ready to understand handle attachment. Recall that we proved in Theorem \ref{thm:standard_cp} that we could homotope a given convex contact structure in a neighborhood of the critical points into a standard form. The handles which we attach will therefore just be subsets of this standard neighborhood. We now describe how to attach handles to a given convex contact cobordism.

To start, consider $\R^2$ with coordinates $(p,q)$, and let $\psi(p,q) = p^2-q^2$. Fix some small $\delta > 0$ and let $\chi \colon [-1-\delta,1+ \delta] \times \R$ be a smooth even convex function matching $\sqrt{x^2-1}$ near the endpoints $\pm (1+\delta)$. Let $S$ be the region in $\R^2$ bounded by $\phi^{-1}(1)$ and such that $|q| \leq \chi(p)$. See Figure \ref{fig:RegionS}. For any $\epsilon > 0$, let $\epsilon S$ be the dilation of $S$ by constant $\epsilon$.

\begin{figure}[h!]
\centering
  \includegraphics[width=\textwidth]{./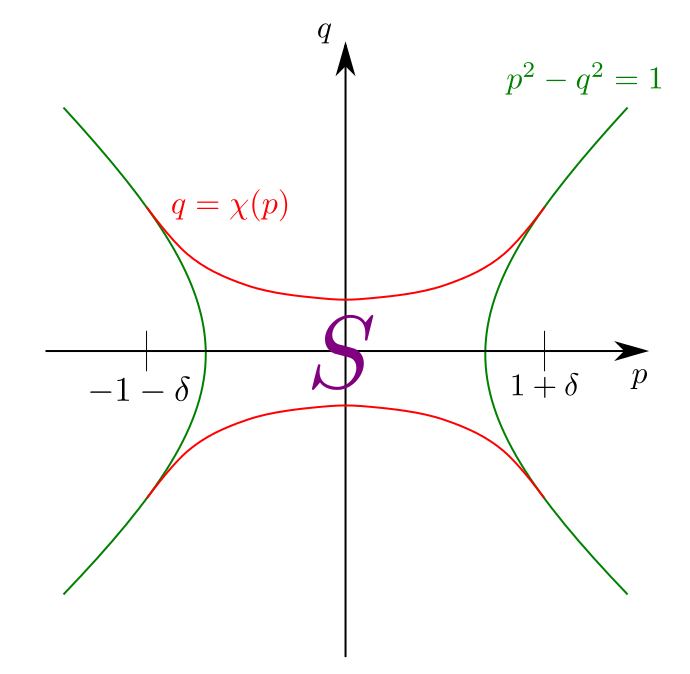}
  \caption{The region $S$ in $\R^2$.}\label{fig:RegionS}
\end{figure}

In $\R^{2n+1}$, our standard convex contact structure was written in Definition \ref{def:standard_cp} in such a way that the coisotropic and isotropic disks (the core and cocore, or vice versa, depending on whether we are in the sub- or super-critical case) were along complementary coordinate planes. Write $r_L$ for the radius in the isotropic directions and $r_C$ for the radius in the coisotropic directions. Explicitly, for example,
$$r_L^2 = \sum_{i=1}^{k}x_i^2.$$
Then we can consider the maps $f_{\sub},f_{\sup} \colon \R^{2n+1} \rightarrow \R^2$ given by $(r_L, r_C)$ and $(r_C,r_L)$ respectively. For any $\epsilon > 0$, denote by $H_{\sub}(\epsilon)$ and $H_{\sup}(\epsilon)$ the regions $f_{\sub}^{-1}(\epsilon S)$ and $f_{\sup}^{-1}(\epsilon S)$. The hyperboloid defined by $\psi(p,q) = p^2-q^2 = 1$ is pulled back to $\phi^{-1}(\phi_0(p) - \epsilon)$ in the subcritical case and $\phi^{-1}(\phi_0(p) + \epsilon)$ in the supercritical case.

\begin{defn} For an index $0 \leq k \leq 2n+1$ and any $\epsilon>0$, a \textbf{handle of index $k$} consists of either the set $H_{\sub}(\epsilon)$ when $k \leq n$ or $H_{\sup}(\epsilon)$ when $k \geq n+1$, together with the corresponding standard convex contact structure of index $k$.

The \textbf{attaching region} in either case consists of the lower boundary, which is a subset of a convex hypersurface which is a level set for $\phi$. The core intersects the attaching region along a sphere called the \textbf{attaching sphere}.
\end{defn}

Let $(R,S)$ be the attaching region and sphere of a handle $H$, and let $U$ be a small neighborhood of $R$ in $H$. Similarly, consider on some convex contact manifold $(M,\xi,X_M,\phi_M)$ sphere $S' \subset \partial_+ M$ with some $R' = \Open_{\partial_+ M}(S')$ and $U' = \Open_M(R')$. Suppose further that we have a diffeomorphism $\Psi \colon (R,S) \rightarrow (R',S')$ such that on $U \cup_\Psi U'$, the contact structures, contact vector fields, and Morse functions glue smoothly (such a gluing is unique if it exists). Then $H \cup M$ has a convex structure. We can apply Lemma \ref{lem:make_level} to form a new convex contact cobordism $\wt{H \cup M}$ by making the new upper boundary a level set.

\begin{defn}
The procedure described in the previous paragraph of going from $M$ to $H \cup M$ is called a \textbf{pre-handle attachment}, and $\wt{H \cup M}$ and \textbf{handle attachment}. A \textbf{convex contact handlebody} is a a convex contact cobordism obtained by obtained by a sequence of convex contact handle attachments starting from a cylindrical convex contact cobordism.
\end{defn}

Our goal will be to prove the following theorem.

\begin{thm}[= Theorem \ref{thm:main_thm}] \label{thm:main_thm_body}
Every convex contact cobordism $(M,\xi,X,\phi)$ is strictly convex homotopic to a convex contact handlebody.
\end{thm}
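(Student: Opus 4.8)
The plan is to combine the local normal form near critical points (Theorem \ref{thm:standard_cp}) with the triviality of cylindrical cobordisms (Lemma \ref{lem:triv_cyl}) in the spirit of standard Morse-theoretic handle decompositions. First I would apply Theorem \ref{thm:standard_cp} to replace $(M,\xi,X,\phi)$, up to strict convex homotopy, by one whose every critical point is standard, so that a neighborhood of each critical point is isomorphic to a neighborhood of $0$ in some $\scr{C}_k$. Next, using a generic perturbation of $\phi$ (within the class of Morse functions for which $X$ remains a pseudo-gradient — note that by Lemma \ref{lem:handles} and the discussion there, once the vector field is standard the Morse function near the critical points can be dragged to the standard model, and away from the critical points one has ample freedom by convexity of the space of valid $\phi$), I would arrange that the critical values are distinct and ordered, so that $M$ is filtered by regular level sets $\Sigma_0 = \partial_-M < \Sigma_1 < \cdots < \Sigma_N = \partial_+M$ with exactly one critical point between consecutive levels. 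Each slab $M_i$ between $\Sigma_{i-1}$ and $\Sigma_i$ thus contains a single standard critical point of index $k_i$.

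The heart of the argument is then a local-to-global matching: I claim each slab $M_i$ is strictly convex homotopic, rel a neighborhood of $\Sigma_{i-1}$, to the result of attaching a standard handle $H_{k_i}(\epsilon)$ to a cylinder over $\Sigma_{i-1}$. To see this, take the standard coordinate neighborhood of the critical point provided by Theorem \ref{thm:standard_cp}; the descending sphere meets the incoming level set $\Sigma_{i-1}$ in the attaching sphere $S'$, which (in the subcritical case) is an isotropic sphere in the dividing set with a framing of its conformal symplectic normal bundle, and (in the supercritical case) is a balanced coisotropic sphere. By Proposition \ref{prop:attaching_data} (resp. Theorem \ref{thm:bcs_nbhd}) a neighborhood $R'$ of $S'$ in $\Sigma_{i-1}$ together with the germ of the convex data along it is identified with the attaching region $R$ of the standard handle $H_{k_i}(\epsilon)$ for suitable small $\epsilon$. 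Gluing $H_{k_i}(\epsilon)$ onto the cylinder $\Sigma_{i-1} \times [0,1]$ along this identification produces a convex contact cobordism; I must then show $M_i$ is strictly convex homotopic to it. Away from the handle region this is exactly Lemma \ref{lem:triv_cyl} (the complement of the descending disk's trajectories retracts along $X$ to a cylinder over $\Sigma_{i-1}$), while inside the handle region the two structures are literally identified by the normal-form coordinates. One uses Lemma \ref{lem:make_level} to turn the upper boundary back into a level set, and Lemma \ref{lem:fix_top} to arrange that the homotopy on each slab is fixed near its upper boundary, so the slab-by-slab homotopies can be concatenated into a single strict convex homotopy of the whole $M$.

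Finally, I would assemble the slabs: starting from the cylinder over $\partial_-M$ and attaching the handles $H_{k_1}(\epsilon_1), \dots, H_{k_N}(\epsilon_N)$ in order produces a convex contact handlebody $M'$, and stacking the slab homotopies (matched along the separating level sets using Lemma \ref{lem:fix_top} and Corollary \ref{cor:uncompleted_contactomorphism} to ensure compatibility of holonomy) shows $M$ is strictly convex homotopic to $M'$. The main obstacle I anticipate is bookkeeping the compatibility of the identifications at the interfaces between slabs: one must ensure that the convexomorphism realized by the holonomy across a cylindrical piece matches on the nose the identification used to attach the next handle, and that the homotopies produced by Lemma \ref{lem:triv_cyl} on one slab do not disturb the already-standardized structure on the adjacent one. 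This is handled precisely by the "fix near the upper boundary" flexibility of Lemma \ref{lem:fix_top} (and its parametric strengthening, Remark \ref{rmk:par_holonomy}), together with the uniqueness-up-to-strict-homotopy of cylindrical cobordisms with prescribed holonomy; carefully threading these ensures the concatenation is legitimate. A secondary technical point is that one should check the handle sizes $\epsilon_i$ can be chosen small enough, and the ordering of critical values spread out enough, that successive handles are attached along disjoint regions of the relevant level sets — a standard Morse-theoretic maneuver.
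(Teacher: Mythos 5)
Your overall strategy matches the paper's: break $M$ into slabs with one critical point each, standardize, show each slab is a handle attachment plus cylinder, concatenate. But the hard content — the uniqueness of handle attachment up to strict homotopy, and the rearrangement of cylinders past handles — is packaged in the paper in Theorems \ref{thm:subcrit_attach} and \ref{thm:supercrit_attach}, which you never invoke, and the gaps where these would do the work are real.

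The claim that ``away from the handle region this is exactly Lemma \ref{lem:triv_cyl}'' does not stand as written. Lemma \ref{lem:triv_cyl} applies to a cylindrical cobordism whose convex structure has been identified with another near the lower boundary; the ``complement of the handle region'' in $M_i$ is not such an object — it is a region with a notch removed where the handle sits, and one cannot apply that lemma to it directly. What actually has to happen (and does in the paper's proof of Theorem \ref{thm:subcrit_attach}) is: normalize the Morse functions via Lemma \ref{lem:Morse_modify}, standardize the critical point, observe that this standardization moves the attaching sphere on $\Sigma_{i-1}$ by an isotopy (since Theorem \ref{thm:standard_cp} only fixes the descending manifold near the critical point), realize that isotopy as the holonomy of an interpolating cylinder via Lemma \ref{lem:framed_isotropic_isotopy_to_holonomy} (resp.\ Lemma \ref{lem:balanced_isotopy_to_holonomy}), so that the handles are literally attached along the same germ of neighborhood, and then build the required diffeomorphism of $M_i$ in three blocks — lower cylinder, handle, upper cylinder — applying Lemma \ref{lem:triv_cyl} on the cylindrical blocks. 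You name neither the isotopy-realization lemmas nor the need to realize such isotopies as holonomies, which is the central mechanism making the identification of the two cobordisms legitimate.

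There is a second gap at the assembly stage. Stacking your slabs yields a configuration of the form cylinder, handle, cylinder, handle, $\dots$, with interstitial cylinders — this is not a convex contact handlebody under the paper's definition, which is a single cylinder followed by a sequence of handle attachments. The paper removes the interstitial cylinders by the rearrangement ``a cylinder followed by a handle attachment can be viewed as a handle attachment followed by a cylinder,'' pushing all cylinders to the top and collapsing the last one. This rearrangement again relies on Theorems \ref{thm:subcrit_attach}/\ref{thm:supercrit_attach}: the cylinder's holonomy moves the attaching sphere by an isotopy, so the resulting handle attachment is the same up to strict homotopy. Your appeal to Lemma \ref{lem:fix_top} and Corollary \ref{cor:uncompleted_contactomorphism} for ``compatibility of holonomy'' does not supply this rearrangement; it only controls homotopies near level sets. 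Both gaps are closable with the tools in the paper, but as written the proposal leaves the key uniqueness and rearrangement steps unargued.
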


As stated, the theorem is not particularly helpful unless we can understand what data determines a handle attachment, since this will then yield a complete surgery-theoretic picture for convex contact cobordisms. A priori, a handle attachment may vary as the attaching region changes in a continuous way. In the course of proving our main theorem, we will show that this is not so. To proceed, we analyze the cases of subcritical and supercritical handle attachments separately. In each setting, we determine precisely what geometric information along a level set determines the handle attachment.

\begin{rmk}
In the following two subsections, we will sweep a technical detail partially under the rug. When we attach a handle, even in the standard smooth theory, we need to understand the attaching sphere as a \textit{parametrized} smoothly embedded sphere, i.e. as an explicit embedding $S^{k-1} \hookrightarrow \phi^{-1}(c)$ for some regular level set of $\phi$, where $k$ is the index of the handle.
\end{rmk}

\subsection{Attaching subcritical handles}

Consider a subcritical handle, $H_{\sub}(\epsilon)$, of index $k$. Recall that the descending disk of the critical point is isotropic and lies completely in the set in which $\alpha(X) = 0$. Hence, in this case, the attaching sphere is an isotropic in the dividing set $\Gamma^{2n-1}$ of the attaching region. Furthermore, a framing of the symplectic normal bundle of the descending disk at the critical point yields a framing over of the conformal symplectic normal bundle over the entire descending disk (up to contractible choice), which in turn restricts over its boundary to a framing of the conformal symplectic normal bundle of the sphere as an isotropic submanifold in the dividing set. By Proposition \ref{prop:attaching_data}, the attaching region is determined up to homotopy by this framing of the conformal symplectic normal bundle, which can be associated (non-canonically) with a map $S^{k-1} \rightarrow Sp(2(n-k))$. An isotropic sphere together with a framing of its conformal symplectic normal bundle will simply be referred to as a \textbf{framed isotropic sphere} for the rest of this section.

Although we originally defined attaching spheres in a model on the handle, we may generalize this definition as follows. Suppose we have a convex contact cobordism with a single critical point. Then the \textbf{attaching sphere} along the lower boundary is precisely the intersection of this level set with the descending disk. In the case of a subcritical point, this is automatically a framed isotropic sphere. We can see this as follows: if we standardize the critical point, then there is some level set just below it along which the attaching sphere remains fixed and becomes the attaching sphere of a standard handle. In fact, the whole descending manifold can be made to remain fixed via this standardization procedure, so also the attaching sphere along the bottom boundary stays fixed. Finally, the holonomy map between the bottom boundary and the level set just below the critical point preserves framed isotropic spheres.

\begin{lem} \label{lem:framed_isotropic_isotopy_to_holonomy} Given any isotopy of framed embedded isotropic spheres $(S_t,F_t)$ in the dividing set $\Gamma$ of a convex hypersurface $\Sigma$, there exists a convexomorphism $\psi \colon \Sigma \rightarrow \Sigma$ intertwining these framed spheres.
\end{lem}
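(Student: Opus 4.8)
The idea is to promote the given isotopy, which a priori lives entirely inside the contact manifold $(\Gamma,\xi)$, first to an ambient contact isotopy of $\Gamma$ and then, via the collar model near the dividing set together with the Moser lemma for ideal Liouville domains, to an isotopy of convexomorphisms of all of $\Sigma$; the sought-after $\psi$ is the time-one map.

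\textbf{Step 1: isotopy extension inside $\Gamma$.} Write $\iota_t\colon S^{k-1}\hookrightarrow\Gamma$ for the parametrized embeddings and $F_t$ for the framings of the conformal symplectic normal bundles. The parametric version of the isotropic neighborhood theorem (Theorem~\ref{thm:INT}), combined with the usual device of multiplying the generating contact Hamiltonian by a bump function supported near $\bigcup_t\iota_t(S^{k-1})$, produces a contact isotopy $g_t\colon\Gamma\to\Gamma$ with $g_0=\mathrm{id}$, $g_t\circ\iota_0=\iota_t$, and $dg_t$ carrying $F_0$ to $F_t$. Since $\Sigma$, hence $\Gamma$, is closed and the family is compact, there are no analytic issues here. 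It now suffices to extend $g_1$ to a convexomorphism of $\Sigma$ which coincides with $g_1$ near $\Gamma$, since such a map automatically intertwines the framed spheres.

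\textbf{Step 2: thicken to the collar, then reduce to Moser.} Using the collar model of Proposition~\ref{prop:div_set_nbhd}, fix $\Open_\Sigma(\Gamma)\cong\Gamma\times(-\epsilon,\epsilon)_u$ on which $\lambda_{\pm}=\beta/u$, and write $g_t^*\beta=e^{h_t}\beta$. Define $\tilde g_t(x,u)=(g_t(x),e^{h_t(x)}u)$; a one-line computation gives $\tilde g_t^*(\beta/u)=\beta/u$, so $\tilde g_t$ is a (strict) symplectomorphism of each side of the collar preserving $\lambda_{\pm}$ and restricting to $g_t$ along $\Gamma$ — this is simply the standard lift of the contactomorphism $g_t$ to the symplectization. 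Extend $\tilde g_t$ (separately over $\overline{R_+}$ and $\overline{R_-}$, then glue along $\Gamma$) to a diffeotopy $G_t\colon\Sigma\to\Sigma$ with $G_0=\mathrm{id}$, equal to $\tilde g_t$ near $\Gamma$, and preserving the decomposition $\Sigma=R_+\cup_\Gamma R_-$. Now consider $\Sigma_t:=G_t^*\Sigma\in\scr{C}_{2n}$, i.e.\ the convex structure with Liouville forms $G_t^*\lambda_{\pm}$ and dividing-set contact structure $g_t^*\xi$: because $G_t=\tilde g_t$ near $\Gamma$ preserves the convex germ there, the family $\Sigma_t$ is constant near $\Gamma$, so it is a Hamiltonian isotopy of convex structures in the sense of Definition~\ref{defn:conv_surf}, with $\Sigma_0=\Sigma$, and $G_t\colon\Sigma_t\to\Sigma$ is a convexomorphism. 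By the (relative) Moser lemma for convex hypersurfaces, Lemma~\ref{lem:moser} (cf.\ \cite{Giroux_Ideal}), there are convexomorphisms $\theta_t\colon\Sigma\to\Sigma_t$ with $\theta_0=\mathrm{id}$ which are the identity near $\Gamma$, since the ideal Liouville structures being interpolated are constant there. Then $\psi_t:=G_t\circ\theta_t\colon\Sigma\to\Sigma$ is a convexomorphism, $\psi_0=\mathrm{id}$, and $\psi_t$ equals $\tilde g_t$ near $\Gamma$; in particular $\psi:=\psi_1$ satisfies $\psi\circ\iota_0=g_1\circ\iota_0=\iota_1$ and $d\psi$ carries $F_0$ to $F_1$, as required. (The full isotopy $\psi_t$ realizing $(S_t,F_t)$ comes for free, and is what one actually uses in the handle-attachment discussion.)

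\textbf{Main obstacle.} I expect the only real subtlety to be the relative refinement of the Moser lemma invoked above: one needs the interpolating convexomorphisms $\theta_t$ to be the identity near the ideal boundary $\Gamma$, so that they do not disturb the matching with $g_t$ set up in Step 2. This is the natural relative version of Giroux's Moser theorem for ideal Liouville domains, and follows from the standard feature of Moser-type arguments that the generating vector field vanishes wherever the family of structures is constant; I would spell this out. A more hands-on route would be to directly thicken the generating contact vector field $Y_t$ of $g_t$ to a Liouville-form-preserving vector field on each $R_{\pm}$ and integrate, but the corresponding Hamiltonian $\beta(Y_t)/u$ blows up at the ideal boundary even though the vector field itself extends smoothly there, so cutting it off requires exactly the same ideal-Liouville-domain care — hence routing through Lemma~\ref{lem:moser} is the cleanest option.
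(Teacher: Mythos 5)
Your Step~1 matches the paper's argument exactly: isotopy extension for the sphere, the parametric neighborhood theorem for the framings, and a bump-function cut-off near $\bigcup_t S_t$. Step~2, however, takes a genuinely different route. The paper's proof thickens the contact Hamiltonian $H$ generating $g_t$ on $\Gamma$ to the Hamiltonian $H/u$ on the collar $(\Gamma\times(-\epsilon,\epsilon))\cap R_\pm$, whose Hamiltonian vector field $X_H+u\mu\partial_u$ extends smoothly over $u=0$, and then cuts off by a bump function $\rho(u)$ equal to $1$ near $u=0$ and vanishing for $|u|$ large; the $d\rho$-correction term in the resulting Hamiltonian vector field is supported where $u$ is bounded away from $0$, so there is no blow-up to contend with, and one obtains a compactly supported Hamiltonian vector field on each $R_\pm$ whose flow is an exact symplectomorphism restricting to $g_t$ on $\Gamma$ --- a short, self-contained construction. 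You instead lift $g_t$ to a strict symplectomorphism $\tilde g_t$ of the collar, extend arbitrarily to a diffeotopy $G_t$ of $\Sigma$, and invoke a relative Moser lemma to absorb the discrepancy $G_t^*\Sigma$ back to $\Sigma$ away from $\Gamma$. This is correct, but it leans on a relative refinement of Lemma~\ref{lem:moser} (convexomorphisms equal to the identity near $\Gamma$ whenever the family is constant there) that the paper never states; you rightly flag this as the only gap, and it does follow from the standard Moser mechanism (the generating field can be taken to be $d$-primitive of $\dot\lambda_t$, which vanishes near $\Gamma$). On balance the paper's cut-off argument is more elementary and is visibly parametric (which matters for Remark~\ref{rmk:parametric}); your claim that it ``requires exactly the same ideal-Liouville-domain care'' is overstated, since the cut-off takes place away from the ideal boundary. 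Routing through Moser is nonetheless a perfectly valid and somewhat more conceptual alternative.
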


\begin{proof}
There are two steps. \\

\noindent
\textbf{\emph{Step 1}} Every isotopy of framed embedded isotropic spheres in a contact manifold extends to an isotopy of contactomorphisms.\\

This is the usual isotopy extension theorem for isotropic submanifolds \cite[Theorem 2.6.2]{Geiges}, but in the special case where we include the framings. First of all, the usual isotopy extension theorem provides an isotopy of contactomorphisms $\psi_t \colon \Gamma \rightarrow \Gamma$ such that $\psi_t^* S_t = S_0$. Pulling back by $\psi_t$, it suffices to consider the case that $S_t$ is constant but the framing changes. But in this case, from the parametric neighborhood theorem, we can find a family of contactomorphisms $f_t \colon \Open(S_0) \rightarrow \Open(S_0)$, the identity along $S_0$, but such that $f_t^*F_t = F_0$. Then $f_t$ is generated by some contact vector field $X_t$, which in turn is a Hamiltonian vector field for some function $H_t$ with respect to a specified contact form. Using the contact vector field $\rho H_t$, for a bump function $\rho = 1$ near $S_0$ and $\rho = 0$ outside $\Open(S_0)$, we can integrate to form a contactomorphism $\chi_t \colon \Gamma \rightarrow \Gamma$ so that $\chi_t$ preserves $S_0$, is the identity away from $S_0$, and satisfies $\chi_t^*F_t = F_0$, as desired. \\

\noindent
\textbf{\emph{Step 2}} Any isotopy of contactomorphisms of the dividing set can be extended to a convexomorphism.\\ 

We will realize an isotopy of contactomorphisms by a Hamiltonian flow on each of $R_{\pm}$ supported in a neighborhood of $\Gamma$. In coordinates $udt+\beta$ near the dividing set, suppose $H$ is the Hamiltonian for $X_H$, the vector field generating the contactomorphism of the dividing set from Step 1, with respect to $\beta$. Then the vector field $X_H+u\mu \partial_{u}$ is Hamiltonian for $d(\beta/u)$, with Hamiltonian function $H/u$, on each of $R_{\pm}$. Using a cut-off function on the level of the Hamiltonian now yields a vector field generating an exact symplectomorphism of each of $R_{\pm}$ which restricts on $\Gamma$ to the desired contactomorphism, hence giving a convexomorphism.
\end{proof}

\begin{rmk} \label{rmk:stronger_subcrit_lemma}
In fact, more is true from the proof of this lemma. Suppose we have an isotopy of germs of neighborhoods of framed isotropic spheres. Then the convexomorphism can be made to intertwine these germs. We will technically need this stronger statement in what follows.
\end{rmk}

\begin{rmk} \label{rmk:parametric}
Further unravelling the proof of this lemma, we see that it is parametric. This is because we cook up a family of convexomorphisms $\Sigma \rightarrow \Sigma$ which realize the isotopy of the attaching sphere. By Remark \ref{rmk:par_holonomy}, it follows that there is a family of cylindrical convex contact cobordism structures $(X_t,\phi_t)$ on $\Sigma \times I$, fixed near the boundary, so that the holonomy maps $\psi_t$ send $(S_0,F_0)$ to $(S_t,F_t)$.
\end{rmk}

\begin{thm} \label{thm:subcrit_attach} Fix $0 \leq k \leq n$. Suppose $M^{2n+1}$ is a cobordism with two convex contact structures $(\xi_i,X_i,\phi_i)$, $i=0,1$, such that:
\begin{itemize}
	\item the convex structures match on $\Open(\partial_- M)$
	\item there is a unique critical point of index $k$
	\item the framed attaching spheres $(S_i,F_i)$ in the dividing set $\Gamma$ of $\partial_- M$ are isotopic through framed isotropic spheres in $\Gamma$
\end{itemize}
Then, up to an overall diffeomorphism of $M$ fixed on $\Open(\partial_-M)$, there is a strict homotopy $(\xi_t,X_t,\phi_t)$, fixed on $\Open(\partial_-M)$, from $(X_0,\phi_0)$ to $(X_1,\phi_1)$.
\end{thm}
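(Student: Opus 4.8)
The plan is to reduce the statement to a sequence of local modifications using the machinery already built up, with the key intuition that the two convex structures only differ near the critical point and, after standardizing, the attaching data lives entirely in the dividing set of $\partial_-M$ as a framed isotropic sphere. First I would apply Theorem \ref{thm:standard_cp} to each of $(\xi_i,X_i,\phi_i)$; this produces strict homotopies, supported in a small neighborhood of the critical point, standardizing both structures and fixing the critical point, its value, and its descending manifold near the critical point. Crucially, since the homotopies from Theorem \ref{thm:standard_cp} are supported away from $\Open(\partial_-M)$, they do not disturb the matching of the two structures near the lower boundary. So without loss of generality I may assume both structures are standard near the critical point and agree on $\Open(\partial_-M)$.

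Next I would exploit the fact that the descending manifold, traced back by the holonomy, meets the dividing set of $\partial_-M$ in the framed isotropic sphere $(S_i,F_i)$, as discussed in the paragraph preceding Lemma \ref{lem:framed_isotropic_isotopy_to_holonomy}. By hypothesis $(S_0,F_0)$ and $(S_1,F_1)$ are isotopic through framed isotropic spheres in $\Gamma$. Using Remark \ref{rmk:parametric} (the parametric strengthening of Lemma \ref{lem:framed_isotropic_isotopy_to_holonomy}), this isotopy can be realized by the holonomy of a family of cylindrical convex structures on $\partial_-M \times I$, fixed near the boundary; equivalently, by attaching such a cylinder below the handle, I may arrange that the two framed attaching spheres actually \emph{coincide} (as parametrized framed isotropic spheres) along a common level set just above $\partial_-M$, at the cost of an overall diffeomorphism of $M$. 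I also need the stronger statement of Remark \ref{rmk:stronger_subcrit_lemma}, so that not just the spheres but germs of their standard neighborhoods (i.e. the germs $udt+\beta$ of Proposition \ref{prop:attaching_data}) are intertwined. By Proposition \ref{prop:attaching_data}, once the framed spheres and their conformal symplectic normal bundle framings match, the germs of the contact neighborhoods of the attaching regions agree up to a strict homotopy, since such germs depend only on the framing up to contractible choice.

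At this point both convex contact cobordisms are, near the critical point, the standard handle $H_{\sub}(\epsilon)$ of index $k$ glued along a common attaching region whose contact germ, contact vector field, and (after Lemma \ref{lem:Morse_modify} / Lemma \ref{lem:make_level}) Morse function all agree. I would then invoke Lemma \ref{lem:triv_cyl}: the complement of a neighborhood of the handle inside each $M_i$ is a cylindrical convex contact cobordism, and these two cylinders agree on a neighborhood of their lower boundary (which is the common $\Open(\partial_-M)$ together with the attaching region) and, after possibly first using Lemma \ref{lem:fix_top} to isolate the homotopy away from $\partial_+M$, have matching holonomy. Lemma \ref{lem:triv_cyl} then produces the desired diffeomorphism of $M$ and the strict homotopy, fixed on $\Open(\partial_-M)$, connecting $(X_0,\phi_0)$ to $(X_1,\phi_1)$. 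Concatenating all the homotopies produced along the way — the two standardizations, the cylinder insertion realizing the isotopy of framed spheres, the germ-matching homotopy from Proposition \ref{prop:attaching_data}, and the final trivial-cylinder homotopy — yields the theorem.

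The main obstacle I expect is the bookkeeping around \emph{parametrized} attaching spheres and the passage from "isotopic framed isotropic spheres" to "identical germs of neighborhoods," i.e. making Remarks \ref{rmk:stronger_subcrit_lemma} and \ref{rmk:parametric} do exactly the work needed: one must be careful that the contractible choice of extending the framing from the critical point over the whole descending disk, and then the identification of the resulting germ $udt+\beta$ via Proposition \ref{prop:attaching_data}, can all be performed compatibly with a strict convex homotopy fixed on $\Open(\partial_-M)$, rather than merely a contactomorphism of germs. The second delicate point is ensuring the Morse functions can be matched on the overlap region without destroying the gradient-like condition, which is handled by Lemma \ref{lem:Morse_modify} and the contractibility of the space of valid Morse functions noted in the proof of Lemma \ref{lem:triv_cyl}, but needs to be threaded through each stage of the argument.
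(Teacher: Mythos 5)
Your proposal is correct and follows essentially the same route as the paper: standardize both critical points via Theorem \ref{thm:standard_cp}, insert a cylinder below the critical value whose holonomy realizes the isotopy of framed attaching spheres (Lemma \ref{lem:framed_isotropic_isotopy_to_holonomy} with the germ refinement of Remark \ref{rmk:stronger_subcrit_lemma}), and then apply Lemma \ref{lem:triv_cyl} to the cylindrical pieces below and above the now-identical handle. The paper's proof is slightly more explicit in building the overall diffeomorphism of $M$ in three blocks (bottom cylinder, handle, top cylinder) by following $X$-trajectories, and it also notes the small point that the standardization step may move the attaching spheres $(S_i,F_i)$ at $\partial_-M$, but only through an isotopy, so the hypothesis is preserved; your write-up glosses this last point a little, but it is not a gap.
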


\begin{proof}
We begin with some reductions. By modification of the Morse function, we may also assume that each $\phi_i$ satisfies $\phi_i(\partial_- M) = 0$, $\phi_i(\partial_+ M) = 1$, and $\phi_i(p_i) = 1/2$, where $p_i$ is the critical point. By applying our standardization procedure, Theorem \ref{thm:standard_cp}, we may furthermore assume that a neighborhood of $p_i$ matches the standard model, and hence, $\phi_i^{-1}[1/2-\epsilon,1/2+\epsilon]$ is given by a fixed handle attachment along some framed isotropic sphere $(S_i',F_i')$ in $\phi^{-1}(1/2-\epsilon)$. This modification may alter $(S_i,F_i)$, but only by an isotopy since $(S_i',F_i')$ remains fixed through this procedure and so $(S_i,F_i)$ are the preimage of these across a $1$-parameter family of holonomy maps. So this does not affect our assumption that the $(S_i,F_i)$ are isotopic. Hence, by including an extra cylinder below the critical point, Lemma \ref{lem:framed_isotropic_isotopy_to_holonomy} implies that we may actually assume that the $(S_i,F_i)$ are exactly the same.

As in the proof of Lemma \ref{lem:triv_cyl}, we use the holonomy map to build up our required overall diffeomorphism from which we can perform our strict homotopy. The difference is that we must be a little bit careful about what happens around the critical point, but since we are using identical models for the handle attachment, this is no real problem. We build up the diffeomorphism $\psi \colon M \rightarrow M$ in three blocks. First, $\psi$ restricts to the unique diffeomorphism $\phi_0^{-1}([0,1/2-\epsilon]) \rightarrow \phi_1^{-1}([0,1/2-\epsilon])$ which preserves $X$-trajectories and intertwines $\phi_0$ and $\phi_1$. This then also sends $(S_0',F_0')$ to $(S_1',F_1')$, and so (using the refinement of Remark \ref{rmk:stronger_subcrit_lemma}) the handles are attached along precisely the same neighborhood. Hence, on the handle itself, we have identical models, and we just map these via the identity. So far, we have a diffeomorphism on the prehandle attachments $\phi_0^{-1}([0,1/2-\epsilon]) \cup H \rightarrow \phi_1^{-1}([0,1/2-\epsilon]) \cup H$. On the final block (a cylindrical cobordism, but such that the lower boundary is not level), we define $\psi$ as we did on the first block, by following $X$-trajectories and intertwining $\phi_0$ and $\phi_1$.

Notice that on the handle, the convex contact structures match exactly. We now apply Lemma \ref{lem:triv_cyl} on the region $\phi_i^{-1}([0,1/2-\epsilon])$ to equilibrate the convex contact structures up to strict homotopy on these cylinders, and then also on the upper block. This completes the proof.
\end{proof}

\subsection{Attaching supercritical handles}

Consider a supercritical handle, $H_{\sup}(\epsilon)$. Just as in the subcritical case, one can read off the attaching data from the underlying contact structure.

\begin{lem} The attaching sphere of $H_{\sup}(\epsilon)$ is a framed balanced coisotropic sphere.
\end{lem}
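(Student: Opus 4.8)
The plan is to compute directly in the explicit model. Set $j := 2n+1-k \le n$, so that the supercritical handle of index $k$ carries the structure $(\R^{2n+1}_{\mathrm{std}},-X_j,-\phi_j)$ of Definition \ref{def:standard_cp}, restricted to $H_{\sup}(\epsilon)=f_{\sup}^{-1}(\epsilon S)$. First I would identify the core (the coisotropic descending disk of $-X_j$) as the ascending manifold of $X_j$, namely the coordinate plane $P=\{x_1=\cdots=x_j=0\}$, cut off to the handle; on $P$ one has $r_L\equiv 0$ and $r_C^2=z^2+\sum_{i=1}^j y_i^2+\sum_{i=j+1}^n(x_i^2+y_i^2)$, so $P\cap H_{\sup}(\epsilon)$ is the disk $\{r_L=0,\ r_C\le\epsilon\}$. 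Its boundary is the attaching sphere $A=\{r_L=0,\ r_C=\epsilon\}$, a $(k-1)$-sphere lying in the level set $\Sigma_0=\{\phi_j=\epsilon^2\}$, which is a convex hypersurface with transverse contact vector field $-X_j$. The dividing set is $\Gamma=\{H_j=0\}\cap\Sigma_0$, where $H_j=i_{X_j}\alpha=z+\tfrac32\sum_{i=1}^j x_iy_i$ is the contact Hamiltonian of $X_j$; since $H_j|_A=z$, the sphere $A$ meets $\Gamma$ exactly along the equator $E:=A\cap\{z=0\}\cong S^{k-2}$, and this intersection is transverse because $z$ restricts to a submersion on $A$ near $E$.

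Next I would verify the hemisphere conditions of Definition \ref{defn:bcs}. Write $R_\pm$ for the two halves of $\Sigma_0$ cut out by the sign of $u=-H_j$, and $\Delta_\pm:=A\cap\overline{R_\pm}=A\cap\{\pm z\le 0\}\cong D^{k-1}$ for the corresponding hemispheres; each has dimension $k-1\ge n$ and hence is coisotropic in the Liouville manifold $R_\pm$ with characteristic foliation of rank $2n-(k-1)=j$. The geometric heart of the proof is then to show these characteristic foliations are the standard nonsingular foliation (Figure \ref{fig:Standard_fol}). Concretely: take the Liouville primitive $\lambda_\pm=\beta/u$ on $R_\pm$ from the $t$-invariant normal form $\alpha=u\,dt+\beta$ near $\Gamma$ (Proposition \ref{prop:div_set_nbhd}), restrict $d\lambda_\pm$ to $\Delta_\pm$, and check that $(T\Delta_\pm)^{d\lambda_\pm}$ has singular leaf the $(j-1)$-sphere $\{z=0,\ x_i=0\ \forall i,\ y_i=0\ \text{for } i>j,\ \sum_{i\le j}y_i^2=\epsilon^2\}\subset E$ — which is precisely the binding of the spin-symmetric foliation that $E$ carries as a coisotropic sphere in $\Gamma$ — with nonsingular leaves the expected $j$-dimensional ellipsoids limiting onto this sphere. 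On $\partial\Delta_\pm\subset\Gamma$ one makes the analogous check using $\xi|_\Gamma$ in place of $\lambda_\pm$; the two are compatible because $\lambda_\pm$ blows up like $1/u$ as one approaches $\Gamma$. Applying a diffeomorphism of $A$ that straightens these ellipsoids to the standard model exhibits $A=\Delta_+\cup_E\Delta_-$ as a balanced coisotropic sphere.

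For the framing: exactly as in the subcritical case (cf.\ the discussion preceding Lemma \ref{lem:framed_isotropic_isotopy_to_holonomy}), the linear model $\R^{2n+1}$ furnishes a distinguished trivialization of the conformal symplectic normal bundle of the coisotropic core $P$ at the critical point. Since $P$ is a disk, parallel transport along its characteristic directions propagates this framing over all of $P$ uniquely up to contractible choice; restricting to $A$ and then to a point of the binding of the equator $E$ yields the framing datum appearing in Theorem \ref{thm:bcs_nbhd}. This shows that $A$ is naturally a framed balanced coisotropic sphere.

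I expect the second paragraph — identifying the characteristic foliations on the hemispheres and matching them, near the singular binding, with the standard nonsingular foliation — to be the main obstacle, since that is where the Morse-theoretic level set, the contact normal form near the dividing set, and the induced Liouville structure on $R_\pm$ all have to be reconciled in coordinates, with particular care at the singular locus; the remaining points are bookkeeping with the linear model and direct appeals to the neighborhood theorems of Sections \ref{sec:background} and \ref{sec:BCS}.
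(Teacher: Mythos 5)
Your setup reproduces the paper's almost exactly: the core is the plane $\{x_1=\cdots=x_j=0\}$, the dividing set of the attaching level is $\{z+\tfrac32\sum_{i\le j}x_iy_i=0\}$, the attaching sphere meets it along $\{z=0\}$, and your identification of the binding $\{z=0,\ x=0,\ y_{>j}=0,\ |y_{\le j}|=\epsilon\}$ and of the framing (propagate the model trivialization along the foliation of the core, restrict to a point of the binding) agrees with the paper's. The divergence is exactly at the step you flag as the main obstacle. The paper does not compute $(T\Delta_\pm)^{d\lambda_\pm}$ with $\lambda_\pm=\beta/u$ at all: it observes that the attaching disk is an $X$-invariant coisotropic whose characteristic foliation in the ambient contact manifold is the linear span of $\partial_{y_1},\ldots,\partial_{y_j}$, and that near the attaching level the flow of $X$ identifies a neighborhood with $\R\times\Sigma_0$ so that the disk becomes $\R\times A$; Proposition \ref{prop:project_foliation} then says the characteristic foliations of $A\cap\Gamma$ (contact) and $A\cap R_\pm$ (symplectic, with the induced Liouville structure) are just the $X$-projections of this linear foliation, which gives the spin-symmetric equator and the standard nonsingular hemispheres immediately. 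Your plan of a direct computation can in principle be pushed through, but note two issues: first, Proposition \ref{prop:div_set_nbhd} only provides the normal form $u\,dt+\beta$ near $\Gamma$, while the hemispheres run far from the dividing set (out to the poles $z=\pm\epsilon$), so you would need the global $t$-invariant splitting obtained by flowing along $-X_j$, and writing $\beta/u$ explicitly in those flow coordinates is precisely the mess the projection proposition is designed to avoid; second, ``dimension $k-1\ge n$ and hence coisotropic'' is a non sequitur (half-dimensional-or-larger does not imply coisotropic in a symplectic manifold) — coisotropy of the hemispheres has to come out of the same computation, or, as in the paper, out of the fact that they are projections of the coisotropic disk. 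So: correct identifications throughout, but the one genuinely substantive verification is deferred to a computation you have not done, and the intended shortcut is Proposition \ref{prop:project_foliation}.
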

\begin{proof}
Recall that in the model for the supercritical handle, we are taking $X = -z\partial_z + \sum_{i=1}^{k}(x_i\partial_{x_i}-2y_i\partial_{y_i}) - \frac{1}{2}\sum_{i=k+1}^n(x_i\partial{x_i}+y_i\partial_{y_i})$, and attaching along the level set $\phi = -z^2+\sum_{i=1}^{k}(x_i^2-y_i^2) + \sum_{i=k+1}^{n}(x_i^2+y_i^2) = -\epsilon$. The dividing set is given by the intersection of this level set with points in which $z+\frac{3}{2}\sum_{i=1}^{k}x_iy_i = 0$. The attaching disk is just the plane given by $x_1=\ldots=x_k = 0$ and it has characteristic foliation given by the span of the $y_1,\ldots,y_k$-directions. The boundary of the attaching disk, which is the attaching sphere, intersects the dividing set where $z=0$.

One now uses Proposition \ref{prop:project_foliation} to read off the foliation from the attaching disk, since the characteristic foliation is just the $X$-projection to the boundary, both along the dividing set $\Gamma$ (with $z=0$) or on $R_{\pm}$ (the regions with $z > 0$ and $z < 0$). We see that, automatically, we have a balanced coisotropic sphere. The framing is induced by a standard choice of framing of the cosymplectic conormal bundle to the leaf of the foliation on the attaching disk with boundary given by the binding of the equator of the attaching sphere.
\end{proof}

We are also interested in the attaching sphere of a general convex contact cobordism with a single supercritical point, not just the attaching sphere for the handle. When we standardize the critical point, we have that the level set just below the critical point undergoes a Hamiltonian isotopy, and so changes by the image of a convexomorphism by Lemma \ref{lem:moser}. But the attaching sphere along this level set is just a framed balanced coisotropic sphere, since it is the attaching sphere for a handle. Therefore, the original attaching sphere is the image under a convexomorphism of a framed balanced coisotropic sphere, hence a framed balanced coisotropic sphere itself, since convexomorphisms preserve this type of submanifold.

\begin{lem} \label{lem:balanced_isotopy_to_holonomy} Any isotopy of embedded framed balanced coisotropic spheres $C \subset \Sigma$ can be realized by a convexomorphism $\Sigma \rightarrow \Sigma$.
\end{lem}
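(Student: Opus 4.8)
The strategy closely mirrors the two-step structure of Lemma \ref{lem:framed_isotropic_isotopy_to_holonomy}: first extend the isotopy of framed balanced coisotropic spheres to an ambient isotopy that respects all the contact/symplectic structures on the pieces $\Gamma$, $R_+$, $R_-$, and then cut off that ambient isotopy near the suture so it glues to a genuine convexomorphism of $\Sigma$. The key new ingredient, compared to the isotropic case, is the neighborhood theorem for balanced coisotropic spheres, Theorem \ref{thm:bcs_nbhd}, which tells us that a framed balanced coisotropic sphere has a standard neighborhood determined parametrically by the framing data at a single point in the binding of the equator.

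\textbf{Step 1: ambient isotopy from the isotopy of spheres.} Let $(C_t, F_t)$, $t \in [0,1]$, be the given isotopy of embedded framed balanced coisotropic spheres in $\Sigma$. Restricting to the equators $E_t = C_t \cap \Gamma$, which are spin-symmetric coisotropic spheres in the contact manifold $(\Gamma, \xi)$, the neighborhood theorem for spin-symmetric coisotropic spheres (Theorem \ref{thm:spin-sym_nbhd}) together with the parametric relative neighborhood theorem (Theorem \ref{thm:par_rel}) furnishes an isotopy of contactomorphisms $\psi_t^\Gamma \colon \Gamma \to \Gamma$ carrying $(E_0, \text{framing at a binding point})$ to $(E_t, F_t|_{\text{binding}})$; standard isotopy extension for the spin-symmetric model produces this as a genuine ambient isotopy. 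Pulling back by $\psi_t^\Gamma$, we reduce to the case where $E_t = E_0$ is fixed (with possibly varying framing, handled by the parametric part of Theorem \ref{thm:bcs_nbhd} exactly as in Step 1 of Lemma \ref{lem:framed_isotropic_isotopy_to_holonomy}). Next, I would extend this across the hemispheres: the hemispheres $D_{\pm,t}$ of $C_t$ are exact Lagrangian-type fillings (more precisely, the characteristic foliation is the standard nonsingular foliation) transverse to $\Gamma$, and Theorem \ref{thm:bcs_nbhd} says their neighborhoods in $R_\pm$ are symplectomorphic in a manner determined parametrically by the data already fixed along the binding. An application of the parametric relative version of this neighborhood theorem (with relative data the already-constructed identification near $\Gamma$) produces a family of exact symplectomorphisms $\psi_t^\pm$ of $\Open_{R_\pm}(D_{\pm,0})$ realizing the isotopy; since these agree with $\psi_t^\Gamma$ near $\Gamma$, they glue to an ambient isotopy of the germ of $\Open_\Sigma(C_0)$.

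\textbf{Step 2: cut off and upgrade to a convexomorphism.} The isotopy of Step 1 is generated, on $\Gamma$, by a contact vector field with contact Hamiltonian $H_t$ (with respect to the contact form $\beta$ on $\Gamma$), and on $R_\pm$ by Hamiltonian vector fields with respect to $d(\beta/u)$. Exactly as in Step 2 of Lemma \ref{lem:framed_isotropic_isotopy_to_holonomy}, in coordinates $u\,dt + \beta$ near the dividing set the vector field $X_{H_t} + u\mu_t\partial_u$ is Hamiltonian for $d(\beta/u)$ with Hamiltonian $H_t/u$ on each $R_\pm$, so after multiplying the Hamiltonian by a cut-off function supported in a neighborhood of $C_0$ (and equal to $1$ on a smaller neighborhood) we obtain a compactly supported time-dependent vector field on $\Sigma$ whose flow is an exact symplectomorphism of each $R_\pm$ and a contactomorphism of $\Gamma$ --- that is, a convexomorphism $\psi_t \colon \Sigma \to \Sigma$. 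By construction $\psi_1$ intertwines $(C_0, F_0)$ with $(C_1, F_1)$, which is the claim. (As in Remark \ref{rmk:stronger_subcrit_lemma}, one in fact obtains a convexomorphism intertwining the full germs of neighborhoods, which is what is needed downstream, and the whole construction is parametric.)

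\textbf{Main obstacle.} The analytically delicate point is verifying that the Hamiltonian $H_t/u$ and its cut-off behave well as $u \to 0$: unlike the isotropic case, here the spheres genuinely straddle the dividing set, and one must check that the generating vector field extends smoothly across $\{u=0\}$ and that cutting it off does not destroy the Liouville/symplectic asymptotics on $R_\pm$ (i.e., that the cut-off symplectomorphism still respects the ideal Liouville structure in the sense of Remark \ref{rmk:ideal}). This is precisely why the neighborhood theorem for balanced coisotropic spheres was phrased with the asymptotic condition built in, and the resolution is to perform the cut-off at the level of the contact Hamiltonian $H_t$ on $\Gamma$ first and then transport it, rather than cutting off the symplectic vector fields on $R_\pm$ directly; this keeps the near-$\Gamma$ behavior automatically compatible. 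Apart from this, the argument is a routine parametric patching of the neighborhood theorems already established.
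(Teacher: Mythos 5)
Your proposal is correct in substance and follows essentially the same route as the paper: produce a compactly supported isotopy of the ambient convex structure realizing the isotopy of spheres, using the neighborhood theorem for balanced coisotropic spheres (Theorem \ref{thm:bcs_nbhd}), and cut off at the level of the generating Hamiltonian so that the flow is an honest convexomorphism of $\Sigma$. One point you assert without justification, which the paper's proof singles out as the crux, is that the symplectic vector field on $R_\pm$ is in fact Hamiltonian. This is not automatic: a symplectic vector field $Y$ has $i_Y\omega$ closed, but you need it exact in order to cut off the primitive and still generate exact symplectomorphisms (the convexomorphism condition on $R_\pm$). The paper's observation is that $C_t \cap R_\pm$ is a disk, hence a contractible set with $H^1=0$, so the closed $1$-form $i_Y\omega$ is exact in a neighborhood of $C_t \cap R_\pm$; this gives the Hamiltonian $H_\pm$ which you then cut off. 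Your proposed resolution of the ``main obstacle'' (cut off on $\Gamma$ and transport via $H/u$) is tailored to the isotropic case and does not by itself handle the portion of the hemispheres away from $\Gamma$; you need the disk-contractibility argument there. Apart from that omission, the two-step structure and the use of the parametric neighborhood theorem are consistent with the paper.
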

\begin{proof}
Suppose we are given an isotopy of framed balanced coisotropic spheres $C_t$. Then one can find neighborhoods of these spheres which are isomorphic in the sense of Theorem \ref{thm:bcs_nbhd}, such that these neighborhoods are smoothly varying. Hence, one can interpolate these neighborhoods by the flow of a vector field $Y_t$ supported near $C_t$ such that $Y_t$ is tangent to and contact along $\Gamma$, and symplectic on $R_{\pm}$. Since $C_t \cap R_{\pm}$ are disks, we have that $Y_t$ is actually Hamiltonian in these regions. Hence, $Y_t$ is completely generated by some Hamiltonian, which can be cut-off away from $C_t$ to produce a global vector field $\wt{Y_t}$ whose flow is a contactomorphism of $\Gamma$ and generates exact symplectomorphisms of $R_{\pm}$. By definition, this flow is a convexomorphism.
\end{proof}

\begin{rmk} \label{rmk:stronger_supercrit_lemma} Just as in Remarks \ref{rmk:stronger_subcrit_lemma} and \ref{rmk:parametric}, this proof works on the level of germs of neighborhoods of framed balanced coisotropic spheres and is parametric. We technically need the refinement via germs in the proof of the following theorem.
\end{rmk}

\begin{thm} \label{thm:supercrit_attach}
Fix $n+1 \leq k \leq 2n+1$. Suppose $M^{2n+1}$ is a cobordism with two convex contact structures $(\xi_i,X_i,\phi_i)$, $i=0,1$, such that:
\begin{itemize}
	\item the convex structures match on $\Open(\partial_- M)$
	\item there is a unique critical point of index $k$
	\item the framed attaching spheres $(S_i,F_i)$ in the dividing set $\Gamma$ of $\partial_- M$ are isotopic through framed balanced coisotropic spheres in the dividing set
\end{itemize}
Then, up to an overall diffeomorphism of $M$ fixed on $\Open(\partial_- M)$, there is a strict homotopy $(X_t,\phi_t)$, fixed on $\Open(\partial_-M)$, from $(X_0,\phi_0)$ to $(X_1,\phi_1)$.
\end{thm}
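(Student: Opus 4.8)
The plan is to follow the proof of Theorem \ref{thm:subcrit_attach} essentially verbatim, substituting the supercritical analogues of each ingredient. First I would perform the same reductions: by modifying the Morse functions (Lemma \ref{lem:Morse_modify}) we may assume $\phi_i(\partial_- M) = 0$, $\phi_i(\partial_+ M) = 1$, and that the unique critical point $p_i$ has $\phi_i(p_i) = 1/2$. Applying the standardization procedure of Theorem \ref{thm:standard_cp}, we may also assume that a neighborhood of $p_i$ is the standard supercritical model of index $k$, so that $\phi_i^{-1}([1/2-\epsilon, 1/2+\epsilon])$ is a standard supercritical handle attached along a framed balanced coisotropic sphere $(S_i', F_i')$ in $\phi_i^{-1}(1/2-\epsilon)$. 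Since the descending coisotropic disk, and hence the attaching sphere $(S_i', F_i')$, stays fixed under standardization, and since the holonomy maps through the modification are convexomorphisms (Lemma \ref{lem:moser}) which preserve framed balanced coisotropic spheres, the original attaching spheres $(S_i, F_i)$ are altered only by an isotopy of framed balanced coisotropic spheres; in particular the hypothesis that the $(S_i, F_i)$ are isotopic is unaffected.

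Next, by Lemma \ref{lem:balanced_isotopy_to_holonomy} together with Remark \ref{rmk:par_holonomy}, after inserting an extra cylindrical cobordism below the critical point whose holonomy realizes the isotopy, we may assume the $(S_i, F_i)$ coincide exactly, and in fact, using the refinement of Remark \ref{rmk:stronger_supercrit_lemma}, that their germs of convex neighborhoods coincide. Then, as in Lemma \ref{lem:triv_cyl}, I would build an overall diffeomorphism $\psi \colon M \to M$ fixed on $\Open(\partial_- M)$ in three blocks: on $\phi_0^{-1}([0, 1/2-\epsilon])$ it is the unique diffeomorphism onto $\phi_1^{-1}([0,1/2-\epsilon])$ following $X$-trajectories and intertwining $\phi_0$ with $\phi_1$; on the handle itself it is the identity, since the handle models and their attaching germs agree; and on the top block (a cylindrical cobordism whose lower boundary is not a level set) it again follows $X$-trajectories. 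Pulling back by $\psi$, the two convex structures now agree exactly on the handle, and on each cylindrical block we apply Lemma \ref{lem:triv_cyl} to produce a strict homotopy, fixed near the boundaries of the block, equilibrating the convex structures there. Concatenating these gives the desired strict homotopy on all of $M$, fixed on $\Open(\partial_- M)$.

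The genuinely new content relative to the subcritical case is the use of Theorem \ref{thm:bcs_nbhd} in place of Proposition \ref{prop:attaching_data}: it is this neighborhood theorem for balanced coisotropic spheres that guarantees the attaching region of a supercritical handle is determined, up to the parametric framing data at the binding point of the equator, by its framed balanced coisotropic attaching sphere, and hence that matching attaching spheres produces matching attaching germs. The step I expect to require the most care is the bookkeeping with the singular characteristic foliations along the equators: one must check that the standardization of Theorem \ref{thm:standard_cp} preserves not merely the descending disk but its characteristic foliation, so that the attaching sphere remains a balanced coisotropic sphere carrying the standard nonsingular foliation on each hemisphere, and that the convexomorphism produced by Lemma \ref{lem:balanced_isotopy_to_holonomy} respects the conformal symplectic normal framing at the binding point of the equator as demanded by Theorem \ref{thm:bcs_nbhd}. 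Both facts follow from the cited results, but handling the spin-symmetric singular foliation correctly is where the argument is least routine.
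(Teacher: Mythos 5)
Your proposal is correct and follows exactly the paper's approach: the paper proves Theorem \ref{thm:supercrit_attach} by declaring it to be the same argument as Theorem \ref{thm:subcrit_attach}, replacing Lemma \ref{lem:framed_isotropic_isotopy_to_holonomy} with Lemma \ref{lem:balanced_isotopy_to_holonomy} and Remark \ref{rmk:stronger_subcrit_lemma} with Remark \ref{rmk:stronger_supercrit_lemma}. Your write-up simply carries out that substitution in detail and correctly flags the genuine new input (Theorem \ref{thm:bcs_nbhd} for balanced coisotropic sphere neighborhoods).
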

\begin{proof}
The proof is the same as Theorem \ref{thm:subcrit_attach} but using Lemma \ref{lem:balanced_isotopy_to_holonomy} instead of Lemma \ref{lem:framed_isotropic_isotopy_to_holonomy} (and Remark \ref{rmk:stronger_supercrit_lemma} instead of Remark \ref{rmk:stronger_subcrit_lemma}).
\end{proof}

\subsection{Proof of Theorem \ref{thm:main_thm} = \ref{thm:main_thm_body}}
\begin{proof}
Suppose we start with a convex contact cobordism $(M,\xi,X,\phi)$. By arbitrarily small perturbation of the Morse function $\phi$, we may assume all of the critical points have distinct critical values. (The proof is no different from the smooth case; see \cite[Theorem 4.1]{Milnor}.) Hence, we can break the cobordism apart along level sets so that it is a composition of cobordisms with a single critical point. We may also assume, by Lemma \ref{lem:Morse_modify}, that $d\phi(X) = 1$ near each of these splitting levels. By Theorems \ref{thm:subcrit_attach} and \ref{thm:supercrit_attach}, each of these cobordisms with a single critical point may be realized by a handle attachment followed by a cylinder. Lemma \ref{lem:fix_top} furthermore ensures that the strict homotopy given can be made to fix the splitting level sets we chose to start. In this way, any elementary cobordism with a single critical point may be viewed as a handle followed by a cylinder. In particular, a cylinder followed by a handle attachment can also be viewed as a handle attachment followed by a cylinder. So all of the cylindrical cobordisms can be pushed to the top, until we are left with a sequence of handle attachments followed by a final cylindrical cobordism. This last cobordism can then be collapsed up to strict homotopy so that we are left with just a sequence of handle attachments.
\end{proof}

\subsection{Rearrangement and split convex structures}

In standard Morse theory, one quickly learns that it is relatively easy to take a Morse function and modify it so that it is self-indexing, without adding any new critical points. The idea is that if, for a Morse function $\phi$ with two critical points $p$ and $q$ of index $k_p$ and $k_q$ with $k_p < k_q$ and $\phi(p) > \phi(q)$, we can find a homotopy of $\phi$ with no birth-death type singularities such that $\phi(p) < \phi(q)$. Using this technique, we can always specify $\phi(p) = k_p$ and $\phi(q) = k_q$, if we so desire.

The same goes for convex contact cobordisms, and the proof is essentially the same.

\begin{lem} \label{lem:self-index}
Suppose $(M,\xi,X_0,\phi_0)$ is a convex contact cobordism. Then one can find a strict homotopy of convex structures $(X_t,\phi_t)$ (with $\xi$ fixed) from $(X_0,\phi_0)$ to $(X_1,\phi_1)$ such that $\phi_1 \colon M \rightarrow \R$ is self-indexing, i.e. for any critical point $p$ of index $k_p$, $\phi(p) = k_p$.
\end{lem}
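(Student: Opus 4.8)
The plan is to mimic the classical construction of a self-indexing (``nice'') Morse function, as in Milnor \cite{Milnor}, observing that the whole construction proceeds by a homotopy of the \emph{Morse function alone}, together with a single preliminary small perturbation of $X$, and never touches the contact structure $\xi$ --- so there is essentially no contact-geometric content to check. Throughout, $\xi$ is held fixed. The gradient-like condition $d\phi(X) \geq \delta(|X|^2+|d\phi|^2)$ is open: on the compact region away from the (finitely many, nondegenerate) critical points one has a uniform positive lower bound for $d\phi(X)$ and for $|X|$, so a sufficiently $C^1$-small modification of $\phi$ or of $X$ supported there creates no new critical points and preserves the gradient-like condition. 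Hence any such small modification of $(X,\phi)$ away from the critical points automatically produces a strict convex contact homotopy.

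First, exactly as in the proof of Theorem \ref{thm:main_thm_body}, perturb $\phi$ slightly (relative to a neighborhood of the critical points, where we may assume it is already in the standard form of Theorem \ref{thm:standard_cp}) so that all critical values are distinct. Next, perturb $X$ within the space of contact vector fields, via a $C^\infty$-small perturbation $H \rightsquigarrow H + \epsilon h$ of its contact Hamiltonian with $h$ supported in a collar of a regular level set away from the critical points, so that every pair of ascending and descending manifolds meets transversally. This is possible because the evaluation $h \mapsto X_h$ at a point of a regular level set is onto the tangent space there (the $R_\alpha$-component of $X_h$ is $h$ and its $\xi$-component is determined by $dh|_\xi$ via the nondegenerate $d\alpha|_\xi$, and both $h$ and $dh|_\xi$ may be prescribed freely), so the standard transversality argument for gradient-like vector fields applies verbatim. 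By the previous paragraph this perturbation is a strict convex contact homotopy with $\xi$ fixed. Crucially, the ascending and descending manifolds $W^u(p),W^s(p)$ of a critical point $p$ depend only on $X$ (and on $p$), not on $\phi$; so once this transversality is arranged it will persist through all the homotopies of $\phi$ below, which leave $X$ and the critical points untouched.

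Now invoke Milnor's rearrangement theorem \cite{Milnor} repeatedly. Suppose $p$ and $q$ are critical points whose critical values are in the order opposite to their indices, say $k_p > k_q$ but $\phi(p) < \phi(q)$. An $X$-trajectory from $p$ to $q$ would lie in $W^u(p) \cap W^s(q)$, which after our perturbation is transverse of dimension $(2n+1-k_p) + k_q - (2n+1) = k_q - k_p < 0$, hence empty; thus no trajectory runs from $p$ to $q$, and Milnor's rearrangement gives a homotopy of $\phi$, keeping all critical points fixed and keeping $X$ gradient-like, that interchanges the values $\phi(p)$ and $\phi(q)$. Finitely many such interchanges sort the critical values into index order, so that the critical values of the index-$k$ critical points lie in pairwise disjoint intervals $I_k$ ordered by $k$. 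A final homotopy of $\phi$, local near each critical point --- shifting its critical value within the band $I_k$ by the usual cutoff-and-reparametrize modification, which preserves the gradient-like condition --- moves the value of every index-$k$ critical point to exactly $k$. Concatenating all of these homotopies (starting from the preliminary perturbation of $X$) yields the desired strict convex homotopy $(X_t,\phi_t)$ with $\xi$ fixed and $\phi_1$ self-indexing.

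The only step that is not a verbatim transcription of the smooth argument is the preliminary perturbation of $X$, which must be carried out through contact vector fields; this is the single place the contact structure intervenes, and it is immediate from the flexibility of contact Hamiltonians noted above. All remaining steps --- Milnor's rearrangement and the final local value-shifts --- manipulate only $\phi$ with $X$ (and $\xi$) held fixed, so no obstacle arises.
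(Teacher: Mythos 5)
Your argument is correct in substance, but it takes a genuinely different route from the paper at the one step where contact geometry enters. The paper does not perturb $X$ at all: it reduces, as you do, to arranging that no trajectory runs from a lower-index critical point down to a higher-index one, but it achieves this by staying inside its surgery framework --- between two offending critical points it inserts a cylindrical convex contact cobordism whose holonomy is a convexomorphism realizing an isotopy of the relevant sphere, using the fact that since $k_p < k_q$ one of the two spheres is isotropic in the dividing set, so that isotropic isotopies lift to convexomorphisms (Lemma \ref{lem:framed_isotropic_isotopy_to_holonomy}, Lemma \ref{lem:holonomy}, Remark \ref{rmk:parametric}); after this offset, only the Morse function is homotoped, exactly as in Milnor. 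You instead prove a contact Kupka--Smale-type statement: perturb the contact Hamiltonian by small bump functions supported in collars away from the critical points, use the pointwise surjectivity of $h \mapsto X_h(x)$ (which you verify correctly) together with parametric Sard to make the ascending and descending manifolds transverse, and then run Milnor's rearrangement on $\phi$ alone with $X$ and $\xi$ fixed. Your route is more classical and avoids the convexomorphism machinery; it also delivers full Morse--Smale transversality, which you quietly need anyway (and the paper needs implicitly) to make same-index critical points share the value $k$ at the end. The paper's route avoids any genericity argument for contact vector fields and only needs disjointness for the out-of-order pairs, which follows from a dimension count once one sphere can be isotoped inside $\Gamma$. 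Two small points to tighten: a perturbation supported in a collar of a single regular level set only controls pairs of critical points separated by that level, so you should perturb in collars between each pair of consecutive critical values (or say ``finitely many collars''); and the final value-shifting step is cleanest if phrased, as in Milnor, via functions constant along $X$-trajectories rather than a cutoff in the target, since the latter moves all critical values in a range simultaneously. Neither affects the validity of the argument.
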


\begin{proof}
We recall the proof of the smooth version to start; see Milnor's great exposition for more details \cite[Chapter 4]{Milnor}. First, suppose the following condition is satisfied:
\begin{itemize}
	\item There are no critical points $p$ and $q$ with $k_p < k_q$, $\phi(p) > \phi(q)$, and a gradient trajectory from $p$ to $q$. 
\end{itemize}
Then one can find a homotopy of the Morse function alone so that it is self-indexing. Hence, it suffices to find a perturbation via a strict homotopy so that the convex contact cobordism satisfies this condition. Furthermore, the condition appears to be generic in the sense that if the descending disk of $p$ and the ascending disk of $q$ are transverse, then they have no intersection by a dimension count.

This is precisely how the proof goes. By inserting cylindrical cobordisms between critical points not satisfying the condition, we may offset the attaching data of $p$ from the coattaching data of $q$ by a holonomy map - in fact by a Hamiltonian isotopy of convex structures. Since $k_p < k_q$, one of these sets of data will be an isotropic sphere in $\Gamma$, and since isotopies of isotropic spheres lift to Hamiltonian isotopies of convexomorphisms, which in turn are the holonomy maps of a family of cylindrical cobordisms, the result follows.
\end{proof}

We shall be interested in a slightly weaker notion than self-indexing. The definition below can be thought of as precisely the condition needed to provide a natural dictionary between Weinstein open book decompositions and convex structures. This will be expanded upon in Section \ref{sec:OBD}.

\begin{defn}
A convex contact cobordism $(M,\xi,X,\phi)$ is called \textbf{split} if there exists a regular level set $\phi^{-1}(c)$ such that all critical points in the region $\phi^{-1}(-\infty,c]$ are subcritical and all critical points in the region $\phi^{-1}[c,\infty)$ are supercritical.
\end{defn}

\begin{cor} \label{cor:split}
Every convex contact cobordism can be strictly homotoped to be split.
\end{cor}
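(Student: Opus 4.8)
The plan is to derive this directly from Lemma \ref{lem:self-index}, since a self-indexing convex contact cobordism is in particular split: for a self-indexing $\phi_1$, every subcritical critical point has value at most $n$, while every supercritical critical point has value at least $n+1$. Hence taking $c$ to be any value in the open interval $(n, n+1)$ — for instance $c = n + \tfrac12$ — gives a regular level set $\phi_1^{-1}(c)$ below which all critical points are subcritical and above which all are supercritical. So the strict homotopy produced by Lemma \ref{lem:self-index} already yields a split structure, and there is essentially nothing more to prove.

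Nonetheless, I would phrase the argument so that it does not secretly need the full strength of self-indexing, because self-indexing demands rearranging \emph{all} out-of-order pairs, whereas split only requires separating the two \emph{classes} subcritical/supercritical. Concretely: the only obstruction to being split is the existence of a supercritical critical point $q$ and a subcritical critical point $p$ with $\phi(p) > \phi(q)$ together with a gradient trajectory from $p$ to $q$ (absent such a trajectory, one can already slide critical values of $p$ and $q$ past each other by a homotopy of the Morse function alone, without touching the contact data). But the descending manifold of $p$ is isotropic of dimension $k_p \le n$ and the descending manifold of $q$ is coisotropic of dimension $k_q \ge n+1$, so its \emph{ascending} manifold is isotropic of dimension $2n+1-k_q \le n$. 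If these two isotropic submanifolds (the ascending manifold of $p$ and the descending manifold of $q$) meet transversely in a level set $\phi^{-1}(c)$ with $\phi(q) < c < \phi(p)$, a dimension count gives $(k_p - 1) + (2n - k_q) - (2n-1) = k_p - k_q \le -1 < 0$, so they are disjoint; hence generically there is no connecting trajectory. Thus, exactly as in the proof of Lemma \ref{lem:self-index}, one inserts cylindrical cobordisms between the relevant level sets to offset the attaching datum of $p$ from the coattaching datum of $q$ by a Hamiltonian isotopy of the convex structure: the attaching datum of $p$ is a framed isotropic sphere in the dividing set, isotopies of framed isotropic spheres lift to Hamiltonian isotopies of convexomorphisms (Lemma \ref{lem:framed_isotropic_isotopy_to_holonomy}), and these are realized as holonomy maps of families of cylindrical convex contact cobordisms (Remark \ref{rmk:parametric}). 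After a generic such perturbation the disjointness holds, so no obstructing trajectory remains, and a homotopy of the Morse function puts all subcritical values below $c$ and all supercritical values above $c$.

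The main obstacle, such as it is, is purely bookkeeping: ensuring that the insertion of the equilibrating cylinders does not reintroduce bad pairs elsewhere, and that the Morse-function rearrangement at the end can be done while keeping $X$ gradient-like for each intermediate function. Both points are handled exactly as in the proof of Lemma \ref{lem:self-index} — the cylinder insertions affect only the two critical points in question, and the space of Morse functions with $d\phi_t(X) > 0$ along a cylinder is convex (as used in Lemma \ref{lem:triv_cyl}) — so the argument goes through verbatim. I would therefore keep the proof of the corollary short, either citing Lemma \ref{lem:self-index} directly and observing that self-indexing implies split with witnessing level $c = n+\tfrac12$, or, if one prefers a lighter-weight argument, running the dimension-count/transversality step above for subcritical--supercritical pairs only.
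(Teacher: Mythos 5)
Your first paragraph is exactly the paper's proof: the paper reduces the corollary in one line to Lemma~\ref{lem:self-index}, observing that self-indexing is stronger than split, and you have unpacked why. The alternative ``lighter-weight'' argument you sketch is also valid and is essentially the proof of Lemma~\ref{lem:self-index} specialized to subcritical/supercritical pairs (both relevant spheres now lie in $\Gamma$, which is why your dimension count is done in a $(2n-1)$-manifold); it correctly isolates which rearrangements are actually needed. One small slip: the parenthetical ``(the ascending manifold of $p$ and the descending manifold of $q$)'' has the roles reversed --- as the preceding sentence and the dimension count $(k_p-1)+(2n-k_q)-(2n-1)$ make clear, you mean the \emph{descending} sphere of $p$ (dimension $k_p-1$) and the \emph{ascending} sphere of $q$ (dimension $2n-k_q$), both sitting in the dividing set of the intermediate level. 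Also, $c=n+\tfrac12$ need not literally lie in the image of a self-indexing $\phi_1$ when one side has no critical points, but in that degenerate case the cobordism is already trivially split, so this is cosmetic.
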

\begin{proof}
Split is a weaker notion than self-indexing, so this is immediate from Lemma \ref{lem:self-index}.
\end{proof}

\section{Sutured convex contact structures} \label{sec:suture}

The attaching data of a subcritical handle consists solely of a framing for the conformal symplectic normal bundle of an isotropic sphere in the dividing set. But framed isotropic spheres in contact manifolds also appear as the attaching data for Weinstein handles. This equivalence is no coincidence: in this section we prove that subcritical handle attachment corresponds naturally to Weinstein handle attachment. Using this sutured description, we will prove in Section \ref{sec:OBD} that there is a correspondence between split convex contact structures and Weinstein open books, stated precisely in Theorem \ref{thm:split_is_OBD}. As a corollary, we deduce that every closed contact manifold has a (split) convex structure, and hence a handle decomposition.

\subsection{Sutured convex contact manifolds}
Colin, Ghiggini, Honda, and Hutchings proved that contact manifolds with convex boundary can be sutured, and conversely sutured contact manifolds can be un-sutured, such that suturing and un-suturing are inverse operations up to contact isotopy \cite{CGHH}. In this subsection, we shall follow the proofs of these facts, but add the extra structure of convexity. We will use the set-up in \cite{CGHH} for convenience.

\begin{defn}
A \textbf{sutured manifold} $(M,\Gamma,U(\Gamma))$ of dimension $n$ is an oriented manifold $M^n$ with boundary and corners such that
\begin{itemize}
	\item $\Gamma \subset \partial M$ is an oriented submanifold of dimension $n-2$
	\item $U(\Gamma)$ is a neighborhood of $\Gamma$, diffeomorphic as an oriented manifold to $[-1,1] \times (-\delta,0] \times \Gamma$ for some $\delta > 0$ such that in this product, $\Gamma$ is the submanifold $\{0\} \times \{0\} \times \Gamma$,
	\item all corners occur within $U(\Gamma)$ as $\{-1,1\} \times \{0\} \times \Gamma$, and
	\item $\partial M \setminus ((-1,1) \times \{0\} \times \Gamma)$ is a disjoint union of submanifolds $R_{\pm}$ with boundary $\{\pm 1\} \times \{0\} \times \Gamma$.
\end{itemize}

We write the \textbf{horizontal boundary} as $\partial_h M = R_+ \sqcup R_-$ and the \textbf{vertical boundary} $\partial_v M \cong [-1,1] \times \{0\} \times \Gamma \subset U(\Gamma)$.
\end{defn}

We use the coordinates $(s,\tau) \in [-1,1] \times (-\delta,0]$.

\begin{defn}
A \textbf{sutured contact manifold} is a contact structure $\xi$ on a sutured manifold $(M,\Gamma,U(\Gamma))$ such that one can write $\xi = \ker \alpha$ such that
\begin{itemize}
	\item $\alpha$ induces the structure of a Liouville domain on each of $R_{\pm}$
	\item In $U(\Gamma)$, $\alpha = Cds + e^{\tau}\beta_0$, where $\beta_0$ is a contact form on $\Gamma$.
\end{itemize} 
\end{defn}

\begin{defn}
A \textbf{sutured convex contact cobordism} is a sutured contact manifold $(M,\Gamma,U(\Gamma),\xi)$ together with a pair $(X,\phi)$ consisting of a contact vector field $X$ and Morse function $\phi$ so that the following two conditions hold:
\begin{itemize}
	\item $X$ is gradient-like for $\phi$ and everywhere transverse to the boundary. At the corners (in $U(\Gamma)$), by transversality, we require that $X$ is not tangent to either $\partial_v M$ or $\partial_h M$.
	\item $\{X \in \xi\} \cap \partial M = \Gamma$, so that $\Gamma$ is the dividing set for $X$.
\end{itemize}
\end{defn}

\begin{rmk}
For a sutured convex contact cobordism, we further break the boundary into two pieces based on whether $X$ is outward or inward pointing. We will always think of attaching handles along where $X$ is outward pointing.
\end{rmk}

\begin{defn}
A \textbf{strict convex contact homotopy} on a sutured manifold is a homotopy of sutured convex contact structures.
\end{defn}

Our goal is to understand how to go back and forth between the sutured and unsutured worlds of convex contact cobordisms. One direction is easy: if we begin with a sutured contact manifold, then we can unsuture by simply rounding off the corners. This procedure of rounding off takes place completely inside of $U(\Gamma)$ (in fact in a small neighborhood of the corners), and the same $\Gamma$ will remain the dividing set. We see further that the unsuturing of a sutured convex contact cobordism is well-defined up to strict convex contact homotopy, as any two different ways of rounding corners yields the same convex contact manifold up to some trivial cylindrical cobordisms.

Conversely, one wishes to suture a given convex contact cobordism $(M,\xi,X,\phi)$. In order to do this, it suffices to suture the convex contact structure near the dividing set $\Gamma_+ \subset \partial_+ M$ (one can similarly suture on the negative boundary by flipping orientations). By \cite[Lemma 4.1]{CGHH}, one can produce a sutured contact manifold out of an unsutured one, such that when we unsuture as described in the previous paragraph, we obtain the same contact manifold with convex boundary (up to homotopy). We do the same but in the convex setting by keeping track of the convex contact structure. (The proof presented here is also a little more explicit.)

\begin{lem} \label{lem:suturing}
Given a convex contact cobordism $(M,\xi,X,\phi)$, one can find a closed subset $M' \subseteq M$ such that:
\begin{itemize}
	\item $M'$ only differs from $M$ in a neighborhood of $\Gamma$
	\item The restriction of $(\xi,X,\phi)$ to $M'$ gives it the structure of a sutured convex contact cobordism, i.e. the subset $\Gamma \subseteq \partial M'$ along which $X \in \xi$ has a neighborhood $U(\Gamma)$ in $M'$ matching the required description
	\item Unsuturing $M'$ yields back $M$, up to strict convex contact homotopy
\end{itemize}
\end{lem}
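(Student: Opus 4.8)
The plan is to produce the cut-down domain $M'$ by an explicit local surgery in a neighborhood of the dividing set $\Gamma = \Gamma_+ \subset \partial_+ M$ (and symmetrically near $\Gamma_- \subset \partial_- M$, which we handle by flipping orientations), leaving $M$ untouched away from these neighborhoods. First I would invoke Proposition \ref{prop:div_set_nbhd} to fix a model neighborhood $U = \Open_M(\Gamma_+) \cong \Gamma_+ \times \Open_{\R^2}(0)$ in which $\alpha = u\,dt + \pi^*(\alpha|_{\Gamma_+})$, with $X = \partial_t$ and $\partial_+ M = \{t = 0\}$. In these coordinates the Liouville field on $R_\pm$ points toward $\{u=0\}$, so the picture near $\Gamma_+$ is exactly a thickened Liouville collar. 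The key observation is that the change of variables $\tau = \log|u|$ (on each side $u>0$, $u<0$ separately) turns $u\,dt + \beta$ into the sutured normal form $Cds + e^\tau \beta_0$ after a further linear change identifying $t$ with the suture parameter $s$ and rescaling; this is precisely the computation already recorded in the introduction's discussion of the sutured model near the suture. So the work is to carve out a closed subset of this collar on which the boundary acquires corners along two copies $\{\pm 1\} \times \{0\} \times \Gamma_+$ of $\Gamma_+$ and has the required product structure $[-1,1] \times (-\delta,0] \times \Gamma_+$ near them.

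Concretely, in the $(t,u)$-plane (fibered over $\Gamma_+$) I would choose a smooth closed region $P$ whose boundary agrees with $\{t=0\}$ for $|u|$ large, and which near $u=0$ looks like a `bitten' rectangle: two vertical boundary segments $\{u = \pm\varepsilon\}$ meeting the horizontal arcs along corners, with a small dent $\{$roughly $t \le -\rho(u)\}$ cut out so that in the normalized coordinates $(s,\tau)$ the corner neighborhood is literally $[-1,1]_s \times (-\delta,0]_\tau \times \Gamma_+$. Then $M' := (M \setminus U) \cup (\Gamma_+ \times P)$, smoothed to agree with $M$ where $P = \{t=0\}$. Because $\xi$, $X = \partial_t$, and (after a harmless modification via Lemma \ref{lem:Morse_modify}, arranging $d\phi(X)=1$ near the boundary) $\phi$ are all essentially $t$-invariant on $U$, restricting the triple $(\xi,X,\phi)$ to $M'$ directly gives: $\alpha$ restricts to the Liouville-domain form on each new $R_\pm$ (these are the $\{u = \pm\varepsilon\}$-slices, which are finite pieces of the completed Liouville manifolds $R_\pm(\partial_+M)$, hence Liouville domains); in $U(\Gamma_+)$ one has the required normal form $Cds+e^\tau\beta_0$; $X$ is transverse to $\partial M'$ and, by construction of the dent, not tangent to either face at the corners; and $\{X \in \xi\} \cap \partial M' = \Gamma_+$ since $X = \partial_t \in \xi$ exactly where $u=0$. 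Thus $M'$ is a sutured convex contact cobordism. For the last bullet, rounding the corners of $\Gamma_+ \times P$ replaces the `bitten rectangle' by a region with smooth boundary $\{t = 0\}$ for all $u$, i.e.\ recovers $U$ and hence $M$; since rounding is supported near the corners and only inserts trivial cylindrical pieces, the result is a strict convex contact homotopy (this is the convex-data refinement of \cite[Lemma 4.1]{CGHH}, and the homotopy can be built by the trivial-cylinder argument of Lemma \ref{lem:triv_cyl}).

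\textbf{Main obstacle.} The delicate point is not any one estimate but the bookkeeping at the corners: one must choose the dent profile $\rho$ and the cutoff scale $\varepsilon$ so that (a) in the \emph{normalized} coordinates $\tau = \log|u|$, $s = $ (rescaled $t$), the corner neighborhood is an honest product $[-1,1] \times (-\delta,0] \times \Gamma_+$ compatible with the orientation conventions, (b) $X$ stays transverse to both the vertical and horizontal faces right up to the corner locus, and (c) the pieces glue smoothly to the untouched part of $M$ where $P$ coincides with $\{t=0\}$. All three are arranged by explicit choices in the two-dimensional $(t,u)$ model and then spread over $\Gamma_+$ using the product structure of Proposition \ref{prop:div_set_nbhd}; there is no global obstruction. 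The only genuine input beyond soft arguments is that $R_\pm$ of $\partial_+ M$ are (finite-type) Liouville manifolds, which was already established in the discussion following Proposition \ref{prop:div_set_nbhd}, so that the truncated slices $\{u = \pm \varepsilon\}$ are Liouville domains as required. Finally, everything here is natural enough to run parametrically and simultaneously near $\Gamma_-$, which will be needed in the subsequent sections but is not required for the statement as given.
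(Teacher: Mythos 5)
Your overall outline (localize near $\Gamma$ via Proposition \ref{prop:div_set_nbhd}, cut a dented region out of the $(t,u)$-plane spread over $\Gamma$, then argue the unsuturing recovers $M$ up to a trivial cylinder) is the right shape, and your last step (rounding corners inserts only trivial cylindrical pieces, as in Lemma \ref{lem:triv_cyl}) matches the paper. But the central claimed coordinate change is wrong, and this hides a genuine missing idea.

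You claim that $\tau = \log|u|$ together with a linear identification of $t$ with $s$ and a rescaling carries $u\,dt + \beta$ to the sutured normal form $C\,ds + e^{\tau}\beta_0$. Try the computation: with $u = \pm e^{\tau}$ one gets $\pm e^{\tau}\,dt + \beta$, and the only rescaling that makes the coefficient of $\beta$ equal to $e^{\tau}$ is multiplication by $e^{\tau}$, which yields $\pm e^{2\tau}\,dt + e^{\tau}\beta$ — the $ds$-coefficient is not constant. If instead you flip the sign ($\tau = -\log|u|$), you get $\pm dt + e^{\tau}\beta$ after rescaling, but now the dividing set $\{u=0\}$ sits at $\tau = +\infty$, so there is no finite collar $(-\delta,0]$ with $\Gamma$ at $\tau=0$. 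Moreover, the vertical boundary $\partial_v M$ in the sutured picture sits at $\tau=0$, i.e.\ at constant $|u|$ in your coordinates, and $X = \partial_t$ is \emph{tangent} to constant-$u$ slices, violating the transversality required of a sutured convex contact cobordism. So the region you describe simply cannot satisfy the definition with $X = \partial_t$.

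The ingredient you are missing is the Reeb-flow twist. The paper first pushes forward by the diffeomorphism $(u,t,x)\mapsto (u,t,\psi^{\,ut/2}_{R_{\beta}}(x))$, which converts $u\,dt+\beta$ into $\tfrac{1}{2}(u\,dt - t\,du) + \beta = \tfrac{1}{2}r^2\,d\theta + \beta$ and replaces $X$ by $\partial_t + \tfrac{1}{2}u R_{\beta}$. After the twist the contact form is rotationally symmetric in the $(u,t)$-plane, and one computes $\alpha(X) = u$, so the dividing set is still $\{u=0\}$. The region is then cut so that the vertical boundary is a circular arc $\{r = \epsilon\}$, to which the twisted $X$ \emph{is} transverse, and the change of variables $\tau = -2\ln(r/\epsilon)$, $s = \tfrac{4}{\pi}(\theta - \tfrac{3\pi}{2})$ carries $\tfrac{1}{2}r^2\,d\theta + \beta$ (up to positive scaling) into $C\,ds + e^{\tau}\beta$. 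It is the $r^2$ in front of $d\theta$, produced by the twist, that makes the exponents come out right and simultaneously fixes both failures in your version. Without it, there is no choice of dent or cutoff scale that makes your bookkeeping at the corners close up.
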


\begin{proof}
All modifications will take place in a neighborhood of $\Gamma$. We work just with $\Gamma \subset \partial_+ M$; the case of $\Gamma \subset \partial_- M$ is the same. Using Proposition \ref{prop:div_set_nbhd}, any neighborhood of $\Gamma$ can be written so that it has contact form $udt + \beta$ where $\beta$ is a contact form along $\Gamma$, $X = \partial_t$, and the convex hypersurface is the set with $t=0$. Suppose $R_{\beta}$ is the Reeb vector field for $\beta$, and let $\psi^{r}_{R_{\beta}}$ be the time-$r$ Reeb flow along $\Gamma$. Consider the diffeomorphism $\phi \colon \Open(\Gamma) \rightarrow \Open(\Gamma)$ given by $(u,t,x) \mapsto \left(u,t,\psi^{\frac{1}{2}ut}(x)\right)$, which has $\phi^*\left(\frac{1}{2}(udt - tdu) + \beta\right) = udt + \beta$. We work in the new coordinates afforded by $\phi$, in which $\alpha = \frac{1}{2}(udt - tdu) + \beta)$ instead. In these coordinates, $X =\phi_* \partial_t =  \partial_t + \frac{1}{2}uR_{\beta_0}$.

Note that $\frac{1}{2}(udt - tdu) = \frac{1}{2}r^2d\theta$, where $r,\theta$ are polar coordinates for the $(u,t)$-plane, $\Open(\Gamma)$ corresponds to the region $\pi \leq \theta \leq 2\pi$. Our $M'$ will match $M$, and be specified by carving out some region in the $(u,t)$-plane. Namely, for some small $\epsilon > 0$, the upper boundary of $M'$ will consist of the graph of the function $t = f(u)$ with
\begin{itemize}
	\item $f(u)$ is even and piecewise smooth, with two corners at $u = \pm \epsilon/\sqrt{2}$ (and elsewhere smooth).
	\item For $|u| \leq \epsilon/\sqrt{2}$, we just take the negative arc of the circle $r = -\epsilon$, i.e. $t = -\sqrt{\epsilon^2-u^2}$.
	\item For $\epsilon/\sqrt{2} \leq |u| \leq \epsilon$, we take $f(u) = -|u|$ (i.e. along a constant $\theta$ value).
	\item For $|u| \geq 2\epsilon$, we take $f(u) = 0$.
	\item For $\epsilon \leq |u| \leq 2\epsilon$, we smoothly interpolate between $-|u|$ and $0$, so that $f(u)$ is always negative.
\end{itemize}
The graph of this region is shown in Figure \ref{fig:Sutured_Region}. Note that the dividing set of this manifold with corners is just the same $\Gamma$ shifted down to $u = 0$ and $t = -\epsilon$. Furthermore, if we look at the region of $M'$ with $\epsilon \leq r < 1.1\epsilon$, in the coordinates
$$(s,\tau) = \left(\frac{4}{\pi}\left(\theta-\frac{3\pi}{2}\right), -2\ln\left(\frac{r}{\epsilon}\right)\right)$$
this region is $[-1,1] \times (-2\ln(1.1),0] \times \Gamma$ with contact form $\alpha = \frac{\pi\epsilon^2}{8}ds + e^{\tau}\beta$, matching the sutured description. The pair $(X,\phi)$ is automatically convex for this sutured contact manifold by construction.

\begin{figure}[h!]
\centering
  \includegraphics[scale=0.75]{./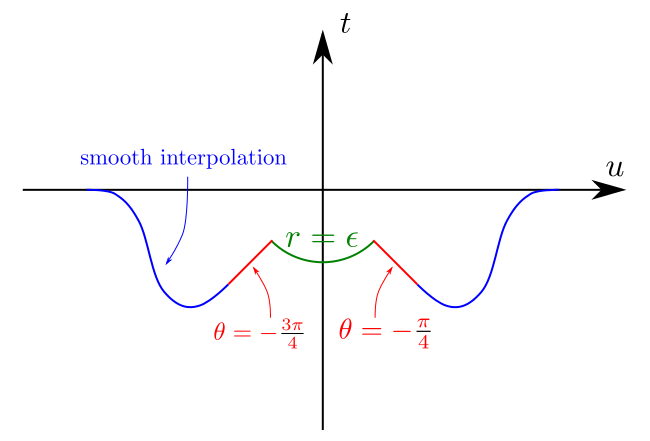}
  \caption{Carving $M'$ out of $M$.}\label{fig:Sutured_Region}
\end{figure}

For the last statement, we see that there is a trivial cobordism from the unsuturing of $M'$ to $M$, so they are the same up to strict convex contact homotopy.
\end{proof}

The previous lemma provides us with a possible way to suture a given convex contact cobordism. However, a priori, there may be many possible ways to suture. We wish to show that up to convex contact homotopy, there is a well-defined notion of suturing. The key is the following lemma.

\begin{lem}
If the unsuturings $u(M)$ and $u(N)$ of two sutured convex contact cobordisms $M$ and $N$ are strictly homotopic, then $M$ and $N$ are themselves strictly homotopic.
\end{lem}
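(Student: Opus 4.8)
The plan is to reverse-engineer the suturing construction: suturing and unsuturing differ only in a small neighborhood of the dividing set $\Gamma$, so the claim should reduce to a local statement near $\Gamma$, combined with the rigidity results for cylindrical cobordisms (Lemma \ref{lem:triv_cyl}, Corollaries \ref{cor:uncompleted_contactomorphism} and \ref{cor:completed_contactomorphism}). First I would observe that a strict convex contact homotopy between $u(M)$ and $u(N)$ induces, via Corollary \ref{cor:completed_contactomorphism} and the parametric holonomy machinery (Remark \ref{rmk:par_holonomy}), a family of holonomy maps, so that up to an overall diffeomorphism and strict homotopy, $u(M)$ and $u(N)$ agree except along cylindrical pieces near their boundaries. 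In particular, away from a neighborhood of $\Gamma_M$ and $\Gamma_N$, the convex structures of $M$ and $N$ are already matched up (after the appropriate diffeomorphism) since unsuturing does not change anything there.

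Next I would localize to the suture. The neighborhood $U(\Gamma)$ in a sutured convex contact cobordism has the completely rigid form $([-1,1]\times(-\delta,0]\times\Gamma, Cds+e^{\tau}\beta_0)$ with $X$ a prescribed contact vector field as in the proof of Lemma \ref{lem:suturing} (namely $X = \partial_t + \frac12 u R_{\beta_0}$ in the polar-coordinate model). The content of the lemma is that this local model, after rounding the corners, produces a \emph{germ} of a convex contact structure near $\Gamma$ that is independent of all auxiliary choices (the function $f(u)$ used to carve out $M'$, the size $\epsilon$, etc.) up to strict convex contact homotopy: any two such carvings are related by a trivial cylindrical cobordism in the $(u,t)$-plane, exactly as in the final paragraph of the proof of Lemma \ref{lem:suturing}. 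So if $u(M)$ and $u(N)$ are strictly homotopic, then their dividing sets $\Gamma_M$ and $\Gamma_N$ (as contact submanifolds with their cosymplectic normal data) are identified under the homotopy, and the germ of the convex structure transverse to $\Gamma$ in $u(M)$ matches that in $u(N)$; pulling this identification back through the (inverse of the) unsuturing rounding shows that $M$ and $N$ have identified germs near their sutures.

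Finally I would glue the global and local pieces: on the bulk region away from $\Gamma$, $M$ and $N$ are identified up to strict homotopy by the first step; near $\Gamma$, they are identified by the second step; and these two identifications agree on the overlap because both are induced by the same strict homotopy of $u(M)$ to $u(N)$. Now apply Lemma \ref{lem:triv_cyl} (and the Morse-function flexibility of Lemma \ref{lem:Morse_modify}, Lemma \ref{lem:fix_top}) to equilibrate along the transition cylinders, yielding a strict convex contact homotopy between $M$ and $N$. I expect the main obstacle to be the bookkeeping in the second step: one must check that the rounding-of-corners operation and its inverse are genuinely well-defined on germs of \emph{convex} (not just contact) structures near $\Gamma$ — that is, that the contact vector field $X$ and Morse function $\phi$ can be carried along through every choice, uniformly in parameters — and that the two identifications (bulk and local) can be made to coincide on the overlap rather than merely being isotopic there. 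This is the convex-geometry refinement of \cite[Lemma 4.1]{CGHH}, and it is precisely the place where the rigidity of the $Cds + e^{\tau}\beta_0$ normal form does the heavy lifting.
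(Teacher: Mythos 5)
Your instinct to localize the argument near the suture and exploit the rigidity of the $Cds + e^\tau\beta_0$ normal form is the right one, and your observation that the carving of Lemma \ref{lem:suturing} is unique up to a trivial cylindrical piece is also correct. However, your first step misfires. You invoke Corollary \ref{cor:completed_contactomorphism} and the holonomy machinery to conclude that ``$u(M)$ and $u(N)$ agree except along cylindrical pieces near their boundaries,'' but that is not what a strict convex contact homotopy gives you: such a homotopy is a genuine path $(\xi_t, X_t, \phi_t)$ that can vary everywhere in the interior of the cobordism, and Corollary \ref{cor:completed_contactomorphism} only outputs a contactomorphism of completions, forgetting the convex data $(X,\phi)$ entirely. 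So you do not actually get a matching of the bulk convex structures, and your final gluing step — patching a bulk identification against a local one near $\Gamma$ and hoping they agree on the overlap — is exactly the bookkeeping you flag as ``the main obstacle,'' and it is a genuine obstacle without a cleaner organizing idea.

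The paper sidesteps the bulk/local glue entirely via a re-suturing trick. One fixes neighborhoods $V(\Gamma_M), V(\Gamma_N)$ of the dividing sets, chosen away from the corners so that they survive the unsuturing unchanged and are both in the standard form $\alpha = u\,ds + \beta$, $X = \partial_s$. The given strict homotopy between $u(M)$ and $u(N)$ can be made relative to these neighborhoods (since the structures already agree there), and since the suturing carve-out of Lemma \ref{lem:suturing} occurs entirely inside them, it commutes with the homotopy: hence $s(u(M))$ and $s(u(N))$ are strictly homotopic. The lemma then reduces to showing $s(u(M))$ is strictly homotopic to $M$ — and here the key observation you are missing is that $s(u(M))$ is literally a \emph{subset} of $M$, differing only inside $U(\Gamma_M)$. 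One picks an isotopy of embeddings $s(u(M)) \hookrightarrow M$ supported in $U(\Gamma_M)$, keeping $X$ transverse to the moving boundary throughout, and reads off the required strict homotopy. This two-step structure ($s(u(M)) \simeq s(u(N))$ then $s(u(M)) \simeq M$) replaces your global/local decomposition and eliminates the overlap-compatibility problem.
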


\begin{proof}
Consider $\Gamma_M \subset \partial M$ and $\Gamma_N \subset \partial N$. Choose neighborhoods $V(\Gamma_M)$ and $V(\Gamma_N)$ which are not affected by the un-suturing (e.g. these neighborhoods do not include the corners) and which are isomorphic to the standard model $\alpha = uds+\beta$ and $X = \partial_s$. Then we can apply the suturing procedure of the previous lemma within these neighborhoods to obtain $s(u(M))$ and $s(u(N))$. Now, $u(M)$ and $u(N)$ are strictly homotopic, and any strict homotopy can be made to be relative to $V(\Gamma_M)$ and $V(\Gamma_N)$, on which the convex contact structures already match. Hence, $s(u(M))$ and $s(u(N))$ are also strictly homotopic, since the above homotopy supported away from $V(\Gamma_M)$ and $V(\Gamma_N)$ is not affected in the suturing procedure described. We claim that $s(u(M))$ is strictly homotopic to $M$, and similarly for $N$, which will complete the claim.

Let us focus on $s(u(M))$. We see that the unsuturing operation $M \mapsto u(M)$ occurs completely inside of $U(\Gamma_M)$. Meanwhile, $V(\Gamma_M)$ can be chosen small enough to stay within $U(\Gamma_M)$, so in fact, the suturing operation also occurs inside this region. We see then that $s(u(M))$ is just a subset of $M$, modified only inside of $U(\Gamma_M)$. One can then choose an isotopy of embeddings of $s(u(M))$ into $M$, modified only in $U(\Gamma_M)$ and interpolating between $s(u(M))$ and $M$, so that $X$ remains transverse to the boundary. This isotopy then yields a strict homotopy between convex structures on $s(u(M))$ and $M$. We leave the details to the reader.
\end{proof}

\begin{cor}
There is a well-defined operation, up to strict homotopy, called \textbf{suturing}, which takes as input a convex contact cobordism and outputs a sutured convex contact cobordism, and which is inverse to the unsuturing operation.
\end{cor}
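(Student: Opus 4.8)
The plan is to derive the corollary formally from Lemma \ref{lem:suturing} and the preceding lemma (that strictly homotopic unsuturings come from strictly homotopic sutured cobordisms). Define the \emph{suturing} of a convex contact cobordism $M$ to be the strict convex contact homotopy class of any $M' \subseteq M$ produced by Lemma \ref{lem:suturing}. Three things then need checking: that this class does not depend on the auxiliary data in the construction (the model neighborhood $\Open(\Gamma)$, the parameter $\epsilon$, the interpolating function $f$); that it depends only on the strict homotopy class of $M$; and that the resulting operation $s$ is a two-sided inverse, up to strict homotopy, of the unsuturing operation $u$.

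For independence of choices, I would argue as follows: if $M_1'$ and $M_2'$ are two outputs of Lemma \ref{lem:suturing} applied to $M$, then by the last bullet of that lemma each of $u(M_1')$ and $u(M_2')$ is strictly homotopic to $M$, hence to each other, and the preceding lemma then shows $M_1'$ and $M_2'$ are strictly homotopic. The same reasoning, now allowing the input to vary through a strict homotopy rather than being fixed (and carrying that homotopy along while keeping it supported away from the fixed model neighborhood of $\Gamma$, exactly as in the proof of the preceding lemma), shows $s$ descends to strict homotopy classes.

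The inverse properties are then immediate from what has already been proved. The relation $u \circ s = \mathrm{id}$ (up to strict homotopy) is the third bullet of Lemma \ref{lem:suturing}. For $s \circ u = \mathrm{id}$, the proof of the preceding lemma already contains the argument: for a sutured cobordism $N$ one chooses the model neighborhood small enough that both unsuturing and resuturing take place inside $U(\Gamma_N)$, realizes $s(u(N))$ as a subset of $N$ differing from it only there, and interpolates by an isotopy of embeddings along which $X$ stays transverse to the boundary, producing the required strict homotopy. I expect no genuine obstacle here --- all the content lives in the two lemmas, and the only point deserving care is the conceptual one that ``suturing'' is well defined only as an operation on strict homotopy classes, which is precisely what combining the two lemmas supplies.
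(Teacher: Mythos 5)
Your proposal is correct and matches what the paper intends: the paper gives no explicit proof of this corollary, treating it as a formal consequence of Lemma~\ref{lem:suturing} together with the immediately preceding lemma, which is exactly the deduction you carry out. One small streamlining: for $s \circ u = \mathrm{id}$ you do not need to re-open the internals of the preceding lemma's proof --- since $u(s(u(N)))$ is strictly homotopic to $u(N)$ by the third bullet of Lemma~\ref{lem:suturing}, applying the preceding lemma as a black box to the sutured cobordisms $N$ and $s(u(N))$ already gives $s(u(N))$ strictly homotopic to $N$, exactly parallel to your well-definedness argument.
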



\begin{defn} On $U(\Gamma) \cong [-1,1] \times (-1,0] \times \Gamma$ with sutured contact structure $\alpha = Cds + e^\tau \beta_0$, a \textbf{standard convex structure} $(X,\phi)$ is a convex structure such that $X = s\partial_s + \partial_\tau$ and $\phi = s^2+\tau$. (The precise choice of $\phi$ does not matter, except that it is Lyapunov for $X$.)
\end{defn}

\begin{lem} \label{lem:standard_suture}
Any sutured convex contact structure can be made standard up to strict convex contact homotopy.
\end{lem}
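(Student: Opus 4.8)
The plan is to mimic the unsutured standardization results (Theorem \ref{thm:standard_cp} and Lemma \ref{lem:handles}), but now carried out along the suture region $U(\Gamma)$ rather than near an interior critical point. First I would observe that on $U(\Gamma) \cong [-1,1] \times (-1,0] \times \Gamma$ with sutured contact form $\alpha = Cds + e^{\tau}\beta_0$, the contact vector field $X$ restricted to a slightly smaller collar $[-1+\delta, 1-\delta] \times (-\delta, 0] \times \Gamma$ is transverse to the boundary, has no zeros there, and by the sutured condition $\{X \in \xi\} \cap \partial M = \Gamma$ forces $\alpha(X) = 0$ exactly along $\{s=0,\tau=0\} \times \Gamma$. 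Thus $X$ is a nonvanishing contact vector field, which is determined by its contact Hamiltonian $H = \alpha(X)$. The model $X_{\mathrm{std}} = s\partial_s + \partial_\tau$ has Hamiltonian $H_{\mathrm{std}} = \alpha(X_{\mathrm{std}}) = Cs + e^\tau$ (up to the precise normalization), which vanishes precisely on $\Gamma$, agreeing with where $H$ vanishes.

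The main steps would be: (1) reduce to the case where the two contact structures agree — since there is no critical point in the suture region, standard Gray stability / Moser applied to the family $\xi_t = \ker((1-t)\alpha + t\,\alpha_{\mathrm{std}})$ (both inducing Liouville structures on $R_\pm$ and matching the product form in $U(\Gamma)$) produces a family of diffeomorphisms pulling back $\xi$ to $\xi_{\mathrm{std}}$, supported near the suture, giving a strict convex homotopy for the $\xi$-component; (2) with $\xi$ fixed, interpolate the Hamiltonians linearly, $H_t = (1-t)H + t H_{\mathrm{std}}$, which stays nonzero away from $\Gamma$ and vanishes (to first order transversally) along $\Gamma$ — this is exactly the interpolation-of-contact-Hamiltonians trick used in Lemma \ref{lem:main_homotopy}, now with $\phi = s^2 + \tau$ playing the role of the Morse function and $d\phi(X_{\mathrm{std}}) = 2s^2 C + e^\tau > 0$ providing the gradient-like estimate; (3) a bump-function argument, cutting off away from $U(\Gamma)$ so that the homotopy is supported in the suture region and does not disturb the rest of the cobordism, again exactly as in Lemma \ref{lem:main_homotopy} (one may need the sequential-bump-function refinement if the interpolation time is short, but since there are no critical points and the relevant quantities are bounded away from zero the estimates are more forgiving here); (4) finally drag $\phi$ along to the model $s^2 + \tau$ as in Lemma \ref{lem:handles}, which is easy since the space of Morse/Lyapunov functions for a fixed gradient-like vector field with fixed behavior near the boundary is convex.

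The hard part will be step (1)–(2) done \emph{relative to} the rest of the manifold: one must ensure the homotopy is genuinely supported in (a neighborhood of) $U(\Gamma)$ while preserving the sutured normal form $\alpha = Cds + e^\tau\beta_0$ throughout, so that the boundary conditions defining a sutured convex contact cobordism (transversality of $X$, $\Gamma$ remaining the dividing set, the product structure on $U(\Gamma)$) are maintained at every time. This requires choosing the interpolations so that they are $s$- and $\tau$-equivariant near the corners, much as in the proof of Lemma \ref{lem:fix_top} and Lemma \ref{lem:triv_cyl}; I expect this bookkeeping, rather than any genuinely new geometric input, to be the only real obstacle. Once the model is achieved on $U(\Gamma)$ the constant $C$ can be rescaled by a further trivial homotopy if one wants a fixed normalization.
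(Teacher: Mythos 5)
Your overall strategy---interpolate the contact Hamiltonians with a cut-off to standardize $X$ on $U(\Gamma)$, then drag $\phi$ along---is of the right flavor and mirrors Lemmas \ref{lem:main_homotopy} and \ref{lem:handles}. But your Step (1) is vacuous: by definition of a sutured contact manifold, $\alpha$ already has the product form $Cds + e^\tau\beta_0$ on $U(\Gamma)$, and the ``standard'' model keeps $\xi$ unchanged and only standardizes $(X,\phi)$, so there is nothing for Gray stability to do. Also, the explicit formulas are off: with $\alpha = Cds + e^\tau\beta_0$ and $X_{\mathrm{std}} = s\partial_s + \partial_\tau$ one has $H_{\mathrm{std}} = \alpha(X_{\mathrm{std}}) = Cs$ (the $e^\tau\beta_0$ term contributes nothing since $\beta_0$ has no $d\tau$-component), and $d\phi(X_{\mathrm{std}}) = 2s^2 + 1$, not $2s^2C + e^\tau$.

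The substantive gap is in Step (2)/(3), where you assert the interpolation $H_t = (1-t)H + tH_{\mathrm{std}}$ ``stays nonzero away from $\Gamma$'' and that the estimates are ``more forgiving.'' Neither claim is justified. The zero locus of $H$ need not coincide with $\{s=0\}$ in the interior of $U(\Gamma)$, and even where $H$ and $H_{\mathrm{std}}$ agree in sign, $X_t$ could acquire a zero (this happens precisely when $H_t = 0$ \emph{and} $dH_t$ is proportional to $\alpha$); transversality of $X_t$ to the corners is likewise not automatic. The paper removes these obstacles up front: it first applies unsuture-then-resuture (Lemma \ref{lem:suturing}) to replace the given $(X,\phi)$ by the explicit model produced in that construction, for which $dH_0(\partial_s) > 0$ throughout $U(\Gamma)$, and then uses a cut-off $\rho = \rho(\tau)$ depending \emph{only} on $\tau$, so the correction term $t\rho'(H_1-H_0)d\tau$ contributes nothing in the $\partial_s$-direction. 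One then reads off $dH_t(\partial_s) > 0$, hence $d\alpha(X_t,R_{\beta_0}) = e^\tau dH_t(R_\alpha) > 0$, which gives both nonvanishing and boundary transversality of $X_t$. Your proposal, working with an arbitrary $H$ and an unspecified bump function, does not establish any of this. A repair along your lines would need to argue (say) that the expansion coefficient $\mu = dH(R_\alpha)$ is positive on $U(\Gamma)$, shrinking $U(\Gamma)$ first if necessary since $\mu > 0$ near $\Gamma$ follows from the suture condition; that argument is missing.
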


This lemma follows easily from how we were able to standardize the pair $(X,\phi)$ near any level set of an unsutured convex contact cobordism in Section \ref{sec:attach}. We include an explicit proof in which we standardize $X$ regardless, leaving the standardization of $\phi$ to the reader.

\begin{proof}
By unsuturing and then suturing, we may assume that $U(\Gamma) \cong [-1,1] \times (-2\ln(1.1),0] \times \Gamma$ is as constructed in Lemma \ref{lem:suturing}. A computation shows that the convex structure we produce has
$$X_0 = \frac{4e^{\tau/2}}{\pi\epsilon}\sin\left(\frac{\pi}{4}s\right) \partial_s + \frac{2e^{\tau/2}}{\epsilon}\cos\left(\frac{\pi}{4}s\right)\partial_{\tau} + \frac{\epsilon}{2e^{\tau/2}}\sin\left(\frac{\pi}{4}s\right)R_{\beta}$$
Let $$X_1 = s\partial_s + \partial_{\tau}$$
be the desired vector field. We wish to interpolate between $X_0$ and $X_1$. Let $\rho \colon [-2\ln(1.1),0] \rightarrow [0,1]$ be a monotonically nondecreasing function which is identically $0$ near the left endpoint and identically $1$ near the right endpoint, and let $H_{s} = (1-s\rho(\tau))H_0 + s\rho(\tau)H_1$ be a $1$-parameter family of contact Hamiltonians, where $H_0$ generates $X_0$ and $H_1$ generates $X_1$. Explicitly, since $H = \alpha(X)$ and $\alpha = Cds+e^{\tau}\beta$, where $C = \frac{\pi \epsilon^2}{8}$,
$$H_0 = \epsilon \cosh\left(\frac{\tau}{2}\right)\sin\left(\frac{\pi}{4}s\right)$$
$$H_1 = \frac{\pi \epsilon^2}{8}s.$$
Now,
$$dH_t = (1-t\rho)dH_0 +t\rho dH_1 + t\rho'(H_1-H_0)d\tau.$$
Since both $dH_0(\partial_s) > 0$ and $dH_1(\partial_s) > 0$, we have $\frac{1}{C}dH_t(R_{\alpha}) = dH_t(\partial_s) > 0$. Also, if we consider $R_{\beta}$, then we see $dH_t(R_{\beta}) = 0$. Therefore,
$$d\alpha(X_t,R_{\beta}) = dH_t(R_{\alpha})\alpha(R_{\beta}) - dH_t(R_{\beta}) = e^{\tau}dH_t(R_{\alpha}) > 0.$$
In other words, $X_t$ is nowhere zero, so we have an isotopy of convex vector fields, supported in $U(\Gamma)$, with no new critical points. Furthermore, we can explicitly write
$$X_t = (1-t\rho(\tau))X_0 + t\rho(\tau)X_1 + t(H_1-H_0)e^{-\tau}\rho'(\tau)R_{\beta},$$
from which it follows that since $X_0$ and $X_1$ are both transverse to the boundary, so is $X_t$.

That $\phi$ can be put into standard form near the boundary can be handled as in Section \ref{sec:attach}, and specifically Lemma \ref{lem:Morse_modify} and Lemma \ref{lem:make_level}, but in the more general setting where the boundary has corners, and where instead of having the boundary be a level set, we specify it to be of the described form $\phi = s^2+\tau$ in $U(\Gamma)$. Details are left to the reader.
\end{proof}

\subsection{Subcritical handles as thickened Weinstein handles}
By Lemma \ref{lem:standard_suture}, in a sutured contact manifold, we can always find a strict homotopy so that $U(\Gamma)$ splits as a Weinstein collar $((-\delta,0] \times \Gamma, e^{\tau}\beta,\partial_{\tau},\tau)$ cross a convex contact interval $([-1,1],s\partial_s,s^2)$. This allows us to transfer Weinstein handle attachments over to the contact setting.

\begin{defn}
Consider the attachment of a Weinstein handle $H$ along a standard Weinstein collar $C = ((-\delta,0] \times \Gamma, e^{\tau}\beta,\partial_{\tau},\tau)$, say along some framed isotropic $\Lambda$ in $\Gamma$. Then a \textbf{thickened Weinstein handle attachment along $\Lambda$} is the result of taking the product of this Weinstein handle attachment with the convex contact interval $I = ([-1,1],s,\partial_s,s^2)$ to obtain a sutured convex contact manifold, where $C\times I$ is identified with $U(\Gamma)$ in some sutured contact manifold.
\end{defn}

Since our procedure of unsuturing keeps the diving set fixed, there is a natural association of the framing of an isotropic submanifold $\Lambda \subset 
\Gamma$ between the sutured and unsutured models. The following theorem is now an easy consequence.

\begin{thm} \label{thm:subcrit_is_Weinstein}
Let $(M,\xi,\Gamma,U(\Gamma), X,\phi)$ be a sutured convex contact cobordism, and let $(M',\xi',X',\phi')$ be its unsuturing. Let $\Lambda \subset \Gamma$ be a framed isotropic submanifold. Then the following two processes yield strictly convex homotopic convex contact cobordisms:
\begin{itemize}
	\item Attaching a subcritical contact handle to $M'$ along $\Lambda$.
	\item The unsuturing of a thickened Weinstein handle attachment along $\Lambda$.
\end{itemize}
\end{thm}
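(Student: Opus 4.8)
The plan is to reduce the statement to a purely local comparison in the sutured region $U(\Gamma)$, where both processes can be written down explicitly, and then invoke the uniqueness machinery already developed for subcritical handle attachment. First I would normalize both sides. By Lemma \ref{lem:standard_suture}, we may assume the sutured convex contact cobordism $(M,\xi,\Gamma,U(\Gamma),X,\phi)$ is standard near $\Gamma$, so that $U(\Gamma) \cong ((-\delta,0] \times \Gamma, e^{\tau}\beta, \partial_{\tau}, \tau) \times ([-1,1], s\partial_s, s^2)$ is literally a product of a standard Weinstein collar $C$ with the convex contact interval $I$. This splitting is exactly the hypothesis that allows the thickened Weinstein handle attachment to be performed: by definition we attach $H \times I$ to $C \times I \cong U(\Gamma)$, where $H$ is a Weinstein handle attached to $C$ along the framed isotropic $\Lambda \subset \Gamma$. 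On the unsutured side, $M'$ is obtained by rounding corners inside $U(\Gamma)$ and keeping $\Gamma$ fixed, so a subcritical handle of index $k = \dim \Lambda + 1$ is attached to $M'$ along a neighborhood of $\Lambda$ in the dividing set $\Gamma$, with framing data given by the same framing of the conformal symplectic normal bundle of $\Lambda$ in $\Gamma$ (this identification of framings is exactly the content of the remark preceding the theorem, and is why the two framing notions literally coincide).

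The heart of the argument is then to show that the unsuturing of $C \times I$ with the product Weinstein handle $H \times I$ attached is, as a convex contact cobordism, strictly homotopic to a standard subcritical handle $H_{\sub}(\epsilon)$ attached along $\Lambda$. Here I would compute directly with the standard models. Recall from Definition \ref{def:standard_cp} that the subcritical handle of index $k$ carries $\alpha = dz + \tfrac{1}{2}\sum(-y_i dx_i + x_i dy_i)$, $X_k = z\partial_z + \sum_{i=1}^k(-x_i\partial_{x_i} + 2y_i\partial_{y_i}) + \tfrac{1}{2}\sum_{i=k+1}^n(x_i\partial_{x_i} + y_i\partial_{y_i})$, with the descending (isotropic) disk lying entirely in the locus $\alpha(X) = 0$, i.e. in the dividing set of every level set it meets. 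Meanwhile the Weinstein handle of index $k$ for a $2n$-dimensional Weinstein structure carries $\omega = \sum dx_i \wedge dy_i$, $X = \sum_{i=1}^k(2x_i\partial_{x_i} - y_i\partial_{y_i}) + \tfrac{1}{2}\sum_{i=k+1}^n(x_i\partial_{x_i} + y_i\partial_{y_i})$, as written in Section \ref{sec:Weinstein}. Taking the product with the convex contact interval $([-1,1], s\partial_s, s^2)$ and using the Liouville-collar-times-interval model for the sutured region near $\Gamma$ — where $\alpha = C\,ds + e^{\tau}\beta_0$ with $|u| = e^{-\tau}$ the symplectization coordinate — one checks that the resulting contact form, contact vector field, and Morse function, after the corner-rounding diffeomorphism implicit in unsuturing, match the standard subcritical model up to the kind of harmless rescaling and Morse-function modification handled by Lemma \ref{lem:Morse_modify}, Lemma \ref{lem:make_level}, and Lemma \ref{lem:main_homotopy}. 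This is a bookkeeping computation: the point is that the Weinstein vector field $2x_i\partial_{x_i} - y_i\partial_{y_i}$ on the page, together with $\partial_{\tau}$ in the collar direction and $s\partial_s$ in the interval direction, reassembles (after the change of coordinates $|u| = e^{-\tau}$ identifying the collar with a neighborhood of the dividing set) into precisely $z\partial_z - x_i\partial_{x_i} + 2y_i\partial_{y_i}$ of the standard subcritical handle — the roles of the $z$ and $\tau$ directions and of the two sets of symplectic coordinates on the page versus the contact structure being exactly as dictated by Proposition \ref{prop:div_set_nbhd} and the discussion of $R_{\pm}$ as Liouville fillings.

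Finally, once both processes are shown to produce a standard subcritical handle attached along the framed isotropic $\Lambda$ in $\Gamma$, I would invoke Theorem \ref{thm:subcrit_attach}: two convex contact cobordisms agreeing near the negative boundary, each with a single subcritical critical point, whose framed attaching spheres in the dividing set are isotopic as framed isotropic spheres, are strictly convex homotopic (after an overall diffeomorphism fixed near the negative boundary). In our situation the attaching spheres are not merely isotopic but literally equal — it is the same $\Lambda \subset \Gamma$ with the same framing, by the natural identification of framings between the sutured and unsutured pictures — so the hypotheses are satisfied trivially, and the conclusion follows. One should also remark that unsuturing and suturing are inverse up to strict homotopy (the corollary preceding the theorem), so passing between $M$ and $M'$ introduces no ambiguity. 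The main obstacle is the explicit model computation in the middle paragraph: one has to be careful that the corner-rounding and the coordinate change $|u| = e^{-\tau}$ interact correctly, and that the Morse functions can be reconciled via the allowed modifications — but none of this is deep, it is precisely the sort of standardization already carried out in Section \ref{sec:CCMs} and Section \ref{sec:attach}, now performed in the presence of a boundary with corners.
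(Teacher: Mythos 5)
Your proposal is correct and follows essentially the same route as the paper: both arguments come down to the observation that the unsutured thickened Weinstein handle attachment is a convex contact cobordism with exactly one new subcritical critical point whose framed isotropic attaching sphere is $\Lambda$ (with the framings identified across suturing/unsuturing), and then conclude by invoking Theorem \ref{thm:subcrit_attach}. The explicit coordinate-matching in your middle paragraph is unnecessary and slightly overstated as a literal identification (the Weinstein model's primitive is $i_X\omega$, not the rotationally symmetric one, so the models agree only after the standardization of Theorem \ref{thm:standard_cp}); the paper sidesteps this by noting that unsuturing changes neither the critical points nor their descending manifolds, and Theorem \ref{thm:subcrit_attach} absorbs all such normalization anyway.
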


\begin{proof}
Consider the second procedure. Unsuturing does not change the set of critical points and descending manifolds of the vector field $X$. Hence, after we attach a thickened Weinstein handle and unsuture, we see that there is one new critical point as compared to $M'$, and the neighborhood of its descending manifold is precisely the same as the one given by attaching the thickened convex Weinstein handle. Therefore, by Theorem \ref{thm:subcrit_attach}, this manifold is defined by a single subcritical contact handle attachment along $\Lambda$.
\end{proof}

\section{Weinstein open book decompositions and split convex contact structures} \label{sec:OBD}

We have just proved that subcritical handles correspond precisely to thickened Weinstein handle attachments. Using this description, it is not difficult to prove that split convex contact structures correspond to Weinstein open book decompositions. One direction of this correspondence, from Weinstein open books to convex contact structures, has already appeared in work of Courte and Massot \cite{CoMa}. Whereas Courte and Massot work with isotropic complexes which consist of the skeleta of two pages of the open book, we work explicitly with handle decompositions.

\subsection{Weinstein open book decompositions}

Recall the following definitions.

\begin{defn}
An \textbf{open book decomposition} $(B,\theta)$ for a manifold $M$ consists of:
\begin{itemize}
	\item a codimension $2$ submanifold $B \subset M$, called the \textbf{binding}, with trivial normal bundle, and
	\item a fibration $\theta \colon M \setminus B \rightarrow S^1$, with fibers called \textbf{pages}, such that on a smooth neighborhood $B \times D^2$ of the binding, $\theta$ is the angular coordinate
\end{itemize}
\end{defn}

\begin{defn}
A contact manifold $(M,\xi)$ is said to be \textbf{supported} by an open book $(B,\theta)$ for $M$ if there is a contact form $\alpha$ for $\xi$ such that
\begin{itemize}
	\item $\alpha|_B$ is a contact form on $B$
	\item $d\alpha$ is symplectic on each page
	\item the orientation of $B$ determined by $\alpha$ matches the boundary orientation for the page, where the page is oriented by $d\alpha$
\end{itemize}
If $P$ is a page of an open book supporting a contact manifold, then $\alpha|_P$ endows $P$ with the structure of a Liouville domain. We call $(B,\theta)$ a \textbf{supporting Weinstein open book} if, furthermore, there is a Morse function on the page endowing it with the structure of a Weinstein domain.
\end{defn}

\begin{rmk}
A choice of contact form $\alpha$ inducing the structure above is called a \textbf{Giroux form} \cite{CoMa}. One could also work with the notion of an \textbf{ideal Giroux form}, which is singular along the binding, but has the upshot that it turns the closure of each page into an ideal Liouville domain (see Remark \ref{rmk:ideal}). The two conventions are essentially equivalent \cite[Lemma 3.9]{CoMa}, and we stick with the more classical choice in this presentation.
\end{rmk}

Given an open book, the data of the fibration $\theta \colon M \setminus B \rightarrow S^1$ is encoded by the page $P$ together with the monodromy map $\phi \colon P \rightarrow P$.

\begin{defn}
An \textbf{abstract open book} consists of a pair $(P,\phi)$, where $P$ is a manifold with boundary, and $\phi$ is a diffeomorphism of $P$ fixing a neighborhood of the boundary.
\end{defn}

Given an abstract open book, one can form the mapping torus
$$MT(P,\phi) = (P\times [0,2\pi])/{\sim}$$
where ${\sim}$ is the relation generated by
$$(x,2\pi) \sim (\phi(x),0),~x \in P.$$
Then $\partial MT(P,\phi) = \partial P \times S^1$, so we can naturally form the manifold
$$M := OB(P,\phi ) = MT(P,\phi) \cup (\partial P \times \mathbb{D}^2),$$
gluing along the boundary. This manifold $M$ naturally has an open book structure, with binding $B := \partial P \times \{0\} \subset \partial P \times \mathbb{D}^2 \subset M$. The fibration $\theta \colon M \setminus B \rightarrow \R/2\pi \Z$ is given by:
\begin{itemize}
	\item $\theta([x,t]) = [t]$ on $MT(P,\phi )$
	\item $\theta$ is the angular coordinate on $\partial P \times (\mathbb{D}^2 \setminus 0)$.
\end{itemize}
Every open book decomposition of a smooth manifold is given up to diffeomorphism by this construction.

The following two theorems indicate to what extent the correspondence between abstract open books and honest open books extends to contact geometry. The first is an extension of a three-dimensional construction of Thurston and Winkelnkemper \cite{TW} to higher dimensions.

\begin{thm}[Giroux-Mohsen, see \cite{Giroux2}] \label{thm:WOBD_from_abstract} Suppose $(P,\phi)$ is an abstract open book with $P$ a Liouville domain and $\phi$ a symplectomorphism of $P$ fixed near the boundary. Then there is a contact form, unique up to isotopy, on $OB(P,\phi)$ such that its natural open book structure supports the contact form, and such that the completion of a page is exact symplectomorphic to $\overline{P}$, the completion of $P$, whereby completion means that we attach the positive symplectization of its boundary. Furthermore, if $P$ is a Weinstein domain, then this is a supporting Weinstein open book decomposition. The construction depends only upon $P$ up to Liouville homotopy and the class of $\phi$ in the symplectic mapping class group.
\end{thm}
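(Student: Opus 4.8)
The plan is to realize the higher-dimensional Thurston--Winkelnkemper construction \cite{TW} explicitly and then to prove well-definedness by a Gray-stability argument. First I would build the contact form on the mapping torus. Writing $\omega = d\lambda$, note that $\phi$ is $\omega$-symplectic and equals the identity on a Liouville collar of $\partial P$, so $\lambda$ and $\phi^*\lambda$ are both primitives of $\omega$; hence for any smooth non-decreasing $g\colon[0,2\pi]\to[0,1]$ that is $\equiv 0$ near $0$ and $\equiv 1$ near $2\pi$, the path $\lambda_{g(t)} := (1-g(t))\lambda + g(t)\,\phi^*\lambda$ has $d\lambda_{g(t)} = \omega$ for all $t$. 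On $P\times[0,2\pi]$ I would set $\widetilde\alpha = C\,dt + \lambda_{g(t)}$ for a constant $C$; a direct computation gives
\[
\widetilde\alpha\wedge(d\widetilde\alpha)^n \;=\; dt\wedge\Bigl(C\,\omega^n + n\,g'(t)\,\lambda_{g(t)}\wedge(\phi^*\lambda-\lambda)\wedge\omega^{n-1}\Bigr),
\]
which is a volume form once $C$ exceeds the (finite, by compactness of $P$) supremum of the second term. Since $g$ is locally constant at the endpoints, $\widetilde\alpha$ equals $C\,dt+\phi^*\lambda$ near $t=2\pi$ and $C\,dt+\lambda$ near $t=0$, and since $\phi^*(C\,dt+\lambda)=C\,dt+\phi^*\lambda$ these glue under $(x,2\pi)\sim(\phi(x),0)$, so $\widetilde\alpha$ descends to a contact form on $MT(P,\phi)$. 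I emphasize that there is no cohomological obstruction at this stage: one never needs $\phi^*\lambda-\lambda$ to be exact.

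Next I would cap off the binding. On a neighborhood of $\partial P$, where $\phi=\mathrm{id}$, the form $\widetilde\alpha$ depends only on $t$ and on the collar coordinate, hence is invariant under rotation of the would-be $\mathbb{D}^2$-factor, so it extends across $\partial P\times\mathbb{D}^2$ by the standard ansatz $\alpha = h_1(\rho)\,\beta_0 + h_2(\rho)\,dt$ in polar coordinates $(\rho,t)$ on $\mathbb{D}^2$, with $\beta_0 = \lambda|_{\partial P}$, where $h_1,h_2$ are chosen so that $h_1(0)>0$, $h_2(\rho)=c\rho^2$ near $\rho=0$, $(h_1,h_2)$ matches the mapping-torus data near $\rho=1$, and the contact inequality in $(h_1,h_2)$ and their derivatives (exactly as in Giroux's three-dimensional computation) holds throughout; such $(h_1,h_2)$ exist and form a contractible set of choices. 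The resulting global contact form $\alpha$ on $OB(P,\phi)$ then satisfies: $\alpha|_B = h_1(0)\beta_0$ is contact, $d\alpha$ restricts to $\omega$ (with the binding cap attached) on each page, and the orientations match, so the natural open book supports $\xi:=\ker\alpha$. Choosing the page at a $t$-value where $g\equiv 0$, the page is a copy of $(P,\lambda)$ with the binding cap, so its completion is exact symplectomorphic to $\overline P$; and if $P$ carries a Weinstein structure, that same page does too, so the open book is a supporting Weinstein open book.

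Finally, for uniqueness up to isotopy and for dependence only on the stated equivalence classes, I would observe that the ingredients of the construction --- the constant $C$, the interpolation function $g$, the cap functions $(h_1,h_2)$, the Liouville collar --- all range over convex or contractible sets, and that a Liouville homotopy $\lambda_s$ and a symplectic isotopy $\phi_s$ plug directly into $\lambda_{s,g(t)} := (1-g(t))\lambda_s + g(t)\phi_s^*\lambda_s$ without changing the diffeomorphism type of $OB(P,\phi)$. Thus any two outputs are joined by a smooth path of contact structures on the fixed manifold $OB(P,\phi)$, and Gray stability converts this into an ambient isotopy carrying one to the other. To promote this to uniqueness among \emph{all} contact forms adapted to the natural open book whose page completes to $\overline P$, I would first normalize such a form, through adapted contact forms, to the cap ansatz near $B$ and to $C\,dt+\lambda_{g(t)}$ over the mapping-torus part --- interpolating the page symplectic data (which the support conditions determine up to exact forms) and straightening the monodromy within its symplectic mapping class --- and then invoke Gray stability once more.

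The hard part will be precisely this last normalization step: carrying it out parametrically and controlling the asymptotics near the binding. I expect the cleanest route is to phrase everything in terms of ideal Liouville domains and ideal Giroux forms (cf.\ Remark \ref{rmk:ideal} and \cite{CoMa}), so that the behavior at the binding is encoded automatically and one sidesteps the flux-type ambiguity in the choice of a primitive on the page, at which point the normalization becomes a relatively soft parametric Moser argument. The existence half, by contrast, is a routine if somewhat lengthy local-to-global computation.
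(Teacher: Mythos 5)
Your plan follows the same Thurston--Winkelnkemper/Giroux construction that the paper sketches and delegates to \cite{Giroux2} and to Section 7.3 of Geiges: a contact form $C\,dt+\lambda_{g(t)}$ on the mapping torus, the $(h_1,h_2)$-cap over the binding, and Gray stability for well-definedness. The one notable deviation is that the paper's sketch first homotopes $\phi$ through compactly supported symplectomorphisms to an \emph{exact} one before writing down the form, whereas you work directly with the linear interpolation $\lambda_{g(t)}=(1-g(t))\lambda+g(t)\,\phi^*\lambda$. You are right that this glues under $(x,2\pi)\sim(\phi(x),0)$ with no hypothesis on the cohomology class of $\phi^*\lambda-\lambda$: exactness of the monodromy is a normalization that makes $d\widetilde\alpha=\omega$ on the mapping torus and gives a Reeb field proportional to $\partial_t$, but it is not a logical prerequisite for producing a supporting contact form. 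So your existence argument is a mild streamlining of the paper's sketch rather than a contradiction of it. For the uniqueness half, you and the paper are in the same position: the paper defers entirely to \cite{Geiges}, and you correctly flag the parametric normalization of an arbitrary adapted contact form to the Giroux model, with control at the binding, as the hard step and propose to carry it out via ideal Giroux forms. That is the right idea --- essentially the route of \cite{CoMa}, which the paper itself cites in Remark \ref{rmk:ideal} --- but it remains a plan rather than a proof, to the same degree that the paper's own sketch does.
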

\begin{proof}[Proof Sketch]
For details of the proof, we refer the reader to Section 7.3 of Geiges' book \cite{Geiges}. The idea is first to homotope the monodromy through symplectomorphisms fixed near the boundary to an exact symplectomorphism, which can be performed parametrically. Using this exact symplectomorphism, one explicitly writes a contact form on $MT(P,\phi)$. Finally, one extends this through $\partial P \times \mathbb{D}^2$ smoothly by choosing an appropriate interpolation between the behavior of the contact form near $\partial MT(P,\phi)$ and the standard contact form $\alpha + \frac{1}{2}r^2d\theta$ near the binding, where $\alpha$ is the contact form on $\partial P$ determined since $P$ is a Liouville domain.
\end{proof}

\begin{thm}[Giroux-Mohsen, see \cite{Giroux2}] \label{thm:WOBD} Every contact manifold has a supporting Weinstein open book decomposition.
\end{thm}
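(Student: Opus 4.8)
\textbf{Proof proposal for Theorem \ref{thm:WOBD}.}

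The plan is to reduce the statement to Theorem \ref{thm:WOBD_from_abstract} by producing, for any closed contact manifold $(M,\xi)$, an abstract open book $(P,\phi)$ with $P$ a Weinstein domain such that $OB(P,\phi)$ is contactomorphic to $(M,\xi)$. The starting point is the classical construction of a (smooth) open book decomposition supporting $\xi$ via Donaldson-type or Lefschetz-pencil methods. Concretely, one embeds $(M,\xi)$ via a contactomorphism into a projectivization and uses an approximately holomorphic section of a high power of a prequantization line bundle: the zero set of such a section, once made transverse to $\xi$, is the binding $B$, a contact submanifold of codimension $2$, and the argument of the section gives the fibration $\theta\colon M\setminus B\to S^1$ whose pages are symplectic (in fact Stein) with the correct boundary behaviour. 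This is precisely the content of Giroux--Mohsen's theorem, and for a careful exposition one can cite Geiges' book, Section 7.3, together with the original announcement \cite{Giroux2}. First I would invoke this to obtain an open book $(B,\theta)$ supporting $\xi$ whose pages $P$ carry a Stein (hence Weinstein) structure, with the contact form restricting to a Giroux form.

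Next I would pass to the abstract picture. Having a supporting open book $(B,\theta)$ with Liouville page $P$, one extracts the monodromy $\phi\colon P\to P$ by following the symplectic connection on the mapping-torus part $MT(P,\phi)=\theta^{-1}(S^1)$ determined by the kernel of $d\alpha$; since $d\alpha$ is symplectic on each page, parallel transport around the $S^1$ is a symplectomorphism, and it is fixed near $\partial P$ because near the binding the contact form is standard ($\alpha + \tfrac12 r^2 d\theta$). Thus $(P,\phi)$ is an abstract open book with $P$ Weinstein and $\phi$ a symplectomorphism fixed near the boundary, and there is a natural contactomorphism $M\cong OB(P,\phi)$ identifying $\xi$ with the natural supported contact structure. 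Applying Theorem \ref{thm:WOBD_from_abstract} to $(P,\phi)$ then produces a supporting \emph{Weinstein} open book on $OB(P,\phi)\cong M$, recovering $\xi$ up to isotopy (and hence, by Gray stability, up to contactomorphism). This yields the desired supporting Weinstein open book decomposition of $(M,\xi)$.

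The main obstacle is entirely contained in the first step: producing \emph{any} supporting open book whose page is Liouville/Weinstein, i.e. the Giroux--Mohsen existence theorem itself, whose proof rests on the technically demanding approximately-holomorphic-geometry machinery of Donaldson and Ibort--Martínez-Torres--Presas (controlled transversality estimates, constructing sections with uniformly transverse zero loci, and arranging the argument map to be a genuine fibration away from the binding). I would not reproduce this; I would quote it as a black box from \cite{Giroux2} and \cite{Geiges}. The only genuinely new (and routine) work on our side is the bookkeeping that translates the geometric open book into an abstract one with the correct Weinstein page and boundary-fixed symplectic monodromy, and the observation that Theorem \ref{thm:WOBD_from_abstract} then upgrades the Liouville page to a Weinstein page while preserving the contact structure up to isotopy; this is a direct application of results already in hand. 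Once this is established, Theorem \ref{thm:proto_split_is_OBD} immediately converts the supporting Weinstein open book into a split convex contact structure, giving Corollary \ref{cor:existence}.
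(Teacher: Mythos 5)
The paper does not give a proof of this theorem at all: it is stated as a quoted result of Giroux--Mohsen and cited to \cite{Giroux2}, then used as a black box in the proof of Corollary \ref{cor:CCS_exist}. Your proposal is, at its core, doing the same thing (quoting the Giroux--Mohsen existence result), so in that sense it is consistent with the paper. But the extra ``bookkeeping'' you add is both superfluous and slightly off.

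Concretely: if, as you state in your first step, the Donaldson/Ibort--Mart\'inez-Torres--Presas approximately holomorphic construction already produces a supporting open book whose pages carry a Stein, hence Weinstein, structure with a Giroux form, then the theorem is already proved at that point. There is nothing left to do. Your steps 2 and 3 (extract an abstract open book $(P,\phi)$, then feed it to Theorem \ref{thm:WOBD_from_abstract}) accomplish nothing new and in fact introduce a circularity risk: Theorem \ref{thm:WOBD_from_abstract} builds a contact structure on $OB(P,\phi)$ from scratch, and you would then have to argue that the resulting contact structure is isotopic to the original $\xi$, which requires a Giroux-type uniqueness argument you have not supplied. Moreover, your phrasing that Theorem \ref{thm:WOBD_from_abstract} ``upgrades the Liouville page to a Weinstein page'' misreads that theorem. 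It does no such upgrading: it takes a Liouville page as input and produces a supported contact structure; if the input page happens to be Weinstein, the output is a supporting Weinstein open book, but the Weinstein structure on the page must already be present in the input. If the Giroux--Mohsen black box only gave you Liouville pages (it doesn't, but if it did), Theorem \ref{thm:WOBD_from_abstract} would not rescue you. So the correct version of your argument is simply: invoke Giroux--Mohsen (\cite{Giroux2}, \cite{Geiges}) to produce a supporting open book with Stein pages, observe that Stein implies Weinstein, and stop. Your second and third steps should be deleted, and the claim about upgrading should be removed.
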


\subsection{Weinstein open books and split convex contact structures}

With this background out of the way, we are ready to state and prove the main theorem of this section. Recall that a split convex contact structure is one in which there is level set lying above all of the subcritical points and below all of the supercritical points.

\begin{thm}[= Theorem \ref{thm:proto_split_is_OBD}] \label{thm:split_is_OBD}
To every split convex contact manifold $(M,\xi,X,\phi)$ with splitting level set $\phi^{-1}(c) = \Sigma = R_+ \cup_{\Gamma} R_-$, there is a supporting Weinstein open book decomposition for the underlying contact structure with binding $\Gamma$ and such that $R_{\pm}$ are pages (and therefore automatically have exact symplectomorphic completions). Conversely, for every abstract Weinstein open book, the corresponding contact manifold can be endowed with a convex structure split by the closure of the union of two opposite pages $\overline{\theta^{-1}(\{0,\pi\})}$.

Furthermore, the composition of going from a supporting Weinstein open book to a convex structure and then back to a Weinstein open book will produce the same Weinstein open book up to exact symplectomorphism of the page and homotopy of monodromy within a fixed symplectic mapping class.
\end{thm}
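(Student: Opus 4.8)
\textbf{Proof proposal for Theorem \ref{thm:split_is_OBD}.}

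The plan is to establish the two directions of the correspondence and then verify that the round-trip recovers the original data. For the forward direction, start with a split convex contact manifold $(M,\xi,X,\phi)$ with splitting level set $\Sigma = \phi^{-1}(c) = R_+ \cup_\Gamma R_-$. The key observation is that $\phi^{-1}((-\infty,c])$, containing only subcritical points, can be built by thickened Weinstein handle attachments starting from a cylinder on a standard model near $\Gamma$. More precisely, after suturing along $\Gamma$ (using Section \ref{sec:suture}) and standardizing via Lemma \ref{lem:standard_suture}, the sutured region splits as a Weinstein collar times the convex interval $([-1,1],s\partial_s,s^2)$. By Theorem \ref{thm:subcrit_is_Weinstein}, each subcritical handle attachment of $M$ corresponds to a thickened Weinstein handle, so the sutured piece $\phi^{-1}((-\infty,c])$ is exactly $[-1,1] \times W_+$ where $W_+$ is a Weinstein domain obtained from the collar by Weinstein handle attachments; since un-suturing keeps the dividing set fixed, $W_+$ is exact symplectomorphic (upon completion) to $R_+$. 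Dually, applying the same argument to $(-X,-\phi)$ on $\phi^{-1}([c,\infty))$ shows that the upper half is $[-1,1] \times W_-$ with $W_-$ Weinstein with completion exact symplectomorphic to $R_-$. Gluing the two halves along $\Sigma$ and un-suturing, one sees $M$ is diffeomorphic to $OB(W_+, \psi)$ for some monodromy $\psi$ fixed near the boundary (the monodromy being exactly the holonomy discrepancy between the two gluings, which is a symplectomorphism of $W_+$ since both halves restrict to the same Liouville structure near $\Gamma$). Finally, one checks that the given contact form restricts to a Giroux form: $d\alpha$ is symplectic on each page because the pages are level sets of the cylindrical regions (these are Liouville domains $R_\pm$), $\alpha|_\Gamma$ is a contact form by the dividing-set proposition, and the orientations match by construction. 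This is precisely the Thurston–Winkelnkemper / Giroux–Mohsen setup (Theorem \ref{thm:WOBD_from_abstract}), so the supporting open book exists with binding $\Gamma$ and pages $R_\pm$.

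For the converse, start with an abstract Weinstein open book $(P,\psi)$; by Theorem \ref{thm:WOBD_from_abstract} this produces a contact manifold $M = OB(P,\psi)$ with a Giroux form $\alpha$. The plan is to build a split convex structure directly: split $M$ as $\overline{\theta^{-1}([0,\pi])} \cup \overline{\theta^{-1}([\pi,2\pi])}$ along the two opposite pages $\overline{\theta^{-1}(\{0,\pi\})}$, each half being (up to thickening of a Liouville collar and a bit of binding geometry) diffeomorphic to $[-1,1] \times P$ with a thickened Weinstein structure coming from a Weinstein Morse function on $P$. Concretely, choose a Weinstein Morse function on $P$; its handle decomposition gives, via Theorem \ref{thm:subcrit_is_Weinstein} run in reverse, a sequence of subcritical convex contact handle attachments on one half and supercritical ones (dually) on the other. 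One needs to interpolate the resulting convex structure across the binding region $\partial P \times \mathbb{D}^2$, where the contact form is the standard $\alpha_\partial + \frac12 r^2 d\theta$; here the contact vector field can be chosen to radiate away from or into the binding appropriately so that the dividing set of the splitting hypersurface is exactly $\Gamma = \partial P \times \{0\}$. The result is a convex structure whose subcritical points all lie below the splitting level and supercritical points above, i.e. a split convex structure.

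For the round-trip statement, suppose we begin with a supporting Weinstein open book $(P,\psi)$, pass to a split convex structure via the converse construction, and then pass back via the forward construction. The forward construction recovers a Weinstein open book whose page is the Weinstein domain $W_+$ built from the subcritical handles, and by Theorem \ref{thm:subcrit_is_Weinstein} together with the invariance statements in Theorem \ref{thm:WOBD_from_abstract}, $W_+$ is Weinstein-homotopic to $P$ (same handle data up to Weinstein homotopy), hence has exact symplectomorphic completion. The recovered monodromy is the holonomy comparing the two halves' gluings, which by construction is $\psi$ up to homotopy through symplectomorphisms fixed near the boundary; by Theorem \ref{thm:WOBD_from_abstract} the resulting contact open book depends only on the Liouville-homotopy class of the page and the symplectic mapping class of the monodromy, so we recover $(P,\psi)$ up to exactly the stated equivalences.

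\textbf{Main obstacle.} The hard part will be the converse direction's treatment of the binding region $\partial P \times \mathbb{D}^2$: one must carefully choose the contact vector field and Morse function there so that (a) no new critical points are introduced, (b) the dividing set of the splitting hypersurface is exactly $\Gamma = \partial P \times \{0\}$ and not some perturbation of it, and (c) the convex structure glues smoothly to the thickened Weinstein structures on the two halves. This is essentially a convex-contact version of the standard interpolation in the Thurston–Winkelnkemper construction, but it requires tracking the contact vector field through the interpolation rather than just the contact form, analogous to the explicit computations in Lemmas \ref{lem:suturing} and \ref{lem:standard_suture}.
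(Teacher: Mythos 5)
Your forward direction has the right decomposition $M = u(W_+ \times I) \cup C \cup u(W_- \times I)$, but there is a gap at the final step: you assert that ``the given contact form restricts to a Giroux form'' because ``the pages are level sets of the cylindrical regions (these are Liouville domains $R_\pm$).'' This does not follow. The Liouville structure on $R_\pm$ is with respect to the form $\lambda = \beta/u$, not with respect to the restriction of an arbitrary contact form $\alpha$ for $\xi$; there is no reason the original $\alpha$ should have $d\alpha$ symplectic on every page, nor that $R_\alpha$ should be transverse to the pages. The paper resolves exactly this issue by a Hamiltonian interpolation on each $W\times I$: starting from $H = s$ (generating $X = s\partial_s + V_\lambda$), it homotopes the contact Hamiltonian to $H = \pm 1$ away from $\Open(\Gamma\times I)$ without creating new zeros, producing a nowhere-zero contact vector field $Y$ that is transverse to the pages. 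One then takes $\alpha^Y$, the contact form with Reeb field $Y$, as the Giroux form, and extends it through $\Open(B)$ using the local model from the Giroux--Mohsen construction. This interpolation is the substance of the forward direction and is absent from your sketch.

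Your converse direction takes a genuinely different route from the paper. You propose to start from the given contact manifold $M$, split it along $\overline{\theta^{-1}(\{0,\pi\})}$, build thickened Weinstein handle decompositions on each half, and interpolate a convex structure across the binding region $\partial P \times \mathbb{D}^2$ --- and you correctly flag that binding interpolation as the main difficulty. The paper instead avoids this entirely: it abstractly builds a convex contact manifold $u(P\times I) \cup C \cup u(P\times I)$ (not a priori related to $M$), with $C$ a cylindrical cobordism whose holonomy is the identity on $R_+$ and $\phi^{-1}$ on $R_-$ (a convexomorphism, hence realizable by Lemma \ref{lem:holonomy}), and then applies the already-proved forward direction to this model to see that it supports the same Weinstein open book $(P,\phi)$; the uniqueness part of Theorem \ref{thm:WOBD_from_abstract} then identifies the underlying contact structure with the original $M$. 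Your direct approach could work in principle, but it requires carrying out the delicate interpolation near the binding (you would have to do for the convex vector field what Thurston--Winkelnkemper do for the contact form alone), whereas the paper's abstract approach buys you that interpolation for free by reusing the forward direction plus uniqueness.
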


\begin{proof}
We begin by associating to any split convex contact structure a Weinstein open book supporting the underlying contact structure. Notice that in the argument that follows, we allow for homotopies of the convex contact structure fixed near $\Sigma$, since a Moser argument implies that the contact structure itself may be assumed to remain fixed. The splitting level set $\Sigma$ separates $M$ into two pieces $M_{\pm}$, where $M_-$ contains only subcritical points, and $M_+$ contains only supercritical points. Then up to strict homotopy, we have sutured models for $M_{\pm}$ given as thickened Weinstein domains $W_{\pm} \times I$ by Theorem \ref{thm:subcrit_is_Weinstein}. By Lemma \ref{lem:fix_top}, we therefore have that, up to strict homotopy fixing the convex contact structure near $\Sigma$, we can view $M$ as $M = u(W_+\times I) \cup C \cup u(W_- \times I)$, where $C$ is a cylindrical cobordism. Notice that the unsuturing of a thickened Weinsten domain $u(P \times I)$ has boundary given by $R_+ \cup_{\Gamma} R_- = \overline{P^+} \cup_{\partial P} \overline{P^-}$, where $\overline{P}$ denotes the completion, and $P^{\pm}$ correspond to the positive and negative choice of Liouville form. So since cylindrical cobordisms induce exact symplectomorphisms via their holonomy maps, we must have that the completions of $W_+$ and $W_-$ are exact symplectomorphic. We may therefore take $W_+ = W_- =: W$.

On each $W \times I$, we have that the convex structure has $(\alpha,X) = (ds+\lambda,s\partial_s + V_{\lambda})$, where $\lambda$ is a Liouville form for $W$ and $V_{\lambda}$ is its Liouville vector field. We see that $X$ corresponds to the Hamiltonian given by $H =s$. Meanwhile, the Reeb vector field $R_{\alpha} = \partial_s$ corresponds to a Hamiltonian $H = 1$. We wish to interpolate between these two so that we may assume that we have a model in which $X = \pm R_{\alpha}$ along $R_{\pm}$ but away from $\Open(\Gamma \times I)$. We shall consider $U(\Gamma) = [-1,1] \times (-K,0] \times \Gamma$, where $K > 0$ is some sufficiently large constant, and on which $\alpha = ds + e^{\tau}\beta$. We simply take an interpolating family $H_{\tau}$ in $U(\Gamma)$ such that
\begin{itemize}	
	\item $H_0 = s$
	\item $H_{\tau}$ remains fixed on $\Open(\Gamma) \cup \Open(W \times \{0\})$
	\item $H_{\tau}^{-1}(0) = W \times \{0\}$
	\item $H_1 = 1$ on $\Open(R_+) \setminus \Open(\Gamma \times \{1\})$
	\item $H_1 = -1$ on $\Open(R_-) \setminus \Open(\Gamma \times \{-1\})$
	\item $\pm\left(H - \frac{\partial H}{\partial \tau}\right) > 0$ along $\Open(R_{\pm})$
\end{itemize}
The only difficult condition is the last one, but is one which can be realized by an interpolation so long as $K >> 0$. These conditions imply that we may homotope $X$ so that it remains fixed on $\Open(\Gamma \times I) \cup \Open(W \times 0)$ and does not introduce any new critical points. The last condition implies that the vector field remains outwardly transverse to the boundary at all times.

So now, $M = u(W \times I) \cup C \cup u(W \times I)$, as before. When we modify $X$ by the homotopy described along each copy of $W \times I$, this can be extended to all of $M$ by a strict homotopy supported near $\Open_C(\partial_{\pm} C)$. Set $B := \{X \in \xi\} \cap C$. Then away from $\Open_M(B)$, we obtain a nowhere zero contact vector field $Y$ given as follows:
\begin{itemize}
	\item On each $(W \times I) \setminus \Open(B)$, we simply take $Y$ to be given by the Reeb vector field
	\item On $C \setminus \Open(B)$, we take $Y$ to be the vector field given by $X$ on the region with $\alpha(X) > 0$, and $-X$ on the region with $\alpha(X) < 0$.
\end{itemize}
This contact vector field $Y$, being everywhere nonzero, is hence the Reeb field for some choice of contact form $\alpha^Y$. We have a fibration from $M \setminus \Open(B) \rightarrow S^1$ given by taking pages $W \times \{s\}$ from each $W \times I$, together with the pages given by the positive and negative sides of each level set of $\phi$. But since $Y$ is transverse to these pages, we have that $d\alpha^Y$ is symplectic on each of them. Notice that the flow of $Y$ induces a monodromy map (where it is defined) which is an exact symplectomorphism of pages. It suffices now to extend the open book decomposition and choice of $\alpha^Y$ through $\Open(B)$. But $\Open(B)$ is just contactomorphic a standard neighborhood of $\Gamma$, and so the contact form filling it is precisely the same one described as in the proof of Theorem \ref{thm:WOBD_from_abstract}. Hence, we have constructed our open book as desired.

Conversely, suppose we have a contact manifold supported by a Weinstein open book, with page $P$ and monodromy $\phi$. Consider the convex contact structure given by $u(P \times I) \cup C \cup u(P \times I)$, where $C$ is a cylindrical convex contact cobordism with holonomy map given by the identity on $R_+$ and $\phi^{-1}$ on $R_-$. (This does indeed form the holonomy of a cylindrical cobordism by Lemma \ref{lem:holonomy} since this is a convexomorphism.) Then the above forward direction implies that this convex contact structure is supported by precisely the same Weinstein open book. The result now follows.
\end{proof}

\begin{cor}[= Corollary \ref{cor:existence}] \label{cor:CCS_exist}
Every contact manifold has a (split) convex contact structure, and therefore is a convex contact handlebody.
\end{cor}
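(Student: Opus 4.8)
The plan is to chain together the two main structural results of the paper with the Giroux--Mohsen theorem. Let $(M,\xi)$ be a closed contact manifold. By Theorem \ref{thm:WOBD} it admits a supporting Weinstein open book, which I would record abstractly as a pair $(P,h)$ with $P$ a Weinstein domain and $h$ a class in its symplectic mapping class group; by Theorem \ref{thm:WOBD_from_abstract} the canonical contact structure on $OB(P,h)$ is then contactomorphic to $(M,\xi)$. Feeding $(P,h)$ into the converse half of Theorem \ref{thm:split_is_OBD} produces a convex contact structure $(X,\phi)$ on $M$ which is split, with splitting hypersurface the closure $\overline{\theta^{-1}(\{0,\pi\})}$ of two opposite pages. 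This already establishes the parenthetical first clause.

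For the handlebody conclusion I would invoke Theorem \ref{thm:main_thm_body}. The only wrinkle is that the notion of convex contact handlebody, and Theorem \ref{thm:main_thm_body} itself, are phrased for cobordisms, whereas $M$ is closed. To bridge this, note that $\phi$ attains its minimum at an index-$0$ critical point $p$, and a small sublevel set $\phi^{-1}((-\infty,c])$ around $p$ is a Darboux ball $D^{2n+1}$ along whose boundary $X$ induces the standard convex $S^{2n}$. Excising its interior realizes $M$, up to strict convex homotopy, as $D^{2n+1}$ glued to a genuine convex contact cobordism $N$ with $\partial_- N$ the standard convex sphere; Theorem \ref{thm:main_thm_body} then presents $N$ as a convex contact handlebody (using Lemma \ref{lem:make_level} to keep the relevant hypersurfaces level), and hence so is $M$. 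Alternatively, and more transparently, one can read the handle decomposition directly off the explicit model $u(P\times I)\cup C\cup u(P\times I)$ constructed in the proof of Theorem \ref{thm:split_is_OBD}: by Theorem \ref{thm:subcrit_is_Weinstein} each unsutured thickened Weinstein domain $u(P\times I)$ is assembled from a thickened $0$-handle by subcritical contact handle attachments, the second copy contributes supercritical handles via the $(X,\phi)\leftrightarrow(-X,-\phi)$ duality, and the cylindrical cobordism $C$ is absorbed.

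I do not expect a genuine obstacle here: all the substantive work has been front-loaded, into the Giroux--Mohsen theorem (Theorem \ref{thm:WOBD}) and into Theorems \ref{thm:split_is_OBD}, \ref{thm:subcrit_is_Weinstein}, and \ref{thm:main_thm_body}. The one point requiring care is the bookkeeping that translates between the closed and relative settings and makes precise in what sense a closed convex contact manifold counts as a convex contact handlebody; the Darboux-ball excision above is the way I would dispatch it. The split structure obtained comes for free from Theorem \ref{thm:split_is_OBD}, and if desired it can be further homotoped to be self-indexing by Corollary \ref{cor:split}.
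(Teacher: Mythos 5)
Your proposal is correct and follows essentially the same route as the paper: Giroux--Mohsen (Theorem \ref{thm:WOBD}) supplies a supporting Weinstein open book, the converse direction of Theorem \ref{thm:split_is_OBD} produces a split convex structure, and Theorem \ref{thm:main_thm_body} (or, as you observe, reading handles directly off the explicit model $u(P\times I)\cup C\cup u(P\times I)$) gives the handlebody conclusion. The extra care you take with the closed-versus-cobordism bookkeeping via Darboux-ball excision is reasonable but not strictly necessary, since the paper's definition of convex contact cobordism already allows empty boundary, so Theorem \ref{thm:main_thm_body} applies directly to the closed case.
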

\begin{proof} By Theorem \ref{thm:WOBD}, every contact manifold has a supporting Weinstein open book decomposition. Hence Theorem \ref{thm:split_is_OBD} implies there is a naturally associated convex structure.
\end{proof}

\section{Birth-death type singularities and convex contact homotopy} \label{sec:homotopy}

\subsection{Subcritical cancellation}

So far, we have been discussing convex contact manifolds up to strict homotopy. This is too restrictive, as it does not allow for any critical point creation or cancellation as occurs in Morse theory or Weinstein surgery theory. Certainly we should allow for this possibility. Recall that we defined birth-death type singularities in Definition \ref{def:birth-death}.

\begin{defn} A \textbf{convex contact homotopy} is a homotopy of convex contact structures $(\xi,X,\phi)$ on a cobordism $M$ which is allowed to pass through birth-death type singularities.
\end{defn}

Any birth-type singularity will produce two new handles of neighboring indices, and any death-type singularity will cancel two handles of neighboring indices. However, the underlying contact geometry places restrictions on how these neighboring index handles interact with each other. Luckily, the answer has already been worked out for Weinstein handles, and so the subcritical case is immediate.

\begin{prop}[= Corollary \ref{cor:subcrit_cancel_intro}] \label{prop:subcrit_cancel} Suppose $(M^{2n+1},\xi,X,\phi)$ is a convex contact cobordism with exactly two critical points $p,q$ of index $k$ and $k+1$, where $0 \leq k \leq 2n$ and $k \neq n$. Suppose there is precisely one $X$-trajectory from the index $k$ critical point to the index $k+1$ critical point, along which the ascending disk of $p$ and the descending disk of $q$ intersect transversely. Then the critical points can be cancelled; that is, $(M,\xi,X,\phi)$ is homotopic to a cylindrical convex contact cobordism.
\end{prop}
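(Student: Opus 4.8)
The plan is to reduce the claim entirely to the known Weinstein handle cancellation result (the last Proposition recalled in Section~\ref{sec:Weinstein}) by using the sutured model developed in Section~\ref{sec:suture}. The hypothesis $k \neq n$ is exactly what allows this reduction: by the duality $(X,\phi) \mapsto (-X,-\phi)$, which exchanges subcritical with supercritical points and index $j$ with index $2n+1-j$, we may assume without loss of generality that $k \leq n-1$, so that \emph{both} critical points $p$ and $q$ are subcritical (of indices $k$ and $k+1$, both $\leq n$). Indeed, if $k \geq n+1$ then both points are supercritical and we pass to the dual; the only genuinely excluded case is $k=n$, where $p$ is subcritical and $q$ is supercritical, and neither this trick nor its dual applies --- this is precisely the middle-dimensional case deferred to later in the paper.

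\textbf{Key steps.} First, by Theorem~\ref{thm:standard_cp}, strictly homotope $(X,\phi)$ so that both critical points are standard, and rearrange (via Lemma~\ref{lem:self-index}, or just a direct rearrangement as in the proof of Corollary~\ref{cor:split}) so that $\phi(p) < \phi(q)$ and both values lie below any supercritical range; since $k,k+1 \leq n$ this is automatic once we have applied the duality reduction. Second, suture $M$ along its boundary dividing sets as in Lemma~\ref{lem:suturing}, and apply Lemma~\ref{lem:standard_suture} to put the structure near $U(\Gamma)$ into standard form $X = s\partial_s + \partial_\tau$. Third, invoke Theorem~\ref{thm:subcrit_is_Weinstein}: each of the two subcritical handle attachments becomes, up to strict convex contact homotopy, the unsuturing of a \emph{thickened} Weinstein handle attachment $H_\pm \times I$ along framed isotropic spheres $\Lambda_k, \Lambda_{k+1}$ in the relevant level sets. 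Thus the sutured model of $M$ is $u\big((W \cup H_k \cup H_{k+1}) \times I\big)$ (up to strict homotopy and cylindrical cobordisms), where $W \cup H_k \cup H_{k+1}$ is a Weinstein cobordism. Fourth, observe that the unique transverse $X$-trajectory from $p$ to $q$ in $M$ corresponds, in the sutured/thickened picture, to a unique transverse trajectory of the Liouville field of $W \cup H_k \cup H_{k+1}$ from the index-$k$ critical point to the index-$(k+1)$ critical point (the $I$-direction contributes nothing since $X$ there is $s\partial_s$ with its single critical point the matched one). Fifth, apply the Weinstein cancellation Proposition to obtain a Weinstein homotopy of $W \cup H_k \cup H_{k+1}$ to a Liouville \emph{cylinder} $W \times [0,1]$ with no critical points; taking the product with the convex contact interval $I$ gives a strict convex contact homotopy (in fact a convex contact homotopy through one birth-death singularity) of the sutured model to a cylindrical sutured convex contact cobordism. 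Sixth, unsuture: since unsuturing preserves the set of critical points and sends strict homotopies to strict homotopies (the arguments of Section~\ref{sec:suture}), and since cancelling cylindrical cobordisms is trivial, we conclude that $(M,\xi,X,\phi)$ is convex contact homotopic to a cylindrical convex contact cobordism.

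\textbf{The main obstacle.} The delicate point is Step four: verifying that the ``one transverse trajectory'' hypothesis transfers correctly between the unsutured convex contact cobordism and the thickened Weinstein model. One must check that suturing, standardizing near $U(\Gamma)$, and passing to the thickened Weinstein picture do not create or destroy trajectories between $p$ and $q$, and preserve transversality of the ascending/descending disks. This follows because all these operations are supported away from the critical points and their connecting trajectories (the homotopies of Theorem~\ref{thm:subcrit_is_Weinstein} and Lemma~\ref{lem:standard_suture} are local to $U(\Gamma)$, while the descending/ascending manifolds of $p$ and $q$ meet the interior), and because the product structure $(\,\cdot\,) \times I$ introduces no new dynamics transverse to the $W$-slices; but making this precise requires a careful bookkeeping of how the flow decomposes near the suture. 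A secondary, purely cosmetic, point is handling the supercritical sub-case: one should state explicitly that the duality $(X,\phi)\mapsto(-X,-\phi)$ intertwines convex contact homotopies and exchanges the roles of ascending and descending disks, so that ``one transverse trajectory from the index-$k$ to the index-$(k+1)$ point'' for $(X,\phi)$ becomes the corresponding condition for $(-X,-\phi)$.
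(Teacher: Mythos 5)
Your proof follows exactly the same route as the paper's: reduce to the subcritical case $0 \leq k \leq n-1$ via the duality $(X,\phi) \mapsto (-X,-\phi)$, pass to the sutured/thickened Weinstein model via Theorem~\ref{thm:subcrit_is_Weinstein}, observe that the single transverse trajectory persists in the Weinstein picture, and invoke Weinstein handle cancellation. You simply spell out the intermediate suturing and standardization steps (Lemmas~\ref{lem:suturing}, \ref{lem:standard_suture}) and the transfer of the trajectory hypothesis in more detail than the paper, which states them tersely.
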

\begin{proof}
We begin with the case $0 \leq k \leq n-1$. Since these are subcritical handle attachments, by Theorem \ref{thm:subcrit_is_Weinstein}, we can view them instead as thickened Weinstein handle attachments, where the corresponding Weinstein handles also have a single trajectory between critical points. But these handles can be cancelled up to Weinstein homotopy (see \cite[Proposition 12.22]{CE} for details), which yields the result.

For $n+1 \leq k \leq 2n$, we simply use $(-X,-\phi)$ instead.
\end{proof}

\begin{rmk}
One need not have that there is only one trajectory between critical points to cancel handles, since the attaching and coattaching data for the neighboring critical points are only determined up to isotopy of attaching and coattaching spheres. It is then a more subtle question to determine when an isotopy which does produce only one trajectory exists. However, in many cases, this question reduces to the smooth one.

Let us suppose that $n \geq 3$, so that we are dealing with contact manifolds of dimension at least $7$. If $k \leq n-2$, or if $k = n-1$ but the attaching sphere of the corresponding $(k+1)$-handle is a loose Legendrian in the dividing set between the critical points, then there is flexibility, in the same sense as discussed in the Weinstein setting \cite[Chapter 14]{CE}. That is, smooth isotopies of the attaching sphere of the $(k+1)$-handle can be approximated by an isotropic or Legendrian isotopy. So it is actually sufficient for the algebraic intersection of the equator of the cosphere of the index $k$ handle and the attaching sphere of the index $k+1$ handle in the dividing set to be equal to $\pm 1$ in order for them to cancel, since then this h-principle allows one to isotope the attaching sphere of the index $(k+1)$-handle so that it intersects the coattaching sphere of the $k$-handle precisely once, transversely. The $k \geq n+2$ case, and also the $k=n+1$ with loose coattaching sphere, are handled identically.
\end{rmk}

\subsection{Smoothly cancelling pairs}

The remaining case is the cancellation of an $n$- and $(n+1)$-handle. In particular, we wish to understand the following question.

\begin{quest} How can we tell when we have a cancelling $n$- and $(n+1)$-handle pair?
\end{quest}

In order to cancel, we need that they at least smoothly cancel, and so up to isotopy of the attaching data of the $(n+1)$-handle, we need there to be one transverse trajectory between the critical points. We describe a particular setting in which this occurs.

The following definition matches with terminology introduced in Honda and Huang's paper defining bypasses in higher dimensions \cite{HH}.

\begin{defn}
A \textbf{smoothly cancelling pair} is a convex contact cobordism consisting of a single $n$- and $(n+1)$-handle with a single transverse gradient trajectory between them.
\end{defn}

We describe what data specifies a smoothly cancelling pair. The reader is referred to Figure \ref{fig:SCP} for an illustration of the following discussion in the case of contact $3$-manifolds, i.e. $2n+1=3$. Suppose, starting from a convex hypersurface $\Sigma$, we attach a smoothly cancelling pair. The $n$-handle is attached along a Legendrian $\Lambda_n$ in the dividing set $\Gamma = \partial R_+ = \partial R_-$. The level set $\Sigma'$ in between the two critical points is given by $\Sigma' = R'_+ \cup R'_-$, where $R'_{\pm} = R_{\pm} \cup H_{\pm}$ are given by Weinstein handle attachment along $\Lambda_n$. Meanwhile, the $(n+1)$-handle is attached along a balanced coisotropic sphere $S$ in $\Sigma'$. In this dimension, a balanced coisotropic sphere is simply a pair of Lagrangian disk fillings $D_{\pm}$ of a Legendrian sphere $\Lambda_{n+1}$ in the dividing set. Since isotoping the attaching sphere of the $(n+1)$-handle does not affect the resulting cobordism up to strict homotopy, we may assume that $\Lambda_{n+1}$ is not in the boundary of the Weinstein handle $H_{\pm}$, and so is a Legendrian in $\partial R_+$ not intersecting $\Lambda_n$. Furthermore, since there is a single trajectory between the critical points, we may assume that the attaching sphere of the $(n+1)$-handle passes through either $H_+$ or $H_-$ (but not both) precisely once, along the core of the handle. Let us assume without loss of generality that it passes through the handle on the positive side, since we may equally well consider the negative case by simply swapping the coorientation of the underlying contact structure. Then $R_+ \cap S$ is an exact Lagrangian annulus, filling the union of $\Lambda_n$ and the $\Lambda_{n+1}$. Meanwhile, $S \cap R'_-$ never enters the handle, and is just a Lagrangian disk filling in $R_-$ of the equator of $\Lambda_{n+1}$. This discussion shows that we may represent a smooth cancelling pair by the following data along $\Sigma$:
\begin{itemize}
	\item a Legendrian sphere $\Lambda_n$ in the dividing set $\Gamma$, giving the attaching data of the $n$-handle
	\item a Legendrian sphere $\Lambda_{n+1}$ in $\Gamma$, giving the equator of the attaching data of the $(n+1)$-handle
	\item a Lagrangian disk $D_- \subset R_-$ filling $\Lambda_{n+1}$, giving one half of the attaching sphere of the $(n+1)$-handle
	\item a Lagrangian annulus $A_+ \subset R_+$ filling $\Lambda_n \cup \Lambda_{n+1}$, which when extended by the core of a Weinstein handle attached along $\Lambda_n$ yields a Lagrangian disk giving the other half of the attaching sphere of the $(n+1)$-handle
\end{itemize}
We may of course swap the roles of $R_+$ and $R_-$ to provide another model for a smoothly cancelling pair.

\begin{figure}[h!]
\centering
  \includegraphics[width=\textwidth]{./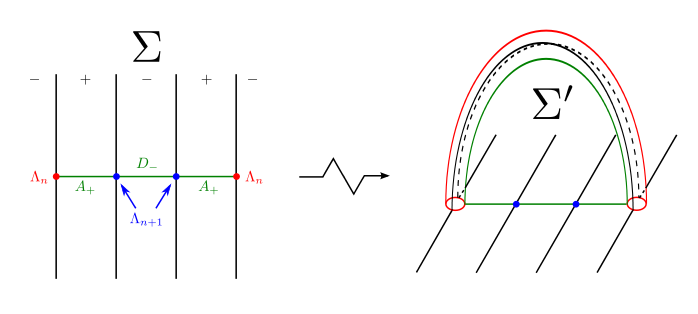}
  \caption{This figure depicts a smoothly cancelling pair in the case of $2n+1 = 3$. On the left is the smoothly cancelling attaching data on a convex surface $\Sigma$, representing an $n$ and $n+1$ handle attachment which smoothly cancel. On the right, we show how $D_- \cup A_+$ extends in the convex surface $\Sigma'$ obtained as the level set after attaching an $n$-handle to $\Sigma$ along $\Lambda_n$ to yield the attaching sphere of the $(n+1)$-handle (in green).}\label{fig:SCP}
\end{figure}

\begin{defn}
The quadruple $(\Lambda_n,\Lambda_{n+1},A_+,D_-)$ will be referred to as \textbf{smoothly cancelling attaching data}.
\end{defn}

\begin{rmk}
In our definition of a smoothly cancelling pair, we artificially include the hypothesis that there is a single trajectory between the critical points, so that we explicitly see that they smoothly cancel. Notice that it is not necessarily the case that a pair of $n$ and $(n+1)$-handles will smoothly cancel if and only if they can be convex contact homotoped to have a single gradient trajectory, since convex contact homotopy is not the same as smooth homotopy of the pair $(X,\phi)$. We suspect there are examples of a pair of handles which smoothly cancel but cannot be strictly convex contact homotoped so that there is a single gradient trajectory between them, and that they can be detected by some version of wrapped Floer cohomology.
\end{rmk}

\begin{rmk}
That $\Lambda_{n+1}$ has a disk filling in $R_-$ is to say that it is \textbf{Lagrangian slice} with respect to $R_-$. This condition is already quite restrictive, even for Legendrian knots in standard $S^3$ filled by the standard $4$-ball. A complete list of Lagrangian slice knots up to 14 crossings can be found in \cite{CNS}.
\end{rmk}

\subsection{Examples of critical cancellation}

The goal in this subsection is to provide a specific model for which a smoothly cancelling pair does cancel in the convex contact setting. Consider the following family of contact vector fields $X_{\epsilon}$ for $\alpha =  -dz-\frac{1}{2}\sum(x_idy_i-y_idx_i)$  on $\R^{2n+1}$, where $-1 < \epsilon < 1$:
$$X_{\epsilon} = (z^2+\epsilon)\partial_z + \sum \left(x_i(z-\frac{3}{2})\partial_{x_i}+y_i(z+\frac{3}{2})\partial_{y_i}\right)$$
(The weird choice of sign for $\alpha$ is motivated by the fact that we want the trajectory connecting the critical points to lie in $R_+$.) This is just the vector field corresponding to the Hamiltonian $H_{\epsilon} = -z^2 - \frac{3}{2}\sum x_iy_i - \epsilon$. Note that $X_{\epsilon}$ passes through a death-type singularity as $\epsilon$ increases through $\epsilon = 0$. When $\epsilon < 0$, there are two critical points $p_{\pm}$ with coordinates $x_i = y_i = 0$ and $z = \pm \sqrt{|\epsilon|}$. The point $p_+$ has index $n$ with ascending manifold along the plane given by $x_1 = \cdots = x_n = 0$, whereas $p_-$ has index $n+1$ and descending manifold along the plane given by $y_1 = \cdots = y_n = 0$. We see from this description that there is a unique gradient trajectory from $p_{+}$ to $p_{-}$ along the $z$-axis.

We can turn this family of contact vector fields into a convex structure with the addition of a family of Morse functions $\phi_{\epsilon}$ so that the pair $(X_{\epsilon},\phi_{\epsilon})$ is of birth-death type. With these choices, $p_{+}$ is a critical point of index $n$, with descending disk along the plane parallel to the $x_i$-directions and ascending disk along the other coordinate directions, while $p_{-}$ is a critical point of index $n+1$, with descending disk along the $x_i$ and $z$-directions and ascending disk along the $y_i$-directions.

The attaching disks for both the $n$- and $n+1$-handle are along the plane spanned by the $x_i$ and $z$-directions. Consider the following pieces of convex hypersurfaces, transverse to $X$, and where $\delta > 0$ is a small constant:
$$\Sigma_1 = \{|\vec{y}| \leq 1/2, |\vec{x}| = \delta, -1-\delta \leq z < 1+\delta\}$$
$$\Sigma_2 = \{1/2 \leq |\vec{y}| < 1, |\vec{x}| = \delta, -1-\delta \leq z < 1+\delta\}$$
$$\Sigma_3 = \{|\vec{y}| \leq 1, |\vec{x}| = \delta, 1+\delta \leq z \leq 1+2\delta\}$$
$$\Sigma_4 = \{|\vec{y}| \leq 1/2, z = -1-\delta\}$$
$$\Sigma_5 = \{1/2 \leq |\vec{y}| < 1, z = -1-\delta\}$$
$$\Sigma_6 = \{|\vec{y}|=1/2, |\vec{x}| \leq \delta, |z|\leq 1+\delta\}$$
$$\Sigma_7 = \{|\vec{y}| \leq 1/2, |\vec{x}| \leq \delta, z=1+\delta\}$$
This set-up is drawn in Figure \ref{fig:Cancel_Region}. Take
$$\Sigma_- = \Sigma_1 \cup \Sigma_2 \cup \Sigma_3 \cup \Sigma_4 \cup \Sigma_5$$
$$\Sigma_+ = \Sigma_2 \cup \Sigma_3 \cup \Sigma_5 \cup \Sigma_6 \cup \Sigma_7.$$

\begin{figure}[h!]
\centering
  \includegraphics[width=\textwidth]{./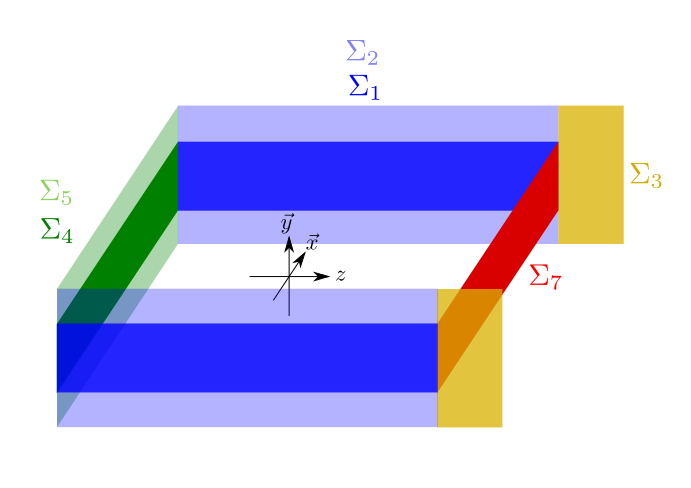}
  \caption{The set-up for the model of cancellation. The origin is at the center of the diagram. Missing from the picture is $\Sigma_6$, which consists of the horizontal planes which, along with the darker vertical regions $\Sigma_1$, $\Sigma_4$, and $\Sigma_7$, enclose a box giving a cobordism from $\Sigma_-$ to $\Sigma_+$.}\label{fig:Cancel_Region}
\end{figure}

We see that $\Sigma_{\pm}$ are piecewise smooth surfaces, and they can be smoothed out so that $X$ remains transverse to them. Regardless of $\epsilon$, we have a convex contact cobordism between $\Sigma_-$ and $\Sigma_+$ given by appending the region contained in the `box' given by $\{|\vec{x}| \leq \delta, |\vec{y}| \leq 1/2, |z| \leq 1+\delta\}$. Hence, we have a family of convex contact structures on the cobordism between $\Sigma_-$ and $\Sigma_+$ which passes through a death-type singularity. For $\epsilon > 0$ there are no critical points, and so this is a trivial cylinder. When $\epsilon < 0$, we have that the two critical points are contained in the box, and their attaching data, thought of as a smoothly cancelling pair, is completely contained in $\Sigma_-$. Hence, we have obtained a model for a cancelling pair of critical points. That is, if we find $\Sigma_-$ lying in a convex hypersurface, and we have an $n$- and $(n+1)$-handle attached which match with the model provided by $\Sigma_-$, then we can cancel the handles.

We note that for this model, when $\epsilon < 0$, the trajectory connecting the critical points lies in the region where $\alpha(X) > 0$. (If we wanted a model where the trajectory had $\alpha(X) < 0$, we would have used $-\alpha$ instead.)

As a smoothly cancelling pair, the data along $\Sigma_-$ must consist of the data of two Legendrians together with an annulur filling of their unions and a disk filling of one of them. The following proposition summarizes our model.

\begin{prop}
The following smoothly cancelling attaching data yields a pair of contact handles which cancel up to convex contact homotopy.
\begin{itemize}
	\item $\Lambda_n$ is any Legendrian in the dividing set
	\item $\Lambda_{n+1}$ is a small positive Reeb push-off of $\Lambda_n$
	\item In $R_-$, we choose any disk filling $D_-$ of $\Lambda_{n+1}$
	\item In $R_+$, we choose the standard annulur filling $A_+$ of $\Lambda_n \cup \Lambda_{n+1}$. (There is a standard choice filling a Legendrian with its Reeb push-off, such that the filling is contained in a neighborhood of $\Gamma$. This cobordism is decomposable, in that it is realized by the trace of a Legendrian isotopy together with a single surgery `pinch' move.)
\end{itemize}
\end{prop}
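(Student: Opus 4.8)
The plan is to obtain the proposition by reading off the smoothly cancelling attaching data from the explicit model $(X_\epsilon,\phi_\epsilon)$ on $\R^{2n+1}$ together with the hypersurfaces $\Sigma_\pm$ constructed above, and then checking that this data is as flexible as claimed. First I would carry out the direct computation in the $\epsilon<0$ picture. The descending manifold of the index-$n$ critical point $p_+$ is the plane $\{\vec y=0,\ z=\sqrt{|\epsilon|}\}$, whose trace on $\Sigma_-$ is the sphere $\Lambda_n=\{|\vec x|=\delta,\ \vec y=0,\ z=\sqrt{|\epsilon|}\}$; since $z$ is constant and $\vec y=0$ along it, $\alpha|_{\Lambda_n}=0$, and as $\Lambda_n\subset\Sigma_-\cap\{H_\epsilon=0\}=\Gamma$ this is a Legendrian in the dividing set. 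The descending manifold of the index-$(n+1)$ point $p_-$ is the half-space $\{\vec y=0,\ z<\sqrt{|\epsilon|}\}$; its trace $S$ on $\Sigma_-$ is a balanced coisotropic sphere with equator $\Lambda_{n+1}=\{|\vec x|=\delta,\ \vec y=0,\ z=-\sqrt{|\epsilon|}\}$, with $R_+$-piece the annulus $A_+=\{|\vec x|=\delta,\ \vec y=0,\ |z|\le\sqrt{|\epsilon|}\}$ joining $\Lambda_n$ to $\Lambda_{n+1}$, and with $R_-$-piece a disk $D_-$ (the part of the half-space lying over $\Sigma_1\cup\Sigma_4$). Because $R_\alpha=-\partial_z$ for $\alpha=-dz-\frac{1}{2}\sum(x_idy_i-y_idx_i)$, the Legendrian $\Lambda_{n+1}$ is precisely the image of $\Lambda_n$ under the positive Reeb flow for time $2\sqrt{|\epsilon|}$, i.e. a small positive Reeb push-off, and $A_+$ is the trace of this Reeb translation, capped off past $z=\pm\sqrt{|\epsilon|}$ by a single pinch move over $\Sigma_3$ — hence the standard decomposable Lagrangian annulus supported in a neighborhood of $\Gamma$. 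This identifies the model's attaching data with a quadruple $(\Lambda_n,\Lambda_{n+1},A_+,D_-)$ of the stated form, and since the trajectory connecting $p_+$ to $p_-$ runs along the $z$-axis inside $\{\alpha(X_\epsilon)>0\}$, it lies in $R_+$ as advertised.

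Next I would promote $\Lambda_n$ to an arbitrary Legendrian in the dividing set of an arbitrary convex hypersurface. A Legendrian $S^{n-1}$ in $\Gamma^{2n-1}$ has vanishing conformal symplectic normal bundle, so by Theorem \ref{thm:INT} (equivalently by Proposition \ref{prop:attaching_data} together with Proposition \ref{prop:div_set_nbhd}) a neighborhood of any such Legendrian in the convex hypersurface, with the germ of its transverse contact vector field, is isomorphic to the corresponding neighborhood in the model. Thus the block of $\R^{2n+1}$ consisting of the box (with a neighborhood of $\Sigma_-$ around it), carrying the whole family $(X_\epsilon,\phi_\epsilon)$, embeds into a collar of any convex hypersurface containing a given $\Lambda_n$; since $(X_\epsilon,\phi_\epsilon)$ is cylindrical (and $\epsilon$-independent) outside the box, the death-type homotopy extends by the constant homotopy to the entire ambient convex contact cobordism, producing a convex contact homotopy to a cylindrical cobordism.

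The remaining and most delicate point is that $D_-$ may be taken to be \emph{any} Lagrangian disk filling of $\Lambda_{n+1}$ in $R_-$, even though distinct such disks need not be isotopic. The key observation is that the death-type homotopy is \emph{supported inside the box}: for $\epsilon$ near $0$ the pair $(X_\epsilon,\phi_\epsilon)$ is altered only there, and the cancellation never disturbs the cobordism outside the box. Consequently, to run the cancellation on a convex contact cobordism assembled from data $(\Lambda_n,\Lambda_{n+1},A_+,D_-)$ one needs only that a neighborhood of the box inside that cobordism be isomorphic to the model; the global behavior of $D_-$ out in $R_-$ is immaterial. This local isomorphism I would assemble from the neighborhood theorems of Sections \ref{sec:background}--\ref{sec:BCS}: Proposition \ref{prop:div_set_nbhd} standardizes the convex hypersurface near $\Gamma$, Theorem \ref{thm:INT} standardizes a neighborhood of $\Lambda_n$ and of the Weinstein handle $H_+$ attached along it, and Theorem \ref{thm:bcs_nbhd} (with the conformal symplectic normal bundle at a point of the binding trivial, hence no framing ambiguity) standardizes a neighborhood of the balanced coisotropic sphere $S$, whose portion meeting the box is exactly $A_+\cup(\text{core of }H_+)$ together with the germ of $D_-$ along $\Lambda_{n+1}$ — and near $\Lambda_{n+1}$ every Lagrangian disk filling has the standard germ. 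Once the local model is in place, Theorems \ref{thm:subcrit_attach} and \ref{thm:supercrit_attach} identify the $n$- and $(n+1)$-handle attachments with those of the $\epsilon<0$ model, and increasing $\epsilon$ through $0$ cancels them. I expect the main obstacle to be precisely this last gluing of local neighborhood models — in particular the relative/local version of the balanced coisotropic sphere neighborhood theorem, applied only to the part of $S$ that meets the box — which I would handle via the parametric relative neighborhood theorem, Theorem \ref{thm:par_rel}.
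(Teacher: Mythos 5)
Your overall strategy matches the paper's: read off the attaching data from the model $(X_\epsilon,\phi_\epsilon)$ along $\Sigma_-$, invoke the neighborhood theorems to embed a neighborhood of the model's attaching data inside any convex contact cobordism carrying data of the described type, and then run the model's death-type homotopy (supported in the box) to cancel the handles.

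However, your verification that $\Lambda_{n+1}$ is a positive Reeb push-off of $\Lambda_n$ contains a genuine error. You compute $R_\alpha = -\partial_z$ for the ambient contact form and then assert that $\Lambda_{n+1}$ is the image of $\Lambda_n$ under this flow. But the Reeb push-off of a Legendrian inside the dividing set $\Gamma$ must be taken with respect to the Reeb vector field $R_{\alpha|_\Gamma}$ of the induced contact form on $\Gamma$ --- not the ambient $R_\alpha$. The ambient flow by $-\partial_z$ does not even preserve $\Gamma$: on $\Sigma_1$, $\Gamma$ is cut out by $z^2 + \tfrac{3}{2}\sum x_iy_i - |\epsilon| = 0$, which is not translation-invariant in $z$, so the intermediate images $\{\,|\vec x|=\delta,\ \vec y=0,\ z=\sqrt{|\epsilon|}-t\,\}$ for $0<t<2\sqrt{|\epsilon|}$ leave $\Gamma$ and lie in the interior of $R_+$. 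That $-\partial_z$ sends $\Lambda_n$ onto $\Lambda_{n+1}$ as sets is a coincidence of the coordinates; it does not establish the push-off relation in $\Gamma$. The paper instead computes $R_{\alpha|_\Gamma} = \frac{4}{(3-2z)\delta^2}\bigl(\sum x_i z\,\partial_{y_i} - \tfrac{3}{4}\delta^2 \partial_z\bigr)$ along $\Sigma_1 \cap \Gamma$, notes that this field always has negative $\partial_z$-component and that its unparametrized flowlines are symmetric under the reflection $z \mapsto -z$, and concludes that the Reeb flow \emph{on $\Gamma$} carries $\Lambda_n$ at $z=\sqrt{|\epsilon|}$ onto its reflected copy $\Lambda_{n+1}$ at $z=-\sqrt{|\epsilon|}$. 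You should replace the ambient-Reeb shortcut with a computation of this form. The rest of your argument --- in particular the use of the Lagrangian neighborhood theorem to allow $D_-$ to be arbitrary, and the observation that the cancellation is supported near the attaching data --- is in line with the paper's reasoning.
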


\begin{proof}
We begin by proving that the attaching data along $\Sigma_-$ is of this type. Notice that $D_-$ does not stay near the dividing set along $\Sigma_3 \subset \Sigma_-$, and any two Lagrangian disks have isomorphic neighborhoods, so $D_-$ may indeed be arbitrarily chosen. As for $D_+$, we see that for $\epsilon < 0$ of small enough magnitude, $D_+$ remains in a collar neighborhood of $\Gamma$ along $\Sigma_1$.

We explicitly have that $\Gamma$ along $\Sigma_1$ is given by the subset in which the Hamiltonian $H = z^2+\frac{3}{2}\sum_{i=1}^{n}x_iy_i - |\epsilon|$ is equal to zero. Recall also that $\Sigma_1$ lies along the surface with $\sum x_i^2 = \delta^2$. One computes from this data that the kernel of $d\alpha|_{\Gamma}$ is given by the span of $-\sum x_iz\partial_{y_i} + \frac{3}{4}\delta^2 \partial_z$. Therefore, one checks that the Reeb vector field is
$$R_{\alpha|_{\Gamma}} = \frac{4}{(3-2z)\delta^2}\left(\sum x_iz\partial_{y_i} - \frac{3}{4}\delta^2 \partial_z\right).$$
We notice that the Reeb vector field along $\Gamma \subset \Sigma_1$ always flows in the negative $z$ direction ($z < 3/2$ on $\Sigma_1$), and we see that the flowlines are symmetric about reflection through the plane $z=0$. Hence, since $\Lambda_{n+1}$ at $z=-\sqrt{|\epsilon|}$ and $\Lambda_n$ at $z = \sqrt{|\epsilon|}$ are reflected copies of each other through this plane, we see that the Reeb vector field flows from $\Lambda_{n}$ to $\Lambda_{n+1}$.

We leave the proof of the description of $A_+$ to the reader. It can be computed using the explicit model on $\Sigma_1$.

To conclude the general result, we need furthermore that if we have attaching data of the form described in the statement of the proposition, then it has a neighborhood which is equivalent to a neighborhood of the attaching data along $\Sigma_-$, since the cancellation of these critical points is local to this neighborhood in $\Sigma_-$. The standard annular filling of a Legendrian knot and its Reeb push-off automatically has a standard neighborhood, as does any Lagrangian disk filling a Legendrian; the result is essentially an exercise.
\end{proof}

We note that this matches one of the two models of \textbf{trivial bypass attachment} given by Honda and Huang \cite{HH}. There is another choice, in which the smoothly cancelling data is as follows:
\begin{itemize}
	\item $\Lambda_n$ is any Legendrian in the dividing set which is Lagrangian slice
	\item $\Lambda_{n+1}$ is a meridional standard Legendrian unknot linking $\Lambda_n$
	\item For $D_-$, we choose the standard disk filling of $\Lambda_{n+1}$ (in a neighborhood of $\Gamma$)
	\item For $A_+$, there is a standard choice of annulus filling $\Lambda_n \cup \Lambda_{n+1}$ given by any choice of slice disk for $\Lambda_n$
\end{itemize}
This other model is essentially the same as the previous model up to homotopy of the attaching data of the $(n+1)$-handle. The key technical detail can be found in \cite[Lemma 5.2.3]{HH}, which details how the attaching changes as $\Lambda_{n+1}$ passes through $\Lambda_n$ by a handle slide.

\begin{rmk}
Honda and Huang \cite{HH} prove that attachment of a trivial bypass yields a positive stabilization of a supporting partial open book. They also define anti-bypass attachments, which yield negative stabilizations of a supporting partial open book. Since negatively stabilized open books are automatically overtwisted \cite{CMP}, we have that this yields examples of smoothly cancelling attaching data which cannot possibly cancel in the convex contact setting, since introduction of a negative stabilization of a tight contact structure yields an overtwisted, hence non-contactomorphic, contact structure.
\end{rmk}

\begin{rmk}
We leave the question of when smoothly cancelling attaching data yields a trivial convex contact cylinder to future work. It is not clear, for instance, whether the trivial bypasses are the only possibility. A description would require further analysis of embryonic critical points in the convex contact setting.
\end{rmk}

\bibliography{Contact_Handles}{}
\bibliographystyle{plain}


\end{document}